\DeclareMathOperator{\Id}{Id}
\DeclareMathOperator{\Var}{Var}
\DeclareMathOperator{\BV}{BV}
\DeclareMathOperator{\Leb}{Leb}
\DeclareMathOperator{\LY}{LY}
\DeclareMathOperator{\End}{End}
\DeclareMathOperator{\vspan}{span}
\DeclareMathOperator{\codim}{codim}
\DeclareMathOperator{\ess}{ess}
\DeclareMathOperator*{\esssup}{ess\,sup}
\DeclareMathOperator{\gap}{Gap}
\newcommand{\wnorm}[1]{\abs{#1}}
\newcommand{\into}{\hookrightarrow}
\theoremstyle{plain}
\newenvironment{customthm}[1]
 {\innercustomthm}
 {\endinnercustomthm}
\title{Stability of hyperbolic Oseledets splittings for quasi-compact operator cocycles}
\author{H. Crimmins\footnote{School of Mathematics and Statistics, UNSW Sydney, NSW, 2052 Australia;  email: harry.crimmins@unsw.edu.au}}
\date{\today}
\begin{document}

\maketitle

\begin{abstract}
  We consider the problem of stability and approximability of Oseledets splittings and Lyapunov exponents for Perron-Frobenius operator cocycles associated to random dynamical systems.
  By developing a random version of the perturbation theory of Gou{\"e}zel, Keller, and Liverani, we obtain a general framework for solving such stability problems, which is particularly well adapted to applications to random dynamical systems.
  We apply our theory to random dynamical systems consisting of $\mathcal{C}^k$ expanding maps on $S^1$ ($k \ge 2$) and provide conditions for the stability of Lyapunov exponents and Oseledets splitting of the associated Perron-Frobenius operator cocycle to (i) uniformly small fiber-wise $\mathcal{C}^{k-1}$-perturbations to the random dynamics, and (ii) numerical approximation via a Fej{\'e}r kernel method.
  A notable addition to our approach is the use of Saks spaces, which provide a unifying framework for many key concepts in the so-called `functional analytic' approach to studying dynamical systems, such as Lasota-Yorke inequalities and Gou{\"e}zel-Keller-Liverani perturbation theory.
\end{abstract}

\pagestyle{myheadings}

\section{Introduction}

Suppose that $M$ is a compact $\mathcal{C}^\infty$ Riemannian manifold, and that $T : M \to M$ is a sufficiently smooth, uniformly expanding (or uniformly hyperbolic) map.
One approach to characterising the statistical properties of $T$ is by finding a Banach space $(X, \norm{\cdot})$ such that\footnote{We use $\into$ to denote a continuous inclusion between topological vector spaces.} $\mathcal{C}^{\infty}(M, \C) \into X \into (\mathcal{C}^{\infty}(M, \C))'$ and so that the Perron-Frobenius operator associated to $T$, denoted $L_T$, induces a bounded, quasi-compact operator on $X$ (we recommend the reader consult \cite{galatolo2015statistical,liverani2004invariant, viviane2000positive,baladi2018dynamical} for an overview of this approach).
Quasi-compactness implies that the spectrum of $L_T$ outside of the essential spectral radius, which is strictly less than 1, consists solely of a number of isolated \emph{exceptional eigenvalues} of finite multiplicity.
The spectral data associated to these exceptional eigenvalues encodes information about the statistical properties of $T$, such as the existence of finitely many SRB measures for $T$, or the rate of exponential decay of correlations if the system is mixing \cite{blank2002ruelle,viviane2000positive}.
In addition, as the essential spectral radius of the Perron-Frobenius operator is typically determined by the exponential rate at which nearby trajectories separate, any eigenvector of $L_T$ associated to an exceptional eigenvalue with modulus less than 1 describes a feature of the dynamics that decays too slowly to be attributed to local hyperbolicity.
This is the Dellnitz-Froyland ansatz \cite{dellnitz2000isolated}, which asserts that these eigenvectors correspond to global dynamical structures that are responsible for slower than expected mixing, such as metastable states or almost-invariant sets.

It is a natural questions as to whether these exceptional eigenvalues and their associated eigendata are stable to perturbations of the dynamics, and whether they may be approximated numerically.
Unfortunately, only in very special settings\footnote{Such as when the dynamics are all analytic, in which case $L_T$ is frequently compact.} is the map $T \mapsto L_T$ continuous in the operator norm, and so classical perturbation theory for linear operators \cite{kato1966perturbation} is inapplicable.
An alternative theory, particularly well adapted to studying the stability of the spectrum of Perron-Frobenius operators, has been developed by Gou{\"e}zel, Keller, and Liverani \cite{gouezel2006banach,keller1982stochastic,keller1999stability}.
The basic observation is that $T \mapsto L_T$ is frequently continuous in a weaker sense: there is a norm $\wnorm{\cdot}$ on $X$ with $\wnorm{\cdot} \le \norm{\cdot}$ so that the `triple norm'
\begin{equation*}
  \tnorm{L_T - L_S} = \sup_{\norm{f} = 1} \wnorm{(L_T - L_S)f}
\end{equation*}
is small when $T$ and $S$ are close. Then, provided that all operators under consideration satisfy a uniform Lasota-Yorke inequality, in addition to some other requirements, one recovers stability of the exceptional eigenvalues of $L_T$ as well as of the associated eigenprojections and eigenvectors in $\tnorm{\cdot}$ and $\wnorm{\cdot}$, respectively.
This approach has found wide applicability to dynamical systems:
diverse systems and perturbations fit into this paradigm, notably including numerical methods.

The `functional analytic' approach outlined in the previous paragraphs has been partially generalised to random dynamical systems by applying multiplicative ergodic theory to random Perron-Frobenius operator cocycles \cite{froyland2013semi, GTQuas1, gonzalez2018multiplicative}.
Let $(\Omega, \mathcal{F}, \mathbb{P})$ be a Lebesgue space and $\sigma : \Omega \to \Omega$ be a $\mathbb{P}$-ergodic, invertible transformation. A measurable map $\mathcal{T} : \Omega \to \mathcal{C}^k(M)$ induces a random dynamical system over $\sigma$, whose trajectories are of the form
\begin{equation*}
  x, \mathcal{T}_\omega(x), \mathcal{T}_{\omega}^2(x), \dots , \mathcal{T}_{\omega}^n(x), \dots
\end{equation*}
where $\mathcal{T}_\omega := \mathcal{T}(\omega)$ and $\mathcal{T}_\omega^n := \mathcal{T}_{\sigma^{n}(\omega)} \circ \cdots \circ \mathcal{T}_\omega$.
If $(X, \norm{\cdot})$ is a Banach space on which the Perron-Frobenius operators $L_\omega$ associated to each $\mathcal{T}_\omega$ are bounded, then the map\footnote{When $(X,\norm{\cdot})$ is a Banach space we denote the set of $\norm{\cdot}$-bounded linear operators on $X$ by $\LL(X)$.}
\begin{equation*}
  (n, \omega) \in \mathbb{N} \times \Omega \mapsto L_{\sigma^{n}(\omega)} \circ \cdots \circ L_\omega \in \LL(X)
\end{equation*}
is the associated Perron-Frobenius cocycle, and we call $(\Omega, \mathcal{F}, \mathbb{P}, \sigma, X, \omega \mapsto L_\omega)$ a \emph{linear random dynamical system}.
If the cocycle is quasi-compact \cite{thieullen1987fibres} on an appropriate Banach space then by applying a semi-invertible multiplicative ergodic theorem one obtains an Oseledets splitting and Lyapunov exponents for the cocycle \cite{froyland2010coherentpf, froyland2013semi,GTQuas1}.
In this setting the Lyapunov exponents play the role of the exceptional eigenvalues, and their corresponding Oseledets spaces generalise the eigenspaces associated to exceptional eigenvalues.
These objects yield information about the statistical properties of a given random dynamical system.
For instance, vectors in the intersection of the top Oseledets space and positive cone are exactly the systems' random equivariant measures \cite{froyland2013semi}, and the Oseledets spaces corresponding to negative Lyapunov exponents describe global random dynamical structures, known as \emph{coherent structures}, that decay slower than the long term rate of local trajectory separation \cite{froyland2010coherentpf}.
However, there is yet to be a stability theory for Oseledets splittings and Lyapunov exponents that is comparable to Gou{\"e}zel--Keller--Liverani (GKL) perturbation theory.
Developing such a theory is the goal of this paper.

The question of stability and approximability of Lyapunov exponents and Oseledets splittings for Perron-Frobenius cocycles has been considered on a few prior occasions.
The stability of the random equivariant measure (i.e. the top Oseledets space) for i.i.d. random systems consisting of uniformly expanding maps nearby a fixed map as the random dynamics is perturbed fiber-wise was treated in \cite{baladi1996random}, and then extended in \cite{baladi1997correlation} to the stability of random equilibrium states.
The first paper comparable to ours in scope is \cite{bogenschutz2000stochastic}, wherein the stability of Lyapunov exponents above a critical index, and their corresponding Oseledets spaces, is proven for `asymptotically small random perturbations' of cocycles whose Oseledets splitting satisfies certain hyperbolicity conditions.
In Section \ref{sec:stability_lyapunov} we contrast the results of \cite{bogenschutz2000stochastic} with our main theorems.
More recently, in \cite{froyland2014stability} the stability of the random equivariant measures (i.e. the top Oseledets space) was proven for general Markov perturbations following a triple-norm approach as in GKL perturbation theory.
Lastly, in \cite{gonzalez2018stability} the stability, and lack thereof, of Lyapunov exponents was studied for the Perron-Frobenius operator cocycle associated to a random dynamical system consisting of expanding Blashcke products when subjected to a variety of perturbations. This last setting is quite special, since the dynamics are all analytic and each Perron-Frobenius operator is compact; in such a setting one usually has stability in the operator norm, and therefore does not need to resort to GKL perturbation theory.

Our first two main theorems (Theorems \ref{thm:stability_cocycle} and \ref{thm:stability_lyapunov}) are abstract stability results, which are modelled on GKL perturbation theory, for the Oseledets splitting and Lyapunov exponents of certain random linear cocycles. The following statement summarises these two results; for precise formulations see Sections \ref{sec:saks_space_stability} and \ref{sec:stability_lyapunov}.

\begin{customthm}{A}\label{thmA}
  Suppose that $(X, \norm{\cdot})$ is a separable Banach space with a compatible weak norm $\wnorm{\cdot}$ (Definition \ref{def:saks_space}), that $(\Omega, \mathcal{F}, \mathbb{P})$ is a Lebesgue space, and that $\sigma : \Omega \to \Omega$ is a $\mathbb{P}$-ergodic invertible transformation.
  If $\{(\Omega, \mathcal{F}, \mathbb{P}, \sigma, X, \mathcal{Q}_\epsilon) \}_{\epsilon \ge 0}$ is a sequence of strongly measurable linear random dynamical systems (Definition \ref{def:random_linear_system}) such that
  \begin{enumerate}
    \item $(\Omega, \mathcal{F}, \mathbb{P}, \sigma, X, \mathcal{Q}_0)$ has a hyperbolic Oseledets splitting (Definition \ref{def:hyperbolic_oseledets}) and is $\wnorm{\cdot}$-bounded;
    \item the set $\{(\Omega, \mathcal{F}, \mathbb{P}, \sigma, X, \mathcal{Q}_\epsilon) \}_{\epsilon \ge 0}$ satisfies a uniform Lasota-Yorke inequality (Definition \ref{def:ly}); and
    \item $\lim_{\epsilon \to 0} \esssup_\omega  \tnorm{\mathcal{Q}_\epsilon(\omega) -\mathcal{Q}_0(\omega)} = 0$;
  \end{enumerate}
  then $\mathcal{Q}_\epsilon$ has an Oseledets splitting for sufficiently small $\epsilon$, and the Lyapunov exponents and Oseledets projections of $\mathcal{Q}_\epsilon$ converge to those of $\mathcal{Q}_0$ as $\epsilon \to 0$.
\end{customthm}

The proofs of GKL perturbation theory do not generalise to prove Theorems \ref{thm:stability_cocycle} and \ref{thm:stability_lyapunov} since the Lyapunov exponents and Oseledets splittings are not related to the spectral data of any operator in consideration.
A new method is therefore required, and the one we pursue is reminiscent of the proof that the class of Anosov maps is open \cite[Corollary 5.5.2]{brin2002introduction}, and therefore bypasses the use of spectral theory.
Specifically, we construct invariant cone fields of subspaces for the unperturbed cocycle and show that these cone fields are also invariant under cocycles that satisfy an appropriate Lasota-Yorke inequality and are uniformly $\tnorm{\cdot}$-close to the unperturbed cocycle.
For this cone argument to work we require an extra hypothesis that is automatically satisfied in the non-random case: the Oseledets splitting must be \emph{hyperbolic}, which essentially means that the exponential separation of Oseledets spaces under the action of the unperturbed cocycle must be realised uniformly in time across all realisations of the random cocycle.
Cone-based arguments of a different flavour have been applied to prove stability of Lyapunov exponents and Oseledets splittings before: in the previously mentioned \cite{baladi1996random}, in \cite{novel2017p} to prove continuity of Lyapunov exponents for operator norm perturbations, and in \cite{sedro2018etude} to prove differentiability of the top Lyapunov exponent and Oseledets spaces for Perron-Frobenius cocycles.
In these works it is required that both the perturbed and unperturbed cocycles preserve a constant (non-random) cone field, which are defined as subsets of the relevant Banach space, and then a Birkhoff cone contraction argument is used.
This should be contrasted with our approach: we use hyperbolicity of the Oseledets splitting to construct various `random' cone fields of subspaces, which are defined as subsets of the Grassmannian. Moreover, by working with the graph-representation of the Grassmannian we mainly deal with projections rather than subspaces, reducing much of the proof to various operator norm estimates.

As a simple application of Theorems \ref{thm:stability_cocycle} and \ref{thm:stability_lyapunov}, we obtain the following result on the stability of Oseledets splitting and Lyapunov exponents for certain random dynamical systems consisting of $\mathcal{C}^k$ expanding maps on the circle (see Theorems \ref{thm:random_deterministic_perturbation} and \ref{thm:random_fejer_approx} for precise, formal statements).
This result should be compared with those in \cite{baladi1996random,bogenschutz2000stochastic} (in the one-dimensional case): when fiber-wise perturbing a $C^k$ map we only require that the maps are fiberwise $\mathcal{C}^{k-1}$-close with bounded $k$th derivative, rather being than $\mathcal{C}^k$-close, although we obtain slightly weaker conclusions as a consequence.
To our knowledge this is the first result on the numerical approximation of Oseledets splittings and Lyapunov exponents for Perron-Frobenius cocycles, other than the approximation of the top Oseledets space in \cite{froyland2014stability}.

\begin{customthm}{B}\label{thmB}
  For $k \ge 2$, $\alpha \in (0,1)$ and $K > 0$ we set
  \begin{equation*}
    \LY_k(\alpha, K) = \left\{ T \in \mathcal{C}^k(S^1, S^1) : \inf \abs{T'} \ge \alpha^{-1} \text{ and } d_{\mathcal{C}^k}(T,0) \le K \right\}.
  \end{equation*}
  Suppose that $(\Omega, \mathcal{F}, \mathbb{P})$ is a Lebesgue space, $\sigma : \Omega \to \Omega$ is $\mathbb{P}$-ergodic and invertible, and $\mathcal{T} : \Omega \to \LY_k(\alpha, K)$ is measurable.
  If the Perron-Frobenius cocycle induced by the random dynamical system associated to $\mathcal{T}$ and $\sigma$ has a hyperbolic Oseledets splitting on the Sobolev space $W^{k-1,1}(S^1)$ then the Lyapunov exponents and projections onto Oseledets subspaces are stable (with $\wnorm{\cdot}$ taken to be the $W^{k-2,1}(S^1)$ norm) to
  \begin{enumerate}
    \item Deterministic fiber-wise perturbations: for measurable $\mathcal{S} : \Omega \to \LY_k(\alpha, K)$ we have convergence of Lyapunov exponents and Oseledets projections as $\esssup d_{\mathcal{C}^{k-1}}(\mathcal{T}_\omega, \mathcal{S}_\omega) \to 0$.
    \item Fej{\'e}r kernel methods: the Lyapunov exponents and Oseledets projections of the finite dimensional matrix cocycle associated to the $n$th Fej{\'e}r kernel approximation of the Perron-Frobenius operator cocycle converge to those of the unperturbed cocycle as $n \to \infty$.
  \end{enumerate}
\end{customthm}

Having sketched our main results let us comment on a key conceptual difference between our approach and those prior, which is the framing of our results in terms of \emph{Saks spaces}. Concretely, a Saks space is a Banach space $(X, \norm{\cdot})$ equipped with a second, weaker topology satisfying certain compatibility conditions. In applications to dynamics, this weaker topology is the weak norm $\wnorm{\cdot}$ mentioned in the previous paragraphs. The natural `weak norm' on the set of linear, $\norm{\cdot}$-bounded operators on $X$ happens to be the triple norm $\tnorm{\cdot}$, and so GKL perturbation theory may be considered as a theory of spectral stability for Saks space perturbations. We believe that Saks spaces provide a natural framework for many key concepts in the so-called `functional analytic' approach to studying dynamical systems, and by studying these spaces on their own we may better understand the potential for, and limitations of, this approach.
In a concrete demonstration of the relevancy Saks spaces let us remark that (i) using Saks space theory one can precisely characterise the set of norms $\wnorm{\cdot}$ such that the closed unit $\norm{\cdot}$-ball is $\wnorm{\cdot}$-compact (see Theorems \ref{thm:compact_saks_unique} and \ref{thm:saks_space_norm_formula}), which has applications to the construction of good anisotropic Banach spaces for hyperbolic dynamical systems, and
(ii) by embracing the Saks space setting we weaken the hypotheses of GKL stability results in the non-random case (see Remark \ref{remark:comparison_to_kl}).

The remainder of the paper is structured as follows. In Section \ref{sec:prelims} we recall material about the Grassmannian on a Banach space, and provide a primer on the theory of Saks spaces.
We prove our first main result of this paper in Section \ref{sec:saks_space_stability}, which concerns the Saks space stability of hyperbolic splittings for bounded linear endomorphisms on vector bundles.
In Section \ref{sec:stability_lyapunov} we leverage the results of Section \ref{sec:saks_space_stability} to prove the Saks space stability of Oseledets splittings and Lyapunov exponents for random linear dynamical systems on separable Banach spaces when the unperturbed system possesses a hyperbolic Oseledets splitting.
Finally, in Section \ref{sec:app_random_dynamics} we apply the abstract results of the previous sections to deduce the stability of the Lyapunov exponents and Oseledets splittings for Perron-Frobenius operator cocycles associated to random dynamical systems of uniformly expanding $\mathcal{C}^k$ maps on the circle with respect to (i) fiber-wise perturbations to the dynamics and (ii) numerical approximation of the Perron-Frobenius operator using a Fej{\'e}r kernel method.
The two appendices contain some of the more technical proofs of the paper, which would otherwise obscure the main ideas of their respective sections.

\section{Preliminaries}\label{sec:prelims}

In this section we recall some preliminary theory that we will later use to state and prove our main results. Section \ref{sec:grassmannian} concerns the Grassmannian on a Banach space, while Section \ref{sec:ss_primer} serves as a primer on the theory of Saks spaces. To simplify the exposition we have deferred all proofs to Appendix \ref{app:preliminaries_proofs}.

First we fix some notation. Firstly, we shall always consider Banach spaces over $\C$. When $X_1$ and $X_2$ are Banach spaces we let $\LL(X_1, X_2)$ denote the set of bounded operators from $X_1$ to $X_2$.
When $\norm{\cdot}$ is a norm on a vector space $X$ we denote the associated closed-unit ball by either $B_{\norm{\cdot}}$ or, when the relevant norm is clear, by $B_{X}$.
If $X_1$ and $X_2$ are topological vector spaces such that $X_1$ is continuously included into $X_2$ then we will write $X_1 X_2$.
If $A \in \LL(X)$ then we denote the spectrum of $A$ by $\sigma(A)$, the spectral radius of $A$ by $\rho(A)$, and the essential spectral radius of $A$ by $\rho_{\ess}(A)$.
When $(Y, d)$ is a metric space we denote the Borel $\sigma$-algebra on $Y$ by $\mathcal{B}_Y$.

\subsection{Graphs and the Grassmannian}\label{sec:grassmannian}

This section summarises some old, but not particularly well-known, material for the readers convenience, and has been collated from \cite[Chapter IV, \S 2 and \S 4]{kato1966perturbation}, \cite[Section 2.1]{blumenthal2016volume}, \cite[Section 2]{froyland2013semi}, and \cite[Appendix A.2]{quas2019explicit}.
If $(X, \norm{\cdot})$ is a Banach space then the set of closed subspaces of $X$ is called the Grassmannian of $X$, and is denoted by $\mathcal{G}(X)$. It is a complete metric space when equipped with the metric
\begin{equation*}
  d_H(E, F) = \max\left\{\sup_{\substack{e \in E \\ \norm{e} = 1}} \inf_{\substack{f \in F \\ \norm{f} = 1}} \norm{e - f}, \sup_{\substack{f \in F \\ \norm{f} = 1}} \inf_{\substack{e \in E \\ \norm{e} = 1}} \norm{e - f} \right\}.
\end{equation*}
The metric $d_H$ is rather hard to bound directly. Instead, it is convenient to work with the \emph{gap} between two subspaces:
\begin{equation*}
  \gap(E, F) = \sup_{\substack{ e \in E \\ \norm{e} = 1}} \inf_{v \in F} \norm{e - f}.
\end{equation*}
We can work with the gap in place of $d_H$ due to the following inequality
\begin{equation}\label{eq:gap_equiv}
  \max\{ \gap(E, F), \gap(F, E)\} \le d_H(E,F) \le 2\max\{ \gap(E, F), \gap(F, E)\}.
\end{equation}

We say that $E, F \in \mathcal{G}(X)$ are topologically complementary subspaces if $E + F = X$ and $E \cap F = \emptyset$, in which case we will write $E \oplus F = X$.
Denote by $\Pi_{E || F}$ the projection  onto $E$ and parallel to $F$ i.e the image of $\Pi_{E || F}$ is $E$ and $\ker(\Pi_{E || F}) = F$. By the closed graph theorem $\Pi_{E || F} \in \LL(X)$.
For every $d \in \Z^+$ we denote by $\mathcal{G}_d(X)$ and $\mathcal{G}^d(X)$ the sets of $d$-dimensional and $d$-codimensional subspaces, respectively. The sets $\mathcal{G}_d(X)$ and $\mathcal{G}^d(X)$ are closed for every $d \in \Z^+$.
For each $F \in \mathcal{G}(X)$ the set
\begin{equation*}
  \mathcal{N}(F) = \{ E \in \mathcal{G}(X) : E \oplus F = X \}
\end{equation*}
is open in $\mathcal{G}(X)$, and has a convenient representation in terms of certain charts. Specifically, for any $E \in \mathcal{N}(F)$ we define $\Phi_{E \oplus F} : \mathcal{N}(F) \to \mathcal{L}(E,F)$
by
\begin{equation*}
  \Phi_{E \oplus F}(E') = \left(\restr{\Pi_{E || F}}{E'}\right)^{-1} - \Id.
\end{equation*}
We call $\Phi_{E \oplus F}$ the graph representation of $\mathcal{N}(F)$ induced by $E \oplus F$. That the graph representation of $\mathcal{N}(F)$ induced by $E \oplus F$ is well-defined follows from the following lemma.
\begin{lemma}\label{lemma:inv_proj}
  If $F \in \mathcal{G}(X)$ and $E_1,E_2 \in \mathcal{N}(F)$ then $\Pi_{E_1 || F} : E_2 \to E_1$ is invertible.
\end{lemma}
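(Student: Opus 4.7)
The plan is to establish algebraic bijectivity of $\Pi_{E_1 \| F} : E_2 \to E_1$ and then invoke the open mapping theorem to upgrade this to a topological isomorphism. Both directions of the bijection follow almost immediately from the two direct sum decompositions $E_1 \oplus F = X$ and $E_2 \oplus F = X$ together with the defining properties of the parallel projection, namely $\mathrm{Im}(\Pi_{E_1 \| F}) = E_1$ and $\ker(\Pi_{E_1 \| F}) = F$.

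First I would verify injectivity. If $v \in E_2$ with $\Pi_{E_1 \| F}(v) = 0$, then $v \in \ker(\Pi_{E_1 \| F}) = F$, so $v \in E_2 \cap F = \{0\}$ by the decomposition $E_2 \oplus F = X$. Next, for surjectivity, given $u \in E_1$ I would use the decomposition $X = E_2 \oplus F$ to write $u = v + f$ with $v \in E_2$ and $f \in F$ uniquely determined. Applying $\Pi_{E_1 \| F}$ to both sides yields $u = \Pi_{E_1 \| F}(u) = \Pi_{E_1 \| F}(v) + \Pi_{E_1 \| F}(f) = \Pi_{E_1 \| F}(v)$, where the first equality uses $u \in E_1$ and the last uses $f \in F = \ker(\Pi_{E_1 \| F})$. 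Hence $\Pi_{E_1 \| F}|_{E_2}$ is surjective onto $E_1$.

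Finally, since $E_1$ and $E_2$ are closed subspaces of the Banach space $X$, they are themselves Banach spaces in the inherited norm. The restriction $\Pi_{E_1 \| F}|_{E_2}$ is bounded (being a restriction of the bounded operator $\Pi_{E_1 \| F} \in \mathcal{L}(X)$, whose boundedness is the closed graph theorem observation recorded just before the statement of the lemma). A bounded bijection between Banach spaces has a bounded inverse by the open mapping theorem, so $\Pi_{E_1 \| F}|_{E_2} : E_2 \to E_1$ is invertible in $\mathcal{L}(E_2, E_1)$.

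There is no real obstacle in this argument; the content of the lemma is essentially a formal consequence of the direct sum hypotheses. The only care required is to remember that \emph{invertibility} in the functional-analytic sense includes boundedness of the inverse, which is why closedness of $E_1, E_2$ (encoded in $E_1, E_2 \in \mathcal{G}(X)$) plus the open mapping theorem is needed to finish.
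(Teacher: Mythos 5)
Your argument is correct and follows the same route as the paper: injectivity from $E_2 \cap F = \{0\}$, and surjectivity by decomposing $u \in E_1$ along $X = E_2 \oplus F$ and applying $\Pi_{E_1 \| F}$. The closing appeal to the open mapping theorem for boundedness of the inverse is a harmless addition the paper leaves implicit.
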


We summarise the properties of the graph representation in the next proposition.
\begin{proposition}\label{prop:graph_chart}
  If $E \oplus F = X$ then the associated graph representation $\Phi_{E \oplus F}$ is a homeomorphism. Moreover, for every $E' \in \mathcal{N}(F)$ we have
  \begin{equation}\label{eq:graph_chart_0}
    \Pi_{E' || F} = (\Id + \Phi_{E \oplus F}(E'))\Pi_{E || F},
  \end{equation}
  and for every $L \in \LL(E,F)$ we have
  \begin{equation}\label{eq:graph_chart_00}
    \Phi_{E \oplus F}^{-1}(L) = (\Id + L)(E).
  \end{equation}
\end{proposition}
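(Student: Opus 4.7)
The plan is to first verify the two explicit formulas \eqref{eq:graph_chart_0} and \eqref{eq:graph_chart_00}, since together they give that $\Phi_{E \oplus F}$ is a set-theoretic bijection with the advertised inverse; then handle bicontinuity separately. Before any of this one must check that $\Phi_{E \oplus F}(E')$ really lies in $\mathcal{L}(E,F)$ rather than just $\mathcal{L}(E,X)$: Lemma \ref{lemma:inv_proj} gives invertibility of $\Pi_{E||F}|_{E'} : E' \to E$, and for $e \in E$ the vector $y = (\Pi_{E||F}|_{E'})^{-1}(e)$ is the unique element of $E'$ with $\Pi_{E||F}(y) = e$, so $y - e \in \ker \Pi_{E||F} = F$. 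Boundedness of the resulting map $E \to F$ follows from the open mapping theorem applied to $\Pi_{E||F}|_{E'}$.

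Formula \eqref{eq:graph_chart_0} follows by decomposing any $x \in X$ as $x = e' + f$ with $e' \in E'$, $f \in F$, observing that $\Pi_{E||F}(x) = \Pi_{E||F}(e')$, and then applying $(\Pi_{E||F}|_{E'})^{-1}$ to recover $e' = \Pi_{E'||F}(x)$. For formula \eqref{eq:graph_chart_00}, given $L \in \mathcal{L}(E,F)$ set $E_L := (\Id + L)(E)$: closedness of $E_L$ follows by pulling a Cauchy sequence $e_n + L(e_n)$ back through $\Pi_{E||F}$ to get a limit $e \in E$ and then using continuity of $L$; transversality to $F$ is immediate since $\Pi_{E||F}(e + L(e)) = e$ vanishes only when $e = 0$; and $E_L + F = X$ follows from the identity $x = (e + L(e)) + (f - L(e))$ for $x = e + f$. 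A direct computation then gives $(\Pi_{E||F}|_{E_L})^{-1}(e) = e + L(e)$, and hence $\Phi_{E \oplus F}(E_L) = L$, completing the bijection argument.

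The main obstacle is continuity of $\Phi_{E \oplus F}$ itself: given $E_n \to E'$ in $d_H$, one needs $\Phi_{E \oplus F}(E_n) \to \Phi_{E \oplus F}(E')$ in operator norm. By \eqref{eq:graph_chart_0} restricted to $E$ one has $\Phi_{E \oplus F}(E')|_E = \Pi_{E'||F}|_E - \Id_E$, so it suffices to show $\Pi_{E_n||F}|_E \to \Pi_{E'||F}|_E$ in the operator norm $\mathcal{L}(E,X)$. For unit $e \in E$, let $y_n = \Pi_{E_n||F}(e) \in E_n$ and pick a near-approximant $z_n \in E'$ with $\|y_n - z_n\| \lesssim d_H(E_n,E')\|y_n\|$; then $\Pi_{E||F}(z_n) = e + \Pi_{E||F}(z_n - y_n)$ is close to $e$, so $\Pi_{E'||F}(e)$ is close to $z_n$, hence to $y_n$. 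The estimate closes provided $\|\Pi_{E_n||F}\|$ is uniformly bounded, which is a standard stability fact for projections onto gap-close complements of a fixed subspace (controlled via a Neumann series once $\gap(E_n, E')$ is small enough relative to $\|\Pi_{E'||F}\|^{-1}$).

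Continuity of the inverse is simpler and almost mechanical. Given $L, L' \in \mathcal{L}(E,F)$, a unit vector in $(\Id + L)(E)$ has the form $y = (e + L(e))/\|e + L(e)\|$ with $\|e\| = \|\Pi_{E||F}(y)\| \le \|\Pi_{E||F}\|$, and the candidate $(e + L'(e))/\|e + L(e)\| \in (\Id + L')(E)$ differs from $y$ by $\|L - L'\| \cdot \|e\|$. Symmetry then gives the gap bound in both directions, and via \eqref{eq:gap_equiv} a Lipschitz estimate on $\Phi_{E \oplus F}^{-1}$ in the relevant metrics. Together with the previous paragraph this establishes the homeomorphism claim.
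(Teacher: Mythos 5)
Your proposal is correct and follows essentially the same route as the paper: establish the two formulas \eqref{eq:graph_chart_0} and \eqref{eq:graph_chart_00} (which are the content of Lemmas \ref{lemma:proj_graph} and \ref{lemma:graph_rep_inv} in the appendix), and then prove bicontinuity (Lemmas \ref{lemma:graph_rep_continuity} and \ref{lemma:graph_rep_inv_continuity}). The only cosmetic difference is that you phrase the continuity of $\Phi_{E \oplus F}$ itself as a sequential limit argument with an explicit approximant, whereas the paper packages it as a one-shot quantitative Lipschitz-type bound; both rely on the same underlying stability fact for projections onto gap-close complements of a fixed $F$, which you correctly flag.
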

The identities \eqref{eq:graph_chart_0} and \eqref{eq:graph_chart_00} follows from Lemmas \ref{lemma:proj_graph} and \ref{lemma:graph_rep_inv}, respectively.
That $\Phi_{E \oplus F}$ is a homeomorphism is a consequence of the following two Lemmas.

\begin{lemma}\label{lemma:graph_rep_continuity}
  If $E \oplus F = X$ and $L_1, L_2 \in \mathcal{L}(E,F)$ then
  \begin{equation*}
    d_{H}(\Phi_{E \oplus F}^{-1}(L_1), \Phi_{E \oplus F}^{-1}(L_2)) \le 2\norm{\Pi_{E || F}} \norm{L_1 - L_2}.
  \end{equation*}
\end{lemma}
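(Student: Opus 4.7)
The plan is to unwind the definition of the graph chart using equation \eqref{eq:graph_chart_00}, giving $E_i := \Phi_{E \oplus F}^{-1}(L_i) = (\Id + L_i)(E)$ for $i = 1, 2$, and then to bound the two one-sided gaps $\gap(E_1, E_2)$ and $\gap(E_2, E_1)$ separately before appealing to \eqref{eq:gap_equiv} to convert the gap estimates into a bound on $d_H$.

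To estimate $\gap(E_1, E_2)$, I would take a unit vector $v_1 \in E_1$, write it as $v_1 = (\Id + L_1) e$ for a unique $e \in E$, and use $v_2 := (\Id + L_2) e \in E_2$ as an explicit (not necessarily unit) competitor in the infimum defining the gap. Then
\begin{equation*}
  \|v_1 - v_2\| = \|(L_1 - L_2) e\| \le \|L_1 - L_2\|\, \|e\|.
\end{equation*}
The key observation is that $e$ can be recovered from $v_1$ by the projection $\Pi_{E || F}$: since $e \in E$ and $L_1 e \in F$, we have $\Pi_{E || F} v_1 = e$, so $\|e\| \le \|\Pi_{E || F}\|$. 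Taking the supremum over unit $v_1 \in E_1$ gives $\gap(E_1, E_2) \le \|\Pi_{E || F}\|\, \|L_1 - L_2\|$. The roles of $L_1$ and $L_2$ are interchangeable, so the same argument yields $\gap(E_2, E_1) \le \|\Pi_{E || F}\|\, \|L_1 - L_2\|$.

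Combining these with the right-hand inequality in \eqref{eq:gap_equiv} gives
\begin{equation*}
  d_H(E_1, E_2) \le 2 \max\{\gap(E_1, E_2), \gap(E_2, E_1)\} \le 2 \|\Pi_{E || F}\|\, \|L_1 - L_2\|,
\end{equation*}
as required. There is no real obstacle here: the proof is essentially a book-keeping exercise, and the only small subtlety is the choice of the non-unit competitor $v_2 = (\Id + L_2) e$ (rather than its normalisation) in the infimum, which avoids an otherwise awkward rescaling step.
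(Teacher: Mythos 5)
Your proof is correct and is essentially the same as the paper's: both recover $e = \Pi_{E\|F}v_1$ from a unit vector $v_1 \in E_1$, use $v_2 = (\Id + L_2)e$ as the competitor, bound $\|v_1-v_2\| = \|(L_1-L_2)e\| \le \|L_1-L_2\|\,\|\Pi_{E\|F}\|$, and pass through \eqref{eq:gap_equiv}. The paper phrases the supremum over unit $v_1$ via an $\epsilon$-argument, but this is a purely presentational difference.
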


\begin{lemma}\label{lemma:graph_rep_inv_continuity}
  If $E \oplus F = X$ and $E_1, E_2 \in \mathcal{N}(F)$ then
  \begin{equation*}
    \norm{\Phi_{E \oplus F}(E_1) - \Phi_{E \oplus F}(E_2)} \le \left(\max \left\{\norm{\Pi_{F || E_1}}\norm{\Pi_{E_2 || F}}, \norm{\Pi_{F || E_2}}\norm{\Pi_{E_1 || F}}\right\}\right)^{-1}d_H(E_1, E_2).
  \end{equation*}
\end{lemma}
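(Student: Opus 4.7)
The plan is to use the explicit formula \eqref{eq:graph_chart_0} to express each $\Phi_{E\oplus F}(E_i)$ in terms of the pair of projections $\Pi_{E_i||F}$ and $\Pi_{E||F}$, and thereby reduce the claim to an operator-norm bound on $\Pi_{E_1||F} - \Pi_{E_2||F}$. Concretely, applying \eqref{eq:graph_chart_0} to each $E_i$ gives $\Pi_{E_i||F} = (\Id + \Phi_{E\oplus F}(E_i))\Pi_{E||F}$. Since $\Pi_{E||F}$ restricts to the identity on $E$ and $\Phi_{E\oplus F}(E_i) \in \mathcal{L}(E, F)$, subtracting yields $\Phi_{E\oplus F}(E_1) - \Phi_{E\oplus F}(E_2) = (\Pi_{E_1||F} - \Pi_{E_2||F})|_E$, so the norm on the left is bounded by $\norm{\Pi_{E_1||F} - \Pi_{E_2||F}}$ as an operator on $X$.

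Next I would establish the key algebraic identity. The crucial observation is that $\Pi_{E_1||F}$ has kernel $F$ while the image of $\Pi_{F||E_2}$ is $F$, so $\Pi_{E_1||F}\Pi_{F||E_2} = 0$. Combining this with the standard identity $P - Q = P(\Id - Q) - (\Id - P)Q$ applied to $P = \Pi_{E_1||F}$ and $Q = \Pi_{E_2||F}$, and using $\Id - \Pi_{E_i||F} = \Pi_{F||E_i}$, one obtains
\[
  \Pi_{E_1||F} - \Pi_{E_2||F} = -\Pi_{F||E_1}\Pi_{E_2||F},
\]
and by the symmetric argument also $\Pi_{E_1||F} - \Pi_{E_2||F} = \Pi_{F||E_2}\Pi_{E_1||F}$.

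Then each factorisation is bounded by importing the gap. For the first, the image of $\Pi_{E_2||F}$ lies in $E_2$, so $\norm{\Pi_{F||E_1}\Pi_{E_2||F}} \le \norm{\Pi_{F||E_1}|_{E_2}}\norm{\Pi_{E_2||F}}$. To control $\norm{\Pi_{F||E_1}|_{E_2}}$, I would exploit that $\Pi_{F||E_1}$ vanishes on $E_1$: for unit $y \in E_2$ and any $e_1 \in E_1$, $\Pi_{F||E_1}(y) = \Pi_{F||E_1}(y - e_1)$, whence
\[
  \norm{\Pi_{F||E_1}(y)} \le \norm{\Pi_{F||E_1}}\inf_{e_1 \in E_1}\norm{y - e_1} \le \norm{\Pi_{F||E_1}}\gap(E_2, E_1).
\]
The symmetric estimate handles the other factorisation. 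Combining with \eqref{eq:gap_equiv} to replace $\gap(E_i, E_j)$ by $d_H(E_1,E_2)$ and taking the sharper of the two bounds yields the claim.

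The main obstacle is spotting the vanishing $\Pi_{E_1||F}\Pi_{F||E_2} = 0$, which collapses a generic operator difference formula into a single clean product; once this is in hand, the gap-based trick of substituting $y - e_1$ for $y$ is standard, and the remaining computation is elementary.
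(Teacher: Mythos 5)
Your proof is correct and follows essentially the same route as the paper's: the reduction via \eqref{eq:graph_chart_0} to $\norm{\Pi_{E_1 || F} - \Pi_{E_2 || F}}$, the factorisation $\Pi_{E_1 || F} - \Pi_{E_2 || F} = -\Pi_{F || E_1}\Pi_{E_2 || F}$, and the substitution $\Pi_{F || E_1}u_2 = \Pi_{F || E_1}(u_2 - u_1)$ are exactly the steps of the paper's argument, which merely phrases the last step with unit vectors and $d_H$ directly instead of passing through $\gap$ and \eqref{eq:gap_equiv}. Note that what both arguments actually deliver is $\norm{\Phi_{E \oplus F}(E_1) - \Phi_{E \oplus F}(E_2)} \le \min\left\{\norm{\Pi_{F || E_1}}\norm{\Pi_{E_2 || F}}, \norm{\Pi_{F || E_2}}\norm{\Pi_{E_1 || F}}\right\} d_H(E_1, E_2)$, i.e.\ the constant enters as a minimum multiplying $d_H$ rather than as the inverted maximum appearing in the lemma's display, which is evidently a misprint in the statement rather than a defect of your argument.
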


Suppose that $X_1, X_2$ are Banach spaces with $E_i \oplus F_i = X_i$ for $i=1,2$, and let $S \in \LL(X_1, X_2)$. Then $S$ induces a natural action on both $\mathcal{G}(X_1)$ and $\mathcal{G}(X_2)$.
Namely, $V_1 \in \mathcal{G}(X_1)$ is mapped to $S(V_1) \in \mathcal{G}(X_2)$, and $V_2 \in \mathcal{G}(X_2)$ is mapped to $S^{-1}(V_2) \in \mathcal{G}(X_1)$.
We will now describe how these actions may induce an action between the graph representations of $\mathcal{N}(F_1)$ and $\mathcal{N}(E_2)$.
If $U \in \LL(E_1, F_1)$ is such that
$\restr{\Pi_{E_{2} || F_{2}} S(\Id + U)}{E_1} : E_1 \to E_{2}$ is invertible then we may define the forward graph-transform of $U$ by $S$ to be
\begin{equation*}
  S^*U = \Pi_{F_{2} || E_{2}} S(\Id + U) \left(\restr{\Pi_{E_{2} || F_{2}} S(\Id + U)}{E_1}\right)^{-1},
\end{equation*}
in which case $S^*U \in \mathcal{L}(E_{2}, F_{2})$.
On the other hand, if $U \in \LL(F_2, E_2)$ is such that $\Pi_{E_2 || F_2}(\Id - U\Pi_{F_2 || E_2})S : E_1 \to E_2$ is invertible then we define the backward graph-transform of $U$ by $S$ to be
\begin{equation*}
  S_* U = \left(\restr{\Pi_{E_2 || F_2}(\Id - U\Pi_{F_2 || E_2})S}{E_1}\right)^{-1} (U\Pi_{F_2 || E_2} - \Pi_{E_2 || F_2})S.
\end{equation*}
Using Proposition \ref{prop:graph_chart}, a quick calculation confirms that $S^*$ and $S_*$ agree with the usual action of an operator on a subspace.
\begin{proposition}\label{prop:graph_tranform}
  Fix $S \in \LL(X_1, X_2)$ and suppose that $E_i \oplus F_i = X_i$ for $i=1,2$.
  \begin{enumerate}
    \item For any $E' \in \mathcal{N}(F_1)$ such that $\Pi_{E_{2} || F_{2}} S \Pi_{E' || F_1} : E_1 \to E_2$ is invertible we have
    \begin{equation*}
      S(E') = \Phi_{E_2 \oplus F_2}^{-1}\left(S^*(\Phi_{E_1 \oplus F_1}(E'))\right).
    \end{equation*}
    \item For any $F' \in \mathcal{N}(E_2)$ such that $\Pi_{E_2 || F_2}\Pi_{E_2 || F'}S : E_1 \to E_2$ is invertible we have
    \begin{equation*}
      S^{-1}(F') = \Phi_{F_1 \oplus E_1}^{-1}(S_* (\Phi_{F_2 \oplus E_2}(F'))).
    \end{equation*}
  \end{enumerate}
\end{proposition}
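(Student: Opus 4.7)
The plan is to prove both identities by direct computation, first replacing the subspaces $E'$ and $F'$ on the right-hand sides by their graph parametrisations via \eqref{eq:graph_chart_00}, and then tracking how $S$ acts on an arbitrary element of each parametrised subspace.

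For part (1), I would set $U = \Phi_{E_1 \oplus F_1}(E')$, so that $E' = (\Id + U)(E_1)$ and hence $S(E') = \{S(\Id + U)y : y \in E_1\}$. Using \eqref{eq:graph_chart_0} to note that $\restr{\Pi_{E' || F_1}}{E_1} = \restr{(\Id + U)}{E_1}$, the hypothesis is exactly that $A := \restr{\Pi_{E_2 || F_2} S(\Id + U)}{E_1} : E_1 \to E_2$ is invertible, which is the condition needed for $S^* U$ to be defined. Decomposing along $X_2 = E_2 \oplus F_2$ gives $S(\Id+U)y = Ay + \Pi_{F_2 || E_2}S(\Id+U)y$; setting $e := Ay$ and substituting $y = A^{-1}e$ on the right-hand term identifies it as $(S^*U)e$ by definition of $S^*$. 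Thus $S(\Id+U)y = (\Id + S^*U)e$, and as $y$ ranges over $E_1$ the point $e$ ranges over all of $E_2$, giving $S(E') = (\Id + S^*U)(E_2) = \Phi_{E_2 \oplus F_2}^{-1}(S^*U)$ by \eqref{eq:graph_chart_00}.

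For part (2), the strategy is similar but requires characterising membership in $F'$ by a linear equation. I would set $V = \Phi_{F_2 \oplus E_2}(F')$, so $F' = (\Id + V)(F_2)$, and observe that $z \in F'$ iff $\Pi_{E_2 || F_2} z = V\Pi_{F_2 || E_2} z$, equivalently $\Pi_{E_2 || F_2}(\Id - V\Pi_{F_2 || E_2})z = 0$ (since $V$ takes values in $E_2$, so $\Pi_{E_2||F_2} V = V$). Writing a generic $x \in X_1$ as $x = e + f$ with $e \in E_1$ and $f \in F_1$ and applying this characterisation to $z = Sx$ yields a linear equation that, under the invertibility hypothesis, can be uniquely solved for $e$ in terms of $f$; a short rearrangement shows this solution is precisely $e = (S_* V)f$ (with $S_* V$ naturally viewed as an element of $\LL(F_1, E_1)$ by restricting its domain to $F_1$). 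Hence $S^{-1}(F') = \{(\Id + S_*V)f : f \in F_1\} = (\Id + S_*V)(F_1) = \Phi_{F_1 \oplus E_1}^{-1}(S_*V)$ by a final application of \eqref{eq:graph_chart_00}.

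Neither step is conceptually hard; the main obstacle is purely bookkeeping, namely keeping track of which summand of $X_i = E_i \oplus F_i$ each quantity lives in. The only structural identities used repeatedly are that $\Pi_{E_i || F_i}$ acts as the identity on $E_i$ (and the dual statement for $\Pi_{F_i || E_i}$), which in part (2) is what causes $\Pi_{E_2||F_2}(\Id - V\Pi_{F_2||E_2})$ to agree with $\Pi_{E_2||F_2} - V\Pi_{F_2||E_2}$ and thereby reconciles the two versions of the invertibility hypothesis.
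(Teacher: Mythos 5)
The paper does not actually spell out a proof of Proposition \ref{prop:graph_tranform}: it simply asserts, after defining $S^*$ and $S_*$, that ``a quick calculation'' using Proposition \ref{prop:graph_chart} confirms the claimed identities. Your proposal supplies exactly that calculation, and it is correct. In part (1) you correctly identify $\restr{\Pi_{E'||F_1}}{E_1}$ with $\restr{(\Id+U)}{E_1}$ via \eqref{eq:graph_chart_0}, which shows the invertibility hypothesis in the statement is the same as the one in the definition of $S^*U$, and the substitution $e = Ay$ does turn $S(\Id+U)y$ into $(\Id + S^*U)e$ with $e$ ranging over $E_2$. In part (2), the key observation that $\Pi_{E_2||F'} = \Pi_{E_2||F_2}(\Id - V\Pi_{F_2||E_2})$ (equivalently, that membership in $F'$ is the vanishing of $\Pi_{E_2||F_2}(\Id - V\Pi_{F_2||E_2})$) simultaneously reconciles the stated hypothesis $\Pi_{E_2||F_2}\Pi_{E_2||F'}S$ invertible with the definition of $S_*V$, and lets you solve $x = e+f \in S^{-1}(F')$ uniquely for $e = (S_*V)f$; this is the heart of the matter and you have it right. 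The only point one could add for completeness is to note explicitly that uniqueness of the decomposition $x = e + f$ is what makes the parametrisation of $S^{-1}(F')$ by $f \in F_1$ a genuine bijection, so that \eqref{eq:graph_chart_00} can be applied, but this is implicit in what you wrote. Overall this is the argument the author intended the reader to reconstruct.
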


\subsection{A Saks space primer}\label{sec:ss_primer}

In this section we reproduce from \cite[Chapter 1]{cooper2011saks} the definition and basic properties of Saks spaces, in addition to proving some new results.
We refer the reader to \cite[Chapter 1]{cooper2011saks} for a comprehensive overview of the theory and history of Saks spaces, as we only include the theory needed for our applications.
Throughout this section $X$ will denote a vector space.

\begin{lemma}[{\cite[Lemma 3.1]{cooper2011saks}}]\label{lemma:equiv}
  Let $X$ be a vector space, $\tau$ be a locally convex topology on $X$, and $\norm{\cdot}$ be a norm on $X$.
  Then the following are equivalent:
  \begin{enumerate}
    \item $B_{\norm{\cdot}}$ is $\tau$-closed;
    \item $\norm{\cdot}$ is lower semicontinuous for $\tau$;
    \item $\norm{\cdot} = \sup\{ \varphi : \varphi \text{ is a  $\tau$-continuous seminorm with } \varphi \le \norm{\cdot}\}$.
  \end{enumerate}
\end{lemma}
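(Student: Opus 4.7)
The plan is to establish the equivalence by proving the cycle $(1) \Leftrightarrow (2) \Leftarrow (3)$ together with $(2) \Rightarrow (3)$, where only the last implication has nontrivial content and the others follow from general facts about lower semicontinuity and the homogeneity of $\norm{\cdot}$.

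First I would dispatch $(1) \Leftrightarrow (2)$. Recall that an extended real-valued function on a topological space is lower semicontinuous iff every sublevel set $\{f \le \alpha\}$ is closed. Applied to $\norm{\cdot}$, for $\alpha > 0$ the sublevel set is $\alpha B_{\norm{\cdot}}$, which is $\tau$-closed iff $B_{\norm{\cdot}}$ is (since multiplication by a nonzero scalar is a $\tau$-homeomorphism), while $\{\norm{\cdot} \le 0\} = \bigcap_{n \ge 1} n^{-1} B_{\norm{\cdot}}$ is automatically $\tau$-closed as soon as $B_{\norm{\cdot}}$ is. For $(3) \Rightarrow (2)$, every $\tau$-continuous seminorm is $\tau$-lsc, and a pointwise supremum of a family of lsc functions is lsc.

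The main implication is $(2) \Rightarrow (3)$. Denote the supremum in (3) by $\psi$; clearly $\psi \le \norm{\cdot}$, so only the reverse inequality needs proof. Fix $x_0 \in X$ and any $\beta$ with $0 < \beta < \norm{x_0}$. Then $y_0 := x_0/\beta$ satisfies $\norm{y_0} > 1$, hence $y_0 \notin B_{\norm{\cdot}}$. Since by (1) the set $B_{\norm{\cdot}}$ is $\tau$-closed, convex, and balanced, and $\{y_0\}$ is compact and disjoint from it, the Hahn--Banach separation theorem for complex locally convex spaces produces a $\tau$-continuous $\C$-linear functional $f$ with $|f| \le 1$ on $B_{\norm{\cdot}}$ and $|f(y_0)| > 1$. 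The seminorm $\varphi := |f|$ is then $\tau$-continuous, satisfies $\varphi \le \norm{\cdot}$ on $X$ by homogeneity, and obeys $\varphi(x_0) = \beta \varphi(y_0) > \beta$. Hence $\psi(x_0) \ge \beta$, and letting $\beta \uparrow \norm{x_0}$ yields $\psi(x_0) \ge \norm{x_0}$.

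The main obstacle, such as it is, is the passage from the real-linear separating functional furnished by Hahn--Banach to a $\C$-linear one whose modulus still dominates $B_{\norm{\cdot}}$ by $1$. The standard trick is to set $f(x) := \ell(x) - i\ell(ix)$ for the separating real-linear $\ell$; then $|f(x)| = \sup_{|c|=1} \ell(cx)$, and balancedness of $B_{\norm{\cdot}}$ (so that $cB_{\norm{\cdot}} \subseteq B_{\norm{\cdot}}$ for all unimodular $c$) upgrades the real bound $\ell \le 1$ on $B_{\norm{\cdot}}$ to $|f| \le 1$ there, while simultaneously preserving $|f(y_0)| \ge |\ell(y_0)| > 1$. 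Once this is verified, all three implications combine to yield the stated equivalence.
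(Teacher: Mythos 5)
Your proof is correct. Note that the paper does not prove this lemma at all --- it is quoted from Cooper's monograph (Lemma 3.1 of the cited reference) --- and your argument is the standard one: $(1)\Leftrightarrow(2)$ by scaling sublevel sets, $(3)\Rightarrow(2)$ because a pointwise supremum of $\tau$-continuous seminorms is $\tau$-lower semicontinuous, and $(2)\Rightarrow(3)$ by Hahn--Banach separation of the point $y_0$ from the $\tau$-closed, convex, balanced, absorbing ball $B_{\norm{\cdot}}$, with the complexification step correctly exploiting balancedness. The only detail you gloss over is the normalization in the separation step: Hahn--Banach first yields a real continuous functional $\ell$ with $\ell\le\alpha$ on $B_{\norm{\cdot}}$ and $\ell(y_0)>\alpha$, and one must check $\alpha>0$ before rescaling to get $\abs{f}\le 1$ on $B_{\norm{\cdot}}$ and $\abs{f(y_0)}>1$ (if $\alpha=0$, balancedness forces $\ell\equiv 0$ on $B_{\norm{\cdot}}$, hence on all of $X$ since the unit ball of a norm is absorbing, contradicting $\ell(y_0)>0$); moreover, since $y_0$ lies in the $\tau$-open complement of $B_{\norm{\cdot}}$, only the open-set form of the separation theorem is needed, so the argument goes through without assuming $\tau$ Hausdorff, consistent with the statement of the lemma.
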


\begin{definition}[Saks space]\label{def:saks_space}
  Let $(X, \norm{\cdot})$ be a normed space and $\tau$ be a Hausdorff locally convex topology on $X$ such that $B_{\norm{\cdot}}$ is $\tau$-bounded and any of the conditions from Lemma \ref{lemma:equiv} is satisfied.
  Denote by $\gamma[\norm{\cdot}, \tau]$ the finest linear topology on $X$ that coincides with $\tau$ on $B_{\norm{\cdot}}$.
  The tuple $(X,\norm{\cdot},\tau)$ equipped with the topology $\gamma[\norm{\cdot}, \tau]$ is called a Saks space; when clear we simply denote this space by $X$ and the topology by $\gamma$.
  We say that $X$ is complete (resp. compact, pre-compact) if $B_{\norm{\cdot}}$ is $\tau$-complete (resp. $\tau$-compact, $\tau$-pre-compact).
\end{definition}

\begin{remark}
  If $X$ is complete as a Saks space then $(X,\norm{\cdot})$ is a Banach space. The converse is false.
\end{remark}

\begin{remark}\label{remark:saks_space_wo_closure}
  Sometimes one produces a tuple $(X,\norm{\cdot},\tau)$ satisfying the definition of a Saks space except for the conditions in Lemma \ref{lemma:equiv}.
  In such a case we could instead consider the Saks spaces $(X, \norm{\cdot}', \tau)$, where $\norm{\cdot}'$ denotes the Minkowski functional\footnote{If $K$ is a balanced, convex body in a vector space $X$ then the Minkowski functional of $K$ is the map $\rho_{K} : V \to [0,\infty)$ defined by $\rho_K(x) = \inf\{ \lambda \in [0, \infty) : \lambda x \in K\}$. The Minkowski functional of a balanced, convex body is always a seminorm, and if $K$ has non-empty interior then it is also a norm.} associated to the $\tau$-closure of $\norm{\cdot}$.
  We do not lose any continuous linear maps by performing this procedure \cite[Lemma 3.3]{cooper2011saks}.
\end{remark}

\begin{remark}
  As outlined in \cite[Chapter 1, Section 3.6]{cooper2011saks}, there is a canonical completion of a non-complete Saks space $(X, \norm{\cdot}, \tau)$. Let $\cl{X}_\tau$ denote the $\tau$-completion of $X$, and define $\norm{\cdot}_\tau$ to be the Minkowski functional of the $\tau$-completion of $B_{\norm{\cdot}}$ in $\cl{X}_\tau$.
  If $\cl{X}$ denotes the linear span of $B_{\norm{\cdot}_\tau}$ then $(\cl{X}, \norm{\cdot}_\tau, \tau)$ is a complete Saks space: the Saks space completion of $(X, \norm{\cdot}, \tau)$.
  We refer the reader to the discussion at the end of \cite[Chapter 1, Section 3.6]{cooper2011saks} for further properties of the Saks space completion, and to \cite[Proposition 3.8]{cooper2011saks} for an interesting characterisation of complete Saks spaces.
\end{remark}

\begin{example}
  Let $(X, \norm{\cdot})$ be a Banach space, and let $\tau$ denote the weak-star topology on $X^*$. Then $(X^*, \norm{\cdot}^*, \tau)$ is a compact Saks space by the Banach-Alaoglu Theorem.
  In Proposition \ref{prop:saks_space_finite} we shall see that every compact Saks space has this form.
\end{example}

\begin{example}\label{example:incl}
  Suppose that $(X_i, \norm{\cdot}_i)$, $i = 1,2$ are Banach spaces with $X_2 \into X_1$.
  Hence $B_{\norm{\cdot}_2}$ is $\norm{\cdot}_1$-bounded, however it may not be the case that $B_{\norm{\cdot}_2}$ is $\norm{\cdot}_1$-closed.
  If we let $\norm{\cdot}_2'$ denote the Minkowski functional of the $\norm{\cdot}_1$-completion of $B_{\norm{\cdot}_2}$, then $(X, \norm{\cdot}_2', \norm{\cdot}_1)$ is a Saks space per Remark \ref{remark:saks_space_wo_closure}.
  A formula for $\norm{v}_2'$ was recognised in \cite[Remark 2.3]{conze2007limit}:
  \begin{equation}\label{eq:incl_1}
    \norm{v}_2' = \lim_{\delta \to 0} \inf \{ \norm{w}_2 : \norm{w-v}_1 \le \delta \}.
  \end{equation}
  To see \eqref{eq:incl_1}, let us fix $v$ and suppose that $\norm{v}_2' = 1$. Let
  \begin{equation*}
    P_{v} = \left\{ (v_n )_{n \in \Z^+} \subseteq B_{\norm{\cdot}_2} : \lim_{n \to \infty} \norm{v_n -v}_1 = 0 \right\}.
  \end{equation*}
  By definition, $P_v$ must be non-empty and for every $(v_n )_{n \in \Z^+} \in P_v$ we must have $\lim_{n \to \infty} \norm{v_n}_2 = 1$, else $\norm{v}_2' < 1$ by the definition of the Minkowski functional.
  Since $P_v$ is non-empty and closed under taking subsequences, there exists $(v_n )_{n \in \Z^+} \in P_v$ such that $\norm{v_n -v}_1 \le 1/n$ and $\abs{\norm{v_n}_2  - 1} \le 1/n$ for every $n \in \Z^+$.
  Thus
  \begin{equation*}
    \inf \{ \norm{w}_2 : \norm{w-v}_1 \le 1/n \} \le \norm{v_n}_2 \le 1 + 1/n
  \end{equation*}
  and so $\lim_{\delta \to 0} \inf \{ \norm{w}_2 : \norm{w-v}_1 \le \delta \} \le 1$.
  If the limit is not 1, then there exists a sequence $(u_n)_{n \in \Z^+}$ such that $\lim_{n \to \infty} \norm{u_n -v}_1 = 0$ and $\lim_{n \to \infty} \norm{u_n}_2 < 1$, which implies that $(u_n)_{n \in \Z^+} \in P_v$.
  However, as discussed earlier, if $(u_n)_{n \in \Z^+} \in P_v$ then we must have $\lim_{n \to \infty} \norm{u_n}_2 = 1$, which is a contradiction. Thus $\lim_{\delta \to 0} \inf \{ \norm{w}_2 : \norm{w-v}_1 \le \delta \} = 1$.
\end{example}

\begin{example}
  We may generalise Example \ref{example:incl} as follows.
  Suppose that $(X_i, \norm{\cdot}_i)$, $i \in \{1,\dots,n\}$, are Banach spaces with $X_n \into  X_{n-1} \into \dots \into X_1$.
  Let $\norm{\cdot}_{i}'$ denote the Minkowski functional induced by the $\norm{\cdot}_{i-1}$-completion of $B_{\norm{\cdot}_i}$.
  As in Example \ref{example:incl}, each of the tuples $(X_i, \norm{\cdot}_{i}', \norm{\cdot}_{i-1})$ is a Saks space.
  Moreover, we have the following chain of continuous inclusions:
  \begin{equation*}
    (X_n, \norm{\cdot}_n) \into (X_n, \norm{\cdot}_{n}', \norm{\cdot}_{n-1}) \into (X_{n-1}, \norm{\cdot}_{n-1}) \into \dots \into (X_2, \norm{\cdot}_{2}', \norm{\cdot}_{1}) \into  (X_1, \norm{\cdot}_{1}).
  \end{equation*}
\end{example}

For a Saks space $(X, \norm{\cdot}, \tau)$ it is possible to give an explicit description of the open sets in $\gamma[\norm{\cdot}, \tau]$. If $(U_n)_{n \in \Z^+}$ denotes a family of absolutely convex $\tau$-open neighbourhoods of 0, then all the sets of the form
\begin{equation}\label{eq:ss_neighbourhood_basis}
  \bigcup_{n=1}^\infty (U_1 \cap B_{\norm{\cdot}} + \dots + U_n \cap 2^{n-1} B_{\norm{\cdot}})
\end{equation}
form a neighbourhood basis about $0$ for a locally convex topology on $X$. By \cite[Proposition 1.5]{cooper2011saks}, this locally convex topology is the $\gamma[\norm{\cdot}, \tau]$ topology.

\begin{remark}
  Any Banach space $(X, \norm{\cdot})$ induces a Saks space with the structure $(X, \norm{\cdot}, \norm{\cdot})$. From the definition of the neighbourhood basis for $\gamma[\norm{\cdot}, \norm{\cdot}]$, it is clear that the $\norm{\cdot}$-topology is equivalent to $\gamma[\norm{\cdot}, \norm{\cdot}]$.
\end{remark}

Let $(X,\norm{\cdot},\tau)$ be a Saks space. Despite $X$ being endowed with the structure of a locally convex vector space, for conceptual purposes it is better to forget this characterisation and adopt the following philosophy: provided that one works on $\norm{\cdot}$-bounded sets, the topological properties of $\gamma$ are the same as $\tau$.
The following three propositions demonstrate this principle.

\begin{proposition}[{\cite[Proposition 1.10]{cooper2011saks}}]\label{prop:saks_space_conv}
  A sequence $(x_n)_{n \in \Z^+} \subseteq X$ is $\gamma$-convergent to $x$ if and only if $(x_n)_{n \in \Z^+}$ is $\norm{\cdot}$-bounded and $\tau$-convergent to $x$.
\end{proposition}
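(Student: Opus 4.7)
The proposition factors into the two implications.

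For the easy direction $(\Leftarrow)$, I would start by assuming $(x_n)$ is $\norm{\cdot}$-bounded by some $M$ and $\tau$-convergent to $x$, reducing to $x = 0$ by translation. Then, given any basic $\gamma$-neighborhood of $0$,
\begin{equation*}
  V = \bigcup_{n=1}^\infty (U_1 \cap B_{\norm{\cdot}} + \dots + U_n \cap 2^{n-1} B_{\norm{\cdot}}),
\end{equation*}
I would pick $K \in \Z^+$ with $2^{K-1} \ge M$, so that $x_n \in 2^{K-1} B_{\norm{\cdot}}$ for every $n$. Since $x_n \to 0$ in $\tau$, eventually $x_n \in U_K$, and hence eventually $x_n \in U_K \cap 2^{K-1} B_{\norm{\cdot}} \subseteq V$, where I use $0 \in U_j$ for $j<K$ to pad out the Minkowski sum.

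For the hard direction $(\Rightarrow)$, I would first address $\tau$-convergence by showing that every absolutely convex $\tau$-open neighborhood $U$ of $0$ is already $\gamma$-open: the choice $U_k := 2^{-k} U$ in the basis formula produces a basic $\gamma$-neighborhood contained in $\sum_{k=1}^\infty 2^{-k} U \subseteq U$ by absolute convexity, so $\tau \subseteq \gamma$ on neighborhoods of $0$. This immediately yields $\tau$-convergence from $\gamma$-convergence.

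The remaining task is $\norm{\cdot}$-boundedness, which I would tackle by contradiction. If $(x_n)$ were not $\norm{\cdot}$-bounded, I would pass to a subsequence $(y_k)$ with $\norm{y_k}$ growing rapidly (say $\norm{y_k} \ge 4^k$), and use Lemma \ref{lemma:equiv}(3) to pick $\tau$-continuous seminorms $\varphi_k \le \norm{\cdot}$ with $\varphi_k(y_k) \ge \norm{y_k}/2$. Then, setting $U_k = \{\varphi_k < c_k\}$ for an adaptively chosen sequence $c_k$, I would try to show that the basic $\gamma$-neighborhood $V = \bigcup_n \sum_{k=1}^n (U_k \cap 2^{k-1} B_{\norm{\cdot}})$ excludes $y_k$ for infinitely many $k$, contradicting $\gamma$-convergence. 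The hard part will be handling cross-terms: in any decomposition $y_k = v_1 + \dots + v_N$ with $v_j \in U_j \cap 2^{j-1} B_{\norm{\cdot}}$, the quantities $\varphi_k(v_j)$ for $j \neq k$ are a priori only bounded by $\norm{v_j} \le 2^{j-1}$, which can be large. I would address this via a diagonal choice of the $c_k$ combined with a sufficiently rapid norm growth for $y_k$, so that $\varphi_k(y_k) \ge \norm{y_k}/2$ overwhelms the aggregate cross-term contribution; as a fallback, I would invoke the alternative description of $\gamma$ as the finest linear topology agreeing with $\tau$ on each $n B_{\norm{\cdot}}$ and directly construct an excluding $\gamma$-open set by prescribing its trace on each ball to be a $\tau$-open set avoiding the relevant $y_k$.
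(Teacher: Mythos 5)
The paper does not prove this statement itself---it cites \cite[Proposition 1.10]{cooper2011saks}---so there is no ``paper's proof'' to compare against; I will assess your argument on its own. The ($\Leftarrow$) direction and the $\tau$-convergence half of ($\Rightarrow$) are correct: the padding-by-zero trick handles the first, and $\tau \subseteq \gamma$ (whether observed directly from the definition of $\gamma$ as the \emph{finest} linear topology agreeing with $\tau$ on $B_{\norm{\cdot}}$, or via your $U_k = 2^{-k}U$ computation) handles the second.

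The boundedness half of ($\Rightarrow$) has a genuine gap, and your proposed repair does not close it. With $U_j = \{\varphi_j < c_j\}$, a decomposition $y_k = v_1 + \cdots + v_N$ with $v_j \in U_j \cap 2^{j-1}B_{\norm{\cdot}}$ only gives you $\varphi_k(v_k) < c_k$; for $j \ne k$ you have nothing better than $\varphi_k(v_j) \le \norm{v_j} \le 2^{j-1}$, so $\varphi_k(y_k) < c_k + \sum_{j \le N, j \ne k} 2^{j-1} < c_k + 2^N$. But any such decomposition forces $\norm{y_k} \le \sum_{j\le N} 2^{j-1} < 2^N$, i.e.\ $2^N > \norm{y_k}$, so the cross-term bound is at least $\norm{y_k}$ and can never be overwhelmed by $\varphi_k(y_k) \ge \norm{y_k}/2$, no matter how fast you let $\norm{y_k}$ grow or how you tune the $c_k$. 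The obstruction is structural: $v_j \in U_j$ must tell you something about $\varphi_k(v_j)$ for $k < j$, not just about $\varphi_j(v_j)$. The standard repair is to make $U_j$ jointly constrain all earlier seminorms, e.g.\ $U_j := \bigcap_{k \le j} \{\varphi_k < 2^{-j}\}$. Then for $j \ge k$ you get $\varphi_k(v_j) < 2^{-j}$, while for $j < k$ you keep $\varphi_k(v_j) \le 2^{j-1}$, giving $\varphi_k(y_k) < (2^{k-1} - 1) + 2^{-k+1} < 2^k$; choosing $\varphi_k$ with $\varphi_k(y_k) > 2^k$ (possible once $\norm{y_k} > 2^{k+1}$, by Lemma \ref{lemma:equiv}(3)) then yields the contradiction. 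Your fallback---prescribing the trace of a $\gamma$-open set on each ball $nB_{\norm{\cdot}}$---is also not a safe escape hatch: $\gamma$ is the finest \emph{linear} topology agreeing with $\tau$ on $B_{\norm{\cdot}}$, which is in general strictly coarser than the final (not necessarily linear) topology induced by the inclusions $nB_{\norm{\cdot}} \hookrightarrow X$, and it is only the latter for which arbitrary coherent $\tau$-open traces are automatically realizable; invoking the equivalence of the two would itself need an argument.
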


\begin{proposition}[{\cite[Proposition 1.11, 1.12]{cooper2011saks}}]\label{prop:ss_bounded}
  If $V \subseteq X$ then:
  \begin{enumerate}
    \item $V$ is $\gamma$-bounded if and only if it is $\norm{\cdot}$-bounded.
    \item $V$ is $\gamma$-compact (resp. $\gamma$-pre-compact) if and only if it is $\norm{\cdot}$-bounded and $\tau$-compact (resp. $\tau$-pre-compact).
  \end{enumerate}
\end{proposition}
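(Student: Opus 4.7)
The plan is to prove the two parts in sequence, using Proposition \ref{prop:saks_space_conv} and the explicit neighbourhood basis \eqref{eq:ss_neighbourhood_basis} for the harder directions, and reducing everything else to the observation that $\gamma$ and $\tau$ agree on every dilate $M B_{\norm{\cdot}}$. The latter agreement follows from the defining agreement on $B_{\norm{\cdot}}$ together with the fact that scalar multiplication is a homeomorphism in both topologies.

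For part 1, I first prove the direction ``$\norm{\cdot}$-bounded $\Rightarrow$ $\gamma$-bounded''. Fix $V \subseteq M B_{\norm{\cdot}}$ and a basic $\gamma$-neighbourhood $W = \bigcup_n \sum_{i=1}^n (U_i \cap 2^{i-1}B_{\norm{\cdot}})$, with each $U_i$ an absolutely convex $\tau$-open neighbourhood of $0$. Choosing $n$ with $2^{n-1} \ge M$ and invoking the Saks space axiom that $B_{\norm{\cdot}}$ is $\tau$-bounded, we find $\lambda \ge 1$ with $2^{n-1}B_{\norm{\cdot}} \subseteq \lambda U_n$, whence $V \subseteq M B_{\norm{\cdot}} \subseteq \lambda(U_n \cap 2^{n-1}B_{\norm{\cdot}}) \subseteq \lambda W$. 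For the converse, suppose $V$ is $\gamma$-bounded but, for contradiction, contains $v_n$ with $\norm{v_n} \to \infty$. Setting $\lambda_n = 1/\sqrt{\norm{v_n}}$, the scalar sequence $\lambda_n$ tends to $0$, and the $\gamma$-boundedness of $V$ implies $\lambda_n v_n \to 0$ in $\gamma$ (for any balanced $\gamma$-neighbourhood $U$ absorbing $V$ via $V \subseteq \mu U$, one has $\lambda_n v_n \in U$ as soon as $\abs{\lambda_n \mu} \le 1$). By Proposition \ref{prop:saks_space_conv}, $\gamma$-convergent sequences are $\norm{\cdot}$-bounded, contradicting $\norm{\lambda_n v_n} = \sqrt{\norm{v_n}} \to \infty$.

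Part 2 then follows almost immediately. If $V$ is $\gamma$-compact then it is $\gamma$-bounded and hence $\norm{\cdot}$-bounded by part 1, so $V \subseteq M B_{\norm{\cdot}}$ for some $M$; since $\gamma$ and $\tau$ agree on $M B_{\norm{\cdot}}$, the set $V$ is also $\tau$-compact. Conversely, if $V$ is $\norm{\cdot}$-bounded and $\tau$-compact then the same agreement of topologies on $M B_{\norm{\cdot}}$ upgrades $\tau$-compactness to $\gamma$-compactness. The pre-compact case is handled identically: pre-compactness is a property of the induced uniform structure on the set in question, and the $\gamma$ and $\tau$ uniformities coincide on each dilate $M B_{\norm{\cdot}}$.

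The main obstacle is the ``$\norm{\cdot}$-bounded $\Rightarrow$ $\gamma$-bounded'' direction of part 1, which depends crucially on the layered dyadic structure of the basis \eqref{eq:ss_neighbourhood_basis}: one has to choose $n$ high enough that the scale $2^{n-1}$ accommodates the norm bound $M$, and then absorb the dilated ball into $U_n$ using $\tau$-boundedness of $B_{\norm{\cdot}}$. Once this direction is established, the remaining arguments are essentially formal consequences of Proposition \ref{prop:saks_space_conv} and the defining agreement of $\gamma$ and $\tau$ on $B_{\norm{\cdot}}$.
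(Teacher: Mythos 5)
Your argument is correct, but there is nothing in the paper to compare it to: the paper does not prove Proposition \ref{prop:ss_bounded} at all, it simply quotes it from \cite[Propositions 1.11, 1.12]{cooper2011saks} (the appendix only proves the new Saks-space results, not the cited ones). Taken on its own terms, your route is sound. The absorption argument for ``$\norm{\cdot}$-bounded $\Rightarrow$ $\gamma$-bounded'' via the dyadic basis \eqref{eq:ss_neighbourhood_basis} is exactly right, and the converse via a null scalar sequence together with Proposition \ref{prop:saks_space_conv} is legitimate, since that proposition is stated independently earlier in the paper. Two small points deserve a line each. First, in part 2 you assert that the $\gamma$- and $\tau$-\emph{uniformities} coincide on $M B_{\norm{\cdot}}$; agreement of topologies on a subset does not in general transfer to the induced uniformities, but here it does because differences of points of $M B_{\norm{\cdot}}$ lie in $2M B_{\norm{\cdot}}$, so the agreement of the two neighbourhood filters of $0$ relative to $2M B_{\norm{\cdot}}$ (which you do have, by your dilation argument) upgrades every trace $\gamma$-entourage on $M B_{\norm{\cdot}}$ to a trace $\tau$-entourage; this half-sentence should be said. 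Second, if ``pre-compact'' is read as ``relatively compact'' rather than ``totally bounded'', the converse direction also needs the Saks-space axiom that $B_{\norm{\cdot}}$ is $\tau$-closed, so that the $\tau$-closure of a norm-bounded set stays inside $M B_{\norm{\cdot}}$ where the topologies agree; under the totally-bounded reading your uniformity argument suffices as written. With those clarifications the proof is complete.
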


\begin{proposition}[{\cite[Corollary 1.6]{cooper2011saks}}]\label{prop:saks_space_weak_top}
  If $(X, \norm{\cdot}, \tau)$ and $(X, \norm{\cdot}, \tau')$ are Saks spaces then $\gamma[\norm{\cdot}, \tau]$ and  $\gamma[\norm{\cdot}, \tau']$ are equivalent if and only if $\tau$ and $\tau'$ are equivalent on $B_{\norm{\cdot}}$.
\end{proposition}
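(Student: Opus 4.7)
My plan is to prove both directions essentially from the universal property built into the definition of $\gamma[\norm{\cdot},\tau]$, namely that it is the \emph{finest} linear topology coinciding with $\tau$ on $B_{\norm{\cdot}}$. For this to give any leverage, the first preliminary observation I need is that $\gamma[\norm{\cdot},\tau]$ really does coincide with $\tau$ on $B_{\norm{\cdot}}$ (rather than being merely $\tau$-coarser there). This is not purely formal, but it follows from the explicit neighbourhood basis \eqref{eq:ss_neighbourhood_basis}: the first stage $U_1 \cap B_{\norm{\cdot}}$ guarantees that any $\tau$-open $U_1$ induces a $\gamma$-neighbourhood of $0$ whose trace on $B_{\norm{\cdot}}$ is the $\tau$-neighbourhood $U_1 \cap B_{\norm{\cdot}}$, while conversely any set of the form \eqref{eq:ss_neighbourhood_basis} is, when cut by $B_{\norm{\cdot}}$, the $\tau$-neighbourhood $U_1 \cap B_{\norm{\cdot}}$ (the higher stages live outside $B_{\norm{\cdot}}$ by a rescaling argument, since $B_{\norm{\cdot}} \cap 2^{n-1} B_{\norm{\cdot}}$-translates accumulate outside). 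After this, both directions are essentially automatic.

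For the forward implication $(\Rightarrow)$, suppose $\gamma[\norm{\cdot},\tau] = \gamma[\norm{\cdot},\tau']$. Restricting both topologies to $B_{\norm{\cdot}}$ gives identical subspace topologies. But by the observation above, the left-hand side restricts to $\tau|_{B_{\norm{\cdot}}}$ and the right-hand side to $\tau'|_{B_{\norm{\cdot}}}$, so $\tau$ and $\tau'$ agree on $B_{\norm{\cdot}}$. This direction is essentially just unwinding the definition.

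For the converse $(\Leftarrow)$, suppose $\tau|_{B_{\norm{\cdot}}} = \tau'|_{B_{\norm{\cdot}}}$. Then since $\gamma[\norm{\cdot},\tau]$ is a linear topology coinciding with $\tau$ on $B_{\norm{\cdot}}$, it also coincides with $\tau'$ on $B_{\norm{\cdot}}$ by hypothesis. The maximality clause in the definition of $\gamma[\norm{\cdot},\tau']$ then forces $\gamma[\norm{\cdot},\tau] \subseteq \gamma[\norm{\cdot},\tau']$ (in the sense of linear topologies: the former is coarser than or equal to the latter). Interchanging the roles of $\tau$ and $\tau'$ gives the reverse inclusion, hence equality.

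The only real content, and the step I expect to be the main obstacle, is the preliminary verification that $\gamma[\norm{\cdot},\tau]$ truly does induce $\tau$ (not something strictly coarser) on $B_{\norm{\cdot}}$. Once this is in hand, the proposition is essentially formal. An alternative route would bypass this by directly comparing the neighbourhood bases \eqref{eq:ss_neighbourhood_basis} built from $\tau$ and $\tau'$: given an absolutely convex $\tau$-open neighbourhood $U$ of $0$ one would need to cover the sets $U \cap 2^{n-1}B_{\norm{\cdot}}$ by traces of absolutely convex $\tau'$-open neighbourhoods, which is possible because $\tau$ and $\tau'$ agree on each $2^{n-1}B_{\norm{\cdot}}$ by homogeneity, but this feels more cumbersome than simply invoking the universal property.
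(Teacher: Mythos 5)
Your overall argument is correct and is essentially the standard universal-property proof. However, the ``preliminary observation'' you regard as the main obstacle requires no verification for this proposition: by Definition~\ref{def:saks_space}, $\gamma[\norm{\cdot},\tau]$ is \emph{by definition} the finest linear topology on $X$ that coincides with $\tau$ on $B_{\norm{\cdot}}$, so the coincidence on $B_{\norm{\cdot}}$ is part of what the symbol $\gamma[\norm{\cdot},\tau]$ means. Your two directions then go through exactly as written, using only this defining coincidence and the maximality clause. The genuinely nontrivial issue is that such a finest topology exists at all---that the definition is non-vacuous---and that is what the neighbourhood basis \eqref{eq:ss_neighbourhood_basis} together with \cite[Proposition 1.5]{cooper2011saks} settles. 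It is a prerequisite for Definition~\ref{def:saks_space} to be meaningful, but it plays no role in the present corollary.

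The verification you sketch for the coincidence is, in fact, incorrect, so it is fortunate that it is superfluous. The assertion that a basic $\gamma$-neighbourhood $V=\bigcup_{n}\bigl(U_1\cap B_{\norm{\cdot}}+\cdots+U_n\cap 2^{n-1}B_{\norm{\cdot}}\bigr)$ satisfies $V\cap B_{\norm{\cdot}}=U_1\cap B_{\norm{\cdot}}$ is false: the higher summands do contribute new points of $B_{\norm{\cdot}}$. On $X=\R$ with the usual topology, taking $U_1=U_2=(-\tfrac{1}{2},\tfrac{1}{2})$ gives $U_1\cap B_{\norm{\cdot}}+U_2\cap 2B_{\norm{\cdot}}=(-1,1)$, whose trace on $B_{\norm{\cdot}}=[-1,1]$ is $(-1,1)$, strictly larger than $U_1\cap B_{\norm{\cdot}}=(-\tfrac{1}{2},\tfrac{1}{2})$. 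Only the one-sided containment $U_1\cap B_{\norm{\cdot}}\subseteq V$ holds, and the ``rescaling'' hand-wave does not supply the control needed in the other direction. Since this claim is not actually invoked in either of your two directions, the proof stands as a whole, but the parenthetical justification of the preliminary observation should simply be deleted and replaced by an appeal to the definition.
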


Having described the basic theory of Saks spaces, we mention a few more concrete examples.

\begin{example}\label{example:bv}
  Let $P$ denote the set of strictly increasing finite sequences in $S^1$.
  Fix $p \in [1, \infty)$. For $f \in L^p(S^1)$ the $p$-variation of $f$ to be
  \begin{equation*}
    \Var_p(f) = \inf \left \{ \left(\sup_{\{x_i\}_{i=0}^n \in P} \sum_{i=1}^n \abs{g(x_{i}) - g(x_{i-1})}^{p} \right)^{1/p} : g = f \text{ a.e.}\right\}.
  \end{equation*}
  The set of functions of bounded $p$-variation on $S^1$ is
  \begin{equation*}
    \BV_p(S^1) = \{ f \in L^1(S^1) : \Var_p(f) < \infty \}.
  \end{equation*}
  Functions bounded $p$-variation have been used to study the statistical properties of piecwise expanding dynamical systems: for $p = 1$ see \cite[Chapter 3]{baladi1997correlation} or \cite{lawsOfChaos}, while for general $p$ see \cite{keller1985generalized}.
  On $\BV_p(S^1)$ the map $f \mapsto \Var_p(f)$ is a seminorm and lower semicontinuous with respect to $\norm{\cdot}_{L^p}$.
  It follows that $\BV_p(S^1)$ is a Banach space when endowed with the norm $\norm{\cdot}_{\BV_p} = \norm{\cdot}_{L^p} + \Var_p(\cdot)$, and is also a Saks space with structure $(\BV_p(S^1), \norm{\cdot}_{\BV_p}, \norm{\cdot}_{L^p})$.
  Let $\E_n \in \LL(L^p(S_1))$ denote the conditional expectation operator associated to the uniform partition of $S^1$ into $n$ intervals. It is clear that $\E_n$ is a contraction on $L^p(S^1)$, and a straightforward calculation shows that the same is true for $\BV_p(S^1)$. One verifies that for every $f \in \BV_p(S^1)$ we have
  \begin{equation*}
    \norm{\E_n(f) - f}_{L^p} \le n^{-1/p}\Var_p(f),
  \end{equation*}
  and so $\E_n \to \Id$ in $\LL(\BV_p(S^1), L^p(S^1))$.
  Since each $\E_n$ has finite rank, it follows from the previous remarks that $B_{\norm{\cdot}_{\BV_p}}$ is $\norm{\cdot}_{L^p}$-compact.
  Hence $(\BV_p(S^1), \norm{\cdot}_{\BV_p}, \norm{\cdot}_{L^p})$ is a compact Saks space.
\end{example}

\begin{example}\label{example:sobolev}
  Fix $p \in [1, \infty)$. The Sobolev space $W^{1,p}(S^1)$ is defined by
  \begin{equation*}
    W^{1,p}(S^1) = \{ f \in L^p(S^1) : f' \text{ exists in the weak sense and }\norm{f'}_{L^p} < \infty \}.
  \end{equation*}
  Each $W^{1,p}(S^1)$ becomes a Banach space when equipped with the norm
  \begin{equation*}
    \norm{f}_{W^{1,p}} = \norm{f'}_{L^p} + \norm{f}_{L^p}.
  \end{equation*}
  It is well-known that $W^{1,1}(S^1)$ coincides with the set of absolutely continuous functions on $S^1$, and that $W^{1,p}(S^1) \subseteq W^{1,1}(S^1)$. Hence every $f \in W^{1,p}(S^1)$ is Riemann integrable. A short calculation then shows that
  \begin{equation*}
    \Var_p(f) = \norm{f'}_{L^p}
  \end{equation*}
  and so $\norm{f}_{\BV} = \norm{f}_{W^{1,p}}$ for every $f \in W^{1,p}(S^1)$. In view of Example \ref{example:bv} we may conclude that $(W^{1,p}(S^1), \norm{\cdot}_{W^{1,p}}, \norm{\cdot}_{L^p})$ is a pre-compact Saks space.
  Moreover, we claim that the Saks space completion of $(W^{1,p}(S^1), \norm{\cdot}_{W^{1,p}}, \norm{\cdot}_{L^p})$ is equal to $(\BV_p(S^1), \norm{\cdot}_{\BV_p}, \norm{\cdot}_{L^p})$.
  The continuous inclusion of the Saks space completion of $W^{1,p}(S^1)$ into $\BV_p(S^1)$ is clear.
  For the other direction, one fixes a sequence $\{q_\epsilon\}_{\epsilon > 0} \subseteq \mathcal{C}^\infty(S^1)$ that approximates the identity. For each $f \in \BV_p(S^1)$ the sequence $\{q_\epsilon \ast f\}_{\epsilon > 0}$ lies in $W^{1,p}(S^1)$, satisfies $\norm{(q_\epsilon \ast f) - f}_{L^p} \to 0$, and by Young's inequality we have $\sup_{\epsilon > 0} \norm{q_\epsilon \ast f}_{W^{1,p}} \le \norm{f}_{W^{1,p}}$.
  Thus $f$ is in the Saks space completion of $W^{1,p}(S^1)$, which confirms our earlier claim.
\end{example}

An obvious question at this stage is whether $\gamma[\norm{\cdot}, \tau]$ is metrisable, since a positive answer would reduce the study of Saks spaces to that of classically studied objects. For interesting examples, however, this is never the case.
\begin{proposition}[{\cite[Proposition 1.14]{cooper2011saks}}]\label{prop:metrisable_saks}
  If $\gamma[\norm{\cdot}, \tau]$ is metrisable then $\tau$ and $\norm{\cdot}$ are equivalent, in which case $\gamma[\norm{\cdot}, \tau]$ and $\norm{\cdot}$ are equivalent too.ß
\end{proposition}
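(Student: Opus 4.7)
The plan is to use metrisability of $\gamma$, together with Proposition \ref{prop:saks_space_conv}, to force the $\tau$- and $\norm{\cdot}$-topologies to coincide on every $\norm{\cdot}$-bounded set, and then to leverage local convexity of $\tau$ to promote this agreement from bounded sets to all of $X$.

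The first ingredient is the straightforward inequality $\tau \le \norm{\cdot}$: since $B_{\norm{\cdot}}$ is $\tau$-bounded by the Saks space definition, any $\tau$-neighborhood $U$ of $0$ absorbs $B_{\norm{\cdot}}$, hence contains some $\epsilon B_{\norm{\cdot}}$ and so is a $\norm{\cdot}$-neighborhood of $0$.

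The crux of the argument is to establish $\tau|_{rB_{\norm{\cdot}}} = \norm{\cdot}|_{rB_{\norm{\cdot}}}$ for every $r>0$. Here I would invoke the classical fact that in any metrisable TVS, a null sequence $(x_n)$ admits scalars $\alpha_n \to \infty$ with $\alpha_n x_n$ still null. Applied to a $\gamma$-null $(x_n)$, Proposition \ref{prop:saks_space_conv} then forces $\alpha_n x_n$ to be $\norm{\cdot}$-bounded, giving $\norm{x_n} = O(\alpha_n^{-1}) \to 0$. Thus on any $rB_{\norm{\cdot}}$, the $\tau$- and $\norm{\cdot}$-null sequences coincide; since $\tau|_{rB_{\norm{\cdot}}} = \gamma|_{rB_{\norm{\cdot}}}$ is metrisable as a subspace of the metrisable $\gamma$, and $\norm{\cdot}|_{rB_{\norm{\cdot}}}$ is likewise metrisable, agreement of null sequences upgrades to agreement of topologies on $rB_{\norm{\cdot}}$.

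For the promotion step, I would fix $\epsilon > 0$ and show that $V_\epsilon := \{x : \norm{x} < \epsilon\}$ is a $\tau$-neighborhood of $0$. Setting $r = 2\epsilon$, the previous step tells us $V_\epsilon$ is a $\tau|_{rB_{\norm{\cdot}}}$-neighborhood of $0$ in $rB_{\norm{\cdot}}$, so using local convexity of $\tau$ I would extract an absolutely convex $\tau$-neighborhood $W$ of $0$ in $X$ with $W \cap rB_{\norm{\cdot}} \subseteq V_\epsilon$. The key claim is then $W \subseteq V_\epsilon$: if some $x \in W$ satisfied $\norm{x} \ge r$, then absolute convexity would place $y := (r/(2\norm{x}))x \in W$ with $\norm{y} = r/2 = \epsilon$, so $y \in W \cap rB_{\norm{\cdot}}$ yet $y \notin V_\epsilon$, a contradiction. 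This yields $\norm{\cdot} \le \tau$ and, combined with $\tau \le \norm{\cdot}$, gives $\tau = \norm{\cdot}$. The parenthetical claim that $\gamma$ and $\norm{\cdot}$ then coincide is immediate, since $\gamma[\norm{\cdot}, \norm{\cdot}]$ is just the $\norm{\cdot}$-topology.

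The main conceptual hurdle, as I see it, is spotting that metrisability of $\gamma$ should be exploited at the level of sequential convergence via the amplification trick, rather than wrestling directly with the rather unwieldy neighborhood basis in \eqref{eq:ss_neighbourhood_basis}; once this sequential reformulation is in hand, the remaining convexity argument is routine.
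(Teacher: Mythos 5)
Your argument is correct. The paper itself does not prove this proposition but simply cites \cite[Proposition 1.14]{cooper2011saks}, so there is no in-paper proof to compare against; I will therefore just assess what you wrote. The two substantive steps both check out: the amplification trick (choosing $\alpha_n\to\infty$ with $\alpha_n x_n$ still $\gamma$-null, then reading off norm-nullity of $x_n$ from Proposition~\ref{prop:saks_space_conv}) correctly yields that $\gamma$ and $\norm{\cdot}$ share the same null sequences, and since both topologies are metrisable (and $\gamma\le\norm{\cdot}$ holds in general because any $\gamma$-basic neighbourhood already contains $U_1\cap B_{\norm{\cdot}}$, a norm-neighbourhood of $0$), this gives $\gamma=\norm{\cdot}$ and hence $\tau=\norm{\cdot}$ on bounded sets. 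The convexity promotion is also sound; I would only note that you do not actually need the generality of all $r>0$ — since scalar multiplication is a $\tau$-homeomorphism, it suffices to show a single $V_\epsilon$ is a $\tau$-neighbourhood of $0$, so one may take $\epsilon=1/2$, $r=1$, and rely only on the agreement of $\tau$ and $\gamma$ on the unit ball itself (which is literally the definition), sidestepping the implicit claim that $\gamma$ coincides with $\tau$ on $rB_{\norm{\cdot}}$ for $r>1$ (true, but requiring an extra argument from the neighbourhood basis \eqref{eq:ss_neighbourhood_basis}).
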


Despite much of the theory holding when $\tau$ is a general locally convex topology, we now specialise to the case where $\tau$ is induced by a norm $\wnorm{\cdot}$. In this case we call $(X, \norm{\cdot}, \wnorm{\cdot})$ a \emph{normed} Saks space\footnote{This terminology should cause no confusion in view of Proposition \ref{prop:metrisable_saks}}.
Normed Saks spaces are also known as two-norm spaces, due to the pioneering papers by Alexiewicz and Semadeni (see e.g. \cite{alexiewicz1953two, alexiewicz1958linear,alexiewicz1959two}).
If Saks space $(X, \norm{\cdot}, \wnorm{\cdot})$ is a normed Saks space such that $\wnorm{\cdot} \le \norm{\cdot}$ then we say $(X, \norm{\cdot}, \wnorm{\cdot})$ is \emph{normal}.
Since $B_{\norm{\cdot}}$ is $\wnorm{\cdot}$-bounded we can make any normed Saks space normal after possibly rescaling either $\wnorm{\cdot}$ or $\norm{\cdot}$.

We now turn our attention to continuous linear maps between normed Saks spaces\footnote{Our approach here is based on \cite[Chapter 1, Section 3.11]{cooper1987saks} rather than on \cite[Chapter 1, Section 3.16]{cooper2011saks}, which is the equivalent section in the second edition.}.
Let $(X_{i},\norm{\cdot}_i,\wnorm{\cdot}_i)$, $i \in \{1,2\}$, be Saks spaces.
We denote by the space of continuous linear operators from $X_1$ to $X_2$ by $\LL_S(X_1,X_2)$. Let $\norm{\cdot}$ denote the strong operator norm:
\begin{equation*}
  \norm{A} = \sup_{\norm{f}_1 = 1} \norm{A f}_2,
\end{equation*}
and let $\tnorm{\cdot}$ be the triple norm:
\begin{equation*}
  \tnorm{A} = \sup_{\norm{f}_1 = 1} \wnorm{Af}_2.
\end{equation*}

\begin{proposition}\label{prop:linear_ops_saks}
  $\LL_S(X_1,X_2)$ is a Saks space with the structure $(\LL_S(X_1,X_2), \norm{\cdot}, \tnorm{\cdot})$.
\end{proposition}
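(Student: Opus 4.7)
The plan is to verify Definition \ref{def:saks_space} directly for the triple $(\LL_S(X_1,X_2), \norm{\cdot}, \tnorm{\cdot})$. Since $\tnorm{\cdot}$ is a norm (hence its induced topology is Hausdorff and locally convex), the only substantive points are: (i) $\tnorm{\cdot}$ is finite and actually a norm on $\LL_S(X_1,X_2)$, (ii) $B_{\norm{\cdot}}$ is $\tnorm{\cdot}$-bounded, and (iii) $B_{\norm{\cdot}}$ is $\tnorm{\cdot}$-closed (condition (1) of Lemma \ref{lemma:equiv}). All three will reduce to Saks space properties of the target space $(X_2,\norm{\cdot}_2,\wnorm{\cdot}_2)$.

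First, since $(X_2,\norm{\cdot}_2,\wnorm{\cdot}_2)$ is a Saks space, the ball $B_{\norm{\cdot}_2}$ is $\wnorm{\cdot}_2$-bounded, so there exists $C > 0$ with $\wnorm{f}_2 \le C\norm{f}_2$ for all $f \in X_2$. Consequently $\tnorm{A} \le C\norm{A}$ for every $A \in \LL_S(X_1,X_2)$, which simultaneously shows that $\tnorm{\cdot}$ is finite-valued and that $B_{\norm{\cdot}}$ is $\tnorm{\cdot}$-bounded. That $\tnorm{\cdot}$ is a norm is then immediate: positive homogeneity and subadditivity follow from the corresponding properties of $\wnorm{\cdot}_2$, while definiteness follows because $\wnorm{\cdot}_2$ is itself a norm (normed Saks space), so $A \ne 0$ yields some $f \in X_1$ with $Af \ne 0$, and then $\wnorm{Af}_2 > 0$.

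For the $\tnorm{\cdot}$-closedness of $B_{\norm{\cdot}}$, suppose $(A_n)_{n \in \Z^+} \subseteq B_{\norm{\cdot}}$ with $\tnorm{A_n - A} \to 0$ for some $A \in \LL_S(X_1,X_2)$. Fix any $f \in X_1$ with $\norm{f}_1 \le 1$. Then $\wnorm{A_n f - Af}_2 \le \tnorm{A_n - A} \to 0$, and $\norm{A_n f}_2 \le 1$ for every $n$. Because $(X_2,\norm{\cdot}_2,\wnorm{\cdot}_2)$ is a Saks space, condition (1) of Lemma \ref{lemma:equiv} applied to $X_2$ says $B_{\norm{\cdot}_2}$ is $\wnorm{\cdot}_2$-closed, so $Af \in B_{\norm{\cdot}_2}$, i.e., $\norm{Af}_2 \le 1$. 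Taking the supremum over $f$ gives $\norm{A} \le 1$, so $A \in B_{\norm{\cdot}}$.

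I do not anticipate any genuine obstacle; the proof is a direct transfer along $A \mapsto Af$ of the Saks space axioms of $X_2$ to the operator space, and the only point that requires a moment's thought is the closedness argument, which uses pointwise (on each $f \in B_{X_1}$) the closedness of $B_{\norm{\cdot}_2}$ in the weak norm. No appeal to the Saks space structure of $X_1$ is needed beyond its norm, which is consistent with the fact that the strong and triple norms on $\LL_S(X_1,X_2)$ are defined purely via $B_{\norm{\cdot}_1}$.
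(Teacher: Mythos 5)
Your proof follows essentially the same route as the paper's, with a marginally cleaner version of the closedness argument (you show directly that $\norm{Af}_2 \le 1$ for every $f \in B_{X_1}$ and take the supremum, rather than choosing an approximately extremal $f_\epsilon$ and letting $\epsilon \to 0$).

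There is one small but nontrivial step you have elided: before any of the three conditions can be checked, you need to know that $\norm{\cdot}$ is actually finite-valued on $\LL_S(X_1,X_2)$, i.e.\ that $\LL_S(X_1,X_2) \subseteq \LL(X_1,X_2)$. By definition, $\LL_S(X_1,X_2)$ consists of the linear maps continuous for the Saks space topologies $\gamma[\norm{\cdot}_i,\wnorm{\cdot}_i]$, and it is not immediate that such a map is bounded for the strong norms. Your argument implicitly uses $\norm{A}$ throughout (in $\tnorm{A} \le C\norm{A}$, in the definition of $B_{\norm{\cdot}}$, and in the final supremum), so this fact has to be supplied. The paper does so by invoking Lemma~\ref{lemma:saks_boundedness}, which in turn uses that a $\gamma$-continuous map sends $\gamma$-bounded sets to $\gamma$-bounded sets and then Proposition~\ref{prop:ss_bounded} to translate back to norm-boundedness. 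Once that is in place, everything else in your proof is correct and parallel to the paper's argument.
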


The following two result are used to prove Proposition \ref{prop:linear_ops_saks}.

\begin{proposition}[{\cite[Proposition 1.9]{cooper2011saks}}]
  Suppose that $B_{\norm{\cdot}}$ is $\tau$-metrisable, then a linear mapping from $(X_1, \norm{\cdot}, \tau)$ into a topological vector space $X_2$ is continuous if and only if it is sequentially continuous.
\end{proposition}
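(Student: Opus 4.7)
The forward direction is trivial since $(X_1, \gamma[\norm{\cdot}, \tau])$ is a topological vector space and continuous maps between such spaces are always sequentially continuous. So the plan is to establish the reverse direction: assuming $T : X_1 \to X_2$ is linear and sequentially $\gamma$-continuous, show that $T$ is $\gamma$-continuous. By linearity it suffices to establish continuity at $0$.

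The first key step is to upgrade sequential continuity to topological continuity on each $\norm{\cdot}$-ball. Concretely, I would prove that for every $n \in \Z^+$, the restriction $\restr{T}{nB_{\norm{\cdot}}} : (nB_{\norm{\cdot}}, \tau) \to X_2$ is continuous. Since $nB_{\norm{\cdot}}$ is a scaled copy of $B_{\norm{\cdot}}$ it is $\tau$-metrisable, so it suffices to verify sequential $\tau$-continuity. Given a $\tau$-convergent sequence $(x_k) \subseteq nB_{\norm{\cdot}}$ with $\tau$-limit $x \in nB_{\norm{\cdot}}$, Proposition \ref{prop:saks_space_conv} yields that $(x_k)$ is $\gamma$-convergent to $x$, whence sequential $\gamma$-continuity of $T$ gives $Tx_k \to Tx$ in $X_2$.

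The second step is to convert these restricted continuities into $\gamma$-continuity at $0$ via the explicit neighbourhood basis \eqref{eq:ss_neighbourhood_basis}. Given any neighbourhood $V$ of $0$ in $X_2$, choose balanced neighbourhoods $V_n$ of $0$ in $X_2$ with $V_1 + V_2 + \cdots + V_n \subseteq V$ for all $n$ (possible in any TVS by iterating $V_{n+1} + V_{n+1} \subseteq V_n$). For each $n$, the continuity of $\restr{T}{2^{n-1}B_{\norm{\cdot}}}$ established in step one produces an absolutely convex $\tau$-open neighbourhood $U_n$ of $0$ with $T(U_n \cap 2^{n-1}B_{\norm{\cdot}}) \subseteq V_n$. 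Setting
\begin{equation*}
  U = \bigcup_{n=1}^\infty \left(U_1 \cap B_{\norm{\cdot}} + \cdots + U_n \cap 2^{n-1}B_{\norm{\cdot}}\right),
\end{equation*}
the description in \eqref{eq:ss_neighbourhood_basis} identifies $U$ as a $\gamma$-neighbourhood of $0$, and linearity of $T$ together with the inclusion $V_1 + \cdots + V_n \subseteq V$ yields $T(U) \subseteq V$, completing the proof.

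The step I expect to require the most care is step two, specifically arranging the $V_n$ and the resulting $U_n$ so that the sum $\sum_{k \le n} V_k$ remains inside $V$ uniformly in $n$; this is where one must use that $X_2$ is a topological vector space to produce the telescoping balanced neighbourhoods, while staying entirely within $X_2$'s given structure (no local convexity assumed on $X_2$). Step one, by contrast, is the clean conceptual input: the combination of $\tau$-metrisability of $B_{\norm{\cdot}}$ with Proposition \ref{prop:saks_space_conv} is exactly what converts sequential control under $\gamma$ into topological control under $\tau$ on bounded pieces, and this is precisely the feature that makes the Saks space framework behave well with respect to sequential arguments.
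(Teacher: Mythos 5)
The paper cites this proposition directly from \cite[Proposition 1.9]{cooper2011saks} and does not reproduce a proof, so there is no in-paper argument to compare against. Your reverse-direction argument is correct and is the natural route: $\tau$-metrisability of $B_{\norm{\cdot}}$ together with Proposition \ref{prop:saks_space_conv} does upgrade sequential $\gamma$-continuity to $\tau$-continuity on each ball $nB_{\norm{\cdot}}$, and the telescoping choice of balanced neighbourhoods $V_n$ in $X_2$ combined with the explicit basis \eqref{eq:ss_neighbourhood_basis} then yields $\gamma$-continuity at $0$, from which linearity gives continuity everywhere. One small implicit point worth surfacing is that $B_{\norm{\cdot}}$, and hence each $nB_{\norm{\cdot}}$, is $\tau$-closed by the Saks space definition (Lemma \ref{lemma:equiv}), which is what guarantees a $\tau$-limit of a sequence in $nB_{\norm{\cdot}}$ actually lies in $nB_{\norm{\cdot}}$, so that sequential $\tau$-continuity is the correct thing to verify in step one; and local convexity of $\tau$ is what lets you take the $U_n$ absolutely convex so that \eqref{eq:ss_neighbourhood_basis} applies.
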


\begin{lemma}\label{lemma:saks_boundedness}
  If $(X_i, \norm{\cdot}_i, \wnorm{\cdot}_i)$, $i \in \{ 1,2\}$, are Saks spaces then $\LL_S(X_1,X_2) \subseteq \LL(X_1,X_2)$.
\end{lemma}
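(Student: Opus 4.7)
The plan is to reduce the statement to the general fact that a continuous linear map between topological vector spaces must send bounded sets to bounded sets, and then apply Proposition \ref{prop:ss_bounded}, which identifies $\gamma$-boundedness with $\norm{\cdot}$-boundedness.

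Concretely, let $A \in \LL_S(X_1, X_2)$, meaning $A$ is linear and continuous from $(X_1, \gamma[\norm{\cdot}_1, \wnorm{\cdot}_1])$ to $(X_2, \gamma[\norm{\cdot}_2, \wnorm{\cdot}_2])$. I would first observe that $B_{\norm{\cdot}_1}$ is $\gamma[\norm{\cdot}_1, \wnorm{\cdot}_1]$-bounded, which is immediate from Proposition \ref{prop:ss_bounded}(1) since it is trivially $\norm{\cdot}_1$-bounded. Next I would invoke the standard fact (which is a short unwinding of the definition of a $\tau$-bounded set as one absorbed by every neighbourhood of $0$, together with continuity of $A$ at $0$) that $A$ maps $\gamma[\norm{\cdot}_1, \wnorm{\cdot}_1]$-bounded sets to $\gamma[\norm{\cdot}_2, \wnorm{\cdot}_2]$-bounded sets. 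Applying Proposition \ref{prop:ss_bounded}(1) in $X_2$, we conclude that $A(B_{\norm{\cdot}_1})$ is $\norm{\cdot}_2$-bounded, which by definition of $\norm{\cdot}$ gives $\norm{A} = \sup_{\norm{f}_1 \le 1} \norm{Af}_2 < \infty$, hence $A \in \LL(X_1, X_2)$.

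There is no real obstacle here: the content is entirely packaged in Proposition \ref{prop:ss_bounded}, which converts the intrinsic $\gamma$-boundedness of images of bounded sets under continuous linear maps into the desired $\norm{\cdot}$-bound. If one wanted to avoid even implicit appeal to the bounded-sets-to-bounded-sets lemma, one could alternatively phrase the argument using the explicit neighbourhood basis \eqref{eq:ss_neighbourhood_basis}: given that $\wnorm{\cdot}_2 \le \norm{\cdot}_2$ (after the usual normalisation) provides a $\gamma[\norm{\cdot}_2, \wnorm{\cdot}_2]$-neighbourhood of $0$ of the form $\{\wnorm{g}_2 < 1\}$, one pulls this back through $A$ to obtain an absorbing $\gamma[\norm{\cdot}_1, \wnorm{\cdot}_1]$-neighbourhood of $0$ in $X_1$, and then uses the fact that $B_{\norm{\cdot}_1}$ is absorbed by such neighbourhoods to control $A(B_{\norm{\cdot}_1})$. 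Either route is short, and the lemma is really a compatibility check between the two boundedness notions rather than a deep result.
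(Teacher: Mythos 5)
Your proof is correct and follows essentially the same route as the paper's: both arguments reduce the claim to the fact that continuous linear maps between topological vector spaces send bounded sets to bounded sets, and then use Proposition \ref{prop:ss_bounded} to convert $\gamma[\norm{\cdot}_2, \wnorm{\cdot}_2]$-boundedness of $A(B_{\norm{\cdot}_1})$ into $\norm{\cdot}_2$-boundedness.
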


We note that $\LL_S(X_1,X_2)$ is not necessarily equal to $\LL(X_1, X_2)$, although it is interesting to observe that the proof of Proposition \ref{prop:linear_ops_saks} also implies that $(\LL(X_1, X_2), \norm{\cdot}, \tnorm{\cdot})$ is a Saks space.
While it is clear that $\LL(X_1, X_2) \cap \LL((X_1, \wnorm{\cdot}_1), (X_2, \wnorm{\cdot}_2)) \subseteq \LL_S(X_1,X_2)$,
it is desirable to have a more quantitative characterisation of $\LL_S(X_1,X_2)$, similar to the characterisation of continuous operators between two Banach spaces as bounded operators.
The following proposition gives such a characterisation, and may even be used to characterise the equicontinuous families of operators in $\LL_S(X_1,X_2)$.

\begin{proposition}\label{prop:saks_equicont}
  Suppose $(X_i, \norm{\cdot}_i, \wnorm{\cdot}_i)$, $i \in \{1,2\}$ are Saks spaces, that $\mathcal{A}$ is an index set, and that for each $\alpha \in \mathcal{A}$ there exists a linear map $A_\alpha : X_1 \to X_2$.
  Then $\{A_\alpha\}_{\alpha \in \mathcal{A}}$ is an equicontinuous subset of $\LL_{S}(X_1, X_2)$ if and only if $\{A_\alpha\}_{\alpha \in \mathcal{A}}$ is an equicontinuous subset of $\LL(X_1, X_2)$ and for every $\eta > 0$ there exists $C_\eta > 0$ such that for every $\alpha \in \mathcal{A}$ and $f \in X_1$ we have
  \begin{equation}\label{eq:saks_equicont_0}
    \wnorm{A_\alpha f}_{2} \le \max\{\eta \norm{f}_1, C_\eta \wnorm{f}_1\}.
  \end{equation}
\end{proposition}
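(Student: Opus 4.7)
The plan is to reduce the proposition to a characterization of $\gamma$-continuous seminorms on a normed Saks space, and then translate equicontinuity into a statement about such seminorms. The key intermediate lemma is: a seminorm $p$ on $(X,\norm{\cdot},\wnorm{\cdot})$ is $\gamma$-continuous if and only if for every $\eta > 0$ there exists $C_\eta > 0$ with $p(f) \le \max\{\eta \norm{f}, C_\eta \wnorm{f}\}$ for all $f \in X$. For the forward implication, use that $\{p \le 1\}$ contains a basic $\gamma$-neighborhood of the form \eqref{eq:ss_neighbourhood_basis}, in which each individual summand $\eta_i B_{\wnorm{\cdot}} \cap 2^{i-1} B_{\norm{\cdot}}$ lies inside $\{p \le 1\}$; a rescaling argument then yields $p \le \max\{\wnorm{\cdot}/\eta_i, \norm{\cdot}/2^{i-1}\}$, and taking $i$ large makes the norm coefficient arbitrarily small. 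For the converse, construct a basic neighborhood inside $\{p \le 1\}$ by choosing the parameters $\eta_i$ to balance the growing $2^{i-1}$ factors against the max bound, applied with $\eta^{(i)} = 2^{-2i}$ so that both contributions sum to a geometric series.

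For the forward direction of the proposition, assume $\{A_\alpha\}$ is equicontinuous in $\LL_S(X_1,X_2)$. Norm-equicontinuity follows because $B_{X_1}$ is $\gamma_1$-bounded by Proposition \ref{prop:ss_bounded}, and any equicontinuous family maps bounded sets to uniformly bounded sets (pick a $\gamma_1$-neighborhood $U$ with $A_\alpha(U) \subseteq V$ for a chosen $\gamma_2$-neighborhood $V$, then absorb $B_{X_1}$ into $\lambda U$); hence $\bigcup_\alpha A_\alpha(B_{X_1})$ is $\gamma_2$-bounded, and Proposition \ref{prop:ss_bounded} again gives $\norm{\cdot}_2$-boundedness. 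For \eqref{eq:saks_equicont_0}, note that $\wnorm{\cdot}_2$ is a $\gamma_2$-continuous seminorm, so equicontinuity forces the sublevel sets of $\sup_\alpha \wnorm{A_\alpha \cdot}_2$ to be $\gamma_1$-neighborhoods of 0, meaning $\sup_\alpha \wnorm{A_\alpha \cdot}_2$ is $\gamma_1$-continuous; the intermediate lemma then delivers the required $C_\eta$.

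For the reverse direction, suppose $M := \sup_\alpha \norm{A_\alpha} < \infty$ and \eqref{eq:saks_equicont_0} holds. Since the $\gamma_2$-topology is generated by its continuous seminorms, it suffices to show that for every $\gamma_2$-continuous seminorm $p$ the seminorm $\sup_\alpha p(A_\alpha \cdot)$ is $\gamma_1$-continuous. By the intermediate lemma we may bound $p(g) \le \max\{\epsilon \norm{g}_2, C_\epsilon \wnorm{g}_2\}$ for any $\epsilon > 0$, so
\begin{equation*}
  p(A_\alpha f) \le \max\{\epsilon M \norm{f}_1, C_\epsilon \wnorm{A_\alpha f}_2\} \le \max\{\epsilon M \norm{f}_1, C_\epsilon \max\{\eta \norm{f}_1, C_\eta \wnorm{f}_1\}\}.
\end{equation*}
Given a desired tolerance $\epsilon_0$, take $\epsilon = \epsilon_0/M$ and $\eta = \epsilon_0/C_\epsilon$; the right-hand side is then bounded by $\max\{\epsilon_0 \norm{f}_1, C_\epsilon C_\eta \wnorm{f}_1\}$, so $\sup_\alpha p \circ A_\alpha$ satisfies the max-condition and is $\gamma_1$-continuous. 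The main obstacle is the characterization of $\gamma$-continuous seminorms, which requires careful bookkeeping with the Minkowski-sum neighborhood basis \eqref{eq:ss_neighbourhood_basis}; once it is in hand, the rest of the argument is essentially mechanical.
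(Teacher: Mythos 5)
Your proof is correct and is organized differently from the paper's. The paper manipulates the explicit neighborhood basis \eqref{eq:ss_neighbourhood_basis} directly in both directions of the equivalence: in the forward direction it finds a set of the form $2^{\ell_\eta} B^{\mathrm{o}}_{\wnorm{\cdot}_1} \cap B_{\norm{\cdot}_1}$ inside the neighborhood guaranteed by equicontinuity and rescales, and in the reverse direction it hand-builds a basic neighborhood $V = \bigcup_n (\sum_{k=N+1}^{n+N} 2^{\nu_k} B^{\mathrm{o}}_{\wnorm{\cdot}_1} \cap 2^{k-N-1} B_{\norm{\cdot}_1})$ whose image under every $A_\alpha$ lands in the given $U$, shifting indices by $N$ to absorb the norm bound. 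You instead factor out a standalone lemma -- a seminorm $p$ is $\gamma$-continuous on a normed Saks space iff $p \le \max\{\eta\norm{\cdot}, C_\eta\wnorm{\cdot}\}$ for every $\eta > 0$ -- and reduce the proposition to it via the standard characterization of equicontinuity between locally convex spaces: $\{A_\alpha\}$ is equicontinuous iff $\sup_\alpha p \circ A_\alpha$ is continuous for every continuous seminorm $p$ on the codomain. This concentrates all the bookkeeping with the nested Minkowski-sum basis into one place (the lemma, proved essentially as the paper does but for a single seminorm), after which both directions of the proposition follow by chaining max-bounds and do not touch the basis again. Your route is more modular, isolates a reusable fact about $\gamma$-continuous seminorms, and makes the reverse implication noticeably cleaner than the paper's index-shifted construction; the paper's is more self-contained and avoids invoking the seminorm characterization of equicontinuity. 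The only places needing a little care that you glossed over but that are fine: (i) the open-versus-closed ball issue when rescaling into $\eta_i B^{\mathrm{o}}_{\wnorm{\cdot}} \cap 2^{i-1}B_{\norm{\cdot}}$, which costs a harmless factor of $2$ in $C_\eta$, and (ii) finiteness of $\sup_\alpha p\circ A_\alpha$, which follows from $\sup_\alpha\norm{A_\alpha}<\infty$ plus the max-bound you derive.
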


Proposition \ref{prop:saks_equicont} allows one to work with the inequality \eqref{eq:saks_equicont_0} in place of open sets of the form in \eqref{eq:ss_neighbourhood_basis}, which often leads to conceptully simpler proofs, such as that of the following proposition.

\begin{proposition}\label{prop:operator_saks_space_complete}
  Suppose $(X_i, \norm{\cdot}_i, \wnorm{\cdot}_i)$, $i \in \{1,2\}$ are Saks spaces.
  If $(X_2, \norm{\cdot}_2, \wnorm{\cdot}_2)$ is complete then $\LL_S(X_1,X_2)$ is complete.
\end{proposition}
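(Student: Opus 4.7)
The plan is to unpack what completeness means here: $(\LL_S(X_1,X_2), \norm{\cdot}, \tnorm{\cdot})$ is complete as a Saks space iff $B_{\norm{\cdot}}$ is $\tnorm{\cdot}$-complete. So I would take an arbitrary $\tnorm{\cdot}$-Cauchy sequence $(A_n)$ with $\norm{A_n} \le 1$ and aim to produce a limit $A \in \LL_S(X_1,X_2)$ with $\norm{A} \le 1$ and $\tnorm{A_n - A} \to 0$. The construction of $A$ is pointwise: for each $f \in B_{\norm{\cdot}_1}$, the estimate $\wnorm{A_n f - A_m f}_2 \le \tnorm{A_n - A_m}$ shows that $(A_n f)$ is $\wnorm{\cdot}_2$-Cauchy, and since $\norm{A_n f}_2 \le 1$ the sequence lies in $B_{\norm{\cdot}_2}$, which is $\wnorm{\cdot}_2$-complete by hypothesis. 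The $\wnorm{\cdot}_2$-limit defines $A f$; I extend by positive-homogeneity, and linearity is inherited from the $A_n$ together with joint continuity of addition and scaling in $\wnorm{\cdot}_2$. The norm bound $\norm{A} \le 1$ follows from the $\wnorm{\cdot}_2$-lower semicontinuity of $\norm{\cdot}_2$ granted by Lemma \ref{lemma:equiv}(2), since $\norm{A f}_2 \le \liminf_n \norm{A_n f}_2 \le \norm{f}_1$.

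The main obstacle, and the only step requiring anything beyond routine limit manipulations, is verifying that $A$ is itself Saks-continuous, i.e. $A \in \LL_S(X_1,X_2)$; this is nontrivial because the $\gamma$-topology is non-metrizable (Proposition \ref{prop:metrisable_saks}) and a pointwise limit of Saks-continuous maps need not be Saks-continuous in general. I would invoke the quantitative criterion Proposition \ref{prop:saks_equicont} applied to the single-element family $\{A\}$. Fixing $\eta > 0$, choose $N$ with $\tnorm{A_n - A_N} \le \eta/2$ for all $n \ge N$; passing $n \to \infty$ in $\wnorm{(A_n - A_N) f}_2 \le (\eta/2) \norm{f}_1$ using $\wnorm{\cdot}_2$-continuity of subtraction gives $\wnorm{(A - A_N) f}_2 \le (\eta/2) \norm{f}_1$ for every $f \in X_1$. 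Since $A_N \in \LL_S(X_1,X_2)$, Proposition \ref{prop:saks_equicont} applied to $\{A_N\}$ supplies $C_{\eta/2} > 0$ with $\wnorm{A_N f}_2 \le \max\{(\eta/2) \norm{f}_1, C_{\eta/2} \wnorm{f}_1\}$. Adding the two estimates and absorbing constants yields $\wnorm{A f}_2 \le \max\{\eta' \norm{f}_1, C'_{\eta'} \wnorm{f}_1\}$ for any preassigned $\eta' > 0$, so Proposition \ref{prop:saks_equicont} (in the other direction) certifies $A \in \LL_S(X_1,X_2)$.

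Finally, I would close the argument with a standard closing-the-limit estimate to upgrade pointwise convergence to $\tnorm{\cdot}$-convergence: given $\eta > 0$ choose $N$ with $\tnorm{A_n - A_m} \le \eta$ for all $n,m \ge N$; for $f \in B_{\norm{\cdot}_1}$ and $n \ge N$, take $m \to \infty$ in $\wnorm{(A_n - A_m) f}_2 \le \eta$, using that $A_m f \to A f$ in $\wnorm{\cdot}_2$, to obtain $\wnorm{(A_n - A) f}_2 \le \eta$. Taking the supremum over $f$ gives $\tnorm{A_n - A} \le \eta$ for $n \ge N$, which completes the proof.
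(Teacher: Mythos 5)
Your proposal is correct and follows essentially the same route as the paper's own proof: construct $A$ as a pointwise $\wnorm{\cdot}_2$-limit using completeness of $X_2$, obtain the $\norm{\cdot}$-bound from $\wnorm{\cdot}_2$-lower semicontinuity of $\norm{\cdot}_2$, and — the key step — verify $A \in \LL_S(X_1,X_2)$ via the two-sided quantitative criterion of Proposition \ref{prop:saks_equicont} by combining the Cauchy estimate with the Saks-continuity of a single fixed $A_N$. The only cosmetic deviation is your detour through the unit ball and positive-homogeneous extension, where the paper simply defines $Af$ for all $f \in X_1$ at once; this changes nothing of substance.
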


We finish this section with some results on compact Saks spaces.
Compact Saks spaces frequently appear in dynamical systems literature due to their use in the Ionescu-Tulcea--Marinescu Theorem, which is also known as Hennion's Theorem due to a later strengthening by Hennion.

\begin{theorem}[{A Saks space version of the Ionescu-Tulcea--Marinescu Theorem \cite{hennion1993theoreme}}]\label{thm:ITMSS}
  Suppose that $(X, \norm{\cdot}, \wnorm{\cdot})$ is a (pre-)compact Saks space, and that $A \in \LL(X)$. If there exist sequences of real numbers $\{r_n\}_{n \in \Z^+}$ and $\{R_n\}_{n \in \Z^+}$ such that for each $n \in \Z^+$ and $f \in X$ we have
  \begin{equation}\label{eq:ITMSS_1}
    \norm{A^n f} \le r_n \norm{f} + R_n \wnorm{f},
  \end{equation}
  then $\rho_{\ess}(A) \le \liminf_{n \to \infty} r_n^{1/n}$.
\end{theorem}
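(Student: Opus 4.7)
The plan is to adapt the classical proof of Hennion's theorem to this Saks space setting, using Nussbaum's formula for the essential spectral radius together with the (pre-)compactness of $B_{\norm{\cdot}}$ in $\wnorm{\cdot}$. Recall Nussbaum's formula: for $A \in \LL(X, \norm{\cdot})$,
\begin{equation*}
  \rho_{\ess}(A) = \lim_{n \to \infty} \alpha(A^n(B_{\norm{\cdot}}))^{1/n},
\end{equation*}
where $\alpha$ denotes the Kuratowski measure of non-compactness with respect to $\norm{\cdot}$. Since $A$ is a bounded operator on the Banach space $(X, \norm{\cdot})$, this formula applies, so it suffices to control $\alpha(A^n(B_{\norm{\cdot}}))$.

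The key step is to exploit the hypothesis that $B_{\norm{\cdot}}$ is $\wnorm{\cdot}$-pre-compact. Fix $n \in \Z^+$ and $\delta > 0$. Cover $B_{\norm{\cdot}}$ by finitely many $\wnorm{\cdot}$-balls of radius $\delta$ centered at points $g_1, \dots, g_N \in B_{\norm{\cdot}}$. For any $f \in B_{\norm{\cdot}}$, pick $g_i$ with $\wnorm{f - g_i} \le \delta$; note $\norm{f - g_i} \le 2$. Applying \eqref{eq:ITMSS_1} to $f - g_i$ gives
\begin{equation*}
  \norm{A^n f - A^n g_i} \le r_n \norm{f - g_i} + R_n \wnorm{f - g_i} \le 2 r_n + R_n \delta.
\end{equation*}
Thus $A^n(B_{\norm{\cdot}}) \subseteq \bigcup_{i=1}^N B_{\norm{\cdot}}(A^n g_i, 2r_n + R_n \delta)$, which yields $\alpha(A^n(B_{\norm{\cdot}})) \le 2(2r_n + R_n \delta)$. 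Since $\delta > 0$ was arbitrary and $R_n$ is fixed once $n$ is, we may let $\delta \to 0$ to obtain $\alpha(A^n(B_{\norm{\cdot}})) \le 4 r_n$.

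Plugging into Nussbaum's formula,
\begin{equation*}
  \rho_{\ess}(A) = \lim_{n \to \infty} \alpha(A^n(B_{\norm{\cdot}}))^{1/n} \le \liminf_{n \to \infty} (4 r_n)^{1/n} = \liminf_{n \to \infty} r_n^{1/n},
\end{equation*}
as desired. In the merely pre-compact case one can either pass to the Saks space completion (where $B$ becomes $\wnorm{\cdot}$-compact and the finite $\delta$-net is immediate) or simply observe that pre-compactness still produces finite $\delta$-nets for every $\delta > 0$, so the argument applies directly. There is no real obstacle here beyond bookkeeping; the only conceptual point worth flagging is that the proof works purely with the Banach-space norm $\norm{\cdot}$ and the Kuratowski measure it induces, with the Saks space hypothesis entering only through the existence of finite $\wnorm{\cdot}$-$\delta$-nets for $B_{\norm{\cdot}}$.
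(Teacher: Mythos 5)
The paper states this theorem by citing Hennion's 1993 paper rather than proving it; there is no in-house proof to compare against. Your argument — bounding the Kuratowski measure of non-compactness of $A^n(B_{\norm{\cdot}})$ via a finite $\wnorm{\cdot}$-$\delta$-net of $B_{\norm{\cdot}}$, sending $\delta \to 0$, and invoking Nussbaum's formula — is correct and is essentially the proof in Hennion's paper. The one place worth being slightly more explicit is the passage from Nussbaum's limit to the $\liminf$: since $\lim_n \alpha(A^n B)^{1/n}$ exists, it equals its own $\liminf$, and then $\alpha(A^n B) \le 4 r_n$ gives $\rho_{\ess}(A) \le \liminf_n (4 r_n)^{1/n} = \liminf_n r_n^{1/n}$ because $4^{1/n} \to 1$; you state this but a reader might want the chain spelled out. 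Your remark on the pre-compact case is also fine — total boundedness of $B_{\norm{\cdot}}$ in $\wnorm{\cdot}$ already produces the finite net with centers in $B_{\norm{\cdot}}$, so no passage to the completion is needed.
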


Recall that an operator $A \in \LL(X)$ is said to be quasi-compact if $\rho_{\ess}(A) < \rho(A)$.
As outlined in the introduction, a modern approach to studying the statistical properties of a dynamical system $T : M \to M$, where $M$ is some Riemannian manifold, is by attempting to find a Banach space $X$ on which the Perron-Frobenius operator is quasi-compact and such that $\mathcal{C}^\infty(M) \into X \into (\mathcal{C}^\infty(M))'$.
The typical route for proving quasi-compactness is via Theorem \ref{thm:ITMSS} i.e. by endowing $X$ with the structure of a (pre-)compact Saks space and obtaining an appropriate Lasota-Yorke inequality, as in \eqref{eq:ITMSS_1}.
This connection prompts some questions about Saks spaces with high relevancy to dynamical systems:
\begin{enumerate}[label=(Q\arabic*)]
  \item \label{en:q1} What Banach spaces permit the structure of a compact Saks space?
  \item \label{en:q2} Given a Banach space which may be endowed with the structure of a compact Saks space, is this structure unique in any sense?
  \item \label{en:q3} If $(X, \norm{\cdot}, \wnorm{\cdot})$ is a compact Saks space, to what extent does $\norm{\cdot}$ determine $\wnorm{\cdot}$?
\end{enumerate}
The first question has a very satisfactory answer: a Banach space may be made into a compact Saks space if and only if it has a predual.
We state the result for the case where the Banach space permits the structure of a normed compact Saks space and refrain from giving all the relevant definitions. For the full result and definitions we refer the reader to \cite[Proposition 2.9]{cooper2011saks} and \cite[Chapter 1]{cooper2011saks}.
\begin{proposition}[{\cite[Proposition 2.9]{cooper2011saks}}]\label{prop:saks_space_finite}
  Let $(X,\norm{\cdot}, \tau)$ be a Saks space. Then the following are equivalent:
  \begin{enumerate}
    \item $B_{\norm{\cdot}}$ is compact and metrisable with respect to $\tau$.
    \item $X$ is the Saks space projective limit of a sequence of finite dimensional Banach spaces.
    \item $X$ has the form $(F^*, \norm{\cdot}, \sigma(F^*,F))$ for some separable Banach space $F$, where $\sigma(F^*,F)$ denotes the weak-$*$ topology on $F^*$.
    \item $B_{\norm{\cdot}}$ is compact and normable with respect to $\tau$ i.e. there is a norm $\wnorm{\cdot}$ on $X$ such that $\wnorm{\cdot}$ and $\tau$ are equivalent on $B_{\norm{\cdot}}$.
  \end{enumerate}
\end{proposition}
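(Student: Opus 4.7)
The plan is to prove the four equivalences by using (3) as the pivot, since it is the classical dual-ball description, and verifying the implications (3) $\Rightarrow$ (1) $\Rightarrow$ (4) $\Rightarrow$ (1) trivially, and then (1) $\Rightarrow$ (3); condition (2) will be inserted as a reformulation of (1) via a countable family of finite dimensional quotients. The main conceptual tool throughout is that a compact Hausdorff topology on a balanced convex set is determined by any point-separating countable family of continuous linear functionals (via the Hausdorff compact-to-Hausdorff homeomorphism trick).

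For (3) $\Rightarrow$ (1), if $X = F^*$ with $F$ separable then $B_{\norm{\cdot}}$ is $\sigma(F^*,F)$-compact by Banach--Alaoglu; picking a countable $\norm{\cdot}_F$-dense $\{f_n\}_{n \in \Z^+} \subseteq F$, the seminorms $x \mapsto \abs{\langle x, f_n\rangle}$ induce a metric on $B_{\norm{\cdot}}$ which a standard argument shows agrees with $\sigma(F^*,F)$ restricted to $B_{\norm{\cdot}}$. The implication (4) $\Rightarrow$ (1) is trivial, since a normable topology on a compact set is metrisable.

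For (1) $\Rightarrow$ (4), the key step is to build an explicit weak norm. Since $(B_{\norm{\cdot}}, \tau)$ is compact metrisable, it is separable; using Hahn--Banach and the Hausdorff property of $\tau$, one extracts a countable family $\{g_n\}_{n \in \Z^+}$ of $\tau$-continuous linear functionals with $\abs{g_n(x)} \le 1$ on $B_{\norm{\cdot}}$ that separate points of $B_{\norm{\cdot}}$ (and hence of $X$, by linearity and rescaling). Define
\begin{equation*}
  \wnorm{x} = \sum_{n=1}^\infty 2^{-n} \abs{g_n(x)}.
\end{equation*}
This is a norm with $\wnorm{\cdot} \le \norm{\cdot}$, and the induced metric on $B_{\norm{\cdot}}$ is coarser than $\tau$ and Hausdorff; because $(B_{\norm{\cdot}}, \tau)$ is compact, the identity map $(B_{\norm{\cdot}}, \tau) \to (B_{\norm{\cdot}}, \wnorm{\cdot})$ is a homeomorphism, so $\wnorm{\cdot}$ normes $\tau$ on $B_{\norm{\cdot}}$.

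For (1) $\Rightarrow$ (3), I would invoke the Dixmier--Ng characterisation of dual Banach spaces: the space $F$ of all $\tau$-continuous linear functionals, equipped with the norm inherited from $X^*$, is a Banach space with $F^* = X$ isometrically and $\sigma(F^*, F) = \tau$ on $B_{\norm{\cdot}}$ (hence also the induced Saks topologies coincide by Proposition \ref{prop:saks_space_weak_top}). The family $\{g_n\}_{n \in \Z^+}$ built above sits inside $F$ and separates points of $X = F^*$, so by a standard polar argument its norm-closed linear span is all of $F$, giving separability of $F$. Finally, (2) can be obtained from (1) by setting $X_n = X/\bigcap_{i=1}^n \ker g_i$ with the quotient norms and verifying that $X$ is the Saks space projective limit, while (2) $\Rightarrow$ (3) is easy because a projective limit of finite dimensional Banach spaces along countably many bonding maps is automatically the dual of a separable space (the predual is the inductive limit of the duals of the $X_n$).

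The main obstacle is the Dixmier--Ng step in (1) $\Rightarrow$ (3), which requires showing that $F := \{ \varphi \in X^* : \varphi \text{ is } \tau\text{-continuous}\}$ is norming on $X$ and that the canonical map $X \to F^*$ is isometric; both follow from the Hahn--Banach theorem using that $B_{\norm{\cdot}}$ is $\tau$-closed (hence the polar of $F$ inside $X$ is $\{0\}$), but the argument must be carried out carefully since $\tau$ is only specified via its restriction to $B_{\norm{\cdot}}$.
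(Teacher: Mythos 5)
The paper does not prove this proposition — it is imported verbatim as \cite[Proposition 2.9]{cooper2011saks}, so there is no in-paper proof to compare against. That said, your sketch is a correct route to the result and worth checking on its own merits.

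Your cycle (3) $\Rightarrow$ (1) (Banach--Alaoglu plus a countable dense subset of $F$), (4) $\Rightarrow$ (1) (trivial), (1) $\Rightarrow$ (4) (build a countable separating family of $\tau$-continuous functionals and sum them), and (1) $\Rightarrow$ (3) (Dixmier--Ng) is sound. Two remarks. First, in (1) $\Rightarrow$ (4) the extraction of a countable separating family needs a bit more than separability of $(B_{\norm{\cdot}},\tau)$: the cleanest way is to note that $C(B_{\norm{\cdot}})$ is norm-separable (as $B_{\norm{\cdot}}$ is compact metrisable), so the set of restrictions to $B_{\norm{\cdot}}$ of $\tau$-continuous functionals, being a subset of a separable metric space, has a countable sup-norm-dense subfamily $\{g_n\}$; if $x\neq y$ were unseparated by every $g_n$, then by density every $\tau$-continuous functional would fail to separate them, contradicting Hahn--Banach. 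Once you have this, the compact-to-Hausdorff homeomorphism argument is exactly right, and in fact the resulting weak norm $\wnorm{x}=\sum_n 2^{-n}\abs{g_n(x)}$ is precisely the construction the paper itself uses later in the proof of Theorem \ref{thm:saks_space_norm_formula}, so your approach meshes well with the rest of the paper. Second, your treatment of condition (2) is the thinnest part: ``Saks space projective limit'' is a specific construction in Cooper's book (it involves the quotient Saks space structure and a compatibility condition with the $\gamma$-topology, not just the Banach quotient norms), and the claims that $X$ is the Saks space projective limit of $X/\bigcap_{i\le n}\ker g_i$, and that a countable projective limit of finite dimensional spaces is automatically a separable-predual dual, both need to be spelled out rather than asserted. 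If you bypass (2) and just prove (1) $\Leftrightarrow$ (3) $\Leftrightarrow$ (4), your argument is complete; adding (2) requires engaging with Cooper's definitions more carefully.
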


We answer \ref{en:q2} and \ref{en:q3} in the following two theorems.
\begin{theorem}\label{thm:compact_saks_unique}
  Suppose that $(X,\norm{\cdot})$ is a Banach space, $\tau$ is a locally convex topology on $X$ such that $(X,\norm{\cdot}, \tau)$ is a compact Saks space and that $D$ is a Hausdorff topological vector space $D$ such that $(X,\tau) \into D$.
  Then $\gamma[\norm{\cdot},\tau]$ is unique up to $D$ i.e. if $\tau'$ is a locally convex topology on $X$ such that $(X,\norm{\cdot}, \tau')$ is a compact Saks space with $(X,\tau') \into D$ then $\gamma[\norm{\cdot}, \tau] = \gamma[\norm{\cdot}, \tau']$.
\end{theorem}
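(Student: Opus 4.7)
My plan is to reduce the question to showing that $\tau$ and $\tau'$ agree on $B_{\norm{\cdot}}$, and then to extract this agreement from the standard fact that a continuous bijection from a compact space to a Hausdorff space is a homeomorphism.

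First I would invoke Proposition \ref{prop:saks_space_weak_top}, which reduces proving $\gamma[\norm{\cdot},\tau] = \gamma[\norm{\cdot},\tau']$ to showing that $\tau$ and $\tau'$ induce the same subspace topology on $B_{\norm{\cdot}}$. So the whole task is to compare these two topologies on the closed unit ball.

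Next, I would use that $B_{\norm{\cdot}}$ is $\tau$-compact (by hypothesis, since $(X,\norm{\cdot},\tau)$ is a compact Saks space). Denote by $\iota : X \to D$ the continuous inclusion afforded by $(X,\tau) \hookrightarrow D$. Restricted to $B_{\norm{\cdot}}$, the map $\iota$ is a continuous injection from the compact space $(B_{\norm{\cdot}}, \tau|_{B_{\norm{\cdot}}})$ into the Hausdorff space $D$. A continuous injection from a compact space to a Hausdorff space is a closed map, hence a homeomorphism onto its image. Therefore $\tau|_{B_{\norm{\cdot}}}$ coincides with the subspace topology on $\iota(B_{\norm{\cdot}})$ inherited from $D$.

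By exactly the same argument applied to $\tau'$ (using that $(X,\norm{\cdot},\tau')$ is also compact Saks with $(X,\tau') \hookrightarrow D$), the topology $\tau'|_{B_{\norm{\cdot}}}$ also coincides with the subspace topology from $D$ on $\iota(B_{\norm{\cdot}})$. Hence $\tau|_{B_{\norm{\cdot}}} = \tau'|_{B_{\norm{\cdot}}}$, and Proposition \ref{prop:saks_space_weak_top} delivers $\gamma[\norm{\cdot},\tau] = \gamma[\norm{\cdot},\tau']$. There is no real obstacle here; the proof is essentially a one-line application of the compact-to-Hausdorff homeomorphism principle, and its content lies entirely in choosing the right intermediate space $D$ to receive both inclusions.
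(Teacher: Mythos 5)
Your proof is correct and takes a somewhat different route from the paper's. The paper argues directly with nets: it takes a $\gamma[\norm{\cdot},\tau]$-convergent net $\{f_\alpha\}$, asserts it is $\norm{\cdot}$-bounded, uses $\tau'$-compactness of $B_{\norm{\cdot}}$ to extract a $\tau'$-convergent subnet, and then uses Hausdorffness of $D$ to force the $\tau'$-limit to agree with the $\tau$-limit. You instead invoke Proposition~\ref{prop:saks_space_weak_top} to reduce everything to comparing the subspace topologies $\restr{\tau}{B_{\norm{\cdot}}}$ and $\restr{\tau'}{B_{\norm{\cdot}}}$, and then settle the comparison in one stroke with the compact-to-Hausdorff homeomorphism principle applied to the common inclusion into $D$. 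The two arguments rest on the same underlying mechanism---compactness of the ball forcing the topology to be determined by the Hausdorff receptacle $D$---but your packaging is cleaner and more localized. It also sidesteps a loose point in the paper's net argument: the paper implicitly uses that a $\gamma[\norm{\cdot},\tau]$-convergent net has $\sup_\alpha \norm{f_\alpha} < \infty$, which is what permits passage to a $\tau'$-convergent subnet inside $B_{\norm{\cdot}}$. Proposition~\ref{prop:saks_space_conv} gives this for sequences, but it is not automatic for general nets in a topological vector space, so the paper's phrasing would need a supporting remark. Your proof never touches convergence of nets and therefore avoids the issue entirely; it only requires (i) the $\tau$- and $\tau'$-compactness of $B_{\norm{\cdot}}$, (ii) injectivity of the inclusion into $D$, and (iii) that the inclusion map $X \to D$ is the same set map in both cases, which is the intended reading of the hypothesis.
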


\begin{theorem}\label{thm:saks_space_norm_formula}
  For a bounded countable family of functionals $\Phi = \{\varphi_n \}_{n \in \Z^+} \in \LL(X, \C)$ we set
  \begin{equation*}
    \norm{f}_{\Phi} = \sup_{\varphi \in \Phi} \abs{\varphi(f)}
  \end{equation*}
  and
  \begin{equation*}
    \wnorm{f}_{\Phi} = \sum_{n \in \Z^+} 2^{-n} \abs{\varphi_n(f)}.
  \end{equation*}
  If $(X, \norm{\cdot}, \wnorm{\cdot})$ is a compact Saks space then there exists $\Phi =\{\varphi_n \}_{n \in \Z^+} \in \LL(X, \C)$ such that $\norm{\varphi_n} = 1$ for every $n \in \Z^+$ then $\norm{\cdot}$ is equivalent to $\norm{\cdot}_\Phi$, and $\wnorm{\cdot}$ is equivalent to $\wnorm{\cdot}_{\Phi}$ on $\norm{\cdot}$-bounded sets.
  In particular $\gamma[\norm{\cdot}, \wnorm{\cdot}]$ and $\gamma[\norm{\cdot}_\Phi, \wnorm{\cdot}_\Phi]$ are equivalent.
\end{theorem}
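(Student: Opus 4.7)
My plan is to reduce the problem to the standard fact that on bounded subsets of a separable dual Banach space, the weak-$*$ topology is metrized by a sum of evaluations at a countable dense set of the predual.

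The first step is to invoke Proposition \ref{prop:saks_space_finite} in the form (3): since $(X, \norm{\cdot}, \wnorm{\cdot})$ is a compact (normed) Saks space, there exists a separable Banach space $F$ and a linear isomorphism $X \cong F^*$ which is isometric for $\norm{\cdot}$ and $\norm{\cdot}_{F^*}$, and under which the $\wnorm{\cdot}$-topology on $B_{\norm{\cdot}}$ corresponds to the weak-$*$ topology $\sigma(F^*,F)$ restricted to the closed unit ball (compatibility of the two topologies on $B_{\norm{\cdot}}$ follows from Proposition \ref{prop:saks_space_weak_top} combined with the uniqueness afforded by Theorem \ref{thm:compact_saks_unique}). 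Identify $X$ with $F^*$ henceforth.

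Next, pick a countable dense sequence $\{y_n\}_{n \in \Z^+}$ in the unit sphere $S_F \subseteq F$, and define $\varphi_n \in X^*$ by $\varphi_n(f) = f(y_n)$. Then $\norm{\varphi_n} = \norm{y_n}_F = 1$ by Hahn--Banach. For the strong norm, density of $\{y_n\}$ in $S_F$ together with the continuity of any fixed $f \in F^*$ gives
\begin{equation*}
  \norm{f}_\Phi = \sup_{n} \abs{f(y_n)} = \sup_{y \in S_F}\abs{f(y)} = \norm{f}_{F^*} = \norm{f},
\end{equation*}
so $\norm{\cdot}_\Phi = \norm{\cdot}$ identically. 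For the weak norm, I will use the classical fact: if $\{y_n\}$ is dense in $S_F$ then $d(f,g) := \sum_{n} 2^{-n}\abs{(f-g)(y_n)}$ metrises the weak-$*$ topology on every norm-bounded subset of $F^*$ (the only step is to approximate any fixed $y \in F$ by rational multiples of the $y_n$'s and estimate via the uniform norm bound). Thus $\wnorm{\cdot}_\Phi$ induces the weak-$*$ topology on $B_{\norm{\cdot}}$, which by the first step coincides with the topology induced by $\wnorm{\cdot}$ on the same set; since both are compatible metrics for the same Hausdorff topology on the $\wnorm{\cdot}$-compact set $B_{\norm{\cdot}}$, they are uniformly equivalent there, giving the claimed equivalence on $\norm{\cdot}$-bounded sets.

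The final assertion about the Saks space topologies is then an immediate application of Proposition \ref{prop:saks_space_weak_top}: the norms $\norm{\cdot}$ and $\norm{\cdot}_\Phi$ coincide, and the two candidate weak topologies agree on $B_{\norm{\cdot}}$, so $\gamma[\norm{\cdot},\wnorm{\cdot}] = \gamma[\norm{\cdot}_\Phi, \wnorm{\cdot}_\Phi]$. The only genuine obstacle is the bookkeeping in identifying the predual topology with $\wnorm{\cdot}$ on bounded sets in step one; everything after that is a transcription of the standard metrisation of the weak-$*$ topology into the language of Saks spaces.
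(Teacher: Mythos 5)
Your proof follows the same route as the paper's: invoke Proposition \ref{prop:saks_space_finite} to write $X$ as the dual of a separable predual $F$, take a countable dense subset of the unit sphere $S_F$, and use the classical metrisation of the weak-$*$ topology on norm-bounded subsets of $F^*$. The one place you over-claim is the assertion that the identification $X \cong F^*$ is \emph{isometric} for $\norm{\cdot}$ and $\norm{\cdot}_{F^*}$. Proposition \ref{prop:saks_space_finite} only delivers an identification of $(X, \norm{\cdot}, \wnorm{\cdot})$ with $(F^*, \norm{\cdot}, \sigma(F^*,F))$ as Saks spaces: the underlying space is $F^*$ and the weak topology restricted to $B_{\norm{\cdot}}$ is $\sigma(F^*,F)$, but $\norm{\cdot}$ is \emph{a priori} just some norm with $\sigma(F^*,F)$-compact, $\sigma(F^*,F)$-closed unit ball. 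The paper bridges this by noting that $\norm{\cdot}$ and $\norm{\cdot}_{F^*}$ are each finer than the Hausdorff topology $\sigma(F^*,F)$, hence are \emph{equivalent} by the closed graph theorem — not equal. So your line $\norm{\cdot}_\Phi = \norm{\cdot}$ should read $\norm{\cdot}_\Phi = \norm{\cdot}_{F^*}$, which is equivalent to (but need not coincide with) $\norm{\cdot}$; this is all the theorem asserts, and the $\varphi_n$ should then be rescaled to make their operator norms in $\LL(X,\C)$ exactly one (the rescaling constants are bounded above and below by norm equivalence, so neither $\norm{\cdot}_\Phi$ nor $\wnorm{\cdot}_\Phi$ is affected up to equivalence). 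With this one-line correction the rest of your argument goes through; in particular, the appeal to the standard metrisation of the weak-$*$ topology on bounded sets is fine and replaces the paper's slightly more explicit two-sided sequential verification.
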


\section{Saks space stability of hyperbolic splittings for Lasota-Yorke cocycles}\label{sec:saks_space_stability}

Our main result for this section concerns the stability of hyperbolic splittings (Definition \ref{def:hyperbolic_splitting}) for operator cocycles satisfying a Lasota-Yorke inequality (Definition \ref{def:ly}) and certain Saks space equicontinuity conditions (Definition \ref{def:equicont_seq}).

Let us fix some notation.
Let $\Omega$ be a set, and $\sigma : \Omega \to \Omega$ be an invertible map. For each $\omega \in \Omega$ let $(X_\omega, \norm{\cdot}_\omega, \wnorm{\cdot}_\omega)$ be a normal Saks space, with each $(X_{\omega}, \norm{\cdot}_\omega)$ being a Banach space\footnote{It will be important later that $\LL(X_\omega)$ is complete.}.
We will consider the vector space bundle\footnote{Note that we do not endow $\mathbb{X}$ with a topology.} $\mathbb{X} = \bigsqcup_{\omega \in \Omega} \{\omega\} \times X_\omega$.
Let $\pi : \mathbb{X} \to \Omega$ denote the projection onto $\Omega$, and for each $\omega \in \Omega$ let $\tau_\omega : \pi^{-1}(\omega) \to X_\omega$ be defined by $\tau_\omega(\omega, f) = f$.
We say that $L : \mathbb{X} \to \mathbb{X}$ is a \emph{bounded linear endomorphism of $\mathbb{X}$ covering $\sigma$} if $\pi \circ L = \sigma \circ \pi$ and if $f \mapsto \tau_{\sigma(\omega)}(L(\omega, f))$ is in $\LL(X_\omega, X_{\sigma(\omega)})$ for every $\omega \in \Omega$.
We denote the set of all bounded linear endomorphisms of $\mathbb{X}$ covering $\sigma$ by $\End(\mathbb{X},\sigma)$.
When $n \in \N$, $\omega \in \Omega$, and $L \in \End(\mathbb{X},\sigma)$ we denote the map $f \mapsto \tau_{\sigma(\omega)}(L(\omega, f))$ by $L_\omega$ and set
\begin{equation*}
  L_\omega^n =
  \begin{cases}
    L_{\sigma^{n-1}(\omega)} \circ \cdots \circ L_{\omega} & \text{if } n \ge 1,\\
    \Id & \text{if } n = 0.
  \end{cases}
\end{equation*}
Clearly $L_\omega^n \in \LL(X_{\omega}, X_{\sigma^n(\omega)})$ for every $n \in \N$ and $\omega \in \Omega$.
Unless required we will frequently drop the subscript $\omega$ from $\norm{\cdot}_\omega$ and $\wnorm{\cdot}_\omega$.
We denote the norm on $\LL(X_{\omega},X_{\sigma(\omega)})$ by $\norm{\cdot}$, the norm on $\LL((X_{\omega}, \wnorm{\cdot}_{\omega}),(X_{\sigma(\omega)}, \wnorm{\cdot}_{\sigma(\omega)}))$ by $\wnorm{\cdot}$, and the norm on $\LL((X_{\omega}, \norm{\cdot}_{\omega}),(X_{\sigma(\omega)}, \wnorm{\cdot}_{\sigma(\omega)}))$ by $\tnorm{\cdot}$.

\begin{definition}\label{def:hyperbolic_splitting}
  Suppose that $L \in \End(\mathbb{X},\sigma)$, $d \in \Z^+$, $0 \le \mu < \lambda$, $(E_\omega)_{\omega \in \Omega} \in \prod_{\omega \in \Omega} \mathcal{G}_d(X_\omega)$ and $(F_\omega)_{\omega \in \Omega} \in \prod_{\omega \in \Omega} \mathcal{G}^d(X_\omega)$.
  We say that $(E_\omega)_{\omega \in \Omega}$ and $(F_\omega)_{\omega \in \Omega}$ form a $(\mu, \lambda,d)$-hyperbolic splitting for $L$, and that $L$ has a hyperbolic splitting of index $d$, if there exists constants $C_\lambda, C_\mu, \Theta > 0$ such that:
  \begin{enumerate}[label=\textnormal{(H\arabic*)}]
    \item \label{en:h1} For every $\omega \in \Omega$ we have $E_\omega \oplus F_\omega = X_\omega$ and
    \begin{equation}\label{eq:h1_angle}
      \max\{\norm{\Pi_{F_\omega || E_\omega}}, \norm{\Pi_{E_\omega || F_\omega}}\} \le \Theta.
    \end{equation}
    \item \label{en:h2} For each $\omega \in \Omega$ we have $L_\omega E_\omega = E_{\sigma(\omega)}$. Moreover, for every $n \in \Z^+$ and $f \in E_\omega$ we have
    \begin{equation}\label{eq:h2_expansion}
      \norm{L_\omega^n f } \ge C_\lambda \lambda^{n} \norm{f}.
    \end{equation}
    \item \label{en:h3} For each $\omega \in \Omega$ we have $L_\omega F_\omega \subseteq F_{\sigma(\omega)}$ and for every $n \in \Z^+$ we have
    \begin{equation}\label{eq:h3_decay}
      \norm{\restr{L_\omega^n}{F_\omega}} \le C_\mu \mu^{n}.
    \end{equation}
  \end{enumerate}
  We call $(E_\omega)_{\omega \in \Omega}$ the equivariant fast spaces for $L$, and $(F_\omega)_{\omega \in \Omega}$ the equivariant slow spaces for $L$.
\end{definition}

\begin{remark}
  Since $L_\omega E_\omega = E_{\sigma(\omega)}$ and $L_\omega F_\omega \subseteq F_{\sigma(\omega)}$ we have \begin{equation*}
    L_{\sigma(\omega)} \Pi_{E_\omega || F_\omega} = \Pi_{E_{\sigma(\omega)} || F_{\sigma(\omega)}} L_\omega \text{ and } L_{\sigma(\omega)} \Pi_{F_\omega || E_\omega} = \Pi_{F_{\sigma(\omega)} || E_{\sigma(\omega)}} L_\omega.
  \end{equation*}
\end{remark}

We will now describe the elements of $\End(\mathbb{X}, \sigma)$ which are `equicontinuous in the Saks space sense'.

\begin{definition}[Saks space continuous endomorphisms]\label{def:equicont_seq}
  We say that $L \in \End(\mathbb{X},\sigma)$ is a \emph{Saks space equicontinuous endomorphism} if $\sup_{\omega \in \Omega} \norm{L_\omega} < \infty$ and if for each $\eta > 0$ there exists $C_\eta > 0$ such that for every $\omega \in \Omega$ and $f \in X_\omega$ we have
  \begin{equation}\label{eq:equicont_seq_1}
     \wnorm{L_{\omega} f} \le \eta \norm{f} + C_\eta \wnorm{f}.
  \end{equation}
  We denote the set of all Saks space equicontinuous endomorphisms in $\End(\mathbb{X}, \sigma)$ by $\End_S(\mathbb{X},\sigma)$.
\end{definition}

\begin{remark}\label{remark:ss_equi}
  Proposition \ref{prop:saks_equicont} justifies the characterisation of the condition in Definition \ref{def:equicont_seq} as an equicontinuity condition.
  Indeed, when all the spaces $(X_\omega, \norm{\cdot}_\omega, \wnorm{\cdot}_\omega)$ are to a fixed space $(X, \norm{\cdot}, \wnorm{\cdot})$, then $L \in \End_S(\mathbb{X}, \sigma)$ if and only if the set $\{L_\omega\}_{\omega \in \Omega}$ is equicontinuous in $\LL_S(X)$.
\end{remark}

\begin{remark}
  $\End_S(\mathbb{X},\sigma)$ admits an interesting alternative characterisation.
  For $(f_\omega)_{\omega \in \Omega} \in \prod_{\omega \in \Omega} X_\omega$ let $\norm{(f_\omega)_{\omega \in \Omega}}_\infty = \sup_{\omega \in \Omega} \norm{f_\omega}_\omega$ and $\wnorm{(f_\omega)_{\omega \in \Omega}}_\infty = \sup_{\omega \in \Omega} \wnorm{f_\omega}_\omega$.
  The set
  \begin{equation*}
    X_\infty = \left\{ (f_\omega)_{\omega \in \Omega} \in \prod_{\omega \in \Omega} X_\omega : \norm{(f_\omega)_{\omega \in \Omega}}_\infty < \infty\right\}
  \end{equation*}
  is a Banach space when equipped with $\norm{\cdot}_\infty$, and a normal Saks space when given the structure $(X_\infty, \norm{\cdot}_\infty, \wnorm{\cdot}_\infty)$.
  For $L \in \End(\mathbb{X}, \sigma)$, one can show that $L \in \End_S(\mathbb{X}, \sigma)$ if and only if the map $(f_\omega)_{\omega \in \Omega} \mapsto (L_\omega f_\omega)_{\omega \in \Omega}$ is in $\LL_S(X_\infty)$.
\end{remark}

\begin{remark}
  If $L \in \End(\mathbb{X}, \sigma)$ satisfies $\sup_{\omega \in \Omega} \norm{L_\omega} < \infty$ and $\sup_{\omega \in \Omega} \wnorm{L_\omega} < \infty$ then $L \in \End_S(\mathbb{X}, \sigma)$.
  In many applications it is easier to bound $\sup_{\omega \in \Omega} \wnorm{L_\omega}$ than it is to obtain the inequality \eqref{eq:equicont_seq_1}.
\end{remark}

We will only consider endomorphisms that satisfy a uniform Lasota-Yorke inequality.
\begin{definition}[Lasota-Yorke class]\label{def:ly}
  For $C_1, C_2, r, R \ge 0$ we denote by $\mathcal{LY}(C_1, C_2, r, R)$ the set of $L \in \End(\mathbb{X}, \sigma)$ such that for every $\omega \in \Omega$, $f \in X_\omega$ and $n \in \Z^+$ we have
  \begin{equation}\label{eq:ly}
    \norm{L_{\omega}^n f} \le C_1 r^n \norm{f} + C_2 R^n \wnorm{f}.
  \end{equation}
\end{definition}

\begin{remark}
  If $L \in \mathcal{LY}(C_1, C_2, r, R)$ then
  \begin{equation*}
    \norm{L_{\omega}^n} \le C_1 r^n + C_2 R^n \le C_3\max\{r, R\}^n,
  \end{equation*}
  where $C_3 = C_1 + C_2$.
  We will only deal with the case where $r \le R$, which we note is the case when $L$ admits a $(\mu, \lambda,d)$-hyperbolic splitting with $\mu > r$, as then $r \le \mu < \lambda \le R$.
  Hence for every $\omega \in \Omega$ and $n \in \Z^+$ we have
  \begin{equation}\label{eq:power_bound}
    \norm{L_\omega^n} \le C_3R^n.
  \end{equation}
\end{remark}

Finally, if $L \in \End_S(\mathbb{X}, \sigma)$ then for $\epsilon > 0$ we set
\begin{equation*}
  \mathcal{O}_{\epsilon}(L) = \left\{ S \in \End(\mathbb{X}, \sigma) : \sup_{\omega \in \Omega} \tnorm{L_\omega - S_\omega} < \epsilon \right\}.
\end{equation*}

Our main result is the following.
\begin{theorem}\label{thm:stability_cocycle}
  Fix $\mu, \lambda, C_1, C_2, R \ge 0$, with $0 \le r < \mu < \lambda$, $d \in \Z^+$, $(E_\omega)_{\omega \in \Omega} \in \prod_{\omega \in \Omega} \mathcal{G}_d(X_\omega)$ and $(F_\omega)_{\omega \in \Omega} \in \prod_{\omega \in \Omega} \mathcal{G}^d(X_\omega)$.
  Suppose that $L \in \End_S(\mathbb{X}, \sigma) \cap \mathcal{LY}(C_1, C_2, r, R)$ has a $(\mu,\lambda,d)$-hyperbolic splitting composed of fast spaces $(E_\omega)_{\omega \in \Omega}$ and slow spaces $(F_\omega)_{\omega \in \Omega}$.
  There exists $\epsilon' > 0$ so that
  \begin{enumerate}
    \item If $S \in \mathcal{LY}(C_1, C_2, r, R) \cap \mathcal{O}_{\epsilon'}(L)$
    then $S$ has a hyperbolic splitting of index $d$.
    \item If $(E_\omega^S)_{\omega \in \Omega} \in \prod_{\omega \in \Omega} \mathcal{G}_d(X_\omega)$ and $(F_\omega^S)_{\omega \in \Omega} \in \prod_{\omega \in \Omega} \mathcal{G}^d(X_\omega)$ denote the equivariant fast and slow spaces for $S$ then
    \begin{equation}\label{eq:stability_cocycle_1}
      \sup \left\{ \norm{\Pi_{E^S_\omega || F^S_\omega}}  : \omega \in \Omega, S \in \mathcal{LY}(C_1, C_2, r, R) \cap \mathcal{O}_{\epsilon'}(L) \right\} < \infty.
    \end{equation}
  \end{enumerate}
  Moreover, for every $\beta \in (0,(\lambda - \mu)/2)$ and $\delta >0$ there exists $\epsilon_{\beta, \delta} \in (0,\epsilon')$ and $C_\beta > 0$ so that if $S \in \mathcal{LY}(C_1, C_2, r, R) \cap \mathcal{O}_{\epsilon_{\beta, \delta}}(L)$
  then
  \begin{enumerate}
    \item We have the estimates
    \begin{equation}\label{eq:stability_cocycle_2}
      \sup_{\omega \in \Omega} \tnorm{\Pi_{E_{\omega}^S || F_{\omega}^S} - \Pi_{E_{\omega} || F_{\omega}} } \le \delta,
    \end{equation}
    and
    \begin{equation}\label{eq:stability_cocycle_3}
      \sup_{\omega \in \Omega} d_H(F_{\omega}^S, F_\omega) \le \delta.
    \end{equation}
    \item The spaces $(E_{\omega}^S)_{\omega \in \Omega}$ and $(F_{\omega}^S)_{\omega \in \Omega}$ form a $(\mu + \beta, \lambda - \beta, d)$-hyperbolic splitting for $S$. More specifically, for every $\omega \in \Omega$ and $n \in \Z^+$ we have
    \begin{equation}\label{eq:stability_cocycle_4}
      \norm{\restr{S_{\omega}^n}{F_{\omega}^S} } \le C_\beta (\mu + \beta)^n,
    \end{equation}
    and, for every $v \in E_{\omega}^S$, that
    \begin{equation}\label{eq:stability_cocycle_5}
      \norm{S_{\omega}^n v} \ge C_\beta^{-1} (\lambda - \beta)^n \norm{v}.
    \end{equation}
  \end{enumerate}
\end{theorem}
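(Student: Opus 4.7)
The plan is to use a cone field argument in the Grassmannian, formulated via the graph representation of Proposition \ref{prop:graph_chart} and the graph transforms of Proposition \ref{prop:graph_tranform}. For small $\kappa > 0$, define the \emph{fast cone} $K^E_{\omega,\kappa} \subseteq \mathcal{N}(F_\omega)$ to consist of subspaces $E'$ with $\norm{\Phi_{E_\omega \oplus F_\omega}(E')} \le \kappa$, and the \emph{slow cone} $K^F_{\omega,\kappa} \subseteq \mathcal{N}(E_\omega)$ analogously via $\Phi_{F_\omega \oplus E_\omega}$. Equivariance of $(E_\omega)$ and $(F_\omega)$ under $L$ reduces the forward graph transform to the explicit formula $(L_\omega^n)^* U = L_\omega^n U (L_\omega^n|_{E_\omega})^{-1}$, which by \ref{en:h2} and \ref{en:h3} satisfies $\norm{(L_\omega^n)^* U} \le (C_\mu / C_\lambda)(\mu/\lambda)^n \norm{U}$. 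Thus for $N$ sufficiently large, $L^N$ strictly contracts $K^E_{\omega,\kappa}$ into its half-radius sub-cone uniformly in $\omega$, and dually the backward graph transform contracts $K^F_{\omega,\kappa}$.

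The core step is to transfer these contractions to any $S \in \mathcal{LY}(C_1, C_2, r, R) \cap \mathcal{O}_\epsilon(L)$. Fix $\beta \in (0,(\lambda-\mu)/2)$ and choose $N$ so that the contraction margin of $L^N$ dominates the perturbation error. The telescoping identity $S_\omega^N - L_\omega^N = \sum_{k=0}^{N-1} S_{\sigma^{k+1}(\omega)}^{N-k-1}(S_{\sigma^k(\omega)} - L_{\sigma^k(\omega)}) L_\omega^k$, together with the power bound $\norm{S_\omega^k} \le C_3 R^k$ from the Lasota-Yorke inequality and the Saks-space equicontinuity bound \eqref{eq:equicont_seq_1} controlling the $\wnorm{\cdot}$-norm of iterates, yields $\sup_\omega \tnorm{S_\omega^N - L_\omega^N} \to 0$ as $\epsilon \to 0$. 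Since the graph transform formula $(S_\omega^N)^* U = \Pi_{F||E} S_\omega^N(\Id+U)(\Pi_{E||F} S_\omega^N(\Id+U)|_{E_\omega})^{-1}$ only involves the action of $S_\omega^N$ on the finite-dimensional space $E_\omega$, the triple-norm estimate translates, via finite-dimensionality and the uniform angle bound $\Theta$, into the operator-norm estimate required for $(S_\omega^N)^*$ to also contract $K^E_{\omega,\kappa}$ into itself. The slow cone is handled dually. The equivariant fast space $E^S_\omega$ is then defined as the unique limit in the chart $\Phi_{E_\omega \oplus F_\omega}$ of $S_{\sigma^{-nN}(\omega)}^{nN}(E')$ for any $E' \in K^E_{\sigma^{-nN}(\omega), \kappa}$ (Cauchy convergence from strict contraction), and $F^S_\omega$ is obtained analogously via iterated backward graph transforms on the slow cone.

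The bounds \eqref{eq:stability_cocycle_1}--\eqref{eq:stability_cocycle_3} then follow from Lemmas \ref{lemma:graph_rep_continuity} and \ref{lemma:graph_rep_inv_continuity}, once one shows that $\norm{\Phi_{E_\omega \oplus F_\omega}(E^S_\omega)}$ and $\norm{\Phi_{F_\omega \oplus E_\omega}(F^S_\omega)}$ tend to zero uniformly in $\omega$ as $\epsilon \to 0$, which follows by bootstrapping the one-step perturbation error through the cone contraction. The hyperbolicity estimates \eqref{eq:stability_cocycle_4}--\eqref{eq:stability_cocycle_5} are proved by writing $v \in E^S_\omega$ as $(\Id + U^S_\omega) g$ for some $g \in E_\omega$ and comparing $S_\omega^n v$ with $L_\omega^n g$ using triple-norm closeness, Lasota-Yorke control, and the separation rates, absorbing the perturbation into the margin $\beta$; the slow-space decay is handled symmetrically. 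The main obstacle is the uniform-in-$\omega$ transfer of the cone contraction from $L$ to $S$: since the graph transform involves an operator inverse on the finite-dimensional fast space, converting a triple-norm perturbation bound into an operator-norm perturbation bound requires carefully combining the uniform Lasota-Yorke inequality (to bound $\norm{S_\omega^k}$), the Saks-space equicontinuity (to absorb $\wnorm{\cdot}$-terms arising in the telescoping products), and the uniform angle bound $\Theta$ (to ensure projection constants are controlled across fibers).
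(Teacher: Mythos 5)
Your overall strategy coincides with the paper's: build invariant cone fields of subspaces in the graph representation, show the graph transform of a sufficiently high iterate contracts them, obtain the perturbed fast and slow spaces as fixed points, and then derive the stability and hyperbolicity estimates. However, the central step is not justified as you state it. You claim that because the forward graph transform of $S^N_\omega$ only involves the action on the $d$-dimensional space $E_\omega$, the bound on $\sup_\omega \tnorm{S^N_\omega - L^N_\omega}$ translates ``via finite-dimensionality and the uniform angle bound $\Theta$'' into the operator-norm estimates needed to define $(S^N_\omega)^*$ and prove it contracts the cone. Finite-dimensionality gives equivalence of $\norm{\cdot}$ and $\wnorm{\cdot}$ on each $E_\omega$ only with an $\omega$-dependent constant, and neither the angle bound nor Saks-space equicontinuity makes that constant uniform; so a uniformly small triple norm by itself yields no uniform operator-norm control of $\restr{(S^N_\omega - L^N_\omega)}{E_\omega}$, and in particular no uniform lower bound guaranteeing the invertibility of $\restr{\Pi_{E_{\sigma^N(\omega)} || F_{\sigma^N(\omega)}} S^N_\omega(\Id + U)}{E_\omega}$ with controlled inverse. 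The mechanism the paper uses, and which is missing from your proposal, is that the Lasota--Yorke inequality combined with the expansion \eqref{eq:h2_expansion} forces $\norm{\cdot} \le K \wnorm{\cdot}$ on the fast spaces uniformly in $\omega$ (Lemma \ref{lemma:fast_space_eccentricity}) and makes the projections $\Pi_{E_\omega || F_\omega}$ uniformly Saks-space continuous, $\norm{\Pi_{E_\omega || F_\omega} v} \le \eta\norm{v} + C_\eta\wnorm{v}$ (Lemma \ref{lemma:fast_space_ss}); these are exactly where the hypothesis $r < \mu < \lambda$ enters, and they are the only devices by which the weak-norm error is measured back in the strong norm (see Lemma \ref{lemma:graph_transform_inv} and Proposition \ref{prop:perturbed_spaces_bound}). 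Without stating and proving such uniform comparability results, the transfer of the cone contraction from $L$ to $S$ is a genuine gap.

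Two further points need attention. First, your fixed-point construction only yields invariance of $E^S_\omega$ and $F^S_\omega$ under the iterate $S^{N}$; Definition \ref{def:hyperbolic_splitting} requires $S_\omega E^S_\omega = E^S_{\sigma(\omega)}$ and $S_\omega F^S_\omega \subseteq F^S_{\sigma(\omega)}$ for a single step, and since one application of $S$ need not preserve your cones this does not follow formally from the limit definition -- the paper needs a separate growth-rate argument (the final proposition of Section \ref{sec:end_of_proof}). Second, your ``dual'' construction of the slow spaces in the chart $\Phi_{F_\omega \oplus E_\omega}$ leaves unverified that the backward graph transform is even well defined on the slow cone; the paper sidesteps this by working in the chart $\Phi_{F_\omega \oplus E^S_\omega}$ adapted to the already-constructed perturbed fast spaces, for which the relevant operator is automatically invertible and the transform is a contraction on all of $\prod_\omega \LL(F_\omega, E^S_\omega)$.
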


\begin{remark}
  In principle one could compute explicit bounds on the various quantities in the statement of Theorem \ref{thm:stability_cocycle}, such as $\epsilon_{\beta, \delta}$ or the supremum in \eqref{eq:stability_cocycle_1}.
  We opted not to pursue such bounds for the sake of simplicity.
\end{remark}

\begin{remark}
  It is possible to obtain an estimate on the distance between $E^S_\omega$ and $E_\omega$ in the Grassmannian distance on $(X_\omega, \wnorm{\cdot}_\omega)$ from \eqref{eq:stability_cocycle_2} by using \cite[Proposition 2.4]{froyland2014detecting}.
\end{remark}

The strategy behind the proof of Theorem \ref{thm:stability_cocycle} is reminiscent of the usual proof that the class of Anosov maps is open \cite[Corollary 5.5.2]{brin2002introduction}, and is quite similar to the overall strategy in \cite{bogenschutz2000stochastic}.
We start by collecting some preliminary estimates and results in Section \ref{sec:estimates}.
In Section \ref{sec:fast_space} we construct invariant `fast' cones of $d$-dimensional subspaces, defined in terms of the graph representation of the hyperbolic splitting of $L$, and show that the forward graph transform induced by an iterate of the perturbed cocycle is a contraction mapping on these cones.
We then prove that perturbed fast spaces approximate, in a Saks space sense, the unperturbed fast spaces.
Once the fast spaces have been constructed, it is quite easy to construct the slow spaces, which is the subject of Section \ref{sec:slow_space}.
The primary difference is that we consider a different graph representation of the Grassmannian -- the one associated to the splitting of each $X_\omega$ into the perturbed fast space and the unperturbed slow space.
The backwards graph transform of an iterate of the perturbed cocycle is a contraction mapping on the set of all such graphs, and we recover the perturbed slow spaces as the transform's fixed point.
We may then prove that the perturbed slow spaces approximate the unperturbed slows spaces.
In Section \ref{sec:end_of_proof} we bring together the results of the previous sections to complete the proof of Theorem \ref{thm:stability_cocycle}.
An advantage of working with the graph representation of the Grassmannian is that we mainly work with projections rather than with subspaces. This is nice both conceptually and technically, particularly when proving the aforementioned stability results.

\begin{remark}
  Until the proof of Theorem \ref{thm:stability_cocycle} is concluded we shall use $L$ to refer to an element of $\End_S(\mathbb{X}, \sigma) \cap \mathcal{LY}(C_1, C_2, r, R)$ which satisfies all the hypotheses of Theorem \ref{thm:stability_cocycle}.
\end{remark}

\subsection{Preliminary estimates and lemmata}\label{sec:estimates}

The following estimate forms the backbone of the proof of Theorem \ref{thm:stability_cocycle}.

\begin{proposition}\label{prop:perturbed_spaces_bound}
  For every $\beta \in (0, (\lambda - \mu)/2)$ there exists $N_\beta$ and for each $n  > N_\beta$ an $\epsilon_{n,\beta}>0$ so that if $S \in \mathcal{LY}(C_1, C_2, r, R) \cap \mathcal{O}_{\epsilon_{n,\beta}}(L)$ and $\omega \in \Omega$ then
  \begin{equation}\label{eq:perturbed_spaces_bound_slow}
    \norm{\restr{S_{\omega}^n}{F_\omega}} \le (\mu+\beta)^n,
  \end{equation}
  and if $v \in E_\omega$ then
  \begin{equation}\label{eq:perturbed_spaces_bound_fast}
    \norm{\Pi_{E_{\sigma^n(\omega)} || F_{\sigma^n(\omega)}}S_{\omega}^n v} \ge (\lambda-\beta)^n \norm{v}.
  \end{equation}
\end{proposition}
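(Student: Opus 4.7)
The plan is to derive both \eqref{eq:perturbed_spaces_bound_slow} and \eqref{eq:perturbed_spaces_bound_fast} by applying the standard telescoping identity
\[
  S_\omega^n - L_\omega^n = \sum_{k=0}^{n-1} S_{\sigma^{k+1}(\omega)}^{n-k-1} (S_{\sigma^k(\omega)} - L_{\sigma^k(\omega)}) L_\omega^k
\]
to vectors in the unperturbed hyperbolic subspaces, then estimating each summand using the Lasota-Yorke inequality for $S$ together with the triple-norm smallness $\tnorm{S_{\sigma^k(\omega)} - L_{\sigma^k(\omega)}} < \epsilon_{n,\beta}$. First I would fix $\beta \in (0, (\lambda-\mu)/2)$ and choose $N_\beta$ so large that $C_\mu \mu^n \le \tfrac{1}{3}(\mu+\beta)^n$ and $C_\lambda \lambda^n \ge 3\Theta(\lambda-\beta)^n$ for all $n \ge N_\beta$ (possible since $\mu < \mu+\beta$ and $\lambda-\beta < \lambda$). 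Both bounds will then follow by absorbing the perturbative contribution into the remaining slack via a choice of $\epsilon_{n,\beta}$ depending on $n$.

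Applying \eqref{eq:ly} to $S^{n-k-1}$, the crude bound $\norm{(S-L)h} \le 2M\norm{h}$ (where $M = \sup_\omega \norm{L_\omega} < \infty$ by the $n = 1$ case of \eqref{eq:ly}), and $\wnorm{(S-L)h} \le \epsilon_{n,\beta}\norm{h}$ gives, for $h \in X_\omega$,
\[
  \norm{S_{\sigma^{k+1}(\omega)}^{n-k-1}(S_{\sigma^k(\omega)} - L_{\sigma^k(\omega)}) L_\omega^k h} \le 2M C_1 r^{n-k-1} \norm{L_\omega^k h} + \epsilon_{n,\beta} C_2 R^{n-k-1} \norm{L_\omega^k h}.
\]

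For the slow bound, I would take $h = f \in F_\omega$ and use \ref{en:h3} to replace $\norm{L_\omega^k f}$ by $C_\mu \mu^k \norm{f}$. Since $r < \mu$, the sum $\sum_k r^{n-k-1}\mu^k$ is controlled by $\mu^n/(\mu-r)$, so summing over $k$ yields a bound $O(\mu^n \norm{f}) + O(\epsilon_{n,\beta} R^n \norm{f})$ on $\norm{(S^n - L^n)f}$. Combining with $\norm{L^n f} \le C_\mu\mu^n\norm{f}$, the choice of $N_\beta$ forces the non-perturbative part below $\tfrac{2}{3}(\mu+\beta)^n\norm{f}$, and choosing $\epsilon_{n,\beta}$ small enough (depending on $n$, since $R \ge \lambda > \mu + \beta$) pins the perturbative part below $\tfrac{1}{3}(\mu+\beta)^n\norm{f}$, giving \eqref{eq:perturbed_spaces_bound_slow}.

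For the fast bound, I would take $h = v \in E_\omega$ and exploit that $\Pi_{E_{\sigma^n(\omega)}||F_{\sigma^n(\omega)}} L_\omega^n v = L_\omega^n v$ (from \ref{en:h2}) with $\norm{L^n v} \ge C_\lambda \lambda^n \norm{v}$ by \eqref{eq:h2_expansion}, so that
\[
  \norm{\Pi_{E_{\sigma^n(\omega)}||F_{\sigma^n(\omega)}} S_\omega^n v} \ge C_\lambda \lambda^n \norm{v} - \Theta \norm{(S_\omega^n - L_\omega^n) v}.
\]
The main obstacle I anticipate is controlling $\norm{L_\omega^k v}$ for $v \in E_\omega$: the crude Lasota-Yorke bound $\norm{L^k v} \le C_3 R^k \norm{v}$ leads to a telescoping error of order $R^n$, which is too large to fit inside $C_\lambda\lambda^n - (\lambda-\beta)^n$ when $R > \lambda$. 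I expect this is resolved by exploiting that, on the fast space, growth is genuinely governed by $\lambda$: combining the hyperbolic lower bound applied to the tail $L_{\sigma^k(\omega)}^{n-k}(L_\omega^k v)$ with the Saks space equicontinuity \eqref{eq:equicont_seq_1} and the uniform angle bound $\Theta$ should produce a matching upper bound of the form $\norm{L_\omega^k v} \lesssim \lambda^k \norm{v}$, uniformly in $\omega$ and $v \in E_\omega$. Granted this, the telescoping estimate yields an error of the form $(K_1 \lambda^n + K_2 \epsilon_{n,\beta} R^n) \norm{v}$ with $K_1, K_2$ independent of $n$; choosing $N_\beta$ large enough to swallow $K_1 \lambda^n$ inside the slack and then $\epsilon_{n,\beta}$ small enough to kill the $R^n$ term delivers \eqref{eq:perturbed_spaces_bound_fast}.
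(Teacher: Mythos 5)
Your treatment of the slow bound \eqref{eq:perturbed_spaces_bound_slow} follows essentially the same route as the paper: telescope $S^n - L^n$, apply the Lasota-Yorke inequality to the $S^{n-k-1}$ factor, use \ref{en:h3} for $\norm{L^k|_{F_\omega}}$, and sum the geometric series (the convergence uses $r < \mu$ in one piece and smallness of $\tnorm{S - L}$ in the other). That part is fine.

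The fast bound \eqref{eq:perturbed_spaces_bound_fast} is where the proposal breaks down, and you correctly identify the obstruction but the proposed fix does not work. There are two problems. First, the claim that one can derive an upper bound of the form $\norm{L_\omega^k v} \lesssim \lambda^k \norm{v}$ for $v \in E_\omega$ is unsupported: the hyperbolicity conditions give only a \emph{lower} bound \eqref{eq:h2_expansion} on the fast space, the Lasota-Yorke inequality \eqref{eq:ly} gives an upper bound of order $R^k$, and neither Saks space equicontinuity nor the angle bound $\Theta$ convert the lower bound into a matching upper one. Nothing in the hypotheses prevents $\norm{L_\omega^k v}$ from oscillating anywhere between $\lambda^k\norm{v}$ and $R^k\norm{v}$ along the orbit. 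Second, even granting that hypothetical bound, the telescoping then produces an error term of the form $K_1 \lambda^n \norm{v}$ with $K_1$ a fixed constant built from $\Theta$, $C_1$, $C_3$, $R$, $(\lambda-r)^{-1}$, etc. To conclude you need $C_\lambda \lambda^n - K_1 \lambda^n \ge (\lambda - \beta)^n$, which for large $n$ requires $C_\lambda \ge K_1$; there is no reason for this to hold, and increasing $N_\beta$ does not help because both sides scale like $\lambda^n$ up to the decaying factor $(1 - \beta/\lambda)^n$. The ``slack'' you invoke is not there.

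The paper's actual proof sidesteps both issues with a Saks-space maneuver you did not use. Instead of bounding $\norm{(L_\omega^n - S_\omega^n)v}$ directly (which is irreducibly of order $R^n$), it bounds $\wnorm{\Pi_{E_{\sigma^n\omega}||F_{\sigma^n\omega}}(L_\omega^n - S_\omega^n)v}$, applying Lemma \ref{lemma:fast_space_ss}:
\begin{equation*}
  \wnorm{\Pi_{E||F} w} \le \eta\norm{w} + C_\eta \wnorm{w}, \qquad \text{for every } \eta > 0.
\end{equation*}
Taking $w = (L_\omega^n - S_\omega^n)v$, the dangerous $\norm{w} = O(R^n\norm{v})$ term now carries a tunable prefactor $\eta$, so one chooses $\eta$ (depending on $n$) to make $\eta R^n$ small compared with $\lambda^n$, and then chooses $\epsilon_{n,\beta}$ to control the $C_\eta\tnorm{L^n - S^n}$ contribution. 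The lower bound is recovered via Lemma \ref{lemma:fast_space_eccentricity}, the norm equivalence on the $d$-dimensional fast space: $\norm{\Pi_{E||F} L^n v} \le K\wnorm{\Pi_{E||F}L^n v}$, so
\begin{equation*}
  \norm{\Pi_{E||F} S^n v} \ge \wnorm{\Pi_{E||F} S^n v} \ge \wnorm{L^n v} - \wnorm{\Pi_{E||F}(L^n - S^n) v} \ge K^{-1}C_\lambda\lambda^n\norm{v} - \left(2\eta C_3 R^n + C_\eta\tnorm{L^n - S^n}\right)\norm{v}.
\end{equation*}
The crucial difference from your proposal: the coefficient of $R^n$ is $\eta$ rather than a fixed constant, so no upper bound $\norm{L^k v} \lesssim \lambda^k\norm{v}$ is ever needed, and no comparison between $C_\lambda$ and an $O(1)$ telescoping constant is required.
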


The proof of Proposition \ref{prop:perturbed_spaces_bound} is split over the following lemmas, all of which are of independent interest.

\begin{lemma}\label{lemma:fast_space_eccentricity}
  There exists $K$ such that for every $\omega \in \Omega$ and $v \in X_\omega$ we have
  \begin{equation*}
    \norm{\Pi_{E_\omega || F_\omega} v} \le K \wnorm{\Pi_{E_\omega || F_\omega} v}.
  \end{equation*}
  \begin{proof}
    From \eqref{eq:ly} we have
    \begin{equation*}
      \norm{L_\omega^n \Pi_{E_{\omega} || F_\omega} v} \le C_1 r^n \norm{\Pi_{E_{\omega} || F_\omega} v} + C_2 R^n \wnorm{\Pi_{E_{\omega} || F_\omega} v},
    \end{equation*}
    while, on the other hand, by \eqref{eq:h2_expansion} we have $\norm{L_\omega^n \Pi_{E_{\omega} || F_\omega} v} \ge C_\lambda \lambda^n \norm{\Pi_{E_{\omega} || F_\omega} v}.$
    Since $\lambda > r$ there exists $N$ such that $C_\lambda \lambda^N - C_1 r^N > 0$, and so
    \begin{equation*}
      \norm{\Pi_{E_{\omega} || F_\omega} v} \le \frac{C_2 R^N}{C_\lambda \lambda^N - C_1 r^N} \wnorm{\Pi_{E_{\omega} || F_\omega} v}.
    \end{equation*}
  \end{proof}
\end{lemma}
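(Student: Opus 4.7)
The plan is to play two bounds on $\norm{L_\omega^n \Pi_{E_\omega || F_\omega} v}$ against each other: the upper bound supplied by the uniform Lasota-Yorke inequality \eqref{eq:ly}, and the lower bound supplied by hyperbolic expansion on the fast space. Since $\Pi_{E_\omega || F_\omega} v \in E_\omega$, property \ref{en:h2} applies to this particular vector and gives
\[
\norm{L_\omega^n \Pi_{E_\omega || F_\omega} v} \ge C_\lambda \lambda^n \norm{\Pi_{E_\omega || F_\omega} v},
\]
while \eqref{eq:ly} applied to the same vector yields
\[
\norm{L_\omega^n \Pi_{E_\omega || F_\omega} v} \le C_1 r^n \norm{\Pi_{E_\omega || F_\omega} v} + C_2 R^n \wnorm{\Pi_{E_\omega || F_\omega} v}.
\]

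Combining these and collecting the strong-norm terms on the left yields $(C_\lambda \lambda^n - C_1 r^n)\norm{\Pi_{E_\omega || F_\omega} v} \le C_2 R^n \wnorm{\Pi_{E_\omega || F_\omega} v}$. Under the standing assumption $r < \mu < \lambda$, the coefficient on the left becomes positive once $n$ is large enough, so I fix any such $N$ and set $K = C_2 R^N/(C_\lambda \lambda^N - C_1 r^N)$. Since the constants $C_\lambda, C_1, r, \lambda$ are all global (they come from the Lasota-Yorke class and from the hyperbolic splitting data, both of which are uniform in $\omega$ by hypothesis), the threshold $N$, and therefore $K$, is independent of $\omega$.

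There is no real obstacle here; the conceptual content of the lemma is simply that a direction which is uniformly expanding under $L$ cannot have arbitrarily small weak-norm eccentricity, because otherwise the Lasota-Yorke ceiling $C_1 r^n \norm{\cdot} + C_2 R^n \wnorm{\cdot}$ would eventually be broken by the forced exponential growth at rate $\lambda > r$. The only point requiring even mild care is the uniformity of $N$ in $\omega$, which is handed to us for free by the uniform nature of both hypotheses.
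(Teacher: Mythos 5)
Your proof is correct and follows exactly the same route as the paper: pit the Lasota--Yorke upper bound against the hyperbolic lower bound on $\norm{L_\omega^n \Pi_{E_\omega || F_\omega} v}$, rearrange, and choose $N$ large enough that $C_\lambda \lambda^N - C_1 r^N > 0$. Your remark on the $\omega$-uniformity of $N$ (and hence of $K$) is the correct observation and is implicit in the paper's version.
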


\begin{lemma}\label{lemma:fast_space_ss}
  For every $\eta > 0$ there exists $C_\eta > 0$ such that for every $\omega \in \Z$ and $v \in X_\omega$ we have
  \begin{equation*}
    \norm{\Pi_{E_\omega || F_\omega} v} \le \eta \norm{v} + C_\eta \wnorm{v}.
  \end{equation*}
  \begin{proof}
    By \eqref{eq:h1_angle} and \eqref{eq:ly} we have
    \begin{equation*}
      \norm{L_\omega^n \Pi_{E_{\omega} || F_\omega} v} = \norm{\Pi_{E_{\sigma^n(\omega)} || F_{\sigma^n(\omega)}} L_\omega^n v} \le \Theta C_1 r^n \norm{ v} + \Theta C_2 R^n \wnorm{v}.
    \end{equation*}
    On the other hand, by \eqref{eq:h2_expansion} we have
    \begin{equation*}
      \norm{L_\omega^n \Pi_{E_{\omega} || F_\omega} v} \ge C_\lambda \lambda^n \norm{ \Pi_{E_{\omega} || F_\omega}v}.
    \end{equation*}
    Hence
    \begin{equation*}
      \norm{ \Pi_{E_{\omega} || F_\omega}v} \le \Theta C_1 \left(\frac{r}{\lambda}\right)^n \norm{ v} + \Theta C_2 \left(\frac{R}{\lambda}\right)^n \wnorm{ v}.
    \end{equation*}
    Since $\lambda > r$, by taking $n$ large enough we may ensure that $\Theta C_1 \left(\frac{r}{\lambda}\right)^n < \eta$, from which the result immediately follows.
  \end{proof}
\end{lemma}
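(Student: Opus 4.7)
The plan is to exploit the expansion on the fast space established by \ref{en:h2}, combined with the equivariance of the projection $\Pi_{E_\omega || F_\omega}$ under $L$ and the Lasota-Yorke inequality. The key observation is that $\Pi_{E_\omega || F_\omega} v \in E_\omega$, where $L^n$ expands at rate at least $\lambda^n$, so we can bound the norm of $\Pi_{E_\omega || F_\omega} v$ by dividing a bound on $\norm{L_\omega^n \Pi_{E_\omega || F_\omega} v}$ by $C_\lambda \lambda^n$.

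More concretely, I would first apply \eqref{eq:h2_expansion} with $f = \Pi_{E_\omega || F_\omega} v \in E_\omega$ to obtain
\begin{equation*}
  \norm{\Pi_{E_\omega || F_\omega} v} \le \frac{1}{C_\lambda \lambda^n}\norm{L_\omega^n \Pi_{E_\omega || F_\omega} v}
\end{equation*}
for every $n \in \Z^+$. Next, I would use the fact that $L_\omega E_\omega = E_{\sigma(\omega)}$ and $L_\omega F_\omega \subseteq F_{\sigma(\omega)}$ (the intertwining identity noted just after Definition \ref{def:hyperbolic_splitting}) to push the projection through the cocycle: $L_\omega^n \Pi_{E_\omega || F_\omega} = \Pi_{E_{\sigma^n(\omega)} || F_{\sigma^n(\omega)}} L_\omega^n$. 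Then the uniform angle bound \eqref{eq:h1_angle} and the Lasota-Yorke inequality \eqref{eq:ly} give
\begin{equation*}
  \norm{L_\omega^n \Pi_{E_\omega || F_\omega} v} = \norm{\Pi_{E_{\sigma^n(\omega)} || F_{\sigma^n(\omega)}} L_\omega^n v} \le \Theta (C_1 r^n \norm{v} + C_2 R^n \wnorm{v}).
\end{equation*}

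Combining these two estimates yields, for every $n \in \Z^+$,
\begin{equation*}
  \norm{\Pi_{E_\omega || F_\omega} v} \le \frac{\Theta C_1}{C_\lambda}\left(\frac{r}{\lambda}\right)^n \norm{v} + \frac{\Theta C_2}{C_\lambda}\left(\frac{R}{\lambda}\right)^n \wnorm{v}.
\end{equation*}
Since $r < \lambda$ by hypothesis (recall $r \le \mu < \lambda$), the coefficient of $\norm{v}$ tends to $0$ as $n \to \infty$. Given $\eta > 0$, I would choose $n = n(\eta)$ large enough that $\Theta C_1 C_\lambda^{-1} (r/\lambda)^n < \eta$, and set $C_\eta = \Theta C_2 C_\lambda^{-1} (R/\lambda)^{n(\eta)}$, which depends only on $\eta$ and not on $\omega$ or $v$. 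There is no real obstacle here beyond book-keeping; the only subtlety worth flagging is that the argument uses $r < \lambda$ rather than the stronger hypothesis $\mu < \lambda$ used elsewhere in Section \ref{sec:saks_space_stability}, so it would apply even to cocycles whose Lasota-Yorke exponent $r$ lies between the two hyperbolic rates.
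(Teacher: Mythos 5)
Your argument is correct and is essentially the same as the paper's: both apply \eqref{eq:h2_expansion} to lower-bound $\norm{L_\omega^n \Pi_{E_\omega || F_\omega} v}$, push the projection through the cocycle to upper-bound it via \eqref{eq:h1_angle} and the Lasota-Yorke inequality \eqref{eq:ly}, divide, and choose $n$ large using $r < \lambda$. You merely present the two estimates in the reverse order, and your bookkeeping is actually slightly more careful (you retain the $C_\lambda^{-1}$ factor that the paper's displayed inequality silently absorbs).
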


\begin{remark}\label{remark:equicont_proj}
    Since $\wnorm{\cdot} \le \norm{\cdot}$, by Lemma \ref{lemma:fast_space_ss} and Proposition \ref{prop:saks_equicont} we have $\Pi_{E_\omega || F_\omega}, \Pi_{F_\omega || E_\omega} \in \LL_S(X_\omega)$ for each $\omega$.
    It is interesting to note that we did not use the fact that $L \in \End_S(\mathbb{X}, \sigma)$ in the proof of Lemma \ref{lemma:fast_space_ss}.
\end{remark}

\begin{lemma}\label{lemma:equicont_power}
  If $R, S \in \End_S(\mathbb{X}, \sigma)$ then $R_1S \in \End_S(\mathbb{X}, \sigma^2)$.
  Hence if $R \in \End_S(\mathbb{X}, \sigma)$ then $R^n \in \End_S(\mathbb{X}, \sigma^n)$ for every $n \in \Z^+$.
  \begin{proof}
    We of course have
    \begin{equation*}
      \sup_{\omega \in \Omega} \norm{R_{\sigma(\omega)} S_\omega} \le \left(\sup_{\omega \in \Omega} \norm{R_{\omega}}\right)\left( \sup_{\omega \in \Omega} \norm{S_\omega} \right)< \infty.
    \end{equation*}
    Let $\eta > 0$.
    For every $\kappa_1, \kappa_2 > 0$ there exists $C_{\kappa_1}, D_{\kappa_2}$ such that for every $\omega \in \Omega$, $f \in X_\omega$ and $g \in X_{\sigma(\omega)}$ we have $\wnorm{S_{\omega} f} \le \max\{\kappa_1 \norm{f}, C_{\kappa_1} \wnorm{f}\}$ and $\wnorm{R_{\sigma(\omega)} g} \le \max\{\kappa_2 \norm{g}, D_{\kappa_2} \wnorm{g}\}$.
    Hence for every $\omega \in \Omega$ and $f \in X_\omega$ we have
    \begin{equation*}
      \wnorm{R_{\sigma(\omega)}S_{\omega} f} \le \max\{\kappa_2 \norm{S_\omega f}, D_{\kappa_2} \wnorm{S_\omega f}_{\omega} \} \le \max\left\{\max\{\kappa_2 \norm{S_\omega}, \kappa_1 D_{\kappa_2}\} \norm{f}, C_{\kappa_1}D_{\kappa_2} \wnorm{f} \right\}.
    \end{equation*}
    Setting $\kappa_2 = (\sup_{\omega \in \Omega}\norm{S_\omega})^{-1} \eta$, $\kappa_1 = D_{\kappa_2}^{-1}\eta$ and $C_\eta =  C_{\kappa_1}D_{\kappa_2}$ yields $\wnorm{R_{\sigma(\omega)}S_\omega f} \le \max\left\{ \eta \norm{f}, C_\eta \wnorm{f} \right\}$.
    Thus $R S \in \End_S(\mathbb{X}, \sigma^2)$.
  \end{proof}
\end{lemma}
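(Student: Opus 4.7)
The goal is to show that composition preserves Saks-space equicontinuity, and then to iterate. The strong-norm bound is immediate from submultiplicativity: $\sup_{\omega} \norm{R_{\sigma(\omega)} S_\omega} \le (\sup_\omega \norm{R_\omega})(\sup_\omega \norm{S_\omega}) < \infty$, so the only real content is to verify the equicontinuity inequality \eqref{eq:equicont_seq_1} for the composition.

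For the composition, my plan is to cascade the inequality \eqref{eq:equicont_seq_1} for $R$ and for $S$ using two parameters that will be chosen in sequence. Fix $\eta > 0$ and set $M_S := \sup_\omega \norm{S_\omega}$. For any parameter $\kappa > 0$ (to be chosen), equicontinuity of $R$ supplies a constant $C_\kappa$ such that $\wnorm{R_{\sigma(\omega)} g} \le \kappa \norm{g} + C_\kappa \wnorm{g}$ for every $\omega$ and every $g \in X_{\sigma(\omega)}$. Applying this with $g = S_\omega f$ and invoking the strong-norm bound $\norm{S_\omega f} \le M_S \norm{f}$ gives
\begin{equation*}
  \wnorm{R_{\sigma(\omega)} S_\omega f} \le \kappa M_S \norm{f} + C_\kappa \wnorm{S_\omega f}.
\end{equation*}
Now, for a second parameter $\eta' > 0$ (depending on $\kappa$, to be chosen), equicontinuity of $S$ gives a constant $C'_{\eta'}$ with $\wnorm{S_\omega f} \le \eta' \norm{f} + C'_{\eta'} \wnorm{f}$. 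Substituting yields
\begin{equation*}
  \wnorm{R_{\sigma(\omega)} S_\omega f} \le (\kappa M_S + C_\kappa \eta') \norm{f} + C_\kappa C'_{\eta'} \wnorm{f}.
\end{equation*}
The trick is that $\kappa$ and $\eta'$ can be chosen independently: take $\kappa = \eta/(2 M_S)$ to control the first half of the coefficient of $\norm{f}$, which fixes $C_\kappa$; then take $\eta' = \eta/(2 C_\kappa)$ to control the second half. Setting $C_\eta := C_\kappa C'_{\eta'}$ delivers $\wnorm{R_{\sigma(\omega)} S_\omega f} \le \eta \norm{f} + C_\eta \wnorm{f}$ uniformly in $\omega$, which is exactly \eqref{eq:equicont_seq_1} for the composition covering $\sigma^2$.

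For the second assertion, I would induct on $n$. The base case $n = 1$ is the hypothesis. For the inductive step, the argument above did not use any special feature of the base dynamics of $R$ or $S$ beyond the compatibility $\pi \circ R = \sigma \circ \pi$, so the same proof shows that if $R^{n-1} \in \End_S(\mathbb{X}, \sigma^{n-1})$ and $R \in \End_S(\mathbb{X}, \sigma)$, then their fiberwise composition $R^n$ lies in $\End_S(\mathbb{X}, \sigma^n)$. The only mild subtlety is that the ambient base transformations differ across the two factors, but the constants $M_S$, $C_\kappa$, $C'_{\eta'}$ are all uniform in $\omega$, so the argument goes through unchanged.

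The main obstacle, such as it is, is simply the coordinated choice of parameters: one must split $\eta$ into two pieces and choose the two parameters sequentially so that $C_\kappa$ (which comes from controlling $R$) is known before one picks $\eta'$ (which controls $S$). This asymmetric two-step selection is the essential idea; everything else is submultiplicativity of norms and an induction.
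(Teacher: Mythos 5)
Your proof is correct and is essentially the same as the paper's: both cascade the equicontinuity inequality by first fixing the parameter controlling $R$ (which determines the constant $C_\kappa$) and then choosing the parameter controlling $S$ in terms of $C_\kappa$, and both obtain the iterated statement by induction. The only cosmetic difference is that you work directly with the additive form of the inequality in Definition \ref{def:equicont_seq}, whereas the paper uses the equivalent maximum form from Proposition \ref{prop:saks_equicont}; this changes nothing of substance.
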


\begin{lemma}\label{lemma:tnorm_vanish}
  For every $n \in \Z^+$ and $\epsilon > 0$ there exists $\kappa > 0$ so that if $S \in \mathcal{LY}(C_1, C_2, r, R) \cap \mathcal{O}_{\kappa}(L)$ then
  \begin{equation}\label{eq:tnorm_vanish_0}
    \sup_{\omega \in \Omega} \tnorm{L_\omega^n - S_\omega^n} \le \epsilon.
  \end{equation}
  \begin{proof}
    For $S \in \mathcal{LY}(C_1, C_2, r, R)$ we may write
    \begin{equation*}
      L_\omega^n - S_\omega^n = \sum_{k=0}^{n-1} L_{\sigma^{k+1}(\omega)}^{n- k - 1}(L_{\sigma^k(\omega)} - S_{\sigma^k(\omega)})S_{\omega}^k.
    \end{equation*}
    Since $L \in \End_S(\mathbb{X}, \sigma)$, Lemma \ref{lemma:equicont_power} and \eqref{eq:power_bound} imply that for every $\eta > 0$ there exists $C_\eta$ such that for every $\omega \in \Omega$ and $S \in \mathcal{LY}(C_1, C_2, r, R) \cap \mathcal{O}_{\kappa}(L)$ we have
    \begin{equation*}\begin{split}
      \tnorm{L_\omega^n - S_\omega^n} &\le \sum_{k=0}^{n-1} \left(\eta \norm{L_{\sigma^k(\omega)} - S_{\sigma^k(\omega)}} + C_\eta \tnorm{L_{\sigma^k(\omega)} - S_{\sigma^k(\omega)} }\right) \norm{S_{\omega}^k} \\
      &\le C_3 \sum_{k=0}^{n-1} \left(2C_3 R \eta + \kappa C_\eta  \right) R^k \le nC_3\left(2C_3 R \eta + \kappa C_\eta  \right) \max\{1, R^n\}.
    \end{split}\end{equation*}
    We obtain \eqref{eq:tnorm_vanish_0} by choosing $\eta$ so that $n2C_3^2 R \eta\max\{1, R^n\} < \epsilon / 2$, and then taking $\kappa$ small enough so that $nC_3 \kappa C_\eta \max\{1, R^n\} < \epsilon / 2$
  \end{proof}
\end{lemma}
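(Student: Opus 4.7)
The plan is to derive the bound from the standard telescoping identity
\begin{equation*}
  L_\omega^n - S_\omega^n = \sum_{k=0}^{n-1} L_{\sigma^{k+1}(\omega)}^{n- k - 1}(L_{\sigma^k(\omega)} - S_{\sigma^k(\omega)})S_{\omega}^k,
\end{equation*}
together with the Saks space equicontinuity of the iterates of $L$ and the Lasota--Yorke bound on powers of $S$. Since we want to estimate the triple norm (i.e.\ norm-to-weak-norm operator norm) of the left hand side, I will apply $\wnorm{\cdot}$ to $(L_\omega^n - S_\omega^n)f$ for $f \in B_{X_\omega}$ and push the triple norm through each summand on the right via $\wnorm{AB C f} \le \tnorm{AB}\norm{C}\norm{f}$ where $A = L^{n-k-1}$, $B = L - S$, and $C = S^k$.

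The crucial input is that $L \in \End_S(\mathbb{X},\sigma)$, so by Lemma \ref{lemma:equicont_power} every iterate $L^{n-k-1} \in \End_S(\mathbb{X}, \sigma^{n-k-1})$ and therefore satisfies a Saks space equicontinuity inequality: for each $\eta > 0$ there exists $C_\eta > 0$ (depending on $n$, via the fixed finite range $0 \le k \le n-1$) with $\wnorm{L_{\sigma^{k+1}(\omega)}^{n-k-1} g} \le \eta \norm{g} + C_\eta \wnorm{g}$ uniformly in $\omega, k$. Applied to $g = (L_{\sigma^k(\omega)} - S_{\sigma^k(\omega)}) S_\omega^k f$, this yields
\begin{equation*}
  \tnorm{L_{\sigma^{k+1}(\omega)}^{n- k - 1}(L_{\sigma^k(\omega)} - S_{\sigma^k(\omega)})S_{\omega}^k} \le \bigl(\eta \norm{L_{\sigma^k(\omega)} - S_{\sigma^k(\omega)}} + C_\eta \tnorm{L_{\sigma^k(\omega)} - S_{\sigma^k(\omega)}}\bigr)\norm{S_\omega^k}.
\end{equation*}
Now use the Lasota--Yorke bound \eqref{eq:power_bound} to see $\norm{S_\omega^k} \le C_3 R^k$, apply the same bound with $k=1$ to both $L$ and $S$ to get $\norm{L_{\sigma^k(\omega)} - S_{\sigma^k(\omega)}} \le 2 C_3 R$, and use $\tnorm{L_{\sigma^k(\omega)} - S_{\sigma^k(\omega)}} \le \kappa$ by the hypothesis $S \in \mathcal{O}_\kappa(L)$. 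Summing $k = 0, \dots, n-1$ gives a bound of the form $n C_3 \max\{1,R^n\}(2C_3 R\, \eta + C_\eta \kappa)$.

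To finish, I choose the parameters in the correct order: first pick $\eta > 0$ small enough that the $\eta$-term is at most $\epsilon/2$; then, with the resulting $C_\eta$ now fixed, pick $\kappa$ small enough that the $\kappa$-term is at most $\epsilon/2$. Since $n$ is fixed throughout and the estimate is uniform in $\omega$, taking the supremum over $\omega$ preserves the bound and yields the conclusion. There is no real obstacle here; the only subtle point is the order of quantifiers---$\eta$ must be chosen first so that $C_\eta$ is fixed before $\kappa$ is chosen---which is exactly the standard way Saks space equicontinuity is deployed.
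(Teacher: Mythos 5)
Your proof is correct and takes essentially the same route as the paper: the same telescoping decomposition, the same use of Lemma \ref{lemma:equicont_power} and \eqref{eq:power_bound}, and the same order of quantifiers ($\eta$ first, then $\kappa$). No discrepancies.
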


\begin{proof}[{The proof of Proposition \ref{prop:perturbed_spaces_bound}}]
  We prove \eqref{eq:perturbed_spaces_bound_slow} and \eqref{eq:perturbed_spaces_bound_fast} separately.
  ~\paragraph{The proof of \eqref{eq:perturbed_spaces_bound_slow}.}
  By telescoping we have
  \begin{equation}\label{eq:perturbed_spaces_bound_1}
    \norm{\restr{S_{\omega}^n}{F_\omega}} \le \norm{\restr{L_{\omega}^n}{F_\omega}} + \norm{\restr{(L_{\omega}^n - S_{\omega}^n)}{F_{\omega}}}\le \norm{\restr{L_{\omega}^n}{F_{\omega}}} + \sum_{k=0}^{n-1}
    \norm{S_{\sigma^{k+1}(\omega)}^{n-k-1}  (L_{\sigma^{k}(\omega)} - S_{\sigma^{k}(\omega)}) \restr{L_{\omega}^k}{F_{\omega}}}.
  \end{equation}
  Since $S, L \in \mathcal{LY}(C_1, C_2, r, R)$, by \eqref{eq:ly} and \eqref{eq:power_bound} we get
  \begin{equation}\begin{split}\label{eq:perturbed_spaces_bound_2}
    \sum_{k=0}^{n-1}
    &\norm{S_{\sigma^{k+1}(\omega)}^{n-k-1}  (L_{\sigma^{k}(\omega)} - S_{\sigma^{k}(\omega)}) \restr{L_{\omega}^k}{F_{\omega}}} \\
    &\le \sum_{k=0}^{n-1}
    C_1 r^{n-k-1}\norm{(L_{\sigma^{k}(\omega)} - S_{\sigma^{k}(\omega)}) \restr{L_{\omega}^k}{F_{\omega}}} + C_2 R^{n-k-1} \tnorm{(L_{\sigma^{k}(\omega)} - S_{\sigma^{k}(\omega)}) \restr{L_{\omega}^k}{F_{\omega}}} \\
    &\le \sum_{k=0}^{n-1} \left(
    2C_1C_3R r^{n-k-1}  + C_2 R^{n-k-1} \tnorm{L_{\sigma^{k}(\omega)} - S_{\sigma^{k}(\omega)}} \right)\norm{\restr{L_{\omega}^k}{F_{\omega}}}.
  \end{split}\end{equation}
  Combining \eqref{eq:perturbed_spaces_bound_1} and \eqref{eq:perturbed_spaces_bound_2}, and then Applying \eqref{eq:h3_decay} yields
  \begin{equation*}
    \norm{\restr{S_{\omega}^n}{F_{\omega}}} \le C_\mu \mu^n + C_\mu\mu^n \sum_{k=0}^{n-1} \left(
    2C_1C_3R\mu^{-1} \left(\frac{r}{\mu}\right)^{n-k-1} + C_2\mu^{-1}\left( \frac{R}{\mu}\right)^{n-k-1} \tnorm{L_{\sigma^{k}(\omega)} - S_{\sigma^{k}(\omega)}} \right).
  \end{equation*}
  By Lemma \ref{lemma:tnorm_vanish} and as $\mu > r$, for any $n \in \Z^+$ there exists $\epsilon_n > 0$ so that if $S \in \mathcal{O}_{\epsilon_n}(L) \cap\mathcal{LY}(C_1, C_2, r, R)$ then
  \begin{equation*}\begin{split}
    \sum_{k=0}^{n-1} \left(
    2C_1C_3R\mu^{-1} \left(\frac{r}{\mu}\right)^{n-k-1} + C_2\mu^{-1}\left( \frac{R}{\mu}\right)^{n-k-1} \tnorm{L_{\sigma^{k}(\omega)} - S_{\sigma^{k}(\omega)}} \right) &\le 3 C_1C_3R\mu^{-1}\sum_{k=0}^{n-1}
     \left(\frac{r}{\mu}\right)^{k}\\
    &\le \frac{3C_1C_3R}{\mu - r} := C'.
  \end{split}\end{equation*}
  Thus if $S \in \mathcal{O}_{\epsilon_n}(L) \cap\mathcal{LY}(C_1, C_2, r, R)$ and $\omega \in \Omega$ then $\norm{\restr{S_{\omega}^n}{F_{\omega}}} \le C_\mu C' \mu^n$.
  Setting $ N_\beta = \log(C_\mu C') / \log(1 + \beta/ \mu)$, we therefore get \eqref{eq:perturbed_spaces_bound_slow} whenever $n > N_\beta$, $S \in \mathcal{O}_{\epsilon_n}(L) \cap\mathcal{LY}(C_1, C_2, r, R)$ and $\omega \in \Omega$.

  \paragraph{The proof of \eqref{eq:perturbed_spaces_bound_fast}.}
  As $\wnorm{\cdot} \le \norm{\cdot}$, for each $v \in E_{\omega}$ we have
  \begin{equation}\begin{split}\label{eq:fast_space_expansion_1}
    \norm{\Pi_{E_{\sigma^n(\omega)} || F_{\sigma^n(\omega)}} S_{\omega}^n v}
    &\ge \wnorm{\Pi_{E_{\sigma^n(\omega)} || F_{\sigma^n(\omega)}} L_{\omega}^n v} - \wnorm{\Pi_{E_{\sigma^n(\omega)} || F_{\sigma^n(\omega)}}(L_{\omega}^n - S_{\omega}^n) v}.
  \end{split}\end{equation}
  Using Lemma \ref{lemma:fast_space_eccentricity}, \eqref{eq:h2_expansion} and the fact that $L_{\omega}^n v \in E_{\sigma^n(\omega)}$, we get
  \begin{equation}\label{eq:fast_space_expansion_2}
    \wnorm{\Pi_{E_{\sigma^n(\omega)} || F_{\sigma^n(\omega)}} L_{\omega}^n v} \ge K^{-1} \norm{L_{\omega}^n v} \ge K^{-1} C_\lambda \lambda^n \norm{v}.
  \end{equation}
  On the other hand, by Lemma \ref{lemma:fast_space_ss} we have for every $\eta > 0$ that
  \begin{equation}\begin{split}\label{eq:fast_space_expansion_3}
    \wnorm{\Pi_{E_{\sigma^n(\omega)} || F_{\sigma^n(\omega)}}(L_{\omega}^n - S_{\omega}^n) v} &\le \eta \norm{(L_{\omega}^n - S_{\omega}^n) v}+ C_\eta \wnorm{(L_{\omega}^n - S_{\omega}^n) v}\\
    &\le \left(2\eta C_3 R^n + C_\eta \tnorm{L_{\omega}^n - S_{\omega}^n} \right)\norm{v}.
  \end{split}\end{equation}
  Applying \eqref{eq:fast_space_expansion_2} and \eqref{eq:fast_space_expansion_3} to \eqref{eq:fast_space_expansion_1} yields
  \begin{equation*}
    \norm{\Pi_{E_{\sigma^n(\omega)} || F_{\sigma^n(\omega)}} S_{\omega}^n v} \ge \left(K^{-1} C_\lambda \lambda^n - 2\eta C_3 R^n - C_\eta \tnorm{L_{\omega}^n - S_{\omega}^n} \right) \norm{v}.
  \end{equation*}
  For each $n$ let $\eta$ be small enough so that $2\eta C_3 K C_\lambda^{-1} R^n < (\lambda)^n/4$.
  By Lemma \ref{lemma:tnorm_vanish} there exists $\epsilon_{n,\beta}$ so that if $S \in \mathcal{O}_{\epsilon_{n,\beta}}(L) \cap\mathcal{LY}(C_1, C_2, r, R)$ then $K C_{\eta}C_\lambda^{-1} \tnorm{L_{\omega}^n - S_{\omega}^n} < (\lambda - \beta)^n/4$.
  Thus if $S \in \mathcal{O}_{\epsilon_{n,\beta}}(L) \cap\mathcal{LY}(C_1, C_2, r, R)$ then $\norm{\Pi_{E_{\sigma^n(\omega)} || F_{\sigma^n(\omega)}} S_{\omega}^n v} \ge (2K)^{-1}C_\lambda \lambda^n \norm{v}$.
  Setting $N_\beta = \log(2^{-1} K^{-1} C_\lambda)/\log(1 - \beta / \lambda)$, we observe that if $n > N_\beta$ and $S \in \mathcal{O}_{\epsilon_{n,\beta}}(L) \cap\mathcal{LY}(C_1, C_2, r, R)$ then \eqref{eq:perturbed_spaces_bound_fast} holds.
\end{proof}

\subsection{Stability of the fast spaces}\label{sec:fast_space}

In this section we will construct perturbed fast spaces $(E_{\omega}^S)_{\omega \in \Omega} \in \prod_{\omega \in \Omega} \mathcal{G}^d(X_\omega)$ when $S \in \mathcal{LY}(C_1, C_2, r, R) \cap \mathcal{O}_\epsilon(L)$ for some small $\epsilon$, and then show that these spaces approximate $(E_{\omega})_{\omega \in \Omega}$ in a Saks space sense.
We will construct these spaces as the fixed point of the forward graph transform of an iterate of $S$, which we will prove is a contraction mapping on a certain cone of subspaces.
Specifically, for $U = (U_\omega)_{\omega \in \Omega} \in \prod_{\omega \in \Omega} \LL(F_\omega, E_\omega)$ such that $\restr{\Pi_{E_{\sigma^n(\omega)} || F_{\sigma^n(\omega)}} S_{\omega}^n (\Id + U_\omega )}{E_\omega }$ is invertible for every $\omega \in \Omega$ we define $(S^n)^* U$ by
\begin{equation*}
  ((S^n)^* U)_\omega = (S_{\sigma^{-n}(\omega)}^n)^* U_{\sigma^{-n}(\omega)},
\end{equation*}
where the forward graph transform has domain $\LL(E_{\sigma^{-n}(\omega)},F_{\sigma^{-n}(\omega)})$ and codomain $\LL(E_\omega, F_\omega)$.
For each $\omega \in \Omega$ and $a > 0$ we define
\begin{equation*}
  \mathcal{C}_{\omega,a} = \{ U \in \mathcal{L}(E_\omega , F_\omega ) : \norm{U} \le a \},
\end{equation*}
and set the fast cone field to be $\mathcal{C}_{a} = \prod_{\omega \in \Omega}  \mathcal{C}_{\omega,a}$.
For each $a > 0$ the fast cone field $\mathcal{C}_{a}$ is a complete metric space with the metric inherited from $\prod_{\omega \in \Omega} \LL(E_\omega ,F_\omega )$.
We may now state our first main result for this section.

\begin{proposition}\label{prop:fast_contraction_mapping}
  There exists $a_0, \epsilon_0 > 0, n_0 \in \Z^+$ so that if $S \in \mathcal{LY}(C_1, C_2, r, R) \cap \mathcal{O}_{\epsilon_0}(L)$ then $(S^{n_0})^* \mathcal{C}_{a_0} \subseteq \mathcal{C}_{a_0}$.
  Moreover, there exists $c_0 \in [0,1)$ such that for every $U, V \in \mathcal{C}_{a_0}$ and $\omega \in \Omega$ we have
  \begin{equation}\label{eq:fast_contraction_mapping_0}
    \norm{(S_{\omega}^{n_0})^*(U_\omega) -(S_{\omega}^{n_0})^*(V_\omega)} \le c_0\norm{U_\omega - V_\omega}
  \end{equation}
   i.e. $(S_{\omega}^{n_0})^*$ is a contraction mapping on $\mathcal{C}_{a_0}$.
\end{proposition}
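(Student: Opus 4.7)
The plan follows the standard open-condition argument for hyperbolicity, transported to the Saks-space setting of this paper. As a warm-up, because $L_\omega E_\omega = E_{\sigma(\omega)}$ and $L_\omega F_\omega \subseteq F_{\sigma(\omega)}$ give $\Pi_F L = L\Pi_F$ and $\Pi_E L = L\Pi_E$, the forward graph transform under $L^n$ reduces to the linear map $(L^n)^\ast U = (L^n|_F)\, U\, (L^n|_E)^{-1}$, whose operator norm is bounded by $C_\mu C_\lambda^{-1}(\mu/\lambda)^n \|U\|$ via \eqref{eq:h2_expansion} and \eqref{eq:h3_decay}. Hence, for $n_0$ large, $(L^{n_0})^\ast$ is a strict linear contraction on every cone $\mathcal{C}_{a}$. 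The remainder of the proof shows that $(S^{n_0})^\ast$ inherits these properties when $S$ is sufficiently close to $L$ in triple norm.

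Fix $\beta \in (0,(\lambda-\mu)/2)$ so that $(\mu+\beta)/(\lambda-\beta) < 1$, and apply Proposition \ref{prop:perturbed_spaces_bound} at some large $n_0 > N_\beta$ to obtain, for $S$ in a triple-norm neighbourhood of $L$, the bounds $\|S^{n_0}_\omega|_{F_\omega}\| \le (\mu+\beta)^{n_0}$ and $\|\Pi_E S^{n_0}_\omega v\| \ge (\lambda-\beta)^{n_0}\|v\|$ for $v \in E_\omega$. Writing $(S^{n_0})^\ast U = N_U M_U^{-1}$ with $M_U = \Pi_E S^{n_0}(\Id + U)|_{E}$ and $N_U = \Pi_F S^{n_0}(\Id + U)|_{E}$, decompose $M_U = \Pi_E S^{n_0}|_{E} + \Pi_E S^{n_0} U$; because $Uv \in F$, the second summand is bounded by $\Theta a_0 (\mu+\beta)^{n_0}$ via \eqref{eq:perturbed_spaces_bound_slow}, and is dominated by the lower bound on the first summand once $a_0$ is small, giving invertibility with $\|M_U^{-1}\| \le 2(\lambda-\beta)^{-n_0}$.

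For the contraction estimate \eqref{eq:fast_contraction_mapping_0}, the algebraic identity
\begin{equation*}
(S^{n_0})^\ast U - (S^{n_0})^\ast V = \Pi_F S^{n_0}(U - V)\, M_V^{-1} + (S^{n_0})^\ast U \cdot \Pi_E S^{n_0}(V - U)\, M_V^{-1}
\end{equation*}
re-expresses the difference through operators acting on the $F$-valued inputs $(U-V)v$ and $(V-U)v$; the bound $\|S^{n_0}|_F\| \le (\mu+\beta)^{n_0}$ together with $\|\Pi_E\|,\|\Pi_F\| \le \Theta$ then yields a contraction rate of order $(1+a_0)\bigl((\mu+\beta)/(\lambda-\beta)\bigr)^{n_0}$, strictly less than $1$ for $n_0$ large. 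Cone invariance follows from $\|(S^{n_0})^\ast U\| \le \|(S^{n_0})^\ast 0\| + c_0 \|U\|$, provided one can show $\|(S^{n_0})^\ast 0\| \le (1 - c_0)\, a_0$ for $S$ close enough to $L$.

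The main obstacle is this last estimate. Since $\Pi_F L^{n_0} v = 0$ for $v \in E$, one has $\Pi_F S^{n_0}|_E = \Pi_F(S^{n_0} - L^{n_0})|_E$, which is small in triple norm by Lemma \ref{lemma:tnorm_vanish}; however, this must be upgraded to a strong-norm bound with target in $F$, where $\|\cdot\|$ and $\wnorm{\cdot}$ are not a priori equivalent. The route is via the telescoping identity
\begin{equation*}
\Pi_F (S^n - L^n) v = \sum_{k=0}^{n-1} L^{n-k-1}\, \Pi_F(S - L)\, S^k v,
\end{equation*}
obtained from $\Pi_F L = L\Pi_F$, combined with the strong-norm contraction $\|L^{n-k-1}|_F\| \le C_\mu \mu^{n-k-1}$, the Saks continuity of $\Pi_F$ from Remark \ref{remark:equicont_proj}, and the Lasota-Yorke inequality to dominate the resulting $R^k$-growth by the $\mu^{n-k-1}$ decay once $\tnorm{S - L}$ and the ``$\eta$'' parameter in the Saks equicontinuity are taken small enough. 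With $n_0$ fixed from the contraction step and then $\epsilon_0$ chosen sufficiently small, all three conclusions of the proposition -- invertibility of $M_U$, cone invariance, and the contraction \eqref{eq:fast_contraction_mapping_0} -- hold uniformly over $\omega \in \Omega$ and $S \in \mathcal{LY}(C_1,C_2,r,R) \cap \mathcal{O}_{\epsilon_0}(L)$.
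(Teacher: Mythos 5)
Your plan deviates from the paper's at exactly the point where the paper has to be careful, and the deviation introduces gaps that do not close.

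\textbf{Circularity in the cone-preservation step.} You propose to derive cone invariance from the contraction inequality: $\norm{(S^{n_0})^*U} \le \norm{(S^{n_0})^*0} + c_0\norm{U}$. But the contraction constant $c_0 \sim (1+a_0)\bigl((\mu+\beta)/(\lambda-\beta)\bigr)^{n_0}$ is obtained by bounding the factor $(S^{n_0})^*U$ appearing in the algebraic identity by $a_0$ --- which is precisely the cone membership you are trying to establish. The paper's Lemma \ref{lemma:perturbed_fast_cone_preservation} breaks this loop by bounding $\norm{(S^n_\omega)^*U}$ directly from the definition, $\norm{\Pi_{F} S^n(\Id + U)}\cdot\norm{M_U^{-1}} \le \Theta\bigl(C_3 R^n + a(\mu+\beta)^n\bigr)(\lambda-\beta)^{-n}$, and then solves for an admissible $a = a_n$; no contraction estimate is invoked at this stage.

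\textbf{Invertibility of $M_U$.} Your bound $\norm{\Pi_E S^{n_0} U} \le \Theta a_0(\mu+\beta)^{n_0}$ requires $a_0$ ``small'' (relative to $(\lambda-\beta)^{n_0}$). But the cone preservation just discussed forces $a_0 \gtrsim \Theta C_3 R^{n_0}/\bigl((\lambda-\beta)^{n_0} - \Theta(\mu+\beta)^{n_0}\bigr) \sim (R/(\lambda-\beta))^{n_0}$, which is large. The two requirements are incompatible unless $R(\mu+\beta) < \lambda-\beta$, a constraint that is not part of the hypotheses. The paper's Lemma \ref{lemma:graph_transform_inv} avoids this entirely: since $\Pi_E L^n U = 0$, one can write $\Pi_E S^n U = \Pi_E(S^n - L^n)U$ and then apply Lemma \ref{lemma:fast_space_ss} ($\norm{\Pi_E v} \le \eta\norm{v} + C_\eta\wnorm{v}$) with $\eta$ chosen depending on $a$ and $n$, so that the $a$-dependence is absorbed. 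That choice of $\eta$, together with Lemma \ref{lemma:tnorm_vanish}, gives a bound that is independent of $a$ up to a fixed constant factor; this is the genuinely clever step, and it is missing from your plan.

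\textbf{The two-scale structure.} The paper's $n_0 = nk$ is essential: $n$ is chosen first to make $\mathcal{C}_{a_n}$ invariant under $(S^n)^*$ (and hence under all iterates $(S^{nk})^*$), and then $k$ is chosen large to force the contraction constant $\Theta(1+a_n)\bigl((\mu+\beta)/(\lambda-\beta)\bigr)^{nk} < 1$ with $a_n$ \emph{held fixed}. A single-scale argument as you propose, with $a_0$ and the contraction exponent both tied to the same $n_0$, generically fails since $a_0$ grows like $(R/(\lambda-\beta))^{n_0}$ and the product $(1+a_0)\bigl((\mu+\beta)/(\lambda-\beta)\bigr)^{n_0}$ need not converge to zero.

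\textbf{The strong-norm estimate on $(S^{n_0})^*0$.} Your telescoping identity for $\Pi_F(S^n-L^n)$ is correct, but the proposed upgrade from $\tnorm{\cdot}$-smallness to $\norm{\cdot}$-smallness of the output does not exist. Remark \ref{remark:equicont_proj} and Proposition \ref{prop:saks_equicont} give $\wnorm{\Pi_F v} \le \max\{\eta\norm{v}, C_\eta\wnorm{v}\}$; they do \emph{not} give $\norm{\Pi_F v} \le \eta\norm{v} + C_\eta\wnorm{v}$. The latter inequality holds for $\Pi_E$ (Lemma \ref{lemma:fast_space_ss}), and that is what the paper exploits, precisely because $E$ is finite-dimensional so $\norm{\cdot}$ and $\wnorm{\cdot}$ are uniformly equivalent on it (Lemma \ref{lemma:fast_space_eccentricity}); nothing of the sort is available for the infinite-dimensional $F$. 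In any case, once you adopt the paper's direct cone-preservation argument, no smallness of $(S^{n_0})^*0$ is needed at this stage --- it only becomes relevant later in Lemma \ref{lemma:tnorm_conv_fast_space_proj}, and there only in the $\tnorm{\cdot}$ sense.

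To repair the proposal: replace the bootstrapped cone preservation by the paper's direct estimate to define $a_n$, replace the naive bound on $\Pi_E S^n U$ by the paper's $\Pi_E(S^n - L^n)U$ plus Saks-continuity argument, and adopt the two-scale structure $n_0 = nk$ before asserting the contraction.
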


If $S$ satisfies the hypotheses of Proposition \ref{prop:fast_contraction_mapping} then we let
$U^S \in \mathcal{C}_{a_0}$ denote the unique fixed point of $(S^{n_0})^*$ and define $E_{\omega}^S = \Phi_{E_\omega  \oplus F_\omega }^{-1}(U_{\omega}^S) = (\Id + U_{\omega}^S)(E_\omega )$.
By Proposition \ref{prop:graph_chart} the sequence $(E_{\omega}^S)_{\omega \in \Omega} \in \prod_{\omega \in \Omega} \mathcal{N}(F_\omega)$ is fixed by $S^{n_0}$ i.e. $S_\omega^{n_0} E_{\omega}^S = E_{\sigma^{n_0}(\omega)}^S$ for every $\omega \in \Omega$.
Our second main result for this section confirms that if $\epsilon$ is sufficiently small then $(E_{\omega}^S)_{\omega \in \Omega}$ satisfy the estimate \eqref{eq:stability_cocycle_5} and that $(E_{\omega}^S)_{\omega \in \Omega}$ and $(E_{\omega})_{\omega \in \Omega}$ are close in a Saks space sense.

\begin{proposition}\label{prop:fast_space_props}
  We have
  \begin{equation}\label{eq:fast_space_props_1}
    \sup \left\{ \norm{\Pi_{E^S_\omega || F_\omega}}  : \omega \in \Omega, S \in \mathcal{LY}(C_1, C_2, r, R) \cap \mathcal{O}_{\epsilon_0}(L) \right\} < \infty.
  \end{equation}
  Moreover, for every $\beta \in (0, (\lambda - \mu)/2)$ and $\delta > 0$ there is $\epsilon_{\beta, \delta} \in (0, \epsilon_0)$ and $C_\beta > 0$ such that if $S \in \mathcal{LY}(C_1, C_2, r, R) \cap \mathcal{O}_{\epsilon_{\beta, \delta}}(L)$ then
  \begin{equation}\label{eq:fast_space_props_2}
    \sup_{\omega \in \Omega} \tnorm{\Pi_{E_{\omega}^S || F_\omega} - \Pi_{E_\omega || F_\omega}} \le \delta,
  \end{equation}
  and if, in addition, we have $\omega \in \Omega$, $v \in E_{\omega}^S$ and $n \in \Z^+$ then
  \begin{equation}\label{eq:fast_space_props_3}
    \norm{S_{\omega}^n v} \ge C_\beta^{-1} (\lambda - \beta)^n \norm{v}.
  \end{equation}
\end{proposition}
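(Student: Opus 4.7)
The strategy hinges on the graph identity $\Pi_{E^S_\omega || F_\omega} = (\Id + U^S_\omega)\Pi_{E_\omega || F_\omega}$ from Proposition \ref{prop:graph_chart}, which reduces every claim of the proposition to a property of the fixed point $U^S_\omega \in \mathcal{C}_{a_0}$ produced by Proposition \ref{prop:fast_contraction_mapping}. Claim \eqref{eq:fast_space_props_1} is then immediate: the identity, combined with $\norm{U^S_\omega} \le a_0$ and $\norm{\Pi_{E_\omega || F_\omega}} \le \Theta$, yields the uniform bound $(1 + a_0)\Theta$. Subtracting yields $\Pi_{E^S_\omega || F_\omega} - \Pi_{E_\omega || F_\omega} = U^S_\omega \Pi_{E_\omega || F_\omega}$, so claim \eqref{eq:fast_space_props_2} reduces to controlling $\sup_{e \in E_\omega,\,\norm{e}=1} \wnorm{U^S_\omega e}$ by $\delta/\Theta$.

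To obtain Saks-space smallness of $U^S_\omega$, I would use that $U^S$ is the $\norm{\cdot}$-limit of the iterates $U^{S,k} := ((S^{n_0})^*)^k 0$ with geometric rate $c_0^k a_0$ provided by Proposition \ref{prop:fast_contraction_mapping}, and that by the cocycle structure of the graph transform $U^{S,k}_\omega = (S^{kn_0}_{\omega'})^* 0$ where $\omega' := \sigma^{-kn_0}(\omega)$. The explicit formula for the forward graph transform of $0$, together with the $L$-invariance $L^{kn_0}_{\omega'}(E_{\omega'}) = E_\omega$ (which kills the $L$-part after applying $\Pi_{F_\omega||E_\omega}$), gives
\begin{equation*}
  U^{S,k}_\omega \, e = \Pi_{F_\omega || E_\omega}\bigl(S^{kn_0}_{\omega'} - L^{kn_0}_{\omega'}\bigr) A^{-1} e, \qquad A := \restr{\Pi_{E_\omega || F_\omega} S^{kn_0}_{\omega'}}{E_{\omega'}}.
\end{equation*}
Proposition \ref{prop:perturbed_spaces_bound} bounds $\norm{A^{-1}} \le (\lambda - \beta)^{-kn_0}$ once $\epsilon$ is sufficiently small relative to $k$, while the uniform Saks-space equicontinuity of the projections (Remark \ref{remark:equicont_proj} via Proposition \ref{prop:saks_equicont}) supplies, for every $\eta > 0$, a uniform constant $D_\eta$ such that $\wnorm{\Pi_{F_\omega || E_\omega} w} \le \eta \norm{w} + D_\eta \wnorm{w}$. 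Combining with $\norm{S^{kn_0}_{\omega'} - L^{kn_0}_{\omega'}} \le 2C_3 R^{kn_0}$ and Lemma \ref{lemma:tnorm_vanish} yields
\begin{equation*}
  \sup_{\omega} \tnorm{U^{S,k}_\omega} \le \bigl(2C_3 R^{kn_0} \eta + D_\eta \sup_\omega \tnorm{S^{kn_0}_\omega - L^{kn_0}_\omega}\bigr)(\lambda - \beta)^{-kn_0}.
\end{equation*}
The bookkeeping is to first fix $k$ with $c_0^k a_0 \le \delta/(2\Theta)$, next pick $\eta$ (depending on $k$) so that the first term is $\le \delta/(4\Theta)$, and finally invoke Lemma \ref{lemma:tnorm_vanish} to choose $\epsilon_{\beta,\delta}$ so small that the $\tnorm{\cdot}$-term is $\le \delta/(4\Theta)$; passing to the limit in $k$ produces \eqref{eq:fast_space_props_2}.

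For the expansion \eqref{eq:fast_space_props_3}, decompose any $v \in E^S_\omega$ as $v = e + U^S_\omega e$ with $e = \Pi_{E_\omega || F_\omega} v$, so $\norm{e} \ge \norm{v}/(1+a_0)$. For $n = kn_0$ large enough, project $S^n_\omega v$ onto $E_{\sigma^n(\omega)}$: the piece $\Pi_{E_{\sigma^n(\omega)} || F_{\sigma^n(\omega)}} S^n_\omega e$ is at least $(\lambda - \beta)^n\norm{e}$ by Proposition \ref{prop:perturbed_spaces_bound}, while the piece from $U^S_\omega e \in F_\omega$ becomes $\Pi_{E_{\sigma^n(\omega)} || F_{\sigma^n(\omega)}}(S^n_\omega - L^n_\omega)U^S_\omega e$ (since the $L$-image remains in $F$) and is controlled by Lemma \ref{lemma:fast_space_ss} together with Lemma \ref{lemma:tnorm_vanish} to be at most $(\lambda - \beta)^n\norm{e}/2$ after further shrinking $\epsilon$. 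Dividing by $\norm{\Pi_{E_{\sigma^n(\omega)} || F_{\sigma^n(\omega)}}} \le \Theta$ and using $\norm{e} \ge \norm{v}/(1+a_0)$ yields expansion at times $kn_0$; for general $n = kn_0 + r$ with $0 \le r < n_0$, the uniform bound $\norm{S^{n_0-r}_\cdot} \le C_3 R^{n_0}$ permits interpolation with the resulting constant absorbed into $C_\beta$.

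The main obstacle is the balancing act in claim \eqref{eq:fast_space_props_2}: the contraction from Proposition \ref{prop:fast_contraction_mapping} lives only in the strong operator norm, so Saks-space refinement must be squeezed out via a long iterate $U^{S,k}$ whose explicit form contains a ratio $R^{kn_0}/(\lambda-\beta)^{kn_0}$ that may be exponentially large in $k$, forcing the parameters $\eta$ and $\epsilon_{\beta,\delta}$ to be chosen in a carefully $k$-dependent order rather than all at once.
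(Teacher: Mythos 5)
Your approach to \eqref{eq:fast_space_props_1} and \eqref{eq:fast_space_props_2} matches the paper's. For \eqref{eq:fast_space_props_1} the argument is identical. For \eqref{eq:fast_space_props_2} the paper likewise writes $\tnorm{U^S_\omega} \le \tnorm{(S^{n_0k}_{\sigma^{-n_0k}(\omega)})^*0} + \norm{(S^{n_0k}_{\sigma^{-n_0k}(\omega)})^*U^S_{\sigma^{-n_0k}(\omega)} - (S^{n_0k}_{\sigma^{-n_0k}(\omega)})^*0}$, bounds the second term by $c_0^k a_0$, and estimates $\tnorm{(S^{n_0k}_{\sigma^{-n_0k}(\omega)})^*0}$ via the explicit graph-transform formula, Lemma \ref{lemma:fast_space_ss}, and Lemma \ref{lemma:tnorm_vanish}, fixing parameters in the order $k$, then $\eta$, then $\epsilon_\delta$. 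One point of note: your explicit formula $U^{S,k}_\omega e = \Pi_{F_\omega||E_\omega}(S^{kn_0}_{\omega'} - L^{kn_0}_{\omega'})\bigl(\restr{\Pi_{E_\omega||F_\omega}S^{kn_0}_{\omega'}}{E_{\omega'}}\bigr)^{-1}e$ is the correct one; the paper's display writes the inverse of $\restr{\Pi_{E_\omega||F_\omega}L^{kn_0}_{\omega'}}{E_{\omega'}}$ instead, which appears to be a small error there (though either choice of inverse supports an adequate upper bound, via Proposition \ref{prop:perturbed_spaces_bound} for the $S$-version or \eqref{eq:h2_expansion} for the $L$-version).

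For \eqref{eq:fast_space_props_3}, however, your sketch has a genuine gap. Your estimate produces, for a \emph{single} fixed $n = k n_0$ (with $k$ large and $\epsilon_{\beta,\delta}$ chosen small relative to that $k$), the bound $\norm{S^n_\omega v} \ge \Theta^{-1}(2(1+a_0))^{-1}(\lambda - \beta)^n\norm{v}$. This does not by itself give the claim for all $n \in \Z^+$, because both Proposition \ref{prop:perturbed_spaces_bound} and Lemma \ref{lemma:tnorm_vanish} require $\epsilon$ to be small relative to the time horizon, so you cannot let $k$ range freely. The paper handles this in two steps that your write-up omits. First, Lemma \ref{lemma:lower_lyapunov} arranges the per-block estimate to hold with leading constant exactly $1$, i.e.\ $\norm{S^{k_\beta n_0}_\omega v} \ge (\lambda - \beta)^{k_\beta n_0}\norm{v}$ (achieved by taking $k_\beta$ so large that the expansion term dominates the absolute constant $2\Theta(1+a_0)$). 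This matters because the equivariance $S^{n_0}_\omega E^S_\omega = E^S_{\sigma^{n_0}(\omega)}$ is then used to \emph{iterate} the block estimate across blocks of length $k_\beta n_0$, and a leading constant $<1$ would decay under iteration. Your stated constant $\Theta^{-1}(2(1+a_0))^{-1} < 1$ would have to be absorbed by replacing $\beta$ with a smaller auxiliary parameter before iterating, a reparameterisation you do not mention. Second, your interpolation remark reduces $n$ modulo $n_0$, but the correct remainder is modulo the block length $k_\beta n_0$: one writes $n = m k_\beta n_0 + j$ with $0 \le j < k_\beta n_0$, iterates the expansion $m+1$ times, and pays a factor $\norm{S^{k_\beta n_0 - j}}^{-1}$. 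Without the iteration and without reducing modulo the correct block length, your argument only controls finitely many $n$ and does not prove \eqref{eq:fast_space_props_3}.

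Your choice to project onto $E_{\sigma^n(\omega)}$ and use Proposition \ref{prop:perturbed_spaces_bound} directly, instead of the paper's route through $\wnorm{\cdot}$, Lemma \ref{lemma:fast_space_eccentricity}, and \eqref{eq:h2_expansion}, is a reasonable variant and would work once the iteration is restored; it is not an error of strategy, only an incomplete execution.
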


We will focus on proving Proposition \ref{prop:fast_contraction_mapping} first.

\begin{lemma}\label{lemma:graph_transform_inv}
  Fix $\beta \in (0, (\lambda - \mu)/2)$ and $a > 0$.
  There exists constants $M_{\beta}$ and, for each $n > M_\beta$, $\epsilon_{n,\beta,a} > 0$ such that if $S \in \mathcal{LY}(C_1, C_2, r, R) \cap \mathcal{O}_{\epsilon_{n,\beta,a}}(L)$, $\omega \in \Omega$ and $U \in \mathcal{C}_{\omega,a}$ then $\Pi_{E_{\sigma^{n}(\omega)} || F_{\sigma^{n}(\omega)}} S_\omega^{n}(\Id + U) : E_\omega \to E_{\sigma^{n}(\omega)}$ is invertible with
  \begin{equation}\label{eq:graph_transform_inv_0}
    \norm{\left(\restr{\Pi_{E_{\sigma^{n}(\omega)} || F_{\sigma^{n}(\omega)}} S_{\omega}^{n}(\Id + U)}{E_\omega}\right)^{-1}} \le (\lambda - \beta)^{-n}.
  \end{equation}
  \begin{proof}
    By Proposition \ref{prop:perturbed_spaces_bound} there exists $M_\beta$ and, for each $n > M_\beta$, $\epsilon_{n,\beta} > 0$ such that for all $\omega \in \Omega$ and $S \in \mathcal{LY}(C_1, C_2, r, R) \cap \mathcal{O}_{\epsilon_{n,\beta}}(L)$ we have
    \begin{equation}\label{eq:graph_transform_inv_1}
      \norm{\left(\restr{\Pi_{E_{\sigma^{n}(\omega)} || F_{\sigma^{n}(\omega)}}S_{\omega}^n}{E_\omega}\right)^{-1}} \le 2(\lambda - \beta)^{-n}.
    \end{equation}
    On the other hand, since $\Pi_{E_{\sigma^n(\omega)} || F_{\sigma^n(\omega)}}L_{\omega}^nU = 0$ and by Lemma \ref{lemma:fast_space_ss} we have for every $\eta > 0$ that
    \begin{equation}\begin{split}\label{eq:graph_transform_inv_2}
      \norm{\restr{\Pi_{E_{\sigma^n(\omega)} || F_{\sigma^n(\omega)}} S_{\omega}^n U}{E_\omega}} &= \norm{\restr{\Pi_{E_{\sigma^n(\omega)} || F_{\sigma^n(\omega)}} (S_{\omega}^n - L_\omega^n) U}{E_\omega}} \\
      &\le \eta \norm{\restr{(S_{\omega}^n - L_\omega^n) U}{E_\omega}} + C_\eta \tnorm{\restr{(S_{\omega}^n - L_\omega^n) U}{E_\omega}} \\
      &\le 2a \eta C_3 R^n  + a C_\eta \tnorm{S_{\omega}^n - L_\omega^n}.
    \end{split}\end{equation}
    By fixing $\eta = \frac{(\lambda - \beta)^{n}}{4a C_3 R^n }$ and applying Lemma \ref{lemma:tnorm_vanish} we find $\epsilon_{n,\beta, a} \in (0,\epsilon_{n,\beta})$ such that if $S \in \mathcal{LY}(C_1, C_2, r, R) \cap \mathcal{O}_{\epsilon_{n,\beta, a}}(L)$ then $a C_\eta \tnorm{S_{\omega}^n - L_\omega^n} \le (\lambda - \beta)^{n}/2$.
    Applying these bounds to \eqref{eq:graph_transform_inv_2} implies that if $S \in \mathcal{LY}(C_1, C_2, r, R) \cap \mathcal{O}_{\epsilon_{n,\beta, a}}(L)$, $\omega \in \Omega$ and $U \in \mathcal{C}_{\omega,a}$ then
    \begin{equation}\label{eq:graph_transform_inv_3}
      \norm{\restr{\Pi_{E_{\sigma^n(\omega)} || F_{\sigma^n(\omega)}} S_{\omega}^n U}{E_\omega}} \le (\lambda - \beta)^{n}.
    \end{equation}
    By combining \eqref{eq:graph_transform_inv_1} and \eqref{eq:graph_transform_inv_3} we confirm that $\restr{\Pi_{E_{\sigma^n(\omega)} || F_{\sigma^n(\omega)}} S_{\omega}^n(\Id + U)}{E_\omega}$ is invertible, and that the estimate \eqref{eq:graph_transform_inv_0} holds.
  \end{proof}
\end{lemma}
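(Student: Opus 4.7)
The plan is to decompose
\[
  \Pi_{E_{\sigma^{n}(\omega)} || F_{\sigma^{n}(\omega)}} S_{\omega}^{n}(\Id + U)
  = \Pi_{E_{\sigma^{n}(\omega)} || F_{\sigma^{n}(\omega)}} S_{\omega}^{n}
  + \Pi_{E_{\sigma^{n}(\omega)} || F_{\sigma^{n}(\omega)}} S_{\omega}^{n} U,
\]
view both terms as operators $E_\omega \to E_{\sigma^n(\omega)}$, invert the first via hyperbolic expansion, and treat the second as a small perturbation handled by a Neumann series.

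For the main term, I would invoke \eqref{eq:perturbed_spaces_bound_fast} with an auxiliary rate $\beta' \in (0,\beta)$ (say $\beta'=\beta/2$); provided $n$ exceeds the $N_{\beta'}$ from Proposition \ref{prop:perturbed_spaces_bound} and $S$ lies in some $\mathcal{O}_{\epsilon_{n,\beta'}}(L)$, this yields $\|\Pi_{E_{\sigma^n(\omega)} || F_{\sigma^n(\omega)}} S_\omega^n v\| \ge (\lambda-\beta')^n \|v\|$ for $v \in E_\omega$, i.e.\ the restriction of $\Pi_{E_{\sigma^n(\omega)} || F_{\sigma^n(\omega)}} S_\omega^n$ to $E_\omega$ is invertible onto its range in $E_{\sigma^n(\omega)}$ with inverse of norm at most $(\lambda-\beta')^{-n}$. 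Set $M_\beta := N_{\beta/2}$.

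For the perturbation term, the key observation is that $L_\omega^n U : E_\omega \to F_{\sigma^n(\omega)}$ by \ref{en:h3}, hence $\Pi_{E_{\sigma^n(\omega)} || F_{\sigma^n(\omega)}} L_\omega^n U = 0$ and
\[
  \restr{\Pi_{E_{\sigma^n(\omega)} || F_{\sigma^n(\omega)}} S_\omega^n U}{E_\omega}
  = \restr{\Pi_{E_{\sigma^n(\omega)} || F_{\sigma^n(\omega)}}(S_\omega^n - L_\omega^n) U}{E_\omega}.
\]
I would bound this in operator norm by combining Lemma \ref{lemma:fast_space_ss} (applied fibrewise to $\Pi_{E_{\sigma^n(\omega)} || F_{\sigma^n(\omega)}}$) with the Lasota-Yorke bound \eqref{eq:power_bound} and $\|U\| \le a$: for any $\eta>0$ the expression is at most $2a\eta C_3 R^n + a C_\eta \tnorm{S_\omega^n - L_\omega^n}$. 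With $n$ fixed, I would first pick $\eta$ small enough that the $R^n$-term is below $\tfrac{1}{2}\bigl((\lambda-\beta')^{-n} - (\lambda-\beta)^{-n}\bigr)$ (times the appropriate Neumann constant), and then apply Lemma \ref{lemma:tnorm_vanish} to select $\epsilon_{n,\beta,a}$ so small that the $\tnorm{S_\omega^n - L_\omega^n}$-term is similarly controlled uniformly in $\omega$. A standard Neumann-series argument then gives invertibility of the full operator on $E_\omega$ with inverse bound $(\lambda-\beta)^{-n}$, as required.

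The main technical annoyance, rather than a deep obstacle, is the bookkeeping that turns the two slacks (the auxiliary $\beta'$ and the perturbation estimate) into the exact bound $(\lambda-\beta)^{-n}$ rather than some larger multiple; the mild asymmetry between $\beta'$ and $\beta$ is what creates just enough room to absorb both the $\eta \cdot R^n$ contribution and the $C_\eta \cdot \tnorm{S_\omega^n - L_\omega^n}$ contribution. Everything else follows from estimates already proved in Section \ref{sec:estimates}: the Saks-space equicontinuity of $\Pi_{E||F}$ (Lemma \ref{lemma:fast_space_ss}), the Lasota--Yorke control of powers, the triple-norm smallness of $S_\omega^n - L_\omega^n$ (Lemma \ref{lemma:tnorm_vanish}), and the hyperbolic expansion on perturbed fast images (Proposition \ref{prop:perturbed_spaces_bound}).
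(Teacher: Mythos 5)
Your approach is correct and essentially identical to the paper's: decompose $\Pi_{E_{\sigma^n(\omega)} || F_{\sigma^n(\omega)}}S_\omega^n(\Id+U)$ into its action on $E_\omega$ (inverted via Proposition \ref{prop:perturbed_spaces_bound}) plus the perturbation $\Pi_{E_{\sigma^n(\omega)} || F_{\sigma^n(\omega)}}S_\omega^n U$, kill the unperturbed part of the latter via $\Pi_{E_{\sigma^n(\omega)} || F_{\sigma^n(\omega)}}L_\omega^n U = 0$, bound the remainder with Lemma \ref{lemma:fast_space_ss} and Lemma \ref{lemma:tnorm_vanish}, and close with a Neumann-series argument. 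One small slip in the bookkeeping sketch: the margin you want the perturbation below is $(\lambda-\beta')^n - (\lambda-\beta)^n$ with \emph{positive} exponents (your expression with $-n$ is negative when $\beta'<\beta$), but the idea of using an auxiliary $\beta'<\beta$ to absorb the Neumann loss is exactly the right instinct, and in fact makes the constant-tracking cleaner than in the paper's own write-up.
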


The main consequence of Lemma \ref{lemma:graph_transform_inv} is this: for each $a > 0$ and $n$ sufficiently large there exists $\epsilon_{a,n} > 0$ such that if $S \in \mathcal{LY}(C_1, C_2, r, R) \cap \mathcal{O}_{\epsilon_{a,n}}(L)$, $\omega \in \Omega$ and $U \in \mathcal{C}_{\omega,a}$ then $(S^n_\omega)^*U$ is well defined.

\begin{lemma}\label{lemma:perturbed_fast_cone_preservation}
  For sufficiently large $n$ there exists $a_n, \epsilon_n > 0$ such that if $S \in \mathcal{LY}(C_1, C_2, r, R) \cap \mathcal{O}_{\epsilon_{n}}(L)$ and $\omega \in \Omega$ then $(S_\omega^{n})^* \mathcal{C}_{\omega,a_n} \subseteq \mathcal{C}_{\sigma^{n}(\omega),a_n}$.
  \begin{proof}
    Fix $\beta \in (0, (\lambda - \mu)/2)$.
    For $a > 0$ let $M_\beta$ and $\epsilon_{n,\beta, a}$ denote the constants produced by Lemma \ref{lemma:graph_transform_inv}.
    By Lemma \ref{lemma:graph_transform_inv}, for every $S \in \mathcal{LY}(C_1, C_2, r, R) \cap \mathcal{O}_{\epsilon_{n,\beta, a}}(L)$, $\omega \in \Omega$ and $U \in \mathcal{C}_{\omega,a}$ we have $(S_\omega^n)^*U \in \LL(E_{\sigma^{n}(\omega)}, F_{\sigma^{n}(\omega)})$.
    By the estimate \eqref{eq:graph_transform_inv_0} and the definition of the forward graph transform,
    \begin{equation*}\begin{split}
      \norm{(S_{\omega}^n)^* U}
      &\le \norm{\Pi_{F_{\sigma^n(\omega)} || E_{\sigma^n(\omega)}} S_{\omega}^n(\Id + U)} (\lambda - \beta)^{-n}.
    \end{split}\end{equation*}
    Let $N_\beta$ and $\epsilon_{n,\beta}$ denote the constants produced by Proposition \ref{prop:perturbed_spaces_bound} and set $\epsilon_n = \min\{\epsilon_{n,\beta}, \epsilon_{n,\beta,a} \}$.
    Then for $n > \max\{N_\beta, M_\beta\}$, $S \in \mathcal{LY}(C_1, C_2, r, R) \cap \mathcal{O}_{\epsilon_{n}}(L)$ and $U \in \mathcal{C}_{\omega,a}$ we have
    \begin{equation*}
      \norm{\Pi_{F_{\sigma^n(\omega)} || E_{\sigma^n(\omega)}} S_{\omega}^n(\Id + U)} \le \Theta \left(\norm{S_{\omega}^n} + a\norm{\restr{S_{\omega}^n}{F_{\omega}}} \right) \le \Theta \left(C_3 R^n + a(\mu + \beta)^n\right),
    \end{equation*}
    and so
    \begin{equation}\label{eq:perturbed_fast_cone_preservation_2}
      \norm{(S_{\omega}^n)^* U} \le \Theta \left(C_3 \left(\frac{R}{\lambda - \beta}\right)^n + a\left(\frac{\mu + \beta}{\lambda - \beta}\right)^n \right).
    \end{equation}
    Since $\beta \in (0, (\lambda - \mu)/2)$, it follows from \eqref{eq:perturbed_fast_cone_preservation_2} that if $n$ is large enough so that $\Theta (\mu +\beta)^n < (\lambda - \beta)^n$ and we set
    \begin{equation*}
      a_n = \frac{\Theta C_3 R^n}{(\lambda - \beta)^n - \Theta(\mu + \beta)^n}
    \end{equation*}
    then $\norm{(S_{\omega}^n)^* U} \le a_n$ for every $U \in \mathcal{C}_{\omega,a_n}$ and $S \in \mathcal{LY}(C_1, C_2, r, R) \cap \mathcal{O}_{\epsilon_{n}}(L)$.
  \end{proof}
\end{lemma}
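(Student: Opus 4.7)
The plan is to bound $\norm{(S_\omega^n)^*U}$ for $U \in \mathcal{C}_{\omega,a}$ directly from the defining formula of the forward graph transform, and then to solve the resulting inequality in $a$ so as to obtain a cone radius that is preserved. Fix any $\beta \in (0,(\lambda-\mu)/2)$. Lemma \ref{lemma:graph_transform_inv} guarantees, for $n$ large and $\epsilon$ small enough (depending on $a$ and $n$), that the denominator $\restr{\Pi_{E_{\sigma^n(\omega)} || F_{\sigma^n(\omega)}} S_\omega^n(\Id + U)}{E_\omega}$ is invertible with inverse of norm at most $(\lambda-\beta)^{-n}$. Proposition \ref{prop:perturbed_spaces_bound} simultaneously provides the contracting bound $\norm{\restr{S_\omega^n}{F_\omega}} \le (\mu+\beta)^n$, again for $n$ large and $\epsilon$ small.

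For the numerator I would exploit that $U$ maps $E_\omega$ into $F_\omega$, so $S_\omega^n(\Id + U) = S_\omega^n + (\restr{S_\omega^n}{F_\omega})\,U$; combined with the angle bound \eqref{eq:h1_angle} and the Lasota-Yorke estimate \eqref{eq:power_bound} this yields
\begin{equation*}
\norm{\Pi_{F_{\sigma^n(\omega)} || E_{\sigma^n(\omega)}} S_\omega^n(\Id + U)} \le \Theta\left(\norm{S_\omega^n} + a \norm{\restr{S_\omega^n}{F_\omega}}\right) \le \Theta\left(C_3 R^n + a (\mu+\beta)^n\right).
\end{equation*}
Multiplying by the $(\lambda-\beta)^{-n}$ factor from Lemma \ref{lemma:graph_transform_inv} gives
\begin{equation*}
\norm{(S_\omega^n)^* U} \le \Theta C_3 \left(\frac{R}{\lambda-\beta}\right)^n + \Theta a \left(\frac{\mu+\beta}{\lambda-\beta}\right)^n.
\end{equation*}

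The cone-preservation requirement $\norm{(S_\omega^n)^*U} \le a$ then reduces to a linear inequality in $a$. Because $\beta < (\lambda-\mu)/2$, one has $(\mu+\beta)/(\lambda-\beta) < 1$, so for all sufficiently large $n$ the quantity $\Theta\bigl((\mu+\beta)/(\lambda-\beta)\bigr)^n$ is strictly less than $1$, and the choice
\begin{equation*}
a_n = \frac{\Theta C_3 R^n}{(\lambda-\beta)^n - \Theta(\mu+\beta)^n}
\end{equation*}
satisfies the inequality with equality. Defining $\epsilon_n$ as the threshold produced by Lemma \ref{lemma:graph_transform_inv} for this particular value $a_n$ completes the proof. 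The principal subtlety I foresee is the order in which the quantifiers must be picked: the admissible $\epsilon$ in Lemma \ref{lemma:graph_transform_inv} depends on $a$, so $a_n$ must be determined first from the linear inequality and only then $\epsilon_n$ chosen accordingly; trying to fix $\epsilon$ before $a$ would produce a circularity, since the cone radius $a_n$ grows geometrically in $n$ and one cannot uniformly absorb arbitrarily large $a$ into a single triple-norm smallness threshold.
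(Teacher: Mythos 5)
Your proof follows the same route as the paper: apply Lemma \ref{lemma:graph_transform_inv} for the $(\lambda-\beta)^{-n}$ bound on the inverse, Proposition \ref{prop:perturbed_spaces_bound} for the slow-space decay, bound the numerator via \eqref{eq:h1_angle} and \eqref{eq:power_bound}, and solve the resulting linear inequality in $a$ to obtain the same $a_n$. Your closing remark on quantifier ordering—that $a_n$ must be determined before $\epsilon_n$ is chosen because the threshold in Lemma \ref{lemma:graph_transform_inv} depends on $a$—is correct and makes explicit a dependence that the paper leaves slightly implicit when it writes $\epsilon_n = \min\{\epsilon_{n,\beta}, \epsilon_{n,\beta,a}\}$ before specialising $a$ to $a_n$.
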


\begin{lemma}\label{lemma:fast_cone_contraction}
  Suppose that $n$ is large enough so that Lemma \ref{lemma:perturbed_fast_cone_preservation} may be applied, and let $a_n$ and $\epsilon_n$ denote the produced constants.
  For any such $n$ there exists $\epsilon' \in (0, \epsilon_n]$, $k \in \Z^+$ and $c \in [0,1)$ such that for every $\omega \in \Omega$, $S \in \mathcal{LY}(C_1, C_2, r, R) \cap\mathcal{O}_{\epsilon'}(L)$ and $U_1, U_2 \in \mathcal{C}_{\omega,a_n}$ we have
  \begin{equation*}
    \norm{(S_{\omega}^{nk})^*(U_1) - (S_{\omega}^{nk})^*(U_2)} \le c\norm{U_1 - U_2}.
  \end{equation*}
  \begin{proof}
    For brevity we set $\Xi_\omega = \Pi_{F_{\omega} || E_{\omega}}$ and $\Gamma_\omega = \Pi_{E_{\omega} || F_{\omega}}$.
    By the definition of $(S_{\omega}^{nk})^*$ we have
    \begin{equation}\begin{split}\label{eq:lemma:fast_cone_contraction_1}
      (S_{\omega}^{nk})^*(U_1) &- (S_{\omega}^{nk})^*(U_2)
       = \Xi_{\sigma^{nk}(\omega)} S_{\omega}^{nk}(U_1 - U_2) \left(\restr{\Gamma_{\sigma^{nk}(\omega)} S_{\omega}^{nk}(\Id + U_1)}{E_\omega}\right)^{-1} \\
      &+ ((S_{\omega}^{nk})^*U_2) \left(\Gamma_{\sigma^{nk}(\omega)} S_{\omega}^{nk}(U_2 - U_1) \right)\left(\restr{\Gamma_{\sigma^{nk}(\omega)} S_{\omega}^{nk}(\Id + U_1)}{E_\omega}\right)^{-1}.
    \end{split}\end{equation}
    We now fix $n$ large enough so that Lemma \ref{lemma:perturbed_fast_cone_preservation} may be applied.
    If $S \in \in \mathcal{LY}(C_1, C_2, r, R) \cap\mathcal{O}_{\epsilon_n}(L)$ then for every $k \in \Z^+$ we have $(S_{\omega}^{nk})^*\mathcal{C}_{\omega,a_n} \subseteq \mathcal{C}_{\sigma^{nk}(\omega),a_n}$, and so $((S_{\omega}^{nk})^*U_2) \le a_n$.
    Thus, \eqref{eq:lemma:fast_cone_contraction_1} becomes
    \begin{equation*}\begin{split}
      \norm{(S_{\omega}^{nk})^*(U_1) - (S_{\omega}^{nk})^*(U_2)}
      &\le (1+a_n) \Theta \norm{\restr{S_{\omega}^{nk}}{F_{\omega}}}  \norm{\left(\restr{\Gamma_{\sigma^{nk}(\omega)} S_{\omega}^{nk}(\Id + U_1)}{E_\omega}\right)^{-1}} \norm{U_1 - U_2}.
    \end{split}\end{equation*}
    Fix $\beta \in (0, (\lambda - \mu)/2)$.
    By Proposition \ref{prop:perturbed_spaces_bound} and Lemma \ref{lemma:graph_transform_inv} for every $k$ sufficiently large there exists $\epsilon_k > 0$ such that if $S \in \mathcal{LY}(C_1, C_2, r, R) \cap\mathcal{O}_{\epsilon_{nk, \beta, a_n}}(L)$ then
    \begin{equation}\label{eq:lemma:fast_cone_contraction_2}
      \norm{(S_{\omega}^{nk})^*(U_1) - (S_{\omega}^{nk})^*(U_2)} \le \Theta (1+a_n)\left(\frac{\mu + \beta}{\lambda - \beta}\right)^{nk} \norm{U_1 - U_2}.
    \end{equation}
    By taking $k$ large enough we may ensure that $c := \Theta (1+a_n)(\mu + \beta^{nk}/(\lambda - \beta)^{nk} < 1$,
    and so we obtain the required inequality from \eqref{eq:lemma:fast_cone_contraction_2} upon setting $\epsilon' = \epsilon_k$.
  \end{proof}
\end{lemma}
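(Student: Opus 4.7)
The plan is to expand the difference $(S_{\omega}^{nk})^*(U_1) - (S_{\omega}^{nk})^*(U_2)$ directly from the formula for the forward graph transform, isolate a factor of $U_1 - U_2$ using the resolvent identity for the invertible piece, and then conclude by combining the three main estimates of this subsection: the slow-space contraction from Proposition \ref{prop:perturbed_spaces_bound}, the invertibility bound from Lemma \ref{lemma:graph_transform_inv}, and the cone preservation from Lemma \ref{lemma:perturbed_fast_cone_preservation}.

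Concretely, I would first write $(S_\omega^{nk})^* U_i = \Pi_{F_{\sigma^{nk}(\omega)} || E_{\sigma^{nk}(\omega)}} S_\omega^{nk}(\Id + U_i) A_i^{-1}$, where $A_i := \restr{\Pi_{E_{\sigma^{nk}(\omega)} || F_{\sigma^{nk}(\omega)}} S_\omega^{nk}(\Id + U_i)}{E_\omega}$. Adding and subtracting $\Pi_{F||E} S_\omega^{nk}(\Id + U_2) A_1^{-1}$, together with the identity $A_1^{-1} - A_2^{-1} = A_2^{-1}(A_2 - A_1) A_1^{-1}$ and the observation $A_2 - A_1 = \restr{\Pi_{E||F} S_\omega^{nk}(U_2 - U_1)}{E_\omega}$, yields
\begin{equation*}
(S_\omega^{nk})^* U_1 - (S_\omega^{nk})^* U_2 = \Pi_{F||E} S_\omega^{nk}(U_1 - U_2) A_1^{-1} - \bigl((S_\omega^{nk})^* U_2\bigr)\, \Pi_{E||F} S_\omega^{nk}(U_1 - U_2) A_1^{-1}.
\end{equation*}

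Since $U_1 - U_2$ maps $E_\omega$ into $F_\omega$, both $\Pi_{F||E} S_\omega^{nk}(U_1 - U_2)$ and $\Pi_{E||F} S_\omega^{nk}(U_1 - U_2)$ factor through $\restr{S_\omega^{nk}}{F_\omega}$, and hence are bounded in norm by $\Theta \norm{\restr{S_\omega^{nk}}{F_\omega}} \norm{U_1 - U_2}$ using \ref{en:h1}. Provided $n$ and $S$ satisfy the hypotheses of Lemma \ref{lemma:perturbed_fast_cone_preservation}, the cone field $\mathcal{C}_{a_n}$ is preserved by $(S^n)^*$ and hence by $(S^{nk})^*$ for every $k \in \Z^+$, so $\norm{(S_\omega^{nk})^* U_2} \le a_n$. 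Combining these bounds with the slow-space estimate $\norm{\restr{S_\omega^{nk}}{F_\omega}} \le (\mu + \beta)^{nk}$ from Proposition \ref{prop:perturbed_spaces_bound} and the invertibility estimate $\norm{A_1^{-1}} \le (\lambda - \beta)^{-nk}$ from Lemma \ref{lemma:graph_transform_inv} gives
\begin{equation*}
\norm{(S_\omega^{nk})^* U_1 - (S_\omega^{nk})^* U_2} \le (1 + a_n)\Theta \left(\frac{\mu+\beta}{\lambda - \beta}\right)^{nk} \norm{U_1 - U_2}.
\end{equation*}
Fixing $\beta \in (0, (\lambda - \mu)/2)$ guarantees $(\mu+\beta)/(\lambda-\beta) < 1$; I then take $k$ large enough to force $c := (1 + a_n)\Theta((\mu+\beta)/(\lambda-\beta))^{nk} < 1$ and choose $\epsilon' \in (0,\epsilon_n]$ as the minimum of the perturbation thresholds demanded by the cited lemmas (all bounded above by $\epsilon_n$).

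The main obstacle is the algebraic identity in the first display: one has to cleanly extract the factor $U_1 - U_2$ from a manifestly nonlinear expression for the graph transform. After applying the resolvent identity to $A_1^{-1} - A_2^{-1}$ this reduces to careful bookkeeping, and the uniformity in $\omega$ is automatic because each invoked estimate is uniform in $\omega$.
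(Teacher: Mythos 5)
Your proposal is correct and follows essentially the same route as the paper: the same algebraic identity for $(S_\omega^{nk})^*U_1 - (S_\omega^{nk})^*U_2$, the same three ingredients (Proposition \ref{prop:perturbed_spaces_bound} for the slow-space contraction, Lemma \ref{lemma:graph_transform_inv} for $\norm{A_1^{-1}}$, Lemma \ref{lemma:perturbed_fast_cone_preservation} for $\norm{(S_\omega^{nk})^*U_2} \le a_n$), and the same final bound $(1+a_n)\Theta\bigl(\tfrac{\mu+\beta}{\lambda-\beta}\bigr)^{nk}$. The only difference is that you derive the key identity explicitly from the resolvent formula $A_1^{-1}-A_2^{-1}=A_2^{-1}(A_2-A_1)A_1^{-1}$ whereas the paper states it directly as \eqref{eq:lemma:fast_cone_contraction_1}, but this is a presentational choice rather than a different approach.
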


\begin{proof}[{The proof of Proposition \ref{prop:fast_contraction_mapping}}]
  Suppose that $n$ is large enough so Lemmas \ref{lemma:perturbed_fast_cone_preservation} and \ref{lemma:fast_cone_contraction} may be applied, and let $a_n$, $\epsilon'$, $k$, and $c$ denote the produced constants.
  Set $a_0 := a_n$, $n_0 := nk$, $\epsilon_0 := \epsilon'$ and $c_0 := c$.
  By Lemma \ref{lemma:perturbed_fast_cone_preservation} we have $(S^{n_0})^* \mathcal{C}_{a_0} \subseteq \mathcal{C}_{a_0}$ for every $S \in \mathcal{LY}(C_1, C_2, r, R) \cap\mathcal{O}_{\epsilon_0}(L)$.
  The estimate \eqref{eq:fast_contraction_mapping_0} is exactly the content of Lemma \ref{lemma:fast_cone_contraction}.
\end{proof}

We turn to the proof of Proposition \ref{prop:fast_space_props}. Recall that $U^S\in \mathcal{C}_{a_0}$ denotes the unique fixed point of $(S^{n_0})^*$, and that $E_{\omega}^S = \Phi_{E_\omega \oplus F_\omega}^{-1}(U^S_\omega) = (\Id + U^S_\omega)(E_\omega)$.

\begin{lemma}\label{lemma:fast_space_angle}
  We have
  \begin{equation*}
    \sup \left\{ \norm{\Pi_{E^S_\omega || F_\omega}}  : \omega \in \Omega, S \in \mathcal{LY}(C_1, C_2, r, R) \cap \mathcal{O}_{\epsilon_0}(L) \right\} < \infty.
  \end{equation*}
  \begin{proof}
    By Proposition \ref{prop:graph_chart} we have $\Pi_{E_{\omega}^S || F_\omega} = (\Id + U_{\omega}^S)\Pi_{E_{\omega} || F_\omega}$.
    Hence, as $U_{\omega}^S \in \mathcal{C}_{a_0}$, it follows that $\norm{\Pi_{E_\omega^S || F_\omega}} \le \norm{\Id + U_{\omega}^S} \norm{\Pi_{E_{\omega} || F_\omega}} \le (1 + a_0)\Theta$.
  \end{proof}
\end{lemma}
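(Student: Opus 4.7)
The plan is to reduce this to a direct application of Proposition \ref{prop:graph_chart}, exploiting the fact that $E_\omega^S$ was constructed precisely as the graph $(\Id + U_\omega^S)(E_\omega)$ with $U_\omega^S$ lying in the cone $\mathcal{C}_{a_0}$. Specifically, identity \eqref{eq:graph_chart_0} of Proposition \ref{prop:graph_chart} applied to the graph representation $\Phi_{E_\omega \oplus F_\omega}$ yields
\begin{equation*}
  \Pi_{E_\omega^S || F_\omega} = (\Id + U_\omega^S)\, \Pi_{E_\omega || F_\omega},
\end{equation*}
so by submultiplicativity and the triangle inequality $\norm{\Pi_{E_\omega^S || F_\omega}} \le (1 + \norm{U_\omega^S})\, \norm{\Pi_{E_\omega || F_\omega}}$.

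It then remains only to observe that both factors are uniformly controlled. The first is bounded by $1 + a_0$ since, by Proposition \ref{prop:fast_contraction_mapping}, $(S^{n_0})^*$ leaves $\mathcal{C}_{a_0}$ invariant for every $S \in \mathcal{LY}(C_1, C_2, r, R) \cap \mathcal{O}_{\epsilon_0}(L)$, so its fixed point $U^S$ lies in $\mathcal{C}_{a_0}$ and $\norm{U_\omega^S} \le a_0$ uniformly in $\omega$ and $S$. The second is bounded by $\Theta$ by hypothesis \eqref{eq:h1_angle} on the hyperbolic splitting of $L$. Combining these gives the uniform bound $(1 + a_0)\Theta$, which is finite and independent of both $\omega$ and $S$. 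No step here presents a genuine obstacle; the whole content is bookkeeping around the cone invariance proved in the previous subsection.
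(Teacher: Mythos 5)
Your proof is correct and is essentially identical to the paper's: both apply \eqref{eq:graph_chart_0} from Proposition~\ref{prop:graph_chart} to write $\Pi_{E_\omega^S || F_\omega} = (\Id + U_\omega^S)\Pi_{E_\omega || F_\omega}$ and then bound the two factors by $1 + a_0$ (cone invariance) and $\Theta$ (hypothesis \eqref{eq:h1_angle}), respectively. You merely spell out the supporting citations a bit more explicitly than the paper does.
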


\begin{lemma}\label{lemma:tnorm_conv_fast_space_proj}
  For every $\delta > 0$ there exists $\epsilon_\delta \in (0, \epsilon_0]$ so that for every $S \in \mathcal{LY}(C_1, C_2, r, R) \cap \mathcal{O}_{\epsilon_\delta}(L)$ we have
  \begin{equation*}
    \sup_{\omega \in \Omega} \tnorm{\Pi_{E_{\omega}^S || F_\omega} - \Pi_{E_{\omega} || F_\omega}} \le \delta.
  \end{equation*}
  \begin{proof}
    By Proposition \ref{prop:graph_chart} we have $\Pi_{E_{\omega}^S || F_{\omega}} = (\Id + U_{\omega}^S) \Pi_{E_{\omega} || F_{\omega}}$, and so
    \begin{equation}\label{eq:tnorm_conv_fast_space_proj_0}
      \tnorm{\Pi_{E_{\omega}^S || F_{\omega}} - \Pi_{E_{\omega} || F_{\omega}}} \le \tnorm{U_{\omega}^S}\norm{\Pi_{E_{\omega} || F_{\omega}}} \le \Theta \tnorm{U_{\omega}^S}.
    \end{equation}
    For any $k \in \Z^+$ we have
    \begin{equation}\begin{split}\label{eq:tnorm_conv_fast_space_proj_1}
      \tnorm{U_{\omega}^S}
      &\le \tnorm{(S_{\sigma^{-n_0 k}(\omega)}^{n_0 k})^* 0}  + \norm{(S_{\sigma^{-n_0 k}(\omega)}^{n_0 k})^* U_{\sigma^{-n_0 k}(\omega)}^S - (S_{\sigma^{-n_0 k}(\omega)}^{n_0 k})^* 0}.
    \end{split}\end{equation}
    By Proposition \ref{prop:fast_contraction_mapping} we have
    \begin{equation}\label{eq:tnorm_conv_fast_space_proj_2}
      \norm{(S_{\sigma^{-n_0 k}(\omega)}^{n_0 k})^* U_{\sigma^{-n_0 k}(\omega)}^S - (S_{\sigma^{-n_0 k}(\omega)}^{n_0 k})^* 0} \le c_0^k a_0.
    \end{equation}
    Hence, by fixing $k$ large enough we may make the left hand side of \eqref{eq:tnorm_conv_fast_space_proj_2} strictly smaller than $\delta/(3\Theta)$.
    On the other hand, since $(L_{\sigma^{-n_0 k}(\omega)}^{n_0 k})^* 0 = 0$ and $\Pi_{F_{\omega} || E_{\omega}} L_{\sigma^{-n_0 k}(\omega)}^{n_0 k} \Pi_{E_{\sigma^{-n_0 k}(\omega)} || F_{\sigma^{-n_0 k}(\omega)}} = 0$, after a short calculation we find that
    \begin{equation*}\begin{split}
      (S_{\sigma^{-n_0 k}(\omega)}^{n_0 k})^* 0 &= (S_{\sigma^{-n_0 k}(\omega)}^{n_0 k})^* 0 - (L_{\sigma^{-n_0 k}(\omega)}^{n_0 k})^* 0 \\
      &= \Pi_{F_{\omega} || E_{\omega}} \left(S_{\sigma^{-n_0 k}(\omega)}^{n_0 k} - L_{\sigma^{-n_0 k}(\omega)}^{n_0 k}\right) \left(\restr{\Pi_{E_{\omega} || F_{\omega}} L_{\sigma^{-n_0 k}(\omega)}^{n_0 k} }{ E_{\sigma^{-n_0 k}(\omega)} } \right)^{-1}.
    \end{split}\end{equation*}
    Hence, by Lemma \ref{lemma:fast_space_ss}, Remark \ref{remark:equicont_proj} and \eqref{eq:h2_expansion} for every $\eta > 0$ there exists $C_\eta > 0$ such that
    \begin{equation*}\begin{split}
      \tnorm{(S_{\sigma^{-n_0 k}(\omega)}^{nk})^* 0} &= \tnorm{\Pi_{F_{\omega} || E_{\omega}} \left(S_{\sigma^{-n_0 k}(\omega)}^{nk} - L_{\sigma^{-n_0 k}(\omega)}^{nk}\right) \left(\restr{\Pi_{E_{\omega} || F_{\omega}} L_{\sigma^{-n_0 k}(\omega)}^{nk} }{E_{\sigma^{-n_0 k}(\omega)}}\right)^{-1}} \\
      &\le \eta \norm{\left(S_{\sigma^{-n_0 k}(\omega)}^{nk} - L_{\sigma^{-n_0 k}(\omega)}^{nk}\right) \left(\restr{\Pi_{E_{\omega} || F_{\omega}} L_{\sigma^{-n_0 k}(\omega)}^{nk} }{E_{\sigma^{-n_0 k}(\omega)}}\right)^{-1}}
      \\
      &+ C_\eta\tnorm{\left(S_{\sigma^{-n_0 k}(\omega)}^{nk} - L_{\sigma^{-n_0 k}(\omega)}^{nk}\right) \left(\restr{\Pi_{E_{\omega} || F_{\omega}} L_{\sigma^{-n_0 k}(\omega)}^{nk} }{E_{\sigma^{-n_0 k}(\omega)}}\right)^{-1}} \\
      &\le C_\lambda^{-1} \lambda^{-nk}\left(2\eta C_3 R^{nk} +C_\eta\tnorm{S_{\sigma^{-n_0 k}(\omega)}^{nk} - L_{\sigma^{-n_0 k}(\omega)}^{nk}} \right).
    \end{split}\end{equation*}
    Since $k$ is fixed there exists $\eta$ such that $2\eta C_3 C_\lambda^{-1} R^{nk} \lambda^{-nk} < \delta/(3\Theta)$.
    Then, by Lemma \ref{lemma:tnorm_vanish}, there exists $\epsilon_\delta \in (0, \epsilon_0]$ such that if $S \in \mathcal{LY}(C_1, C_2, r, R) \cap \mathcal{O}_{\epsilon_\delta}(L)$ then
    \begin{equation*}
      C_\eta C_\lambda^{-1} \lambda^{-nk}\tnorm{S_{\sigma^{-n_0k}(\omega)}^{nk} - L_{\sigma^{-n_0k}(\omega)}^{nk}} \le \frac{\delta}{3\Theta}.
    \end{equation*}
    Thus, if $S \in \mathcal{LY}(C_1, C_2, r, R) \cap \mathcal{O}_{\epsilon_\delta}(L)$ then $\tnorm{(S_{\sigma^{-n_0 k}(\omega)}^{nk})^* 0} \le 2\delta/ (3\Theta)$,
    and so $\tnorm{U_{\omega}^S} \le \delta/\Theta$ by \eqref{eq:tnorm_conv_fast_space_proj_1}.
    We obtain the required inequality upon recalling \eqref{eq:tnorm_conv_fast_space_proj_0}.
  \end{proof}
\end{lemma}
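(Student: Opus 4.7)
My plan is to exploit the graph-chart identity from Proposition \ref{prop:graph_chart}, which gives $\Pi_{E_{\omega}^S || F_\omega} = (\Id + U_\omega^S)\Pi_{E_\omega || F_\omega}$, hence
\begin{equation*}
\tnorm{\Pi_{E_{\omega}^S || F_\omega} - \Pi_{E_\omega || F_\omega}} = \tnorm{U_\omega^S \, \Pi_{E_\omega || F_\omega}} \le \Theta \, \tnorm{U_\omega^S}.
\end{equation*}
So the task reduces to choosing $\epsilon_\delta \in (0, \epsilon_0]$ such that $\tnorm{U_\omega^S} \le \delta/\Theta$ uniformly in $\omega$ for every $S \in \mathcal{LY}(C_1, C_2, r, R) \cap \mathcal{O}_{\epsilon_\delta}(L)$.

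To bound $\tnorm{U_\omega^S}$ I would iterate the fixed-point relation $U_\omega^S = (S_{\sigma^{-n_0 k}(\omega)}^{n_0 k})^* U_{\sigma^{-n_0 k}(\omega)}^S$ backwards $k$ times and insert the element $0 \in \mathcal{C}_{\sigma^{-n_0 k}(\omega),a_0}$ as a reference point:
\begin{equation*}
\tnorm{U_\omega^S} \le \tnorm{(S_{\sigma^{-n_0 k}(\omega)}^{n_0 k})^* 0} + \norm{(S_{\sigma^{-n_0 k}(\omega)}^{n_0 k})^* U_{\sigma^{-n_0 k}(\omega)}^S - (S_{\sigma^{-n_0 k}(\omega)}^{n_0 k})^* 0}.
\end{equation*}
The second term is controlled by iterating the contraction estimate \eqref{eq:fast_contraction_mapping_0} of Proposition \ref{prop:fast_contraction_mapping}, giving the bound $c_0^k a_0 \to 0$, so a fixed choice of $k$ makes it smaller than $\delta/(3\Theta)$. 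For the first term, the crucial observation is that $(L_{\sigma^{-n_0 k}(\omega)}^{n_0 k})^* 0 = 0$ because $L$ preserves the fast spaces, and moreover $\Pi_{F_\omega || E_\omega} L_{\sigma^{-n_0k}(\omega)}^{n_0 k} \Pi_{E_{\sigma^{-n_0 k}(\omega)} || F_{\sigma^{-n_0 k}(\omega)}} = 0$; expanding the definition of the forward graph transform then telescopes into the explicit expression
\begin{equation*}
(S^{n_0 k}_{\sigma^{-n_0 k}(\omega)})^* 0 = \Pi_{F_\omega || E_\omega} \bigl(S^{n_0 k}_{\sigma^{-n_0 k}(\omega)} - L^{n_0 k}_{\sigma^{-n_0 k}(\omega)}\bigr) \left(\restr{\Pi_{E_\omega || F_\omega} L^{n_0 k}_{\sigma^{-n_0 k}(\omega)}}{E_{\sigma^{-n_0 k}(\omega)}}\right)^{-1}.
\end{equation*}
Hypothesis \ref{en:h2} gives $C_\lambda^{-1}\lambda^{-n_0 k}$ as a norm bound on the inverse, and Remark \ref{remark:equicont_proj} (via Lemma \ref{lemma:fast_space_ss}) says that $\Pi_{F_\omega || E_\omega}$ is Saks-space continuous, so for every $\eta > 0$ there exists $C_\eta$ with $\tnorm{\Pi_{F_\omega || E_\omega} v} \le \eta \norm{v} + C_\eta \wnorm{v}$.

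Combining these with the Lasota-Yorke bound \eqref{eq:power_bound} $\norm{S^{n_0 k} - L^{n_0 k}} \le 2C_3 R^{n_0 k}$ yields
\begin{equation*}
\tnorm{(S_{\sigma^{-n_0 k}(\omega)}^{n_0 k})^* 0} \le 2\eta C_3 C_\lambda^{-1} (R/\lambda)^{n_0 k} + C_\eta C_\lambda^{-1} \lambda^{-n_0 k} \, \tnorm{S^{n_0 k}_{\sigma^{-n_0 k}(\omega)} - L^{n_0 k}_{\sigma^{-n_0 k}(\omega)}}.
\end{equation*}
The hard part, and the reason the argument must be carefully staged, is that typically $R \ge \lambda$ so the factor $(R/\lambda)^{n_0 k}$ is large once $k$ is fixed. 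The resolution is a strict ordering of quantifier choices: $\delta$ is given; then $k$ is chosen large so that the contraction term $c_0^k a_0$ is below $\delta/(3\Theta)$, which fixes $n_0 k$; next $\eta$ is chosen small enough that $2\eta C_3 C_\lambda^{-1} (R/\lambda)^{n_0 k} < \delta/(3\Theta)$, which determines $C_\eta$; finally Lemma \ref{lemma:tnorm_vanish} supplies $\epsilon_\delta \in (0,\epsilon_0]$ making $C_\eta C_\lambda^{-1} \lambda^{-n_0 k} \tnorm{S^{n_0 k} - L^{n_0 k}} < \delta/(3\Theta)$ uniformly in $\omega$. Summing the three contributions gives $\tnorm{U_\omega^S} \le \delta/\Theta$, and hence the conclusion.
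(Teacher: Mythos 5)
Your proposal is correct and follows essentially the same route as the paper's proof: the same graph-chart reduction to $\tnorm{U_\omega^S}$, the same decomposition against $(S^{n_0 k})^* 0$ with the contraction bound $c_0^k a_0$, the same explicit formula for $(S^{n_0 k})^* 0$ via $(L^{n_0 k})^* 0 = 0$, and the same quantifier ordering ($k$, then $\eta$, then $\epsilon_\delta$ via Lemma \ref{lemma:tnorm_vanish}). No gaps to report.
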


\begin{lemma}\label{lemma:lower_lyapunov}
  For each $\beta \in (0, (\lambda - \mu)/2)$ there exists $k_\beta \in \Z^+$ and $\epsilon_\beta > 0$ such that for every $S \in \mathcal{LY}(C_1, C_2, r, R) \cap \mathcal{O}_{\epsilon_\beta}(L)$, $\omega \in \Omega$, $U \in \mathcal{C}_{\omega,a_0}$ and $v \in E_\omega^S$ we have
  \begin{equation*}
    \norm{S_{\omega}^{k_\beta n_0} v} \ge (\lambda - \beta)^{k_\beta n_0} \norm{v}.
  \end{equation*}
  \begin{proof}
    Since $\wnorm{\cdot} \le \norm{\cdot}$ we have
    \begin{equation}\label{eq:lower_lyapunov_1}
      \norm{S_{\omega}^{k n_0} v} \ge \wnorm{S_{\omega}^{k n_0} v} \ge \wnorm{L_\omega^{k n_0} \Pi_{E_{\omega} || F_{\omega}} v} - \wnorm{\left(S_{\omega}^{k n_0} - L_\omega^{k n_0}\right) \Pi_{E_{\omega} || F_{\omega}} v} - \norm{S_{\omega}^{k n_0} \Pi_{F_{\omega} || E_{\omega}} v}.
    \end{equation}
    Using Lemma \ref{lemma:fast_space_eccentricity} and \eqref{eq:h2_expansion} we find that
    \begin{equation*}
      \wnorm{L_\omega^{k n_0} \Pi_{E_{\omega} || F_{\omega}} v} \ge K^{-1} C_\lambda \lambda^{kn_0} \norm{\Pi_{E_{\omega} || F_{\omega}} v}.
    \end{equation*}
    Since $v \in E^S_\omega = (\Id + U_\omega)(E_{\omega})$ and $U_\omega \in \mathcal{C}_{\omega, \epsilon_0}$ we have $\norm{\Pi_{F_{\omega} || E_{\omega}} v} \le a_0 \norm{\Pi_{E_{\omega} || F_{\omega}} v}$ and so $(1+a_0)^{-1} \norm{ v} \le \norm{\Pi_{E_{\omega} || F_{\omega}} v}$.
    Hence \eqref{eq:lower_lyapunov_1} becomes
    \begin{equation}\begin{split}\label{eq:lower_lyapunov_2}
      \norm{S_{\omega}^{k n_0} v}
      &\ge \left((1+a_0)^{-1}K^{-1} C_\lambda \lambda^{kn_0}  - \Theta \tnorm{S_{\omega}^{k n_0} - L_\omega^{k n_0}} -  \Theta \norm{\restr{S_{\omega}^{k n_0}}{F_{\omega}}}\right)\norm{v}.
    \end{split}\end{equation}
    Let $k := k_\beta$ be sufficiently large so that
    \begin{equation}\label{eq:lower_lyapunov_3}
      (1+a_0)^{-1}K^{-1} C_\lambda \lambda^{k_\beta n_0} \ge 2(\lambda - \beta)^{k_\beta n_0},
    \end{equation}
    and so that Proposition \ref{prop:perturbed_spaces_bound} and Lemma \ref{lemma:tnorm_vanish} may be applied with $n = k_\beta n_0$ to produce $\epsilon_\beta$ so that if $S \in \mathcal{LY}(C_1, C_2, r, R) \cap \mathcal{O}_{\epsilon_{\beta}}(L)$ and $\omega \in \Omega$ then
    \begin{equation}\label{eq:lower_lyapunov_4}
      \norm{\restr{S_{\omega}^{k_\beta n_0}}{F_{\omega}}} \le (\mu + \beta)^{k_\beta n_0} \le \frac{(\lambda - \beta)^{k_\beta n_0}}{2\Theta},
    \end{equation}
    and
    \begin{equation}\label{eq:lower_lyapunov_5}
      \tnorm{L_\omega^{k_\beta n_0 } - S_{\omega}^{k_\beta n_0}} \le \frac{(\lambda - \beta)^{ k_\beta n_0}}{2\Theta}.
    \end{equation}
    Applying \eqref{eq:lower_lyapunov_3}, \eqref{eq:lower_lyapunov_4} and \eqref{eq:lower_lyapunov_5} to \eqref{eq:lower_lyapunov_2} yields the required inequality.
  \end{proof}
\end{lemma}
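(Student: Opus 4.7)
My plan is to reduce the claim to a splitting of $v$ into its $E_\omega$ and $F_\omega$ components and then exploit the expansion already known for $L$ on $E_\omega$, using Saks-space technology to convert this into a lower bound for $S$.  First, recall that $v \in E_\omega^S = (\Id + U_\omega^S)(E_\omega)$ with $U_\omega^S \in \mathcal{C}_{\omega,a_0}$, so $\Pi_{F_\omega \| E_\omega} v = U_\omega^S \Pi_{E_\omega \| F_\omega} v$.  In particular, $\norm{\Pi_{F_\omega \| E_\omega} v} \le a_0 \norm{\Pi_{E_\omega \| F_\omega} v}$ and therefore $\norm{v} \le (1 + a_0)\norm{\Pi_{E_\omega \| F_\omega} v}$, so the $E$-component of $v$ controls the whole vector up to a fixed factor.

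Next, since $\wnorm{\cdot} \le \norm{\cdot}$, I would estimate
\begin{equation*}
  \norm{S_\omega^n v} \ge \wnorm{S_\omega^n v} \ge \wnorm{L_\omega^n \Pi_{E_\omega \| F_\omega} v} - \wnorm{(S_\omega^n - L_\omega^n)\Pi_{E_\omega \| F_\omega} v} - \norm{S_\omega^n \Pi_{F_\omega \| E_\omega} v}.
\end{equation*}
For the main term, $L_\omega^n \Pi_{E_\omega \| F_\omega} v \in E_{\sigma^n(\omega)}$, so Lemma \ref{lemma:fast_space_eccentricity} trades the weak norm for the strong norm at cost $K$, and then \eqref{eq:h2_expansion} yields a lower bound of $K^{-1} C_\lambda \lambda^n \norm{\Pi_{E_\omega \| F_\omega} v} \ge K^{-1}C_\lambda (1+a_0)^{-1} \lambda^n \norm{v}$.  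The $F$-component error is controlled by $\Theta \norm{\restr{S_\omega^n}{F_\omega}} \norm{v}$, which Proposition \ref{prop:perturbed_spaces_bound} bounds by $\Theta(\mu+\beta)^n \norm{v}$ once $n$ and $\epsilon$ are chosen appropriately.  The perturbation term is handled by $\Theta \tnorm{S_\omega^n - L_\omega^n} \norm{v}$, which Lemma \ref{lemma:tnorm_vanish} can drive to zero by shrinking $\epsilon$, for each fixed $n$.

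Combining these three bounds gives
\begin{equation*}
  \norm{S_\omega^n v} \ge \bigl(K^{-1}C_\lambda(1+a_0)^{-1}\lambda^n - \Theta(\mu+\beta)^n - \Theta \tnorm{S_\omega^n - L_\omega^n}\bigr)\norm{v}.
\end{equation*}
Since $\beta \in (0,(\lambda-\mu)/2)$, we have $\lambda - \beta > \mu + \beta$, so the gap $(\lambda - \beta)/(\mu+\beta)$ is genuinely bigger than $1$.  I would therefore pick $k_\beta \in \Z^+$ large enough that $K^{-1}C_\lambda(1+a_0)^{-1}\lambda^{k_\beta n_0} \ge 2(\lambda - \beta)^{k_\beta n_0}$ and $\Theta(\mu+\beta)^{k_\beta n_0} \le \tfrac12 (\lambda-\beta)^{k_\beta n_0}$, both of which are possible by the exponential dominance.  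With $k_\beta$ fixed, Lemma \ref{lemma:tnorm_vanish} (with $n = k_\beta n_0$) supplies $\epsilon_\beta > 0$ small enough that $\Theta \tnorm{S_\omega^{k_\beta n_0} - L_\omega^{k_\beta n_0}} \le \tfrac12 (\lambda - \beta)^{k_\beta n_0}$ whenever $S \in \mathcal{O}_{\epsilon_\beta}(L)$, finishing the estimate.

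The step I expect to require the most care is the order of quantifiers: the constant $K$ from Lemma \ref{lemma:fast_space_eccentricity} is fixed at the outset, and the cone radius $a_0$ and iterate $n_0$ are fixed by the fast-cone construction, so $k_\beta$ must be selected to swamp these \emph{before} one can use Lemma \ref{lemma:tnorm_vanish} to choose $\epsilon_\beta$ (which depends on $k_\beta n_0$).  This is the standard `fix $\beta$, then $k$, then $\epsilon$' bookkeeping familiar from hyperbolic stability arguments, and it is the only genuine subtlety.
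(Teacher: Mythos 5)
Your proposal is correct and follows essentially the same route as the paper's proof: the same split of $v$ into its $E_\omega$ and $F_\omega$ components, the same use of $\wnorm{\cdot}\le\norm{\cdot}$ and the triangle inequality, the same invocation of Lemma \ref{lemma:fast_space_eccentricity}, \eqref{eq:h2_expansion}, Proposition \ref{prop:perturbed_spaces_bound}, and Lemma \ref{lemma:tnorm_vanish}, and the same `fix $\beta$, then $k_\beta$, then $\epsilon_\beta$' quantifier order. The only cosmetic difference is that you make the requirement $\Theta(\mu+\beta)^{k_\beta n_0}\le\tfrac12(\lambda-\beta)^{k_\beta n_0}$ explicit at the point where $k_\beta$ is chosen, while the paper folds it into the clause that $k_\beta$ be ``sufficiently large'' and records it only within its display \eqref{eq:lower_lyapunov_4}.
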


\begin{proof}[{The proof of Proposition \ref{prop:fast_space_props}}]
  The estimates \eqref{eq:fast_space_props_1} and \eqref{eq:fast_space_props_2} are proven in Lemmas \ref{lemma:fast_space_angle} and \ref{lemma:tnorm_conv_fast_space_proj}, respectively.
  Thus to finish the proof it suffices to demonstrate \eqref{eq:fast_space_props_3}.

  For $\beta \in (0, (\lambda - \mu)/2)$ let $k_\beta$ and $\epsilon_\beta$ be the constants produced by Lemma \ref{lemma:lower_lyapunov}. For $n \in \Z^+$ write $n = m n_0 k_\beta + j$ where $m \in \N$ and $j \in \{0, \dots, n_0 k_\beta - 1\}$.
  For any $\omega \in \Omega$, $S \in \mathcal{LY}(C_1, C_2, r, R) \cap \mathcal{O}_{\epsilon_{\beta}}(L)$ and $v \in E_{\omega }^S$ we have
  \begin{equation*}
    S_{\omega}^{mn_0 k_\beta} v \in E_{\sigma^{mn_0 k_\beta}(\omega)}^S = \left(\Id + U_{\sigma^{mn_0 k_\beta}(\omega)}^S\right)E_{\sigma^{mn_0 k_\beta}(\omega)}.
  \end{equation*}
  Hence, as $U_{\sigma^{mn_0 k_\beta}(\omega)}^S \in \mathcal{C}_{\sigma^{mn_0 k_\beta}(\omega), a_0}$, by Lemma \ref{lemma:lower_lyapunov} we have
  \begin{equation*}
    \norm{S_{\omega}^{(m+1)n_0 k_\beta} v} = \norm{S_{\sigma^{mn_0 k_\beta}(\omega)}^{n_0 k_\beta}S_{\omega}^{mn_0 k_\beta} v} \ge (\lambda - \beta)^{n_0 k_\beta}\norm{S_{\omega}^{mn_0 k_\beta} v}.
  \end{equation*}
  By repeating this argument we deduce that $\norm{S_{\omega}^{(m+1) n_0 k_\beta} v} \ge (\lambda - \beta)^{(m+1)n_0 k_\beta} \norm{v}$. Therefore, as $R > \lambda - \beta$,
  \begin{equation*}\begin{split}
    \norm{S_{\omega}^{n } v} \ge \norm{S_{\sigma^{m n_0 k_\beta+j}(\omega)}^{n_0 k_\beta - j }}^{-1}\norm{S_{\omega}^{(m+1) n_0 k_\beta} v} &\ge C_3^{-1} \left(\frac{\lambda - \beta}{R}\right)^{n_0 k_\beta -j }  (\lambda - \beta)^{mn_0 k_\beta + j} \norm{v}\\
    &\ge C_3^{-1} \left(\frac{\lambda - \beta}{R}\right)^{n_0 k_\beta}  (\lambda - \beta)^{n} \norm{v},
  \end{split}\end{equation*}
  and so we obtain the required claim by setting $C_\beta = C_3 \left(\frac{R}{\lambda - \beta}\right)^{n_0 k_\beta}$.
\end{proof}

\subsection{Stability of the slow spaces}\label{sec:slow_space}

In this section we will construct and characterise the perturbed slow spaces for $S \in \mathcal{LY}(C_1, C_2, r, R) \cap \mathcal{O}_{\epsilon}(L)$ when $\epsilon$ is sufficiently small.
These perturbed slow spaces will be the fixed point of a backwards graph transform associated to $S$, although our approach is slightly different to that of the previous section since we may capitalise on the existence of fast spaces for $S$. Once constructed, we show that the slow spaces are stable in the Grassmannian, and verify the estimate \eqref{eq:stability_cocycle_4}.
Let $n_0$ and $\epsilon_0$ be the constants produced by Proposition \ref{prop:fast_contraction_mapping}, and suppose that $S \in \mathcal{LY}(C_1, C_2, r, R) \cap \mathcal{O}_{\epsilon_0}(L)$.
For $V \in \LL(F_\omega, E_{\omega}^S)$ recall that $(S_{\sigma^{-n_0}(\omega)}^{n_0})_* V$ is well-defined if $(\Pi_{E_{\omega}^S || F_{\omega}} - V\Pi_{F_{\omega} || E_{\omega}^S})S_{\sigma^{-n_0}(\omega)}^{n_0} : E_{\sigma^{-n_0}(\omega)}^S \to E_{\omega}^S$ is invertible.
Since $S_{\sigma^{-n_0}(\omega)}^{n_0}E_{\sigma^{-n_0 }(\omega)}^S = E_{\omega}^S$ it follows that
\begin{equation*}
  \restr{(\Pi_{E_{\omega}^S || F_{\omega}} - V\Pi_{F_{\omega} || E_{\omega}^S})S_{\sigma^{-n_0}(\omega)}^{n_0}}{E_{\sigma^{-n_0}(\omega)}^S} = \restr{S_{\sigma^{-n_0}(\omega)}^{n_0}}{E_{\sigma^{-n_0}(\omega)}^S},
\end{equation*}
which is always invertible.
Hence the map $(S_{\sigma^{-n_0 }(\omega)}^{n_0})_* : \LL(F_{\omega}, E_{\omega}^S) \to \LL(F_{\sigma^{-n_0 }(\omega)}, E_{\sigma^{-n_0 }(\omega)}^S)$ is well defined and satisfies
\begin{equation*}
(S_{\sigma^{-n_0 }(\omega)}^{n_0 })_* V = \left(\restr{S_{\sigma^{-n_0 }(\omega)}^{n_0}}{E_{\sigma^{-n_0}(\omega)}^S}\right)^{-1} \left(V \Pi_{F_\omega || E_{\omega}^S} - \Pi_{E_{\omega}^S || F_\omega} \right)S_{\sigma^{-n_0 }(\omega)}^{n_0 }.
\end{equation*}
Finally, let $S^{n_0}_* : \prod_{\omega \in \Omega} \LL(F_{\omega}, E_{\omega}^S) \to \prod_{\omega \in \Omega} \LL(F_{\omega}, E_{\omega}^S)$ be defined by
\begin{equation*}
  (S^{n_0}_* V)_\omega =  (S^{n_0}_\omega)_*V_{\sigma^{n_0}(\omega)}.
\end{equation*}

\begin{proposition}\label{prop:slow_contraction_mapping}
  There exists $k \in \Z^+$, $c \in[0,1)$ and $\epsilon_1 \in [0, \epsilon_0)$ such that for any $S \in \mathcal{LY}(C_1, C_2, r, R) \cap \mathcal{O}_{\epsilon_1}(L)$, $\omega \in \Omega$ and $V_1,V_2 \in \LL(F_\omega, E_{\omega}^S)$ we have
  \begin{equation*}
    \norm{(S_{\sigma^{-n_0 k}(\omega)}^{n_0 k})_*(V_1) - (S_{\sigma^{-n_0 k}(\omega)}^{n_0 k})_*(V_2)} \le c \norm{V_1 - V_2}.
  \end{equation*}
  Hence $S^{n_0 k}_*$ is a contraction mapping on $\prod_{\omega \in \Omega} \LL(F_{\omega}, E_{\omega}^S)$.
\end{proposition}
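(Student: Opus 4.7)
The plan is to work directly with the explicit formula for the backward graph transform of $S_{\sigma^{-n_0 k}(\omega)}^{n_0 k}$, which is valid because $S^{n_0 k}$ maps $E^S_{\sigma^{-n_0 k}(\omega)}$ bijectively onto $E^S_\omega$, so $\restr{S_{\sigma^{-n_0 k}(\omega)}^{n_0 k}}{E_{\sigma^{-n_0 k}(\omega)}^S}$ is invertible. Using functoriality of the backward graph transform (Proposition \ref{prop:graph_tranform}, part 2), one has $((S^{n_0}_*)^k V)_\omega = (S_{\sigma^{-n_0 k}(\omega)}^{n_0 k})_* V_\omega$, and the analogue of the formula used for $(S_{\sigma^{-n_0}(\omega)}^{n_0})_*$ now reads
\begin{equation*}
  (S_{\sigma^{-n_0 k}(\omega)}^{n_0 k})_* V = \left(\restr{S_{\sigma^{-n_0 k}(\omega)}^{n_0 k}}{E_{\sigma^{-n_0 k}(\omega)}^S}\right)^{-1}\!\left(V\,\Pi_{F_\omega || E_\omega^S} - \Pi_{E_\omega^S || F_\omega}\right)S_{\sigma^{-n_0 k}(\omega)}^{n_0 k}.
\end{equation*}
Subtracting two such expressions cancels the $\Pi_{E_\omega^S || F_\omega}$ term and yields
\begin{equation*}
  (S_{\sigma^{-n_0 k}(\omega)}^{n_0 k})_*(V_1) - (S_{\sigma^{-n_0 k}(\omega)}^{n_0 k})_*(V_2) = \left(\restr{S_{\sigma^{-n_0 k}(\omega)}^{n_0 k}}{E_{\sigma^{-n_0 k}(\omega)}^S}\right)^{-1}(V_1 - V_2)\,\Pi_{F_\omega || E_\omega^S}\, S_{\sigma^{-n_0 k}(\omega)}^{n_0 k}.
\end{equation*}

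Next I would bound each factor on the right uniformly in $\omega$ and in $S \in \mathcal{LY}(C_1,C_2,r,R)\cap \mathcal{O}_{\epsilon_1}(L)$ for $\epsilon_1$ sufficiently small. Fix $\beta\in(0,(\lambda-\mu)/2)$. First, since $\Pi_{F_\omega || E_\omega^S} = \Id - \Pi_{E_\omega^S || F_\omega}$, Proposition \ref{prop:fast_space_props}, equation \eqref{eq:fast_space_props_1}, gives a constant $\Theta' := 1+ \sup \{\norm{\Pi_{E_\omega^S || F_\omega}}\}$ with $\norm{\Pi_{F_\omega || E_\omega^S}} \le \Theta'$. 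Second, provided $n_0 k > N_\beta$ and $\epsilon_1 \le \epsilon_{n_0 k,\beta}$ (with $N_\beta$, $\epsilon_{n_0 k,\beta}$ from Proposition \ref{prop:perturbed_spaces_bound}), we have $\norm{\restr{S_{\sigma^{-n_0 k}(\omega)}^{n_0 k}}{F_{\sigma^{-n_0 k}(\omega)}}} \le (\mu+\beta)^{n_0 k}$, so for any $f \in F_{\sigma^{-n_0 k}(\omega)}$,
\begin{equation*}
  \bigl\|\Pi_{F_\omega || E_\omega^S}\, S_{\sigma^{-n_0 k}(\omega)}^{n_0 k} f\bigr\| \le \Theta'(\mu+\beta)^{n_0 k}\|f\|.
\end{equation*}
Third, by Proposition \ref{prop:fast_space_props}, equation \eqref{eq:fast_space_props_3} (after possibly shrinking $\epsilon_1$ below $\epsilon_{\beta,\delta}$ for any chosen $\delta$), we have $\norm{S_{\sigma^{-n_0 k}(\omega)}^{n_0 k} v} \ge C_\beta^{-1}(\lambda-\beta)^{n_0 k}\|v\|$ for every $v \in E_{\sigma^{-n_0 k}(\omega)}^S$, which gives
\begin{equation*}
  \bigl\|(\restr{S_{\sigma^{-n_0 k}(\omega)}^{n_0 k}}{E_{\sigma^{-n_0 k}(\omega)}^S})^{-1}\bigr\| \le C_\beta (\lambda-\beta)^{-n_0 k}.
\end{equation*}

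Combining the three estimates gives, for every $f \in F_{\sigma^{-n_0 k}(\omega)}$,
\begin{equation*}
  \bigl\|\bigl((S_{\sigma^{-n_0 k}(\omega)}^{n_0 k})_*(V_1) - (S_{\sigma^{-n_0 k}(\omega)}^{n_0 k})_*(V_2)\bigr)f\bigr\| \le C_\beta \Theta' \left(\frac{\mu+\beta}{\lambda-\beta}\right)^{n_0 k}\|V_1-V_2\|\,\|f\|.
\end{equation*}
Since $\beta < (\lambda-\mu)/2$ forces $\mu+\beta < \lambda-\beta$, the geometric factor tends to $0$ as $k\to\infty$. Hence one can choose $k$ large enough so that $c := C_\beta \Theta' \bigl(\tfrac{\mu+\beta}{\lambda-\beta}\bigr)^{n_0 k} < 1$, and then take $\epsilon_1 \in (0,\epsilon_0)$ small enough to make the invocations of Propositions \ref{prop:perturbed_spaces_bound} and \ref{prop:fast_space_props} valid at level $n_0 k$. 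Taking the supremum over $\omega$ yields the claimed contraction on $\prod_{\omega \in \Omega}\LL(F_\omega, E_\omega^S)$ equipped with the sup norm.

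The only genuinely delicate point, and the one I expect to be the main obstacle, is keeping all of the constants uniform in $\omega$ and in $S$: we must be sure that the bound on $\norm{\Pi_{F_\omega||E_\omega^S}}$ (coming from \eqref{eq:fast_space_props_1}) and the lower Lyapunov-type bound on $E^S_{\sigma^{-n_0 k}(\omega)}$ (coming from \eqref{eq:fast_space_props_3}) can be arranged simultaneously for a single threshold $\epsilon_1$, and that the explicit formula for $(S_{\sigma^{-n_0 k}(\omega)}^{n_0 k})_*$ agrees with the $k$-fold iterate of $(S^{n_0}_\omega)_*$; the latter is the functoriality statement essentially built into Proposition \ref{prop:graph_tranform}. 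Once these are in place, the contraction estimate is straightforward.
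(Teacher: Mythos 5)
Your proposal is correct and follows essentially the same route as the paper: the same explicit difference formula $(S_{\sigma^{-n_0 k}(\omega)}^{n_0 k})_*(V_1) - (S_{\sigma^{-n_0 k}(\omega)}^{n_0 k})_*(V_2) = \bigl(\restr{S_{\sigma^{-n_0 k}(\omega)}^{n_0 k}}{E_{\sigma^{-n_0 k}(\omega)}^S}\bigr)^{-1}(V_1-V_2)\Pi_{F_\omega || E_\omega^S} S_{\sigma^{-n_0 k}(\omega)}^{n_0 k}$, bounded factor-by-factor via \eqref{eq:fast_space_props_1}, \eqref{eq:fast_space_props_3} and Proposition \ref{prop:perturbed_spaces_bound}, with $k$ chosen large and $\epsilon_1$ small. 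The uniformity issue you flag is handled exactly as you suggest (taking the minimum of the finitely many thresholds coming from those propositions), so there is no gap.
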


If $S$ satisfies the hypotheses of Proposition \ref{prop:slow_contraction_mapping} then we let $V^S \in \prod_{\omega \in \Omega} \LL(F_\omega, E_{\omega}^S)$ denote the unique fixed point of $S^{n_0k}_*$, and set $F_{\omega}^S = \Phi_{F_{\omega} \oplus E_{\omega}^S}^{-1}(V_{\omega}^S) = (\Id + V_{\omega}^S)(F_\omega)$.
Note that, since $S^{n_0}_*$ preserves $\prod_{\omega \in \Omega} \LL(F_\omega, E_{\omega}^S)$, we must have $S^{n_0}_* V^S = V^S$.
Moreover, by the definition of the graph representation we have $F^S_\omega \in \mathcal{N}(E_\omega^S)$, so that $X = F^S_\omega \oplus E_\omega^S$.
Our second main result for this section confirms that the spaces $(F_{\omega}^S)_{\omega \in \Omega}$ are equivariant slow spaces for $S^{n_0k}$, and that $(F_{\omega}^S)_{\omega \in \Omega}$ approximates $(F_{\omega})_{\omega \in \Omega}$ in the Grassmannian.

\begin{proposition}\label{prop:slow_space_props}
  We have
  \begin{equation}\label{eq:slow_space_props_1}
    \sup \left\{ \norm{\Pi_{E^S_\omega || F_\omega^S}}  : \omega \in \Omega, S \in \mathcal{LY}(C_1, C_2, r, R) \cap \mathcal{O}_{\epsilon_1}(L) \right\} < \infty,
  \end{equation}
  and $S_{\omega}^{n_0}F_{\omega}^S \subseteq F^{S}_{\sigma^{n_0}(\omega)}$ for every $\omega \in \Omega$.
  Moreover, for every $\beta \in (0, (\lambda - \mu)/2)$ and $\delta > 0$ there is $\epsilon_{\beta, \delta} \in (0, \epsilon_1]$ and $C_\beta > 0$ such that if $S \in \mathcal{LY}(C_1, C_2, r, R) \cap \mathcal{O}_{\epsilon_{\beta, \delta}}(L)$ then
  \begin{equation}\label{eq:slow_space_props_2}
    \sup_{\omega \in \Omega} d_H( F_{\omega}^S, F_\omega) \le \delta,
  \end{equation}
  \begin{equation}\label{eq:slow_space_props_3}
    \sup_{\omega \in \Omega} \tnorm{\Pi_{F_{\omega}^S || E_{\omega}^S } - \Pi_{ F_{\omega} || E_{\omega}}} \le \delta,
  \end{equation}
  and if, in addition, we have $\omega \in \Omega$ and $n \in \Z^+$ then
  \begin{equation}\label{eq:slow_space_props_4}
    \norm{\restr{S_\omega^n}{F_{\omega}^S}} \le C_\beta (\mu + \beta)^n.
  \end{equation}
\end{proposition}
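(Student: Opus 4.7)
My plan is to parallel Section \ref{sec:fast_space}, transferring the contraction-mapping analysis to the backward fixed point $V^S$ while exploiting the already-established properties of the perturbed fast spaces $E^S_\omega$ from Propositions \ref{prop:fast_contraction_mapping} and \ref{prop:fast_space_props}. The equivariance $S^{n_0}_\omega F^S_\omega \subseteq F^S_{\sigma^{n_0}(\omega)}$ is obtained by translating $S^{n_0}_* V^S = V^S$ through Proposition \ref{prop:graph_tranform}(2). To establish $S^{n_0}_* V^S = V^S$, note that $S^{n_0}_* V^S$ also lies in $\prod_\omega \LL(F_\omega, E^S_\omega)$ and that the chain rule for backward graph transforms yields $S^{n_0 k}_* = (S^{n_0}_*)^k$, so $S^{n_0}_* V^S$ is a fixed point of $S^{n_0 k}_*$ in this space; uniqueness from Proposition \ref{prop:slow_contraction_mapping} then forces $S^{n_0}_* V^S = V^S$.

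For the remaining estimates I would first uniformly bound $\norm{V^S_\omega}$: the contraction in Proposition \ref{prop:slow_contraction_mapping} gives $\sup_\omega \norm{V^S_\omega} \le \sup_\omega \norm{(S^{n_0 k}_\omega)_* 0}/(1 - c)$, and the explicit formula for the backward graph transform, together with $L^{n_0 k} F_\omega \subseteq F_{\sigma^{n_0 k}(\omega)}$, yields for $w \in F_\omega$
\[
  (S^{n_0 k}_\omega)_* 0 \cdot w = -\left(\restr{S^{n_0 k}_\omega}{E^S_\omega}\right)^{-1} \Pi_{E^S_{\sigma^{n_0 k}(\omega)} || F_{\sigma^{n_0 k}(\omega)}} (S^{n_0 k}_\omega - L^{n_0 k}_\omega) w.
\]
The principal technical obstacle is converting the operator-norm expression $S^{n_0 k} - L^{n_0 k}$, which is not a priori small, into something controlled by $\tnorm{S - L}$. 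I would resolve this by establishing Saks space analogs of Lemmas \ref{lemma:fast_space_eccentricity} and \ref{lemma:fast_space_ss} for the perturbed fast space $E^S$ (whose proofs carry over mutatis mutandis using \eqref{eq:fast_space_props_1} and \eqref{eq:fast_space_props_3} in place of \eqref{eq:h1_angle} and \eqref{eq:h2_expansion}), and then applying Lemma \ref{lemma:tnorm_vanish}. This yields a uniform bound on $\norm{V^S_\omega}$ that can additionally be made arbitrarily small by shrinking $\tnorm{S - L}$.

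Given these controls on $V^S$, \eqref{eq:slow_space_props_1} follows from $\Pi_{E^S_\omega || F^S_\omega} = \Id - (\Id + V^S_\omega)\Pi_{F_\omega || E^S_\omega}$ (Proposition \ref{prop:graph_chart}) together with the uniform bound on $\Pi_{F_\omega || E^S_\omega} = \Id - \Pi_{E^S_\omega || F_\omega}$ coming from Lemma \ref{lemma:fast_space_angle}, while \eqref{eq:slow_space_props_2} drops out of Lemma \ref{lemma:graph_rep_continuity} once $\norm{V^S_\omega}$ is small. For \eqref{eq:slow_space_props_3} I would decompose
\[
  \Pi_{F^S || E^S} - \Pi_{F || E} = \left(\Pi_{F^S || E^S} - \Pi_{F || E^S}\right) + \left(\Pi_{F || E^S} - \Pi_{F || E}\right),
\]
bounding the first summand by $\tnorm{V^S_\omega}\,\norm{\Pi_{F_\omega || E^S_\omega}}$ via $\Pi_{F^S || E^S} = (\Id + V^S)\Pi_{F || E^S}$ together with $\tnorm{V^S} \le \norm{V^S}$, and bounding the second by passing to complementary projections and invoking \eqref{eq:fast_space_props_2}.

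Finally, for the decay \eqref{eq:slow_space_props_4} I would exploit the $S^{n_0}$-equivariance: for $v \in F^S_\omega$ write $v = w + V^S_\omega w$ with $w = \Pi_{F_\omega || E^S_\omega}v \in F_\omega$ (so $\norm{w}$ is comparable to $\norm{v}$ once $\norm{V^S}$ is small) and split
\[
  S^{M n_0}_\omega v = S^{M n_0}_\omega w + S^{M n_0}_\omega V^S_\omega w,
\]
bounding the first term by $(\mu+\beta/2)^{Mn_0}\norm{w}$ via Proposition \ref{prop:perturbed_spaces_bound} and the second by $C_3 R^{Mn_0}\norm{V^S}\,\norm{w}$ via the Lasota-Yorke inequality \eqref{eq:power_bound}. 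Choosing $M$ large enough that $(\mu+\beta/2)^{Mn_0}$ sits well inside $(\mu+\beta)^{Mn_0}$, and then shrinking $\tnorm{S - L}$ further so that $\norm{V^S} \le (\mu+\beta/2)^{Mn_0}/R^{Mn_0}$ for this fixed $M$, yields $\norm{\restr{S^{Mn_0}_\omega}{F^S_\omega}} \le (\mu+\beta)^{Mn_0}$; iterating this $Mn_0$-step bound along the orbit using equivariance and absorbing the residual $0 \le q < Mn_0$ steps via \eqref{eq:power_bound} delivers \eqref{eq:slow_space_props_4} with the claimed $C_\beta$.
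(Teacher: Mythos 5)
Your proof follows the paper's architecture almost step for step: $V^S$ as the fixed point of the backward transform, $S^{n_0}_*V^S=V^S$ by uniqueness of the fixed point of $S^{n_0k}_*$, equivariance of $(F^S_\omega)_{\omega\in\Omega}$ (the paper verifies this by the direct computation in Lemma \ref{lemma:slow_equivariance}, which is equivalent to your appeal to Proposition \ref{prop:graph_tranform}), the bound \eqref{eq:slow_space_props_1} via $\Pi_{F^S_\omega||E^S_\omega}=(\Id+V^S_\omega)\Pi_{F_\omega||E^S_\omega}$, \eqref{eq:slow_space_props_2} via Lemma \ref{lemma:graph_rep_continuity}, \eqref{eq:slow_space_props_3} via exactly the paper's two-term decomposition (Lemma \ref{lemma:tnorm_conv_perturbed_proj}), and \eqref{eq:slow_space_props_4} by a long-block estimate iterated along the orbit with the remainder absorbed through \eqref{eq:power_bound} (Lemma \ref{lemma:slow_decay} and the end of the paper's proof). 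The one genuine divergence is how you control $(S^{n_0k}_\omega)_*0$ and hence $\norm{V^S_\omega}$: you subtract $L^{n_0k}_\omega$ and route the estimate through $\tnorm{S^{n_0k}_\omega-L^{n_0k}_\omega}$, new Saks-space lemmas for $\Pi_{E^S_\omega||F_\omega}$, and Lemma \ref{lemma:tnorm_vanish}. The "principal technical obstacle" you identify is not actually there: the backward transform at $0$ acts on the \emph{unperturbed} slow space $F_\omega$, so one may bound the relevant factor simply by $\norm{\Pi_{E^S_{\sigma^{n_0k}(\omega)}||F_{\sigma^{n_0k}(\omega)}}}\,\norm{\restr{S^{n_0k}_\omega}{F_\omega}}$ and invoke \eqref{eq:perturbed_spaces_bound_slow}; this is what the paper does, giving boundedness of $\norm{V^S_\omega}$ for every $S\in\mathcal{LY}(C_1,C_2,r,R)\cap\mathcal{O}_{\epsilon_1}(L)$ (Lemma \ref{lemma:slow_space_angle_bound}) and smallness by taking the number of blocks large (Lemma \ref{lemma:slow_space_chart_stability}), with no additional triple-norm smallness beyond what Propositions \ref{prop:perturbed_spaces_bound} and \ref{prop:fast_space_props} already demand. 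Your detour is valid but buys nothing and costs two auxiliary lemmas.

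Within that detour there is one justification that is wrong as stated: the analog of Lemma \ref{lemma:fast_space_ss} for $\Pi_{E^S_\omega||F_\omega}$ does \emph{not} carry over mutatis mutandis, because that proof uses the intertwining $\Pi_{E_{\sigma^n(\omega)}||F_{\sigma^n(\omega)}}L^n_\omega = L^n_\omega\Pi_{E_\omega||F_\omega}$, which rests on $L$ preserving both members of the splitting; $S$ does not preserve $F_\omega$, so the same manipulation is unavailable for the pair $(E^S_\omega,F_\omega)$. The repair is cheap and uses an identity you employ elsewhere: by Proposition \ref{prop:graph_chart}, $\Pi_{E^S_\omega||F_\omega}=(\Id+U^S_\omega)\Pi_{E_\omega||F_\omega}$ with $\norm{U^S_\omega}\le a_0$, so the inequality of Lemma \ref{lemma:fast_space_ss} for the unperturbed projection transfers to $\Pi_{E^S_\omega||F_\omega}$ up to the factor $1+a_0$ (the eccentricity analog of Lemma \ref{lemma:fast_space_eccentricity} for $E^S_\omega$, by contrast, does carry over directly from \eqref{eq:fast_space_props_3} and the Lasota-Yorke inequality for $S$). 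With this repair, and with the block length in your argument for \eqref{eq:slow_space_props_4} fixed before shrinking $\epsilon$ (as you do), your proof closes.
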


To fix some notation, we let
\begin{equation}\label{eq:norm_bound}
  M := \sup \left\{ \norm{\Pi_{E^S_\omega || F_\omega}}  : \omega \in \Omega, S \in \mathcal{LY}(C_1, C_2, r, R) \cap \mathcal{O}_{\epsilon_0}(L) \right\},
\end{equation}
which is finite by Proposition \ref{prop:fast_space_props}.

\begin{proof}[{The proof of Proposition \ref{prop:slow_contraction_mapping}}]
  By Proposition \ref{prop:graph_tranform} we have for every $k \in \Z^+$ that
  \begin{equation*}
    (S_{\sigma^{-n_0 k}(\omega)}^{n_0 k})_*(V_1) - (S_{\sigma^{-n_0 k}(\omega)}^{n_0 k})_*(V_2) = \left(\restr{S_{\sigma^{-n_0 k}(\omega)}^{n_0 k}}{E_{\sigma^{-n_0 k}(\omega)}^S}\right)^{-1} (V_1 - V_2) \Pi_{F_\omega || E_{\omega}^S}S_{\sigma^{-n_0 k}(\omega)}^{n_0 k},
  \end{equation*}
  and so
  \begin{equation}\begin{split}\label{eq:slow_contraction_mapping_1}
    \bigg\lVert(S_{\sigma^{-n_0 k}(\omega)}^{n_0 k})_*(V_1) &- (S_{\sigma^{-n_0 k}(\omega)}^{n_0 k})_*(V_1)\bigg\rVert \\
    &\le \norm{ \left(\restr{S_{\sigma^{-n_0 k}(\omega)}^{n_0 k}}{E_{\sigma^{-n_0 k}(\omega)}^S}\right)^{-1} } \norm{\Pi_{F_\omega || E_{\omega}^S}} \norm{\restr{S_{\sigma^{-n_0 k}(\omega)}^{n_0 k}}{F_{\sigma^{-n_0 k}(\omega)}}} \norm{V_1 - V_2}.
  \end{split}\end{equation}
  Let $\beta \in (0, (\lambda - \mu)/2)$. By Proposition \ref{prop:fast_space_props} there exists $\epsilon_\beta \in (0, \epsilon_0)$ and $C_\beta > 0$ so that for every $S \in \mathcal{LY}(C_1, C_2, r, R) \cap \mathcal{O}_{\epsilon_{\beta}}(L)$ and $k \in \Z^+$ we have
  \begin{equation}\label{eq:slow_contraction_mapping_2}
    \norm{ \left(\restr{S_{\sigma^{-n_0 k}(\omega)}^{n_0 k}}{E_{\sigma^{-n_0 k}(\omega)}^S}\right)^{-1} } \le C_\beta (\lambda - \beta)^{-n_0 k}.
  \end{equation}
  Fix $k$ large enough so that $c := C_\beta (M+1) (\mu + \beta)^{n_0 k}/(\lambda - \beta)^{n_0 k} < 1$.
  By Proposition \ref{prop:perturbed_spaces_bound} there exists $\epsilon_{\beta, k} \in (0, \epsilon_\beta)$ so that for $S \in \mathcal{LY}(C_1, C_2, r, R) \cap \mathcal{O}_{\epsilon_{\beta,k}}(L)$ we have
  \begin{equation}\label{eq:slow_contraction_mapping_3}
    \norm{\restr{ S_{\sigma^{-n_0 k}(\omega)}^{n_0 k}}{F_{\sigma^{-n_0 k}(\omega)}}} \le (\mu + \beta)^{n_0 k}.
  \end{equation}
  Set $\epsilon_1 := \epsilon_{\beta, k}$. We obtain the required statement by applying \eqref{eq:norm_bound}, \eqref{eq:slow_contraction_mapping_2}, and \eqref{eq:slow_contraction_mapping_3} to \eqref{eq:slow_contraction_mapping_1}, and then recalling that $c \in [0,1)$.
\end{proof}

The proof of Proposition \ref{prop:slow_space_props} is broken into a number of lemmas.

\begin{lemma}\label{lemma:slow_equivariance}
  For every $\omega \in \Omega$ and $S \in \mathcal{LY}(C_1, C_2, r, R) \cap \mathcal{O}_{\epsilon_1}(L)$ we have $S_{\omega}^{n_0} F_{\omega}^S \subseteq F_{\sigma^{n_0}(\omega)}^S$.
  \begin{proof}
    Since $(S_{\omega}^{n_0})_* V_{\sigma^{n_0}(\omega)}^S = V_{\omega}^S$ we have
    \begin{equation*}\begin{split}
      S_{\omega}^{n_0} F_{\omega}^S &= S_{\omega}^{n_0 } (\Id + V_{\omega}^S)(F_\omega)\\
      &= S_{\omega}^{n_0 } (\Id + (S_{\omega}^{n_0})_* V_{\sigma^{n_0}(\omega)}^S)(F_{\omega})\\
      &= S_{\omega}^{n_0 } \left(\Id + \left(\restr{S_{\omega}^{n_0 }}{E_{\omega}^S}\right)^{-1} \left(V_{\sigma^{n_0}(\omega)}^S \Pi_{F_{\sigma^{n_0 }(\omega)} || E_{\sigma^{n_0}(\omega)}^S} - \Pi_{E_{\sigma^{n_0 }(\omega)}^S || F_{\sigma^{n_0 }(\omega)}} \right)S_{\omega}^{n_0 }\right)(F_{\omega})\\
      &= \left(\left(\Id - \Pi_{E_{\sigma^{n_0 }(\omega)}^S || F_{\sigma^{n_0 }(\omega)}}\right) + V_{\sigma^{n_0 }(\omega)}^S \Pi_{F_{\sigma^{n_0 }(\omega)} || E_{\sigma^{n_0 }(\omega)}^S} \right)S_{\omega}^{n_0}(F_{\omega})\\
      &= (\Id  + V_{\sigma^{n_0 }(\omega)}^S) \Pi_{F_{\sigma^{n_0 }(\omega)} || E_{\sigma^{n_0 }(\omega)}^S} S_{\omega}^{n_0}(F_{\omega})\\
      &\subseteq (\Id  + V_{\sigma^{n_0 }(\omega)}^S)(F_{\sigma^{n_0}(\omega)})= F_{\sigma^{n_0 }(\omega)}^S.
    \end{split}\end{equation*}
  \end{proof}
\end{lemma}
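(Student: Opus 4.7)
The plan is to show the inclusion by direct computation on the graph representation, exploiting the fact that the fixed point $V^S$ of $S^{n_0 k}_*$ is in fact already fixed by the single-step transform $S^{n_0}_*$. The latter point is the key preliminary observation recorded in the passage just before the lemma: since $S^{n_0 k}_* = (S^{n_0}_*)^k$ has unique fixed point $V^S$, and $S^{n_0}_* V^S$ is again a fixed point of $(S^{n_0}_*)^k$, uniqueness forces $S^{n_0}_* V^S = V^S$.

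First I would write out the simplified backward graph-transform formula. Because $\restr{S_\omega^{n_0}}{E_\omega^S} : E_\omega^S \to E_{\sigma^{n_0}(\omega)}^S$ is a bijection (by the invariance of the fast spaces and the lower bound from Proposition \ref{prop:fast_space_props}), the invertibility ingredient in the definition reduces to this restriction, yielding
\begin{equation*}
  V_\omega^S \;=\; \left(\restr{S_\omega^{n_0}}{E_\omega^S}\right)^{-1}\!\!\left(V_{\sigma^{n_0}(\omega)}^S\, \Pi_{F_{\sigma^{n_0}(\omega)} || E_{\sigma^{n_0}(\omega)}^S} - \Pi_{E_{\sigma^{n_0}(\omega)}^S || F_{\sigma^{n_0}(\omega)}}\right)\! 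S_\omega^{n_0}.
\end{equation*}
Next I would apply $S_\omega^{n_0}$ to $(\Id + V_\omega^S)$ and substitute this identity. The outer $S_\omega^{n_0}$ cancels the inverse, so
\begin{equation*}
  S_\omega^{n_0}(\Id + V_\omega^S) \;=\; S_\omega^{n_0} + V_{\sigma^{n_0}(\omega)}^S \Pi_{F_{\sigma^{n_0}(\omega)} || E_{\sigma^{n_0}(\omega)}^S} S_\omega^{n_0} - \Pi_{E_{\sigma^{n_0}(\omega)}^S || F_{\sigma^{n_0}(\omega)}} S_\omega^{n_0}.
\end{equation*}

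Then I would apply the partition of unity $\Id = \Pi_{E_{\sigma^{n_0}(\omega)}^S || F_{\sigma^{n_0}(\omega)}} + \Pi_{F_{\sigma^{n_0}(\omega)} || E_{\sigma^{n_0}(\omega)}^S}$ for the splitting $X_{\sigma^{n_0}(\omega)} = E_{\sigma^{n_0}(\omega)}^S \oplus F_{\sigma^{n_0}(\omega)}$. The two terms involving $\Pi_{E_{\sigma^{n_0}(\omega)}^S || F_{\sigma^{n_0}(\omega)}} S_\omega^{n_0}$ cancel against part of $S_\omega^{n_0}$, collapsing the expression to
\begin{equation*}
  (\Id + V_{\sigma^{n_0}(\omega)}^S)\, \Pi_{F_{\sigma^{n_0}(\omega)} || E_{\sigma^{n_0}(\omega)}^S}\, S_\omega^{n_0}.
\end{equation*}
Restricting to $F_\omega$ and noting that the range of $\Pi_{F_{\sigma^{n_0}(\omega)} || E_{\sigma^{n_0}(\omega)}^S}$ lies in $F_{\sigma^{n_0}(\omega)}$, we conclude $S_\omega^{n_0}(F_\omega^S) \subseteq (\Id + V_{\sigma^{n_0}(\omega)}^S)(F_{\sigma^{n_0}(\omega)}) = F_{\sigma^{n_0}(\omega)}^S$.

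The main obstacle is purely notational -- one must carefully distinguish the two complementary splittings $X_\omega = E_\omega^S \oplus F_\omega$ (used to define $V_\omega^S$) and $X_\omega = E_\omega^S \oplus F_\omega^S$ (used when writing the perturbed slow space itself), and likewise for the index $\sigma^{n_0}(\omega)$. No analytic estimates are needed at this step: once the fixed-point identity $S^{n_0}_* V^S = V^S$ is in hand, the inclusion is a linear-algebraic consequence of how the backward graph transform was defined in Section~\ref{sec:grassmannian} together with Proposition~\ref{prop:graph_tranform}.
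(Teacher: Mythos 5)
Your proposal is correct and follows essentially the same route as the paper's proof: you invoke the preliminary observation $S^{n_0}_*V^S = V^S$ (recorded in the text just before Proposition~\ref{prop:slow_space_props}), substitute the explicit backward graph-transform formula for $V_\omega^S$, cancel $\left(\restr{S_\omega^{n_0}}{E_\omega^S}\right)^{-1}$ against the outer $S_\omega^{n_0}$, and use the partition of unity to collapse to $(\Id+V_{\sigma^{n_0}(\omega)}^S)\Pi_{F_{\sigma^{n_0}(\omega)}||E_{\sigma^{n_0}(\omega)}^S}S_\omega^{n_0}(F_\omega)\subseteq F_{\sigma^{n_0}(\omega)}^S$, which is exactly the chain of equalities the paper displays. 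Your explicit uniqueness-plus-commutativity justification of $S^{n_0}_*V^S=V^S$ is a sound and slightly fuller account of what the paper asserts tersely.
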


\begin{lemma}\label{lemma:slow_space_angle_bound}
  We have
  \begin{equation}
    \sup \left\{ \norm{\Pi_{E^S_\omega || F_\omega^S}}  : \omega \in \Omega, S \in \mathcal{LY}(C_1, C_2, r, R) \cap \mathcal{O}_{\epsilon_1}(L) \right\} < \infty.
  \end{equation}
  \begin{proof}
    By Proposition \ref{prop:slow_contraction_mapping} we have for every $\omega \in \Omega$ that
    \begin{equation*}
      \norm{V_{\omega}^S} \le \norm{(S_{\omega}^{n_0 k})_* V_{\sigma^{n_0 k}(\omega)}^S - (S_{\omega}^{n_0 k})_*(0)} + \norm{(S_{\omega}^{n_0 k})_*(0)} \le c\norm{V_{\sigma^{n_0 k}(\omega)}^S} + \norm{(S_{\omega}^{n_0 k})_*(0)},
    \end{equation*}
    from which it follows that
    \begin{equation*}
      \sup_{\omega \in \Omega} \norm{V_{\omega}^S} \le (1-c)^{-1} \sup_{\omega \in \Omega} \norm{(S_{\omega}^{n_0 k})_*(0)}.
    \end{equation*}
    Since
    \begin{equation*}
      (S_{\omega}^{n_0 k})_*(0) = -\left(\restr{S_{\omega}^{n_0 k}}{E_{\omega}^S}\right)^{-1} \Pi_{E_{\sigma^{n_0 k}(\omega)}^S || F_{\sigma^{n_0 k}(\omega)}} S_{\omega}^{n_0 k},
    \end{equation*}
    the bounds used in the proof of Proposition \ref{prop:slow_contraction_mapping} imply that
    \begin{equation*}
      \norm{(S_{\omega}^{n_0 k})_*(0)} \le \norm{\left(\restr{S_{\omega}^{n_0 k}}{E_{\omega}^S}\right)^{-1}} \norm{\Pi_{E_{\sigma^{n_0 k}(\omega)}^S || F_{\sigma^{n_0 k}(\omega)}}} \norm{\restr{S_{\omega}^{n_0 k}}{F_\omega}} < c.
    \end{equation*}
    Hence for every $S \in \mathcal{LY}(C_1, C_2, r, R) \cap \mathcal{O}_{\epsilon_1}(L)$ we have
    \begin{equation}\label{eq:slow_space_angle_bound_1}
      \sup_{\omega \in \Omega} \norm{V_{\omega}^S} \le \frac{c}{1-c}.
    \end{equation}
    By Proposition \ref{prop:graph_chart} we have $\Pi_{F_{\omega}^S || E_{\omega}^S } = (\Id + V_{\omega}^S) \Pi_{F_\omega || E_{\omega}^S}$ and so $\norm{\Pi_{F_{\omega}^S || E_{\omega}^S }} \le (1  + \norm{V_{\omega}^S}) \norm{ \Pi_{F_\omega || E_{\omega}^S}}$.
    Recalling the bound \eqref{eq:norm_bound}, we get
    \begin{equation*}
      \sup \left\{ \norm{\Pi_{E^S_\omega || F_\omega^S}}  : \omega \in \Omega, S \in \mathcal{LY}(C_1, C_2, r, R) \cap \mathcal{O}_{\epsilon_1}(L) \right\} \le \frac{M+1}{1-c}.
    \end{equation*}
  \end{proof}
\end{lemma}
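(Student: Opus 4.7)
The plan is to exploit the fact that $V^S$ is the fixed point of the contraction $S^{n_0 k}_*$ established in Proposition~\ref{prop:slow_contraction_mapping}, reducing the desired boundedness to an estimate on $\sup_\omega \norm{(S_{\omega}^{n_0 k})_*(0)}$. First, I would observe that since $S^{n_0 k}_* V^S = V^S$, the contraction property yields, for every $\omega \in \Omega$,
\begin{equation*}
  \norm{V_{\omega}^S} \le \norm{(S_{\omega}^{n_0 k})_* V_{\sigma^{n_0 k}(\omega)}^S - (S_{\omega}^{n_0 k})_*(0)} + \norm{(S_{\omega}^{n_0 k})_*(0)} \le c \norm{V_{\sigma^{n_0 k}(\omega)}^S} + \norm{(S_{\omega}^{n_0 k})_*(0)},
\end{equation*}
and taking the supremum over $\omega$ (which is \emph{a priori} finite because $V^S_\omega \in \LL(F_\omega, E^S_\omega)$ and the contraction drives orbits into a bounded set of the fiber) gives $\sup_\omega \norm{V^S_\omega} \le (1-c)^{-1} \sup_\omega \norm{(S_{\omega}^{n_0 k})_*(0)}$.

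Next, I would use the explicit formula for the backward graph transform at $V=0$:
\begin{equation*}
  (S_{\omega}^{n_0 k})_*(0) = -\left(\restr{S_{\omega}^{n_0 k}}{E_{\omega}^S}\right)^{-1} \Pi_{E_{\sigma^{n_0 k}(\omega)}^S || F_{\sigma^{n_0 k}(\omega)}} \restr{S_{\omega}^{n_0 k}}{F_\omega}.
\end{equation*}
The three factors are now controlled by ingredients already assembled: $\norm{(\restr{S_{\omega}^{n_0 k}}{E^S_\omega})^{-1}} \le C_\beta(\lambda-\beta)^{-n_0 k}$ from Proposition~\ref{prop:fast_space_props}, the projection norm is bounded by $M$ (Equation~\eqref{eq:norm_bound}, from Proposition~\ref{prop:fast_space_props}), and $\norm{\restr{S_{\omega}^{n_0 k}}{F_\omega}} \le (\mu+\beta)^{n_0 k}$ from Proposition~\ref{prop:perturbed_spaces_bound}, all of which hold uniformly in $\omega$ and in $S \in \mathcal{LY}(C_1,C_2,r,R) \cap \mathcal{O}_{\epsilon_1}(L)$. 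Since $k$ was precisely chosen in the proof of Proposition~\ref{prop:slow_contraction_mapping} so that $C_\beta(M+1)\bigl((\mu+\beta)/(\lambda-\beta)\bigr)^{n_0 k} = c < 1$, these bounds combine to give $\sup_\omega \norm{(S_{\omega}^{n_0 k})_*(0)} \le c/(M+1) < 1$, and hence $\sup_\omega \norm{V^S_\omega} \le c/((1-c)(M+1))$.

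Finally, I would translate this into the claimed bound on the projections. By the identity \eqref{eq:graph_chart_0} of Proposition~\ref{prop:graph_chart} applied to the splitting $E^S_\omega \oplus F_\omega = X_\omega$ with $V_\omega^S \in \LL(F_\omega, E^S_\omega)$, we have $\Pi_{F_\omega^S || E_\omega^S} = (\Id + V_\omega^S)\Pi_{F_\omega || E^S_\omega}$, and therefore
\begin{equation*}
  \norm{\Pi_{E^S_\omega || F^S_\omega}} \le 1 + \norm{\Pi_{F^S_\omega || E^S_\omega}} \le 1 + (1 + \norm{V_\omega^S})\norm{\Pi_{F_\omega || E^S_\omega}} \le 1 + (1+\norm{V^S_\omega})(1+M),
\end{equation*}
where the last step uses $\Pi_{F_\omega || E^S_\omega} = \Id - \Pi_{E^S_\omega || F_\omega}$ together with \eqref{eq:norm_bound}. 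The right-hand side is bounded uniformly in $\omega$ and $S$ by the previous paragraph, completing the proof. There is no genuine obstacle here: the only subtle point is to verify that the \emph{a priori} boundedness of $\sup_\omega \norm{V^S_\omega}$ needed to legitimate the fixed-point estimate is automatic, which follows because the contraction $S^{n_0 k}_*$ on the complete space $\prod_\omega \LL(F_\omega, E^S_\omega)$ (equipped with the sup-norm) has its unique fixed point lying in the bounded set of radius $(1-c)^{-1} \sup_\omega \norm{(S_\omega^{n_0 k})_*(0)}$.
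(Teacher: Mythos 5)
Your proposal follows the same route as the paper: bound $\norm{V^S_\omega}$ uniformly via the fixed-point/contraction estimate applied at $0$, compute $(S_\omega^{n_0 k})_*(0)$ explicitly, and transfer the bound to the projections through $\Pi_{F^S_\omega || E^S_\omega} = (\Id + V^S_\omega)\Pi_{F_\omega || E^S_\omega}$ and the $M$-bound of \eqref{eq:norm_bound}. One small arithmetic slip: the three factors you cite give $C_\beta \, M \,\bigl((\mu+\beta)/(\lambda-\beta)\bigr)^{n_0 k} = cM/(M+1)$, not $c/(M+1)$; either way the conclusion $\sup_\omega \norm{(S_\omega^{n_0 k})_*(0)} < c$ holds and the argument is unaffected. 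Your remark that the supremum $\sup_\omega \norm{V^S_\omega}$ is \emph{a priori} finite because the fixed point is constructed in the sup-normed (hence complete) subspace is a legitimate and useful clarification of a point the paper leaves implicit.
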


\begin{lemma}\label{lemma:slow_space_chart_stability}
  For every $\delta > 0$ there exists $\epsilon_\delta \in (0, \epsilon_1]$ such that if $S \in \mathcal{LY}(C_1, C_2, r, R) \cap \mathcal{O}_{\epsilon_\delta}(L)$ and $\omega \in \Omega$ then $\norm{V_{\omega}^S} \le \delta$.
  \begin{proof}
    For every $m \in \Z^+$ we have
    \begin{equation}\label{eq:slow_space_chart_stability_1}
      \begin{split}
      \norm{V_{\omega}^S} &=  \norm{(S_{\omega}^{n_0km})_* V_{\sigma^{n_0 km}(\omega)}^S}\\
      &= \norm{\left(\restr{S_{\omega}^{n_0 k m}}{E_{\omega}^S}\right)^{-1} \left(V_{\sigma^{n_0 km}(\omega)}^S \Pi_{F_{\sigma^{n_0 km}(\omega)} || E_{\sigma^{n_0 km}(\omega)}^S} - \Pi_{E_{\sigma^{n_0 km}(\omega)}^S || F_{\sigma^{n_0 km}(\omega)}} \right)\restr{S_{\omega}^{n_0 km}}{F_{\omega}} }.
    \end{split}\end{equation}
    Fix $\beta \in (0 ,(\lambda - \mu)/2)$. By Proposition \ref{prop:fast_space_props} there exists $\epsilon_\beta \in (0, \epsilon_1]$ and $C_\beta$ so that if $S \in \mathcal{LY}(C_1, C_2, r, R) \cap \mathcal{O}_{\epsilon_\beta}(L)$ then for every $m \in \Z^+$ we have
    \begin{equation}\label{eq:slow_space_chart_stability_2}
      \norm{\left(\restr{S_{\omega}^{n_0 k m}}{E_{\omega}^S}\right)^{-1}} \le C_\beta(\lambda - \beta)^{-n_0 k m}.
    \end{equation}
    Let $N_\beta$ be the constant produced by Proposition \ref{prop:perturbed_spaces_bound} and fix $m > N_\beta/(n_0 k)$ large enough so that
    \begin{equation}\label{eq:slow_space_chart_stability_3}
      C_\beta \left( \frac{c(1+M)}{1-c} + M \right) \left( \frac{\mu + \beta}{\lambda - \beta}\right)^{n_0 k m} \le \delta.
    \end{equation}
    By Proposition \ref{prop:perturbed_spaces_bound} there is $\epsilon_{\delta} \in (0, \epsilon_\beta]$ such that if $S \in \mathcal{LY}(C_1, C_2, r, R) \cap \mathcal{O}_{\epsilon_\delta}(L)$ then
    \begin{equation}\label{eq:slow_space_chart_stability_4}
      \norm{\restr{S_{\omega}^{n_0 km}}{F_{\omega}}} \le (\mu + \beta)^{m n_0 k}.
    \end{equation}
    Recalling \eqref{eq:slow_space_angle_bound_1} from the proof of Lemma \ref{lemma:slow_space_angle_bound}, and then applying \eqref{eq:slow_space_chart_stability_2}, \eqref{eq:slow_space_chart_stability_3} and \eqref{eq:slow_space_chart_stability_3} to \eqref{eq:slow_space_chart_stability_1} yields the required inequality.
  \end{proof}
\end{lemma}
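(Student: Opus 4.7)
The plan is to iterate the fixed-point equation $V^S = S^{n_0 k}_* V^S$ many times and exploit the exponential gap between expansion on the perturbed fast spaces and decay on the unperturbed slow spaces. Since $S^{n_0k}_*$ fixes $V^S$ by construction, iterating $m$ times gives $V_\omega^S = (S_\omega^{n_0 k m})_* V_{\sigma^{n_0km}(\omega)}^S$ for every $m \in \Z^+$, and unrolling the definition of the backward graph transform produces
\begin{equation*}
  V_\omega^S = \left(\restr{S_\omega^{n_0km}}{E_\omega^S}\right)^{-1}\left(V_{\sigma^{n_0km}(\omega)}^S \Pi_{F_{\sigma^{n_0km}(\omega)} || E_{\sigma^{n_0km}(\omega)}^S} - \Pi_{E_{\sigma^{n_0km}(\omega)}^S || F_{\sigma^{n_0km}(\omega)}}\right)\restr{S_\omega^{n_0km}}{F_\omega},
\end{equation*}
on which I would run the standard three-factor estimate.

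Fix $\beta \in (0, (\lambda - \mu)/2)$. The inverse factor $(\restr{S_\omega^{n_0km}}{E_\omega^S})^{-1}$ is bounded by $C_\beta (\lambda - \beta)^{-n_0km}$ via the lower Lyapunov bound \eqref{eq:fast_space_props_3} from Proposition \ref{prop:fast_space_props}, once $\epsilon$ is below some $\beta$-threshold. The middle bracket is bounded \emph{independently of $m$}: the estimate \eqref{eq:slow_space_angle_bound_1} derived inside the proof of Lemma \ref{lemma:slow_space_angle_bound} controls $\norm{V^S}$ uniformly, while the two projection norms are controlled uniformly by \eqref{eq:norm_bound} and by the original hyperbolicity constant $\Theta$. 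The slow factor $\norm{\restr{S_\omega^{n_0km}}{F_\omega}}$ is at most $(\mu+\beta)^{n_0km}$ by Proposition \ref{prop:perturbed_spaces_bound}, provided $\epsilon$ is below a further, $m$-dependent threshold.

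Stitching these three bounds together yields $\norm{V_\omega^S} \le K \bigl((\mu + \beta)/(\lambda - \beta)\bigr)^{n_0km}$ for a constant $K$ that does not depend on $m$ or on $\omega$. Since $\mu + \beta < \lambda - \beta$ by the choice of $\beta$, I would first pick $m$ large enough that the right-hand side is at most $\delta$, and only then invoke Proposition \ref{prop:perturbed_spaces_bound} at $n = n_0km$ to extract the corresponding $\epsilon_\delta \in (0, \epsilon_\beta]$.

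The only real subtlety is the order of quantifiers: the $\epsilon$-threshold from Proposition \ref{prop:perturbed_spaces_bound} depends on the iterate $n = n_0km$, so $m$ must be selected based on $\beta$ and the uniform constants \emph{before} $\epsilon_\delta$ is produced. I do not anticipate any substantial technical obstacle once this ordering is respected, since the structure of the argument mirrors that of the proofs of Lemma \ref{lemma:tnorm_conv_fast_space_proj} and Lemma \ref{lemma:lower_lyapunov}, and every quantitative input is already available from Propositions \ref{prop:perturbed_spaces_bound} and \ref{prop:fast_space_props} together with Lemma \ref{lemma:slow_space_angle_bound}.
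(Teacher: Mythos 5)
Your proposal is correct and follows essentially the same route as the paper's proof: iterate the fixed-point equation for the backward graph transform, split into the three factors, bound the inverse factor by Proposition \ref{prop:fast_space_props}, the middle bracket uniformly by \eqref{eq:slow_space_angle_bound_1} and \eqref{eq:norm_bound}, and the slow factor by Proposition \ref{prop:perturbed_spaces_bound}, with $m$ chosen before $\epsilon_\delta$. The one small imprecision is attributing part of the middle-bracket control to $\Theta$; the relevant projections are perturbed ones, controlled by the constant $M$ from \eqref{eq:norm_bound}, but this does not affect the argument.
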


\begin{lemma}\label{lemma:slow_space_stability}
  For every $\delta > 0$ there exists $\epsilon_\delta \in (0, \epsilon_1]$ such that if $S \in \mathcal{LY}(C_1, C_2, r, R) \cap \mathcal{O}_{\epsilon_\delta}(L)$ and $\omega \in \Omega$ then $d_H( F_{\omega}^S, F_\omega) \le \delta$.
  \begin{proof}
    Since $\Phi_{F_\omega \oplus E_{\omega}^S}(F_{\omega}) = 0$, by Lemma \ref{lemma:graph_rep_continuity} and \eqref{eq:norm_bound} we have
    \begin{equation*}
      d_H( F_{\omega}^S, F_{\omega}) \le 2 \norm{\Pi_{F_\omega || E_\omega^S}}\norm{V_\omega^S - \Phi_{F_\omega \oplus E_{\omega}^S}(F_{\omega})} \le 2 (M + 1) \norm{V_{\omega}^S},
    \end{equation*}
    and so the required inequality follows immediately from Lemma \ref{lemma:slow_space_chart_stability}.
  \end{proof}
\end{lemma}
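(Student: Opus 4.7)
The plan is to read off $d_H(F_\omega^S, F_\omega)$ directly from the graph-representation picture that has already been set up. By construction $F_\omega^S = \Phi_{F_\omega \oplus E_\omega^S}^{-1}(V_\omega^S)$, and the target subspace $F_\omega$ itself corresponds to the zero operator in that same chart, i.e.\ $F_\omega = \Phi_{F_\omega \oplus E_\omega^S}^{-1}(0)$. So the Hausdorff distance we wish to bound should be controlled by $\norm{V_\omega^S}$, up to a geometric factor measuring the skewness of the splitting.

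First I would invoke Lemma \ref{lemma:graph_rep_continuity} with the decomposition $F_\omega \oplus E_\omega^S = X_\omega$ and the linear maps $L_1 = V_\omega^S$, $L_2 = 0$ to obtain
\[
  d_H(F_\omega^S, F_\omega) \le 2\,\norm{\Pi_{F_\omega || E_\omega^S}}\,\norm{V_\omega^S}.
\]
Next I would bound $\norm{\Pi_{F_\omega || E_\omega^S}}$ uniformly: since $\Pi_{F_\omega || E_\omega^S} = \Id - \Pi_{E_\omega^S || F_\omega}$, the bound \eqref{eq:norm_bound} coming from Proposition \ref{prop:fast_space_props} gives $\norm{\Pi_{F_\omega || E_\omega^S}} \le 1 + M$ uniformly over $\omega \in \Omega$ and over $S \in \mathcal{LY}(C_1, C_2, r, R) \cap \mathcal{O}_{\epsilon_0}(L)$.

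Finally, Lemma \ref{lemma:slow_space_chart_stability} supplies, for each prescribed $\eta > 0$, a threshold $\epsilon_\eta \in (0,\epsilon_1]$ such that $S \in \mathcal{LY}(C_1, C_2, r, R) \cap \mathcal{O}_{\epsilon_\eta}(L)$ forces $\sup_\omega \norm{V_\omega^S} \le \eta$. Taking $\eta := \delta/(2(M+1))$ and setting $\epsilon_\delta := \epsilon_\eta$ then yields $d_H(F_\omega^S, F_\omega) \le \delta$ as required. There is no genuine obstacle here; the lemma is essentially a translation of the (already proven) Saks-space convergence $V_\omega^S \to 0$ into the Grassmannian metric, made possible by the Lipschitz character of the graph chart in Lemma \ref{lemma:graph_rep_continuity} together with the uniform angle bound from \eqref{eq:norm_bound}.
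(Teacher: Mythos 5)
Your proof is correct and follows essentially the same route as the paper: identify $F_\omega$ with the zero operator in the chart $\Phi_{F_\omega \oplus E_\omega^S}$, apply Lemma \ref{lemma:graph_rep_continuity} to obtain $d_H(F_\omega^S,F_\omega)\le 2\norm{\Pi_{F_\omega||E_\omega^S}}\norm{V_\omega^S}$, bound the projection norm by $M+1$ via \eqref{eq:norm_bound}, and then invoke Lemma \ref{lemma:slow_space_chart_stability}.
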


\begin{lemma}\label{lemma:tnorm_conv_perturbed_proj}
  For every $\delta > 0$ there exists $\epsilon_{\delta} \in (0, \epsilon_1]$ such that for all $S \in \mathcal{LY}(C_1, C_2, r, R) \cap \mathcal{O}_{\epsilon_\delta}(L)$ one has
  \begin{equation*}
    \sup_{\omega \in \Omega} \tnorm{\Pi_{F_{\omega}^S || E_{\omega}^S } - \Pi_{ F_\omega || E_{\omega}}} \le \delta.
  \end{equation*}
  \begin{proof}
    By the triangle inequality we get
    \begin{equation}\label{eq:tnorm_conv_perturbed_proj_1}
      \tnorm{\Pi_{F_{\omega}^S || E_{\omega}^S } - \Pi_{ F_{\omega} || E_{\omega}}}
      \le \norm{\Pi_{F_{\omega}^S || E_{\omega}^S } - \Pi_{ F_{\omega} || E_{\omega}^S}} + \tnorm{\Pi_{E_{\omega}^S || F_{\omega} } - \Pi_{E_{\omega} || F_{\omega}}}.
    \end{equation}
    By Proposition \ref{prop:graph_chart} and \eqref{eq:norm_bound} we have
    \begin{equation*}
      \norm{\Pi_{F_{\omega}^S || E_{\omega}^S } - \Pi_{ F_{\omega} || E_{\omega}^S}} \le \norm{V_\omega^S} \norm{\Pi_{ F_{\omega} || E_{\omega}^S}} \le (M + 1) \norm{V_\omega^S}.
    \end{equation*}
    Hence by Lemma \ref{lemma:slow_space_chart_stability} there exists $\epsilon_{\delta, 1} \in (0,\epsilon_1]$ such that if $S \in \mathcal{LY}(C_1, C_2, r, R) \cap \mathcal{O}_{\epsilon_{\delta,1}}(L)$ then
    \begin{equation}\label{eq:tnorm_conv_perturbed_proj_2}
      \sup_{\omega \in \Omega} \norm{\Pi_{F_{\omega}^S || E_{\omega}^S } - \Pi_{ F_{\omega} || E_{\omega}^S}} \le \delta / 2.
    \end{equation}
    On the other hand, by Proposition \ref{prop:fast_space_props} there exists $\epsilon_{\delta, 2} \in (0, \epsilon_0]$ such that if $S \in \mathcal{LY}(C_1, C_2, r, R) \cap \mathcal{O}_{\epsilon_{\delta,2}}(L)$ then
    \begin{equation}\label{eq:tnorm_conv_perturbed_proj_3}
      \sup_{\omega \in \Omega} \tnorm{\Pi_{E_{\omega}^S || F_{\omega} } - \Pi_{E_{\omega} || F_{\omega}}} \le \delta / 2.
    \end{equation}
    Upon setting $\epsilon_{\delta} = \min \{\epsilon_{\delta, 1},  \epsilon_{\delta, 2}\}$ we may conclude by applying \eqref{eq:tnorm_conv_perturbed_proj_2} and \eqref{eq:tnorm_conv_perturbed_proj_3} to \eqref{eq:tnorm_conv_perturbed_proj_1}.
  \end{proof}
\end{lemma}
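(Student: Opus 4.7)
The plan is to reduce this triple-norm convergence statement to two ingredients already available: the Saks-space stability of the fast-space projections from Proposition~\ref{prop:fast_space_props} (specifically~\eqref{eq:fast_space_props_2}), and the operator-norm smallness of $V_\omega^S$ from Lemma~\ref{lemma:slow_space_chart_stability}. The bridge is the intermediate projection $\Pi_{F_\omega || E_\omega^S}$, which is well-defined because $E_\omega^S \in \mathcal{N}(F_\omega)$ by construction and therefore $F_\omega \oplus E_\omega^S = X$. The key observation that makes everything fit together cleanly is the identity
\begin{equation*}
  \Pi_{F_\omega || E_\omega^S} - \Pi_{F_\omega || E_\omega} = \Pi_{E_\omega || F_\omega} - \Pi_{E_\omega^S || F_\omega},
\end{equation*}
which follows from $\Pi_{F_\omega || \cdot} = \Id - \Pi_{\cdot || F_\omega}$ applied to both complementary subspaces.

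First I would insert this intermediate term and apply the triangle inequality:
\begin{equation*}
  \tnorm{\Pi_{F_\omega^S || E_\omega^S} - \Pi_{F_\omega || E_\omega}} \le \tnorm{\Pi_{F_\omega^S || E_\omega^S} - \Pi_{F_\omega || E_\omega^S}} + \tnorm{\Pi_{F_\omega || E_\omega^S} - \Pi_{F_\omega || E_\omega}}.
\end{equation*}
For the first summand I would use Proposition~\ref{prop:graph_chart} with the splitting $F_\omega \oplus E_\omega^S = X$ and the graph representation of $F_\omega^S$, which gives $\Pi_{F_\omega^S || E_\omega^S} = (\Id + V_\omega^S) \Pi_{F_\omega || E_\omega^S}$, so the difference equals $V_\omega^S \Pi_{F_\omega || E_\omega^S}$. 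Since $\tnorm{\cdot} \le \norm{\cdot}$ (normality of the Saks spaces) and $\norm{\Pi_{F_\omega || E_\omega^S}} \le 1 + \norm{\Pi_{E_\omega^S || F_\omega}} \le 1 + M$ by~\eqref{eq:norm_bound}, we obtain a bound of the form $(1+M)\norm{V_\omega^S}$, uniform in $\omega$, which Lemma~\ref{lemma:slow_space_chart_stability} makes smaller than $\delta/2$ for all $S \in \mathcal{LY}(C_1,C_2,r,R) \cap \mathcal{O}_{\epsilon_{\delta,1}}(L)$ for some $\epsilon_{\delta,1} \in (0,\epsilon_1]$.

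For the second summand I would use the identity above to rewrite it as $\tnorm{\Pi_{E_\omega^S || F_\omega} - \Pi_{E_\omega || F_\omega}}$, which is precisely the quantity controlled by~\eqref{eq:fast_space_props_2} of Proposition~\ref{prop:fast_space_props}; this produces some $\epsilon_{\delta,2} \in (0, \epsilon_0]$ making the second summand at most $\delta/2$ for all $S \in \mathcal{LY}(C_1,C_2,r,R) \cap \mathcal{O}_{\epsilon_{\delta,2}}(L)$. Setting $\epsilon_\delta = \min\{\epsilon_{\delta,1}, \epsilon_{\delta,2}\} \in (0, \epsilon_1]$ finishes the argument.

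There is essentially no obstacle to overcome here; the lemma is a clean consequence of the two preceding stability results once the correct triangle inequality decomposition is chosen. The only subtlety worth flagging is that the first summand does not genuinely require the triple norm and the Saks structure (we are bounding $V_\omega^S$ in operator norm via Lemma~\ref{lemma:slow_space_chart_stability}), whereas the second summand is where the triple norm is genuinely needed and where the Saks-space content of the whole machinery is imported via Proposition~\ref{prop:fast_space_props}.
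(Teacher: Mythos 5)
Your proposal is correct and follows essentially the same route as the paper: insert the intermediate projection $\Pi_{F_\omega || E_\omega^S}$, bound the first difference in operator norm via $\Pi_{F_\omega^S || E_\omega^S} = (\Id + V_\omega^S)\Pi_{F_\omega || E_\omega^S}$ together with Lemma~\ref{lemma:slow_space_chart_stability}, and bound the second difference in triple norm via Proposition~\ref{prop:fast_space_props}. The only presentational difference is that you state explicitly the identity $\Pi_{F_\omega || E_\omega^S} - \Pi_{F_\omega || E_\omega} = \Pi_{E_\omega || F_\omega} - \Pi_{E_\omega^S || F_\omega}$ that the paper uses silently in its first displayed inequality.
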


\begin{lemma}\label{lemma:slow_decay}
  For $\beta \in (0, (\lambda - \mu)/2)$ there exists $\epsilon_\beta \in (0, \epsilon_1]$ and $m \in \Z^+$ such that if $S \in \mathcal{LY}(C_1, C_2, r, R) \cap \mathcal{O}_{\epsilon_\delta}(L)$ and $\omega \in \Omega$ then
  \begin{equation*}
    \norm{\restr{S_{\omega}^{n_0 k m}}{F_{\omega}^S}} \le (\mu + \beta)^{n_0 k m}.
  \end{equation*}
  \begin{proof}
    We have
    \begin{equation}\label{eq:slow_decay_1}
      \norm{\restr{S_{\omega}^{n_0 k m}}{F_{\omega}^S} } \le \norm{ \restr{S_{\omega}^{n_0 k m}}{F_{\omega}} } + \norm{S_{\omega}^{n_0 k m}} d_H(F_{\omega}, F_{\omega}^S) \le \norm{\restr{S_{\omega}^{n_0 k m}}{F_{\omega}}} + C_3 R^{n_0 k m} d_H(F_{\omega}, F_{\omega}^S).
    \end{equation}
    By Proposition \ref{prop:perturbed_spaces_bound} there exists $\epsilon_{\beta,1} \in (0, \epsilon_1)$ and $m \in \Z^+$ such that if $S \in \mathcal{LY}(C_1, C_2, r, R) \cap \mathcal{O}_{\epsilon_{\beta,1}}(L)$ and $\omega \in \Omega$ then
    \begin{equation}\label{eq:slow_decay_2}
      \norm{\restr{S_{\omega}^{n_0 k m}}{F_{\omega}}} \le \frac{(\mu + \beta)^{n_0 k m}}{2}.
    \end{equation}
    By Lemma \ref{lemma:slow_space_stability} there exists $\epsilon_{\beta,2} \in (0, \epsilon_\beta)$ such that if $S \in \mathcal{LY}(C_1, C_2, r, R) \cap \mathcal{O}_{\epsilon_{\beta,2}}(L)$ then
    \begin{equation}\label{eq:slow_decay_3}
      \sup_{\omega \in \Omega} d_H(F_{\omega}, F_{\omega}^S) \le (2C_3)^{-1} \left(\frac{\mu + \beta}{R}\right)^{n_0 k m}.
    \end{equation}
    We obtain the required inequality by setting $\epsilon_\beta = \min\{ \epsilon_{\beta,1},\epsilon_{\beta,2}\}$ and then applying \eqref{eq:slow_decay_2} and \eqref{eq:slow_decay_3} to \eqref{eq:slow_decay_1}.
  \end{proof}
\end{lemma}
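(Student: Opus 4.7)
The natural approach is to compare $\restr{S^{n_0km}_\omega}{F^S_\omega}$ with $\restr{S^{n_0km}_\omega}{F_\omega}$, exploiting the fact that by Lemma \ref{lemma:slow_space_stability} the spaces $F^S_\omega$ and $F_\omega$ can be made arbitrarily close in the Hausdorff metric, while the decay of the latter is already controlled by Proposition \ref{prop:perturbed_spaces_bound}. Concretely, for any unit $v \in F^S_\omega$ the definition of the gap yields a nearby vector $w \in F_\omega$ with $\norm{v-w}$ arbitrarily close to $\gap(F^S_\omega, F_\omega) \le d_H(F^S_\omega, F_\omega)$; decomposing $v = w + (v-w)$ and applying the triangle inequality gives the standard comparison
\begin{equation*}
  \norm{\restr{S^{n_0 k m}_\omega}{F^S_\omega}} \le \norm{\restr{S^{n_0 k m}_\omega}{F_\omega}} + \norm{S^{n_0 k m}_\omega}\, d_H(F_\omega, F^S_\omega),
\end{equation*}
up to an absorbable factor of $1 + d_H$ on the first summand, which is harmless once $d_H$ is small.

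Now the Lasota-Yorke hypothesis together with \eqref{eq:power_bound} gives $\norm{S^{n_0 k m}_\omega} \le C_3 R^{n_0 k m}$ uniformly in $\omega$ and in $S \in \mathcal{LY}(C_1,C_2,r,R)$. The plan is then a two-step choice: first, invoke Proposition \ref{prop:perturbed_spaces_bound} to produce an $\epsilon_{\beta,1} > 0$ such that $\norm{\restr{S^n_\omega}{F_\omega}} \le (\mu + \beta/2)^n$ for every $n > N_\beta$ and $S \in \mathcal{LY}(C_1,C_2,r,R) \cap \mathcal{O}_{\epsilon_{\beta,1}}(L)$, and then fix $m \in \Z^+$ with $m n_0 k > N_\beta$ large enough so that $(\mu + \beta/2)^{n_0 k m} \le \tfrac{1}{2}(\mu+\beta)^{n_0 k m}$ (which is automatic). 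With $m$ now frozen, apply Lemma \ref{lemma:slow_space_stability} with $\delta := (2C_3)^{-1}\bigl((\mu+\beta)/R\bigr)^{n_0 k m}$ to obtain $\epsilon_{\beta,2} \in (0,\epsilon_1]$ ensuring $d_H(F_\omega, F^S_\omega) \le \delta$ uniformly in $\omega$; then the second summand is bounded by $\tfrac{1}{2}(\mu+\beta)^{n_0 k m}$ as well. Setting $\epsilon_\beta = \min\{\epsilon_{\beta,1}, \epsilon_{\beta,2}\}$ and summing the two halves completes the argument.

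The only delicate point is the order of quantifiers: $m$ must be chosen first, independently of $\epsilon$, because it has to be large enough to extract a genuine decay rate from $\mu + \beta$, and only then can $\epsilon$ be chosen (depending on $m$) small enough to counteract the potentially enormous factor $R^{n_0 k m}$ appearing in the $d_H$-term. Since Lemma \ref{lemma:slow_space_stability} permits $d_H(F_\omega, F^S_\omega)$ to be made arbitrarily small uniformly in $\omega$, this staged choice creates no real obstruction, and no further structural input (in particular, no further use of Saks space equicontinuity) is required beyond what the two cited results already provide.
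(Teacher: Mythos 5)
Your argument is correct and follows the same route as the paper: decompose via the triangle inequality to compare restriction to $F^S_\omega$ against restriction to $F_\omega$, control the first term via Proposition \ref{prop:perturbed_spaces_bound} and the second via Lemma \ref{lemma:slow_space_stability}, with $m$ fixed before $\epsilon$ to absorb the $R^{n_0km}$ factor. Your remark that one should apply Proposition \ref{prop:perturbed_spaces_bound} with a strictly smaller gap parameter (e.g.\ $\beta/2$) in order to legitimately extract the factor $\tfrac12$ in front of $(\mu+\beta)^{n_0km}$ is a genuine small repair of a point the paper's proof glosses over.
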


\begin{proof}[{The proof of Proposition \ref{prop:slow_space_props}}]
  Lemma \ref{lemma:slow_space_angle_bound} proves \eqref{eq:slow_space_props_1}, while
  Lemma \ref{lemma:slow_equivariance} proves that $S_{\omega}^{n_0} F_{\omega}^S \subseteq F_{\sigma^{n_0}(\omega)}^S$ for every $\omega \in \Omega$.
  We get \eqref{eq:slow_space_props_2} and \eqref{eq:slow_space_props_3} from Lemmas \ref{lemma:slow_space_stability} and \ref{lemma:tnorm_conv_perturbed_proj}, respectively.

  Thus it remains to prove \eqref{eq:slow_space_props_4}, which we will do using Lemma \ref{lemma:slow_decay}. With the notation of Lemma \ref{lemma:slow_decay} set $n_1 = n_0 k m$. For $n \in \Z^+$ write $n = \ell n_1 + j$ where $\ell \in \Z^+$ and $j \in \{0, \dots, n_1 - 1\}$.
  By Lemma \ref{lemma:slow_decay} and the equivariance of $(F_\omega^S)_{\omega \in \Omega}$ we have for $S \in \mathcal{LY}(C_1, C_2, r, R) \cap \mathcal{O}_{\epsilon_\beta}(L)$ that
  \begin{equation*}
    \norm{\restr{S_{\omega}^{\ell n_1}}{F_{\omega}^S}} \le \prod_{i=0}^{\ell - 1} \norm{
    \restr{S_{\sigma^{(in_1)}(\omega)}^{n_1}}{F_{\sigma^{(in_1)}(\omega)}^S } } \le (\mu + \beta)^{\ell n_1},
  \end{equation*}
  and so
  \begin{equation*}
    \norm{\restr{S_{\omega}^{n}}{F_{\omega}^S}} \le \norm{S_{\sigma^{\ell n_1}(\omega)}^j} \norm{\restr{S_{\omega}^{\ell n_1}}{F_{\omega}^S}} \le C_3 \left(\frac{R}{\mu + \beta}\right)^{j} (\mu + \beta)^{n}.
  \end{equation*}
  Since $\mu + \beta \le R$ we obtain \eqref{eq:slow_space_props_4} upon setting $C_\beta = C_3 \left(\frac{R}{\mu + \beta}\right)^{n_1 -1}$.
\end{proof}

\subsection{Completing the proof of Theorem \ref{thm:stability_cocycle}}\label{sec:end_of_proof}

We have assembled most of the ingredients that are required to complete the proof of Theorem \ref{thm:stability_cocycle}.
Indeed, all of the conclusions of Theorem \ref{thm:stability_cocycle} are verified by Propositions \ref{prop:fast_space_props} and \ref{prop:slow_space_props}, except for the following result.

\begin{proposition}
  There exists $\epsilon' \in (0, \epsilon_1)$ such that if $S \in \mathcal{LY}(C_1, C_2, r, R) \cap \mathcal{O}_{\epsilon'}(L)$ then the fast spaces $(E_\omega^S)_{\omega \in \Omega} \in \prod_{\omega \in \Omega} \mathcal{G}_d(X_\omega)$ and slow spaces $(F_\omega^S)_{\omega \in \Omega} \in \prod_{\omega \in \Omega} \mathcal{G}^d(X_\omega)$ produced by Propositions \ref{prop:fast_contraction_mapping} and \ref{prop:slow_contraction_mapping}, respectively, form a hyperbolic splitting of index $d$ for $S$.
  \begin{proof}
    Fix $\beta \in (0, (\lambda - \mu)/2)$.
    By Propositions \ref{prop:fast_space_props} and \ref{prop:slow_space_props} there exists $\epsilon' >0$ and $C_\beta$ such that if $S \in \mathcal{LY}(C_1, C_2, r, R) \cap \mathcal{O}_{\epsilon'}(L)$, $n \in \Z^+$ and $\omega \in \Omega$ then
    \begin{equation*}
      \norm{\restr{S_\omega^n}{F_\omega^S}} \le C_\beta (\mu + \beta)^n,
    \end{equation*}
    and if, in addition, $v \in E_\omega^S$ then
    \begin{equation*}
      \norm{S_\omega^n v} \ge C_\beta^{-1} (\lambda - \beta)^n.
    \end{equation*}
    Hence, it suffices to prove that for every $\omega \in \Omega$ we have $S_\omega E_\omega^S = E_{\sigma(\omega)}^S$ and $S_\omega F_\omega^S \subseteq F_{\sigma(\omega)}^S$.
    We will prove these separately.

    \paragraph{The equivariance of $(F_\omega^S)_{\omega \in \Omega}$.}
    If $S_{\omega}F_{\omega}^S \not\subseteq F_{\sigma(\omega)}^S$ then there exists $f \in F_{\omega}^S$ such that $\norm{f} = 1$ and $S_{\omega} f \notin F_{\sigma(\omega)}^S$.
    Thus $\codim(F_{\sigma(\omega)}^S \oplus \vspan\{ S_{\omega} f\}) = d - 1$, and so there exists $e \in E_{\sigma(\omega)}^S \cap (F_{\sigma(\omega)}^S \oplus \vspan\{ S_{\omega} f\})$ with $\norm{e} = 1$.
    Write $e = aS_{\omega} f + f'$ where $a$ is a scalar and $f' \in F_{\sigma(\omega)}^S$. For every $n \in \Z^+$ we have
    \begin{equation*}
      C_\beta^{-1} (\lambda - \beta)^n \le \norm{S^m_{\sigma(\omega)}e} \le \abs{a} \norm{S^{n+1}_{\omega}f} + \norm{S^n_{\sigma(\omega)}f'} \le C_\beta (\mu + \beta)^n\left(\abs{a}(\mu + \beta) \norm{f} + \norm{f'}\right).
    \end{equation*}
    Since $\lambda - \beta > \mu + \beta$ we obtain a contradiction by taking $n \to \infty$.

    \paragraph{The equivariance of $(E_\omega^S)_{\omega \in \Omega}$.}
    If $S_{\omega} E_{\omega}^S \ne E_{\sigma(\omega)}^S$ then there exists $e \in E_{\omega}^S$ such that $\norm{e} = 1$ and $S_{\omega} e \notin E_{\sigma(\omega)}^S$.
    Recall that for the constant $n_0$ produced by Propositions \ref{prop:fast_contraction_mapping} we have $S^{m n_0}_{\sigma^{-m n_0}(\omega)} E_{\sigma^{-m n_0}(\omega)}^S = E_{\omega}^S$ for every $m \in \Z^+$.
    Hence, for each $m \in \Z^+$ there is a unique vector $e_m \in E_{\sigma^{-m n_0}(\omega)}^S$  satisfying $S_{\sigma^{-m n_0}(\omega)}^{mn_0} e_m = e$.
    Since $S_{\sigma^{-mn_0+1}(\omega)}^{mn_0} E_{\sigma^{-mn_0+1}(\omega)}^S = E_{\sigma(\omega)}^S$ we must have $S_{\sigma^{-m n_0}(\omega)} e_m \notin E_{\sigma^{-m n_0+1}(\omega)}^S$.
    Thus $\dim(E_{\sigma^{-mn_0+1}(\omega)}^S \oplus \vspan\{ S_{\sigma^{-mn_0}(\omega)} e_m \}) = d+1$, and so there exists $f_m \in (E_{\sigma^{-mn_0+1}(\omega)}^S \oplus \vspan\{ S_{\sigma^{-mn_0}(\omega)} e_m\}) \cap F_{\sigma^{-mn_0 +1}(\omega)}^S$ with $\norm{f_m} = 1$.
    Writing $f_m = a_m S_{\sigma^{-mn_0}(\omega)}e_m + g_m$ for some scalar $a_m$ and $g_m \in E_{\sigma^{-mn_0 +1}(\omega)}^S$,
    we have
    \begin{equation*}\begin{split}
      C_\beta (\mu + \beta)^{m n_0 -1} &\ge \norm{S^{mn_0-1}_{\sigma^{-mn_0+1}(\omega)} f_m} \\
      &= \norm{a_m S^{m n_0}_{\sigma^{-mn_0}(\omega)} e_m + S^{mn_0-1}_{\sigma^{-mn_0+1}(\omega)} g_m} \\
      &\ge \max\left\{ \abs{a_m} \norm{S^{m n_0}_{\sigma^{-mn_0}(\omega)} e_m} \norm{\Pi_{ \vspan\{S_{\omega} e\} || E_{\sigma(\omega)}^S}}^{-1}, \norm{S^{mn_0-1}_{\sigma^{-mn_0+1}(\omega)} g_m} \norm{\Pi_{ E_{\sigma(\omega)}^S || \vspan\{S_{\omega} e\} }}^{-1} \right\}\\
      &\ge \frac{(\lambda - \beta)^{m n_0 -1}}{2C_\beta} \max\left\{ \abs{a_m} (\lambda - \beta) \norm{e_m} , \norm{g_m} \right\} \norm{\Pi_{ \vspan\{S_{\omega} e\} || E_{\sigma(\omega)}^S}}^{-1}.
    \end{split}\end{equation*}
    Since $f_m = a_m S_{\sigma^{-mn_0}(\omega)}e_m + g_m$ and $\norm{f_m} = 1$ we have $1 \le \abs{a_m} C_3 R\norm{e_m} + \norm{g_m}$, and so
    \begin{equation*}
      C_\beta (\mu + \beta)^{m n_0 -1} \ge \frac{(\lambda - \beta)^{m n_0 -1}}{2C_\beta} \max\left\{ (\lambda - \beta) \frac{1 - \norm{g_m}}{C_3R}, \norm{g_m} \right\} \norm{\Pi_{ \vspan\{S_{\omega} e\} || E_{\sigma(\omega)}^S}}^{-1}.
    \end{equation*}
    For any value of $\norm{g_m}$ we have
    \begin{equation*}
      \max\left\{ (\lambda - \beta) \frac{1 - \norm{g_m}}{C_3R}, \norm{g_m} \right\} \ge \frac{\lambda - \beta}{C_3R + \lambda - \beta}.
    \end{equation*}
    Thus
    \begin{equation*}
      C_\beta (\mu + \beta)^{m n_0 - 1} \ge \frac{(\lambda - \beta)^{mn_0}}{2C_\beta(C_3R + \lambda - \beta)} \norm{\Pi_{ \vspan\{S_{\omega} e\} || E_{\sigma(\omega)}^S}}^{-1},
    \end{equation*}
    and so we obtain a contradiction by sending $m \to \infty$.
  \end{proof}
\end{proposition}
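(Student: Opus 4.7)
The plan is to fix $\beta \in (0,(\lambda-\mu)/2)$ and choose $\epsilon'$ so that Propositions~\ref{prop:fast_space_props} and~\ref{prop:slow_space_props} both apply. For any $S \in \mathcal{LY}(C_1,C_2,r,R) \cap \mathcal{O}_{\epsilon'}(L)$, those results already supply the angle bound \eqref{eq:h1_angle}, the fast expansion \eqref{eq:h2_expansion} and slow decay \eqref{eq:h3_decay} (with rates $\lambda-\beta$ and $\mu+\beta$), and the $n_0$-step equivariances $S^{n_0}_\omega E^S_\omega = E^S_{\sigma^{n_0}(\omega)}$ and $S^{n_0}_\omega F^S_\omega \subseteq F^S_{\sigma^{n_0}(\omega)}$. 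Hence the only remaining task is to promote these to the single-step equivariances $S_\omega E^S_\omega = E^S_{\sigma(\omega)}$ and $S_\omega F^S_\omega \subseteq F^S_{\sigma(\omega)}$, and I would establish each by contradiction, exploiting the exponential gap between $\mu+\beta$ and $\lambda-\beta$.

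For the slow direction, suppose $f \in F^S_\omega$ satisfies $\norm{f}=1$ and $S_\omega f \notin F^S_{\sigma(\omega)}$. Then the closed subspace $F^S_{\sigma(\omega)} + \vspan\{S_\omega f\}$ has codimension $d-1$, so it intersects the $d$-dimensional $E^S_{\sigma(\omega)}$ nontrivially; pick a unit vector $e$ in that intersection and write $e = a S_\omega f + f'$ with $f' \in F^S_{\sigma(\omega)}$. Applying $S^n_{\sigma(\omega)}$, the fast lower bound on the left and the slow upper bound on each summand on the right give
\[
C_\beta^{-1}(\lambda-\beta)^n \;\le\; \norm{S_{\sigma(\omega)}^n e} \;\le\; C_\beta(\mu+\beta)^n\bigl(|a|(\mu+\beta)\norm{f} + \norm{f'}\bigr),
\]
which is impossible as $n \to \infty$.

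The fast direction is the main obstacle, since I only have backward control of $E^S$ along the subsequence $S^{n_0}$. Suppose $e \in E^S_\omega$ has $\norm{e}=1$ and $S_\omega e \notin E^S_{\sigma(\omega)}$. Iterating $S^{n_0}$-equivariance $m$ times yields, for each $m \in \Z^+$, a preimage $e_m \in E^S_{\sigma^{-mn_0}(\omega)}$ with $S^{mn_0}_{\sigma^{-mn_0}(\omega)} e_m = e$ and, by the fast lower bound, $\norm{e_m} \le C_\beta(\lambda-\beta)^{-mn_0}$. Were $S_{\sigma^{-mn_0}(\omega)} e_m$ to lie in $E^S_{\sigma^{-mn_0+1}(\omega)}$, a further $m$-fold iteration of $S^{n_0}$-equivariance would force $S_\omega e \in E^S_{\sigma(\omega)}$, contradicting the hypothesis; hence $E^S_{\sigma^{-mn_0+1}(\omega)} + \vspan\{S_{\sigma^{-mn_0}(\omega)} e_m\}$ has dimension $d+1$ and meets the codimension-$d$ subspace $F^S_{\sigma^{-mn_0+1}(\omega)}$, from which I select a unit vector $f_m = a_m S_{\sigma^{-mn_0}(\omega)} e_m + g_m$ with $g_m \in E^S_{\sigma^{-mn_0+1}(\omega)}$. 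Applying $S^{mn_0}_{\sigma^{-mn_0+1}(\omega)}$ produces
\[
S^{mn_0}_{\sigma^{-mn_0+1}(\omega)} f_m \;=\; a_m S_\omega e + h_m, \qquad h_m := S^{mn_0}_{\sigma^{-mn_0+1}(\omega)} g_m \in E^S_{\sigma(\omega)},
\]
where $h_m \in E^S_{\sigma(\omega)}$ follows from $m$-fold $S^{n_0}$-equivariance, and the left-hand side is bounded in norm by $C_\beta(\mu+\beta)^{mn_0}$ via the slow decay applied to $f_m \in F^S_{\sigma^{-mn_0+1}(\omega)}$. Since $S_\omega e \notin E^S_{\sigma(\omega)}$, the finite-dimensional splitting $\vspan\{S_\omega e\} \oplus E^S_{\sigma(\omega)}$ is well defined with projection norms independent of $m$, and so individually controls $|a_m|\norm{S_\omega e}$ and $\norm{h_m}$ by a constant multiple of $(\mu+\beta)^{mn_0}$. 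Combining $\norm{h_m} \ge C_\beta^{-1}(\lambda-\beta)^{mn_0}\norm{g_m}$ (fast expansion on $g_m \in E^S$) forces $\norm{g_m} \to 0$, and then $\norm{f_m}=1$ together with $\norm{S_{\sigma^{-mn_0}(\omega)} e_m} \le C_3 R C_\beta(\lambda-\beta)^{-mn_0}$ forces $|a_m|$ to grow at least like $(\lambda-\beta)^{mn_0}$, contradicting the bound $|a_m|\norm{S_\omega e} \le C(\mu+\beta)^{mn_0}$ for large $m$. The technical care lies in verifying that the projection norms for the splitting $\vspan\{S_\omega e\} \oplus E^S_{\sigma(\omega)}$ are finite and $m$-independent, which holds because both subspaces are finite-dimensional and $m$ enters only through the vectors being projected, not the subspaces themselves.
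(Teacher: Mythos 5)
Your proof is correct and follows essentially the same approach as the paper: both establish the two equivariances by contradiction, exploiting the $n_0$-step equivariance $S^{n_0}_\omega E^S_\omega = E^S_{\sigma^{n_0}(\omega)}$ and $S^{n_0}_\omega F^S_\omega \subseteq F^S_{\sigma^{n_0}(\omega)}$ already guaranteed by Propositions \ref{prop:fast_contraction_mapping} and \ref{prop:slow_contraction_mapping}, together with the exponential gap between $\mu+\beta$ and $\lambda-\beta$. The slow direction is identical. In the fast direction your bookkeeping differs slightly from the paper's: you apply $S^{mn_0}_{\sigma^{-mn_0+1}(\omega)}$ so as to land at $\sigma(\omega)$ (where the splitting $\vspan\{S_\omega e\} \oplus E^S_{\sigma(\omega)}$ naturally lives), then argue $\norm{g_m} \to 0$ and $\abs{a_m}$ grows like $(\lambda-\beta)^{mn_0}$ as two separate estimates before taking the contradiction, whereas the paper packages the two via a single $\max$-inequality and closes algebraically. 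Your version has cleaner index bookkeeping (the paper's application of $S^{mn_0-1}_{\sigma^{-mn_0+1}(\omega)}$ lands at $\omega$, which sits awkwardly against the projections written at $\sigma(\omega)$), but the substance of the argument is the same.
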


\section{Application to random linear systems}\label{sec:stability_lyapunov}

In this section we will use Theorem \ref{thm:stability_cocycle} to prove the stability of the Oseledets splitting and Lyapunov exponents of certain random linear systems (see Theorem \ref{thm:stability_lyapunov}).
In order to properly formulate our results we need some language from \cite{GTQuas1} (although we note the existence of alternatives, such as \cite{blumenthal2016volume,froyland2013semi}).
\begin{definition}\label{def:random_linear_system}
  A separable strongly measurable random linear system is a tuple $\mathcal{Q} = (\Omega, \mathcal{F}, \mathbb{P}, \sigma, X, Q)$ such that $(\Omega, \mathcal{F}, \mathbb{P})$ is a Lebesgue space, $\sigma : \Omega \to \Omega$ is a $\mathbb{P}$-preserving transformation of $(\Omega, \mathcal{F}, \mathbb{P})$, $X$ is a separable Banach space, and the generator $Q : \Omega \to \LL(X)$ is strongly measurable i.e. for every $x \in X$ the map $\omega \mapsto Q_\omega(x)$ is $(\mathcal{F}, \mathcal{B}_X)$-measurable where $\mathcal{B}_X$ is the Borel $\sigma$-algebra on $X$.
  We say that $\mathcal{Q}$ has an ergodic invertible base if $\sigma$ is invertible and $\mathbb{P}$-ergodic.
\end{definition}

\begin{remark}
  We will frequently use an alternative characterisation of strong measurability from \cite[Appendix A]{GTQuas1}: in the context of Definition \ref{def:random_linear_system} this condition is equivalent to $Q$ being $(\mathcal{F},\mathcal{S})$-measurable, where $\mathcal{S}$ is the Borel $\sigma$-algebra of the strong operator topology on $\LL(X)$.
\end{remark}

\begin{definition}\label{def:oseledets_splitting}
  Let $\mathcal{Q} = (\Omega, \mathcal{F}, \mathbb{P}, \sigma, X, Q)$
  be a separable strongly measurable random linear system.
  Suppose that there exists $k_Q \in \Z^+$, constants $\lambda_{1,Q} > \lambda_{2,Q} > \dots > \lambda_{k_Q,Q} > \mu_Q$, a map $F_Q : \Omega \mapsto \mathcal{G}(X)$, and for each $i \in \{1, \dots, k_Q\}$ a positive integer $d_{i,Q}$ and a map $E_{i,Q} : \Omega \to \mathcal{G}_{d_{i,Q}}(X)$,
  such that
  \begin{enumerate}
    \item For a.e. $\omega$ we have
    \begin{equation}\label{eq:oseledets_splitting}
      X = \left(\bigoplus_{i=1}^{k_Q} E_{i, Q}(\omega) \right) \oplus F_Q(\omega),
    \end{equation}
    and each of the projections associated to the decomposition \eqref{eq:oseledets_splitting} is strongly measurable.
    \item For every $i \in \{1, \dots, k_Q\}$ and a.e. $\omega \in \Omega$ we have $Q_\omega E_{i,Q}(\omega) = E_{i,Q}(\sigma(\omega))$, and for each non-zero $v \in E_{i,Q}(\omega)$ one has
    \begin{equation}\label{eq:fast_growth}
      \lim_{n \to \infty} \frac{1}{n} \log \norm{ Q_\omega^n v} = \lambda_{i,Q}.
    \end{equation}
    \item For a.e. $\omega \in \Omega$ one has $Q_\omega F_Q(\omega) \subseteq F_Q(\sigma(\omega))$ and
    \begin{equation}\label{eq:slow_growth}
      \lim_{n \to \infty } \frac{1}{n} \log \norm{ \restr{Q_\omega^n}{F_Q(\omega)}} \le \mu_Q.
    \end{equation}
  \end{enumerate}
  Then we call \eqref{eq:oseledets_splitting} an Oseledets splitting for $\mathcal{Q}$ of dimension $d = \sum_{i=1}^{k_Q} d_{i,Q}$.
  The numbers $\{\lambda_{i,Q} \}_{i=1}^{k_Q}$ are called the exceptional Lyapunov exponents of $\mathcal{Q}$, and we say that $d_{i,Q}$ is the multiplicity of $\lambda_{i,Q}$.
  The spaces $E_{i,Q}(\omega)$ and $F_Q(\omega)$ are called Oseledets subspaces of $\mathcal{Q}$.
  For convenience we set $\lambda_{k_Q+1,Q} = \mu_Q$.
  Finally, the Lyapunov exponents of $\mathcal{Q}$ counted with multiplicities is the sequence
  \begin{equation}\label{eq:lyapunov_w_multiplicity}
    \lambda_{1,Q}, \dots, \lambda_{1,Q}, \lambda_{2, Q} \dots, \lambda_{2, Q}, \lambda_{3,Q}, \dots, \lambda_{k_Q,Q},
  \end{equation}
  where each $\lambda_{i,Q}$ occurs $d_{i,Q}$ times. For $\ell \in \{1, \dots, d\}$ we set $\gamma_{\ell,Q}$ to be the $\ell$th element of \eqref{eq:lyapunov_w_multiplicity} (from left to right).
\end{definition}

\begin{remark}\label{remark:measureable_fast_spaces}
  It follows from Lemma \ref{lemma:strong_cont_proj_grassmannian} that $\omega \mapsto E_{i, Q}(\omega)$ is $(\mathcal{F}, \mathcal{B}_{\mathcal{G}(X)})$-measurable for every $i \in \{1, \dots, k_Q\}$.
\end{remark}

\begin{remark}
    The existence of an Oseledets splittings may be guaranteed by a multiplicative ergodic theorem. There are now a plethora of such theorems, starting with \cite{oseledets1968multiplicative} and being generalised in a number of directions, but for our desired application we are only concerned with semi-invertible multiplicative ergodic theorems on Banach spaces.
    The semi-invertibility of such a result refers to the requirement that $\sigma$ is invertible, but that no invertibility assumption is placed on the generator $Q$.
    In an infinite-dimensional Banach space there is also a requirement that the random linear system being considered is quasi-compact, which, roughly speaking, implies that the iterates of the cocycle become increasingly close to a compact cocycle. We refer the reader to \cite{GTQuas1,gonzalez2015concise,blumenthal2016volume,froyland2013semi} for precise statements of various semi-invertible multiplicative ergodic theorems.
    Finally, we note that a semi-invertible multiplicative ergodic theorem for compact cocycles on a continuous field of Banach spaces was recently developed \cite{varzaneh2019dynamical}, in which case the Banach space is allowed to vary fiber-wise.
    This setting is quite similar that of Section \ref{sec:saks_space_stability}, and suggests the possibility of generalising the results of this section to cocycles on Banach fields.
\end{remark}

To a separable strongly measurable random linear system $(\Omega, \mathcal{F}, \mathbb{P}, \sigma, X, Q)$ we may associate a canonical bounded linear endomorphism of $\mathbb{X} = \bigsqcup_{\omega \in \Omega} \{\omega\} \times X$, which we also denote by $Q$, that is defined by
\begin{equation*}
  Q(\omega,f) = (\sigma(\omega), Q_\omega f).
\end{equation*}
To apply Theorem \ref{thm:stability_cocycle} we require a hyperbolic splitting for $Q$ when considered as an element of $\End(\mathbb{X}, \sigma)$.
The following definition makes precise this requirement in the context of Oseledets splittings.

\begin{definition}\label{def:hyperbolic_oseledets}
  Suppose that $\mathcal{Q} = (\Omega, \mathcal{F}, \mathbb{P}, \sigma, X, Q)$ is a separable strongly measurable random linear system with an Oseledets splitting of dimension $d$ as in Definition \ref{def:oseledets_splitting}.
  For each $i \in \{1, \dots, k_Q\}$ let $U_{i,Q}(\omega) = \bigoplus_{j \le i } E_{i, Q}(\omega)$ and $V_{i,Q}(\omega) = \left(\bigoplus_{j > i} E_{j, Q}(\omega)\right) \oplus F_Q(\omega)$.
  We say that $\mathcal{Q}$ has a hyperbolic Oseledets splitting up to the dimension $d$ if there exists a $\sigma$-invariant set $\Omega' \subseteq \Omega$ of full $\mathbb{P}$-measure such that for each $i \in \{1, \dots, k_Q\}$ the families of subspaces $\{U_{i,Q}(\omega)\}_{\omega \in \Omega'}$ and $\{V_{i,Q}(\omega)\}_{\omega \in \Omega'}$ form the equivariant fast and slow spaces, respectively, for a hyperbolic splitting of the restriction of $Q$ to $\mathbb{X}' = \bigsqcup_{\omega \in \Omega'} \{\omega\} \times X$ when $Q$ is considered as an element of $\End(\mathbb{X}, \sigma)$.
\end{definition}

\begin{remark}\label{remark:alt_hyperbolic_oseledets}
  Unpacking the various requirements in Definition \ref{def:hyperbolic_oseledets}, we observe that the Oseledets splitting of $\mathcal{Q}$ being hyperbolic is equivalent to the existence of a $\sigma$-invariant set $\Omega' \subseteq \Omega$ of full $\mathbb{P}$-measure, constants $\Theta, C > 0$ and $\eta < 2^{-1}\min_{1 \le i \le k_Q} \{\lambda_{i,Q} - \lambda_{i+1,Q}\}$ such that for every $i \in \{1, \dots, k_Q\}$, $\omega \in \Omega'$ and $n \in \Z^+$ we have
  \begin{equation}\label{eq:alt_hyperbolic_oseledets_1}
    \max\left\{\norm{\Pi_{U_{i,Q}(\omega) || V_{i,Q}(\omega)}}, \norm{\Pi_{V_{i,Q}(\omega) || U_{i,Q}(\omega)}}\right\} < \Theta,
  \end{equation}
  \begin{equation}\label{eq:alt_hyperbolic_oseledets_2}
    \norm{\restr{Q_\omega^n}{V_{i,Q}(\omega)}} \le C e^{n(\lambda_{i+1,Q} + \eta)},
  \end{equation}
  and
  \begin{equation}\label{eq:alt_hyperbolic_oseledets_3}
    \norm{\left(\restr{Q_\omega^n}{U_{i,Q}(\omega)}\right)^{-1}} \le C^{-1} e^{-n(\lambda_{i,Q} - \eta)}.
  \end{equation}
\end{remark}

Before stating our main result for this section we require some notation.
Suppose that  $\mathcal{Q} = (\Omega, \mathcal{F}, \mathbb{P}, \sigma, X, Q)$
is a separable strongly measurable random linear system with Oseledets splitting of dimension $d$.
Rather than indexing the projections onto Oseledets spaces with the index of their Lyapunov exponents, it will be more convenient to state our perturbation results by indexing projections by collections of Lyapunov exponents.
If $I \subseteq \R$ is a open interval such that $I \subseteq (\mu_Q, \infty)$ and $\partial I \cap \{\lambda_{i,Q} : 1 \le i \le k_Q \} = \emptyset$ then we say that $I$ separates the Lyapunov spectrum of $\mathcal{Q}$.
When $I$ separates the Lyapunov spectrum of $\mathcal{Q}$ we may define $\Pi_{I,Q}(\omega) \in \LL(X)$ to be the projection onto
\begin{equation*}
  \bigoplus_{i : \lambda_{i,Q} \in I} E_{i,Q}
\end{equation*}
according to the decomposition \eqref{eq:oseledets_splitting}. Finally, if $(X, \norm{\cdot}, \wnorm{\cdot})$ is a Saks space and $\epsilon > 0$ then, as in Section \ref{sec:saks_space_stability}, we set
\begin{equation*}
  \mathcal{O}_{\epsilon}(Q) = \left\{ P: \Omega \mapsto \LL(X) \, \bigg\vert \, P \text{ is strongly measurable with } \esssup_{\omega \in \Omega} \tnorm{Q_\omega - P_\omega} < \epsilon \right\}.
\end{equation*}
Our main result for this section is the following.

\begin{theorem}\label{thm:stability_lyapunov}
  Suppose that $(X, \norm{\cdot}, \wnorm{\cdot})$ is a Saks space, with $(X, \norm{\cdot})$ being a Banach space, that $\mathcal{Q} = (\Omega, \mathcal{F}, \mathbb{P}, \sigma, X, Q)$ is a separable strongly measurable random linear system with ergodic invertible base and a hyperbolic Oseledets splitting of dimension $d \in \Z^+$, and that $Q \in \mathcal{LY}(C_1, C_2, r, R) \cap \End_S(\mathbb{X},\sigma)$ for some $C_1,C_2,R > 0$ and $r \in [0, e^{\mu_Q})$.
  There exists $\epsilon_0 >0$ such that if $\mathcal{P} = (\Omega, \mathcal{F}, \mathbb{P}, \sigma, X, P)$ is a separable strongly measurable random linear system with $P \in \mathcal{LY}(C_1, C_2, r, R) \cap \mathcal{O}_{\epsilon_0}(Q)$ then $\mathcal{P}$ also has an Oseledets splitting of dimension $d$.
  In addition, there exists $c_0 < 2^{-1}\min_{1 \le i \le k_Q} \{\lambda_{i,Q} - \lambda_{i+1,Q}\}$ such that each $I_{i} = (\lambda_{i,Q} - c_0, \max\{\lambda_{i,Q}, \log(\delta_{1i} R)\} + c_0)$, $i \in \{1, \dots, k_Q\}$, separates the Lyapunov spectrum of $\mathcal{P}$, and the corresponding projections satisfy
  \begin{equation}\label{eq:stability_lyapunov_0000}
    \forall i \in \{1, \dots, k_Q\}, \text{ a.e. } \omega \in \Omega \quad \rank(\Pi_{I_i, P}(\omega)) = d_{i,Q},
  \end{equation}
  and
  \begin{equation}\label{eq:stability_lyapunov_00000}
    \sup \left\{ \esssup_{\omega \in \Omega} \norm{\Pi_{I_i, P}(\omega) } : P \in \mathcal{LY}(C_1, C_2, r, R) \cap \mathcal{O}_{\epsilon_0}(Q), 1\le i \le k_Q \right\} < \infty.
  \end{equation}
  Moreover, for every $\nu > 0$ there exists $\epsilon_\nu \in (0, \epsilon_0)$ so that if $P \in \mathcal{LY}(C_1, C_2, r, R) \cap \mathcal{O}_{\epsilon_\nu}(Q)$ then
  \begin{equation}\label{eq:stability_lyapunov_0}
      \sup_{1 \le i \le d} \abs{\gamma_{i, Q} - \gamma_{i, P}} \le \nu,
  \end{equation}
  \begin{equation}\label{eq:stability_lyapunov_00}
      \sup_{1 \le i \le k_Q} \esssup_{\omega \in \Omega} \tnorm{\Pi_{I_{i},Q}(\omega) - \Pi_{I_{i},P}(\omega)} \le \nu,
  \end{equation}
  and
  \begin{equation}\label{eq:stability_lyapunov_000}
      \esssup_{\omega \in \Omega} d_H(F_{k_Q,Q}(\omega), F_{k_P,P}(\omega) ) \le \nu.
  \end{equation}
\end{theorem}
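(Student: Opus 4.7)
The plan is to reduce Theorem \ref{thm:stability_lyapunov} to $k_Q$ applications of Theorem \ref{thm:stability_cocycle}, one per level of the hyperbolic Oseledets filtration of $\mathcal{Q}$. Fix $\eta > 0$ smaller than $\tfrac{1}{2}\min_{i}(\lambda_{i,Q}-\lambda_{i+1,Q})$ and small enough that $r < e^{\mu_Q + \eta}$. For each $i \in \{1,\dots,k_Q\}$, Remark \ref{remark:alt_hyperbolic_oseledets} supplies a $(\mu_i,\lambda_i,d_{U,i})$-hyperbolic splitting of $Q$ after restricting to a $\sigma$-invariant full-measure set, with fast spaces $U_{i,Q}(\omega)$, slow spaces $V_{i,Q}(\omega)$, $\mu_i := e^{\lambda_{i+1,Q}+\eta}$, $\lambda_i := e^{\lambda_{i,Q}-\eta}$, and $d_{U,i} := \sum_{j\le i} d_{j,Q}$. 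Theorem \ref{thm:stability_cocycle} applied to each splitting yields a common threshold $\epsilon_0 > 0$ such that every $P \in \mathcal{LY}(C_1,C_2,r,R) \cap \mathcal{O}_{\epsilon_0}(Q)$ admits perturbed fast spaces $U^P_i(\omega)$ and slow spaces $V^P_i(\omega)$ forming $(\mu_i + \beta, \lambda_i - \beta, d_{U,i})$-hyperbolic splittings for $\beta$ arbitrarily small, together with uniform angle bounds that will yield \eqref{eq:stability_lyapunov_00000} once the construction below is complete.

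The key conceptual step is to show that the perturbed filtration is nested, i.e.\ $U^P_{i-1}(\omega) \subseteq U^P_i(\omega)$. For $v \in U^P_{i-1}(\omega)$, bijectivity of $\restr{P^n_{\sigma^{-n}\omega}}{U^P_{i-1}(\sigma^{-n}\omega)}$ and the inverse bound from Theorem \ref{thm:stability_cocycle} produce preimages $v_n \in U^P_{i-1}(\sigma^{-n}\omega)$ with $\norm{v_n} \le C_\beta(\lambda_{i-1}-\beta)^{-n}\norm{v}$; decomposing $v_n = u_n + w_n$ along $U^P_i(\sigma^{-n}\omega) \oplus V^P_i(\sigma^{-n}\omega)$ and applying $P^n$, the $V^P_i(\omega)$-component of $v$ is exactly $P^n_{\sigma^{-n}\omega} w_n$, which is bounded by $C\bigl((\mu_i+\beta)/(\lambda_{i-1}-\beta)\bigr)^n\norm{v}$ and hence vanishes in the limit, forcing $v \in U^P_i(\omega)$. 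With nesting established, set
\begin{equation*}
E_{i,P}(\omega) := U^P_i(\omega) \cap V^P_{i-1}(\omega), \qquad F_P(\omega) := V^P_{k_Q}(\omega),
\end{equation*}
with the convention $V^P_0 = X$. Nesting gives $U^P_i = U^P_{i-1} \oplus E_{i,P}$, hence $\dim E_{i,P} = d_{i,Q}$, and telescoping produces the decomposition $X = \bigoplus_i E_{i,P}(\omega) \oplus F_P(\omega)$; equivariance $P_\omega E_{i,P}(\omega) = E_{i,P}(\sigma\omega)$ follows from equivariance of $U^P_i$, subinvariance of $V^P_{i-1}$, and dimension counting. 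Applying the classical finite-dimensional multiplicative ergodic theorem to each equivariant bundle $(E_{i,P}(\omega))_{\omega \in \Omega}$ then refines the picture into a genuine Oseledets splitting of $\mathcal{P}$, whose Lyapunov exponents (counted with multiplicity) all lie in $[\lambda_{i,Q}-\eta-O(\beta),\lambda_{i,Q}+\eta+O(\beta)]$ within the $i$-th band; taking $c_0$ slightly larger than $\eta + O(\beta)$ secures \eqref{eq:stability_lyapunov_0000} and \eqref{eq:stability_lyapunov_0}.

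Finally, the convergence statements \eqref{eq:stability_lyapunov_00} and \eqref{eq:stability_lyapunov_000} come from the quantitative bounds \eqref{eq:stability_cocycle_2}--\eqref{eq:stability_cocycle_3} in Theorem \ref{thm:stability_cocycle}. The telescoping identity $\Pi_{I_i,P} = \Pi_{U^P_i || V^P_i} - \Pi_{U^P_{i-1} || V^P_{i-1}}$, a direct consequence of the nesting and the definitions, transports triple-norm convergence of each Oseledets-level projection to the projection $\Pi_{I_i, P}$, while \eqref{eq:stability_cocycle_3} applied at the top index $i = k_Q$ delivers the Hausdorff convergence \eqref{eq:stability_lyapunov_000}. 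Strong measurability of all constructed objects is inherited from the contraction-mapping constructions of Propositions \ref{prop:fast_contraction_mapping} and \ref{prop:slow_contraction_mapping}, which depend measurably on the generator $P$. The main obstacle is the nesting step: without it the individually-constructed hyperbolic splittings fail to combine into a coherent Oseledets splitting, and the argument relies delicately on the inverse bound on $\restr{P^n}{U^P_{i-1}}$ together with the comparison $\mu_i + \beta < \lambda_{i-1} - \beta$.
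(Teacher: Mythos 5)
Your overall architecture matches the paper's: apply Theorem~\ref{thm:stability_cocycle} to each level of the hyperbolic Oseledets filtration to get perturbed fast/slow spaces, intersect consecutive levels to form blocks $G_{i,P}(\omega) = U^P_i(\omega) \cap V^P_{i-1}(\omega)$, then apply the classical finite-dimensional MET on each equivariant block. You make the nesting $U^P_{i-1} \subseteq U^P_i$ explicit, a step the paper relies on implicitly when asserting $X = (\bigoplus_i G_{i,P}) \oplus V_{k_Q,P}$ and writing $\Pi_{G_{i,P} || H_{i,P}} = \Pi_{U_{i,P} || V_{i,P}} \Pi_{V_{i-1,P} || U_{i-1,P}}$, and your argument for it (pull $v$ back along $U^P_{i-1}$, decompose, push the slow part forward, compare $\mu_i + \beta$ against $\lambda_{i-1} - \beta$) is correct. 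Your telescoping identity $\Pi_{I_i,P} = \Pi_{U^P_i || V^P_i} - \Pi_{U^P_{i-1} || V^P_{i-1}}$ is an equivalent alternative to the paper's composition formula and serves the same purpose for \eqref{eq:stability_lyapunov_00000} and \eqref{eq:stability_lyapunov_00}.

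However, there is a genuine gap in your treatment of the Lyapunov exponent convergence \eqref{eq:stability_lyapunov_0}. You argue that the classical MET on each block yields exponents lying in $[\lambda_{i,Q}-\eta-O(\beta), \lambda_{i,Q}+\eta+O(\beta)]$ and that ``taking $c_0$ slightly larger than $\eta + O(\beta)$ secures \eqref{eq:stability_lyapunov_0000} and \eqref{eq:stability_lyapunov_0}.'' This gives only $\abs{\gamma_{\ell,Q} - \gamma_{\ell,P}} \le \eta + O(\beta)$, which is a \emph{fixed} bound determined by the hyperbolicity constants of the unperturbed splitting and does not shrink to $0$ as $\esssup\tnorm{Q_\omega - P_\omega}\to 0$. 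Statement \eqref{eq:stability_lyapunov_0} requires that for every $\nu > 0$ there is $\epsilon_\nu$ such that $\sup_i\abs{\gamma_{i,Q} - \gamma_{i,P}} \le \nu$; the enclosure in a band of fixed half-width $\eta + O(\beta)$ is not sufficient, and is precisely what the intervals $I_i$ and the rank/separation statements \eqref{eq:stability_lyapunov_0000} encode, not the convergence. To close this gap one needs a quantitative comparison of $\lambda_{j,P}$ against $\lambda_{i,Q}$ that exploits the $\tnorm{\cdot}$-smallness of $P - Q$. The paper does this in Section~\ref{sec:Lyapunov_converge} via the determinant formula for Lyapunov exponents (Proposition~\ref{prop:exponent_formula}), the eccentricity estimate of Lemma~\ref{lemma:oseledets_eccentricity} relating the strong and weak norms on the perturbed fast spaces, and a careful comparison of $\wnorm{P^n_\omega v}$ against $\wnorm{Q^n_\omega \Pi_{E_{i,Q}}v}$ using $\tnorm{Q^n_\omega - P^n_\omega}$ and $\tnorm{\Pi_{G_{i,P}} - \Pi_{E_{i,Q}}}$ --- this is substantive machinery that your sketch omits entirely. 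A related but more minor point: asserting that strong measurability of all Oseledets constituents ``is inherited from the contraction-mapping constructions'' conceals real technical work; the paper devotes Lemmas~\ref{lemma:measurable_hyperbolic_perturbed_spaces}--\ref{lemma:measurable_change_of_basis} and Appendix~\ref{app:stability_lyapunov_proofs} to it, including a measurable change-of-basis lemma to reduce to $\C^{d_{i,Q}}$ in a way compatible with Oseledets' theorem.
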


\begin{remark}
  Note that $\Pi_{I_{i},Q}(\omega)$ is simply the projection onto $E_{i,Q}(\omega)$ according to the Oseledets splitting of $\mathcal{Q}$.
\end{remark}

\begin{remark}
  By possibly rescaling $\abs{\cdot}$, without loss of generality we may assume that the Saks space $(X, \norm{\cdot}, \wnorm{\cdot})$ in Theorem \ref{thm:stability_lyapunov} is normal.
\end{remark}

\begin{remark}\label{remark:comparison_to_kl}
  Theorem \ref{thm:stability_cocycle} may be considered a generalisation of the results of \cite{keller1999stability}.
  Indeed, in the case where $\Omega$ is a singleton we obtain a version of the results of \cite{keller1999stability}.
  We note that one condition from \cite{keller1999stability} has been substantially weakened, namely condition (2) from \cite{keller1999stability} is generalised to the requirement that $Q$ is a Saks space equicontinuous endomorphism (see Proposition \ref{prop:saks_equicont}, \eqref{eq:equicont_seq_1} and Remark \ref{remark:ss_equi}), which we only require for the unperturbed endomorphism $Q$, and not for any perturbation.
  In addition, the convergence of the slow spaces in the Grassmannian as in \eqref{eq:stability_lyapunov_000} is new.
  We did not pursue H{\"o}lder bounds on the $\tnorm{\cdot}$-error between the perturbed and unperturbed projections as in \cite{keller1999stability}.
  It is natural to conjecture that the conclusion of Theorem
  \ref{thm:stability_lyapunov} (and Theorem \ref{thm:stability_cocycle}) could be strengthened to obtain H{\"o}lder error bounds in \eqref{eq:stability_lyapunov_0}, \eqref{eq:stability_lyapunov_00} and \eqref{eq:stability_lyapunov_000} under the additional assumption that $\esssup_{\omega \in \Omega} \wnorm{Q_\omega} < \infty$.
\end{remark}

\begin{remark}\label{remark:not_hyperbolic}
  Contrary to what one might expect given Theorem \ref{thm:stability_cocycle}, in Theorem \ref{thm:stability_lyapunov} we cannot conclude that $\mathcal{P}$ possesses a hyperbolic Oseledets splitting.
  The obstruction for this is the following: if $\mathcal{Q}$ has a Lyapunov exponent $\lambda_{j,Q}$ with $d_{j,Q} > 1$ then after perturbing the cocycle one expects the exponent to immediately split into $d_{j,Q}$ distinct exponents.
  None of the hypotheses of Theorem \ref{thm:stability_lyapunov} may be used to control the angle between the Oseledets spaces for these new Lyapunov exponents, which prevents us from concluding these splittings are hyperbolic.
  However, it follows from Theorem \ref{thm:stability_cocycle}, that if every Lyapunov exponent of $\mathcal{Q}$ has multiplicity 1 then the Oseledets splitting for $\mathcal{P}$ is hyperbolic.
\end{remark}

The proof of Theorem \ref{thm:stability_lyapunov} is broken into a number of steps. In Section \ref{sec:characterising_oseledets} we produce an Oseledets splitting of dimension $d$ for $\mathcal{P}$, and then we relate this Oseledets splitting to various hyperbolic splittings produced by Theorem \ref{thm:stability_cocycle}.
Once this is done, in Section \ref{sec:Lyapunov_converge} we characterise and then prove the stability of the Lyapunov exponents.

However, before embarking on the proof of Theorem \ref{thm:stability_lyapunov}, we will discuss its relation to the \cite[Theorem 1.10]{bogenschutz2000stochastic}, to which our result bares a strong resemblance. The primary differences are the following:
\begin{enumerate}
  \item In \cite{bogenschutz2000stochastic} it is required that convergence in \eqref{eq:fast_growth} and \eqref{eq:slow_growth} is uniform in $\omega$, while we only require the weaker bounds \eqref{eq:alt_hyperbolic_oseledets_2} and \eqref{eq:alt_hyperbolic_oseledets_3}.
  \item The perturbations in \cite{bogenschutz2000stochastic} are required to be asymptotically small: (i) each iterate of the perturbed cocycle must converge uniformly in the strong operator topology to the corresponding iterate of the unperturbed cocycle, and (ii) there exists $s \in (\lambda_{k_Q +1,Q}, \lambda_{k_Q,Q})$ and $N \in \Z^+$ such that for every $n > N$ there is $\epsilon(n)$ so that for all $\epsilon \in (0, \epsilon(n))$ and a.e. $\omega \in \Omega$ one has
  \begin{equation*}
    \norm{Q_\omega^n - P_\omega^n}_{\LL(X)} \le e^{ns}.
  \end{equation*}
  We compare (i) to closeness in the Saks space sense in Proposition \ref{prop:sot_ss} and show that our hypotheses are weaker for pre-compact Saks spaces, which is a common setting for Perron-Frobenius operator cocycles.
  On the other hand, the condition (ii) is not directly comparable to any of our hypotheses, although it is comparable `in spirit' to our requirement that the perturbed cocycle lies in a Lasota-Yorke class: the exponent $s$ plays a similar role to the $r$ term in our Lasota-Yorke inequalities, in that one cannot conclude anything about the stability of any Lyapunov exponents of modulus smaller than $s$ in \cite{bogenschutz2000stochastic}, or $\log r$ in Theorem \ref{thm:stability_lyapunov}.
  \item Due to the weaker requirements of our result, our conclusions on the stability of the Oseledets spaces are weaker than that of \cite{bogenschutz2000stochastic}.
  \item We require the additional hypotheses that the unperturbed cocycle is a Saks space equicontinuous endomorphism, which presupposes that $X$ admits a Saks space structure. However, (pre-)compact Saks spaces are commonly used to study the statistical properties of dynamical systems via Perron-Frobenius operators, and so these hypotheses are natural for our primary application.
\end{enumerate}

\begin{proposition}\label{prop:sot_ss}
  Suppose that $(X, \norm{\cdot}, \wnorm{\cdot})$ is a (pre-)compact Saks space, and that $\{Q_n\}_{n \in \Z^+} \subseteq \LL_S(X)$ is an equicontinuous subset of $\LL_S(X)$ which converges in the strong operator topology to $Q \in \LL_S(X)$. Then $Q_n \to Q$ in $(\LL_S(X), \norm{\cdot}_{\LL(X)}, \tnorm{\cdot})$.
  \begin{proof}
    That $\{Q_n\}_{n \in \Z^+}$ is bounded in $\LL(X)$ follows from Proposition \ref{prop:saks_equicont}.
    Let $G_\epsilon \subseteq B_{\norm{\cdot}}$ be a finite set such that
    \begin{equation*}
      \inf_{\norm{f} = 1}\inf_{g \in G} \wnorm{f -g} \le \epsilon.
    \end{equation*}
    Then
    \begin{equation*}
      \tnorm{Q_n - Q} \le \sup_{g \in G_\epsilon} \norm{(Q_n - Q)g} + \sup_{\norm{f} = 1} \inf_{g \in G_\epsilon} \wnorm{(Q_n - Q)(f-g)}
    \end{equation*}
    Since $\{Q\} \cup \{Q_n\}_{n \in \Z^+}$ is equicontinuous in $\LL_S(X)$, by Proposition \ref{prop:saks_equicont} we have for every $\kappa > 0$ a $C_\kappa$ such that for every $n \in \Z^+$
    \begin{equation*}
        \tnorm{Q_n - Q} \le \sup_{g \in G_\epsilon} \norm{(Q_n - Q)g} + 2\kappa + C_\kappa \epsilon.
    \end{equation*}
    Sending $n \to \infty$ yields
    \begin{equation}\label{eq:sot_ss_1}
      \limsup_{n \to \infty} \tnorm{Q_n - Q} \le 2\kappa + C_\kappa \epsilon.
    \end{equation}
    By first choosing $\kappa$ to be very small, and then shrinking $\epsilon$ appropriately, we may make the right hand side of \eqref{eq:sot_ss_1} as small as we like, which implies that $\lim_{n \to \infty} \tnorm{Q_n - Q} = 0$.
  \end{proof}
\end{proposition}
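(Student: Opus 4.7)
The plan is to appeal to Proposition \ref{prop:saks_space_conv}, which characterises $\gamma$-convergence in the Saks space $(\LL_S(X), \norm{\cdot}_{\LL(X)}, \tnorm{\cdot})$: it suffices to show that $\{Q_n\}_{n \in \Z^+}$ is $\norm{\cdot}_{\LL(X)}$-bounded and $\tnorm{\cdot}$-convergent to $Q$. Boundedness is immediate from Proposition \ref{prop:saks_equicont}, since an equicontinuous subset of $\LL_S(X)$ is in particular equicontinuous, hence uniformly bounded, in $\LL(X)$.

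The core of the argument is showing $\tnorm{Q_n - Q} \to 0$, and this is where the (pre-)compactness of the Saks space does the work. By Proposition \ref{prop:ss_bounded} the unit ball $B_{\norm{\cdot}}$ is $\wnorm{\cdot}$-totally bounded; fix $\epsilon > 0$ and a finite set $G_\epsilon \subseteq B_{\norm{\cdot}}$ such that every $f \in B_{\norm{\cdot}}$ admits some $g_f \in G_\epsilon$ with $\wnorm{f - g_f} \le \epsilon$. Writing $f = g_f + (f - g_f)$ and using $\wnorm{\cdot} \le \norm{\cdot}$ yields
\begin{equation*}
  \tnorm{Q_n - Q} \le \max_{g \in G_\epsilon} \norm{(Q_n - Q)g} + \sup_{f \in B_{\norm{\cdot}}} \wnorm{(Q_n - Q)(f - g_f)}.
\end{equation*}
The first term tends to $0$ as $n \to \infty$ by strong operator convergence on the finite set $G_\epsilon$ (a finite number of null sequences).

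For the second term I invoke Proposition \ref{prop:saks_equicont}: since $\{Q\} \cup \{Q_n\}_{n \in \Z^+}$ is equicontinuous in $\LL_S(X)$, so is the difference family $\{Q_n - Q\}_{n \in \Z^+}$, and therefore for every $\kappa > 0$ there is $C_\kappa > 0$ with $\wnorm{(Q_n - Q)h} \le \max\{\kappa \norm{h}, C_\kappa \wnorm{h}\}$ uniformly in $n$ and $h \in X$. Applied to $h = f - g_f$ (with $\norm{h} \le 2$ and $\wnorm{h} \le \epsilon$) this gives $\sup_f \wnorm{(Q_n - Q)(f - g_f)} \le \max\{2\kappa, C_\kappa \epsilon\}$ independently of $n$, whence $\limsup_n \tnorm{Q_n - Q} \le \max\{2\kappa, C_\kappa \epsilon\}$ for every $\kappa, \epsilon > 0$. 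The only subtlety (and the main conceptual point) is the order of quantifiers: one must choose $\kappa$ first to shrink the $\norm{h}$-contribution, which fixes $C_\kappa$, and only then shrink $\epsilon$ to kill the $\wnorm{h}$-contribution; since the two contributions decouple in this order the proof concludes.
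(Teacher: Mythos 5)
Your proof is correct and follows essentially the same route as the paper: approximate $B_{\norm{\cdot}}$ by a finite $\wnorm{\cdot}$-net $G_\epsilon$ (using pre-compactness), control $\tnorm{Q_n-Q}$ by a finite-net term that vanishes by SOT convergence plus a remainder bounded uniformly in $n$ via the equicontinuity characterisation in Proposition \ref{prop:saks_equicont}, and then choose $\kappa$ before $\epsilon$. Your explicit reduction through Proposition \ref{prop:saks_space_conv} (convergence in the Saks space topology $\gamma$ is boundedness plus $\tnorm{\cdot}$-convergence) is a slightly more careful framing of what the paper's proof does implicitly, but the substance is identical.
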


\subsection{Characterising the perturbed Oseledets splitting}\label{sec:characterising_oseledets}

Recall $\eta$ and $\Omega'$ from Remark \ref{remark:alt_hyperbolic_oseledets}, and let $\beta_0 > 0 $ satisfy $\eta + \beta_0 < 2^{-1}\min_{1 \le i \le k_Q} \{\lambda_{i,Q} - \lambda_{i+1,Q}\}$.
For each $i \in \{1, \dots, k_Q\}$ we may apply Theorem \ref{thm:stability_cocycle} to $Q$ with respect to the hyperbolic splitting composed of fast spaces $\{U_{i,Q}(\omega)\}_{\omega \in \Omega'}$ and slow spaces $\{V_{i,Q}(\omega)\}_{\omega \in \Omega'}$ to produce $\epsilon_{0}, C_0, \Theta_0 > 0$ so that if $P \in \mathcal{LY}(C_1, C_2, r, R)$ is strongly measurable and satisfies
\begin{equation}\label{eq:tnorm_closeness}
    \sup_{\omega \in \Omega'} \tnorm{Q_\omega - P_\omega} < \epsilon_0
\end{equation}
then $P$ has a hyperbolic splitting of index $\sum_{i \le j} d_{j,Q}$ (in the sense of Definition \ref{def:hyperbolic_splitting}). Moreover, if we denote the fast and slow spaces of these splittings by $\{U_{i,P}(\omega)\}_{\omega \in \Omega'}$ and $\{V_{i,P}(\omega)\}_{\omega \in \Omega'}$, respectively, then for every $n \in \Z^+$, $i \in \{1, \dots, k_Q\}$ and $\omega \in \Omega'$ we have
\begin{equation}\label{eq:oseledets_angle}
  \max \left\{ \norm{\Pi_{U_{i,P}(\omega) || V_{i,P}(\omega)} }, \norm{\Pi_{V_{i,P}(\omega) || U_{i,P}(\omega)} }\right\} < \Theta_0,
\end{equation}
\begin{equation}\label{eq:slow_oseledets}
  \norm{\restr{P_\omega^n}{V_{i,P}(\omega)}} \le C_0 e^{n(\lambda_{i+1,Q} + \eta +\beta_0)},
\end{equation}
and, for every $v \in U_{i,P}(\omega)$,
\begin{equation}\label{eq:fast_oseledets}
  \norm{P_\omega^n v} \ge C_0^{-1} e^{n(\lambda_{i,Q} -\eta - \beta_0)} \norm{v}.
\end{equation}

\begin{remark}
    If, rather than \eqref{eq:tnorm_closeness}, we just have that $P \in \mathcal{O}_{\epsilon_0}(Q)$, then we may instead consider the following construction. Let $\Omega_P \in \mathcal{F}$ have full $\mathbb{P}$-measure and satisfy
    \begin{equation*}
        \sup_{\omega \in \Omega_P} \tnorm{Q_\omega - P_\omega} < \epsilon_0.
    \end{equation*}
    By perhaps replacing $\Omega_P$ with $\bigcap_{n \in \Z} \sigma^{n}(\Omega_P)$ we may assume that $\Omega_P$ is $\sigma$-invariant.
    Let $\tilde{P} : \Omega \mapsto \LL(X)$ be defined by
    \begin{equation*}
        \tilde{P}_\omega =
        \begin{cases}
            P_\omega & \text{if } \omega \in \Omega' \cap \Omega_P, \\
            Q_\omega & \text{otherwise.}\\
        \end{cases}
    \end{equation*}
    Since $\tilde{P}_\omega = P_\omega$ a.e. and $(\Omega, \mathcal{F}, \mathbb{P})$ is a complete measure space it follows that $\tilde{P}$ is strongly measurable.
    By construction \eqref{eq:tnorm_closeness} holds with $\tilde{P}$ in place of $P$, and $\tilde{P} \in \mathcal{LY}(C_1, C_2, r, R)$ since $\Omega' \cap \Omega_P$ is $\sigma$-invariant.
    Thus Theorem \ref{thm:stability_cocycle} may be applied with $\tilde{P}$, which produces fast spaces $\{U_{i,\tilde{P}(\omega)}\}_{\omega \in \Omega'}$ and slow spaces $\{V_{i,\tilde{P}(\omega)}\}_{\omega \in \Omega'}$ for $\tilde{P}$, which restrict to fast and slow spaces for $P$ when considered on $\Omega_P$.
    Moreover, we obtain \eqref{eq:oseledets_angle}, \eqref{eq:slow_oseledets} and \eqref{eq:fast_oseledets} for $P$ and $\omega \in \Omega_P$ (i.e. for a.e. $\omega \in \Omega$).
    We will not discuss this technical point any further, and simply carry out of constructions a.e. for $P$.
\end{remark}

For each $i \in \{1, \dots, k_Q\}$ set
\begin{equation*}
  G_{i,P}(\omega) =
  \begin{cases}
    U_{1,P}(\omega) & i = 1,\\
    U_{i,P}(\omega) \cap V_{i-1,P}(\omega) & 1 < i \le k_Q.\\
  \end{cases}
\end{equation*}
and
\begin{equation*}
  H_{i,P}(\omega) =
  \begin{cases}
    V_{1,P}(\omega) & i = 1,\\
    V_{i,P}(\omega) \oplus U_{i-1,P}(\omega) & 1 < i \le k_Q.\\
  \end{cases}
\end{equation*}
Note that $\dim(G_{i,P}(\omega)) = \codim(H_{i,P}(\omega)) = d_{i,Q}$ and $X = G_{i,P}(\omega) \oplus H_{i,P}(\omega)$ for a.e. $\omega$ and each $i \in \{1, \dots, k_Q \}$. Moreover, for a.e. $\omega$ we have
\begin{equation}\label{eq:splitting_perturbed_blocks}
  X = \left(\bigoplus_{1 \le i \le k_Q} G_{i,P}(\omega) \right) \oplus V_{k_Q,P}(\omega).
\end{equation}
It is clear that $G_{i,Q}(\omega) = E_{i,Q}(\omega)$, and so we will consider $G_{i,P}(\omega)$ to be perturbation of $E_{i,Q}(\omega)$.
Our first main result for this section makes this idea rigorous, and is a straightforward application of Theorem \ref{thm:stability_cocycle}.
Later we will see that, in general, $G_{i,P}(\omega)$ is not an Oseledets space for $\mathcal{P}$, but rather a direct sum of finitely many Oseledets spaces of $\mathcal{P}$.

\begin{proposition}\label{prop:perturbed_oseledets_blocks_bound}
    With $\epsilon_0$ as at the beginning of this section, we have
    \begin{equation}\label{eq:perturbed_oseledets_blocks_bound_0}
        \sup \left\{ \esssup_{\omega \in \Omega} \norm{\Pi_{G_{i,P}(\omega) || H_{i, P}(\omega)}} : P \in \mathcal{LY}(C_1, C_2, r, R) \cap \mathcal{O}_{\epsilon_0}(Q), 1\le i \le k_Q \right\} \le \Theta_0^2 < \infty.
    \end{equation}
    Moreover, for every $\nu > 0$ there exists $\epsilon_\nu \in (0, \epsilon_0)$ so that if $P \in \mathcal{LY}(C_1, C_2, r, R) \cap \mathcal{O}_{\epsilon_\nu}(Q)$ and $i \in \{1, \dots, k_Q\}$ then
    \begin{equation}\label{eq:perturbed_oseledets_blocks_bound_00}
        \esssup_{\omega \in \Omega} \tnorm{\Pi_{E_{i,Q}(\omega) || H_{i, Q}(\omega)} - \Pi_{G_{i,P}(\omega) || H_{i, P}(\omega)}} \le \nu,
    \end{equation}
    and
    \begin{equation}\label{eq:perturbed_oseledets_blocks_bound_000}
      \esssup_{\omega \in \Omega} d_H(V_{k_Q,Q}(\omega), V_{k_Q,P}(\omega) ) \le \nu.
    \end{equation}
    \begin{proof}
    By \eqref{eq:oseledets_angle} we have
    \begin{equation*}
        \sup \left\{ \esssup_{\omega \in \Omega} \norm{\Pi_{U_{i,P}(\omega) || V_{i, P}(\omega)}} : P \in \mathcal{LY}(C_1, C_2, r, R) \cap \mathcal{O}_{\epsilon_0}(Q), 1\le i \le k_Q \right\} \le \Theta_0.
    \end{equation*}
    Since for $1< i \le k_Q$ we have
    \begin{equation}\label{eq:perturbed_oseledets_blocks_bound_1}
      \Pi_{G_{i,P}(\omega) || H_{i,P}(\omega)} = \Pi_{U_{i,P}(\omega) || V_{i, P}(\omega) } \Pi_{V_{i-1,P}(\omega) || U_{i-1, P}(\omega) },
    \end{equation}
    we may therefore bound the left hand side of \eqref{eq:perturbed_oseledets_blocks_bound_0} by $\max\{\Theta_0, \Theta_0^2\} = \Theta_0^2$, since $\Theta_0 \ge 1$ necessarily.

    We will now prove \eqref{eq:perturbed_oseledets_blocks_bound_00}, for which we note that it suffices consider each $i \in \{1, \dots, k_Q\}$ separately.
    By Theorem \ref{thm:stability_cocycle} there exists $\epsilon_{\nu} > 0$ so that if $P \in \mathcal{LY}(C_1, C_2, r, R) \cap \mathcal{O}_{\epsilon_\nu}(Q)$ then
    \begin{equation*}
        \esssup_{\omega \in \Omega} \tnorm{\Pi_{U_{1,Q}(\omega) || V_{1, Q}(\omega)} - \Pi_{U_{1,P}(\omega) || V_{1, P}(\omega)}} \le \nu,
    \end{equation*}
    which yields \eqref{eq:perturbed_oseledets_blocks_bound_00} for $i = 1$.
    Now assume that $1 < i \le k_Q$.
    If $P \in \mathcal{LY}(C_1, C_2, r, R) \cap \mathcal{O}_{\epsilon_0}(Q)$ and $1< i \le k_Q$ then by \eqref{eq:perturbed_oseledets_blocks_bound_1} we have for a.e. $\omega$ that
    \begin{equation}\begin{split}\label{eq:perturbed_oseledets_blocks_bound_2}
        \big\vert\kern-0.25ex\big\vert\kern-0.25ex\big\vert
        \Pi_{E_{i,Q}(\omega) || H_{i, Q}(\omega)}& - \Pi_{G_{i,P}(\omega) || H_{i, P}(\omega)}
        \big\vert\kern-0.25ex\big\vert\kern-0.25ex\big\vert \\
        \le &\tnorm{\Pi_{U_{i,Q}(\omega) || V_{i, Q}(\omega) }\left( \Pi_{V_{i-1,Q}(\omega) || U_{i-1, Q}(\omega) } - \Pi_{V_{i-1,P}(\omega) || U_{i-1, P}(\omega)}\right)} \\
        &+ \tnorm{\left(\Pi_{U_{i,Q}(\omega) || V_{i, Q}(\omega) } - \Pi_{U_{i,P}(\omega) || V_{i, P}(\omega) }\right) \Pi_{V_{i-1,P}(\omega) || U_{i-1, P}(\omega)}}  .
    \end{split}\end{equation}
    Lemma \ref{lemma:fast_space_ss} implies that for every $\kappa > 0$ there exists $C_\kappa$ such that
    \begin{equation*}\begin{split}
        \big\vert\kern-0.25ex\big\vert\kern-0.25ex\big\vert
        \Pi_{U_{i,Q}(\omega) || V_{i, Q}(\omega) }&\left( \Pi_{V_{i-1,Q}(\omega) || U_{i-1, Q}(\omega) } - \Pi_{V_{i-1,P}(\omega) || U_{i-1, P}(\omega)}\right)
        \big\vert\kern-0.25ex\big\vert\kern-0.25ex\big\vert\\
        \le &\kappa \norm{\Pi_{V_{i-1,Q}(\omega) || U_{i-1, Q}(\omega) } - \Pi_{V_{i-1,P}(\omega) || U_{i-1, P}(\omega)}} \\
        &+ C_\kappa \tnorm{\Pi_{V_{i-1,Q}(\omega) || U_{i-1, Q}(\omega) } - \Pi_{V_{i-1,P}(\omega) || U_{i-1, P}(\omega)}}\\
        \le &2\kappa \Theta_0 + C_\kappa \tnorm{\Pi_{V_{i-1,Q}(\omega) || U_{i-1, Q}(\omega) } - \Pi_{V_{i-1,P}(\omega) || U_{i-1, P}(\omega)}}.
    \end{split}\end{equation*}
    Thus from \eqref{eq:perturbed_oseledets_blocks_bound_2} we obtain
    \begin{equation}\begin{split}\label{eq:perturbed_oseledets_blocks_bound_3}
        \tnorm{\Pi_{E_{i,Q}(\omega) || H_{i, Q}(\omega)} - \Pi_{G_{i,P}(\omega) || H_{i, P}(\omega)}} \le &2\kappa \Theta_0 + C_\kappa\tnorm{\Pi_{V_{i-1,Q}(\omega) || U_{i-1, Q}(\omega) } - \Pi_{V_{i-1,P}(\omega) || U_{i-1, P}(\omega)}} \\
        &+ \Theta_0\tnorm{\Pi_{U_{i,Q}(\omega) || V_{i, Q}(\omega) } - \Pi_{U_{i,P}(\omega) || V_{i, P}(\omega) }}.
    \end{split}\end{equation}
    Fix $\kappa = \frac{\nu}{4\Theta_0}$.
    By Theorem \ref{thm:stability_cocycle}, there exists $\epsilon_\nu \in (0, \epsilon_0)$ so that if $P \in \mathcal{LY}(C_1, C_2, r, R) \cap \mathcal{O}_{\epsilon_\nu}(Q)$ and $i \in \{1, \dots, k_Q\}$ then
    \begin{equation}\label{eq:perturbed_oseledets_blocks_bound_4}
        \esssup_{\omega \in \Omega} \tnorm{\Pi_{U_{i,Q}(\omega) || V_{i, Q}(\omega) } - \Pi_{U_{i,P}(\omega) || V_{i, P}(\omega) }} \le \frac{\nu}{2(C_\kappa + \Theta_0)}.
    \end{equation}
    Thus by applying \eqref{eq:perturbed_oseledets_blocks_bound_4} to \eqref{eq:perturbed_oseledets_blocks_bound_3} we obtain \eqref{eq:perturbed_oseledets_blocks_bound_00}.
    Finally, we obtain \eqref{eq:perturbed_oseledets_blocks_bound_000} due to our application of Theorem \ref{thm:stability_cocycle} with respect to the hyperbolic splitting of $X$ into equivariant fast spaces $\{U_{k_Q,Q}(\omega) \}_{\omega \in \Omega'}$ and slow spaces $\{V_{k_Q,Q}(\omega) \}_{\omega \in \Omega'}$.
    \end{proof}
\end{proposition}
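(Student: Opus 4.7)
The case $i=1$ is essentially tautological: $G_{1,P}(\omega) = U_{1,P}(\omega)$ and $H_{1,P}(\omega) = V_{1,P}(\omega)$, so \eqref{eq:perturbed_oseledets_blocks_bound_0} follows from \eqref{eq:oseledets_angle} with constant $\Theta_0$ and \eqref{eq:perturbed_oseledets_blocks_bound_00} is exactly the content of \eqref{eq:stability_cocycle_2} applied at level $1$, while \eqref{eq:perturbed_oseledets_blocks_bound_000} is \eqref{eq:stability_cocycle_3} applied at level $k_Q$ and is independent of $i$. The real work is the case $1 < i \le k_Q$, which I plan to reduce to projections whose stability is already in hand by exploiting the factorization
\begin{equation*}
\Pi_{G_{i,P}(\omega)||H_{i,P}(\omega)} = \Pi_{U_{i,P}(\omega)||V_{i,P}(\omega)}\,\Pi_{V_{i-1,P}(\omega)||U_{i-1,P}(\omega)}.
\end{equation*}
This factorization is equivalent to the containment $U_{i-1,P}(\omega) \subseteq U_{i,P}(\omega)$, which in turn follows from the growth-rate bounds \eqref{eq:fast_oseledets} and \eqref{eq:slow_oseledets} together with the choice of $\beta_0$: any nonzero element of $U_{i-1,P}(\omega) \cap V_{i,P}(\omega)$ would need to admit lower growth at rate $\lambda_{i-1,Q}-\eta-\beta_0$ and upper growth at rate $\lambda_{i+1,Q}+\eta+\beta_0$, incompatible by the gap condition on $\beta_0$.

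With the factorization in hand, \eqref{eq:perturbed_oseledets_blocks_bound_0} drops out at once by submultiplicativity and \eqref{eq:oseledets_angle}, producing the constant $\Theta_0^2$. For the $\tnorm{\cdot}$-estimate \eqref{eq:perturbed_oseledets_blocks_bound_00} I add and subtract a cross term,
\begin{align*}
\Pi_{E_{i,Q}||H_{i,Q}} - \Pi_{G_{i,P}||H_{i,P}}
&= \Pi_{U_{i,Q}||V_{i,Q}}\bigl(\Pi_{V_{i-1,Q}||U_{i-1,Q}} - \Pi_{V_{i-1,P}||U_{i-1,P}}\bigr) \\
&\quad + \bigl(\Pi_{U_{i,Q}||V_{i,Q}} - \Pi_{U_{i,P}||V_{i,P}}\bigr)\Pi_{V_{i-1,P}||U_{i-1,P}}.
\end{align*}
The second term is benign: $\tnorm{AB} \le \tnorm{A}\norm{B}$ together with $\norm{\Pi_{V_{i-1,P}||U_{i-1,P}}} \le \Theta_0$ lets Theorem \ref{thm:stability_cocycle} make it arbitrarily small via \eqref{eq:stability_cocycle_2}. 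The first term is the delicate one, since the triple norm cannot simply be pulled past the outer projection $\Pi_{U_{i,Q}||V_{i,Q}}$ acting on the left. Here the Saks-space control provided by Lemma \ref{lemma:fast_space_ss} is essential: for every $\kappa > 0$ there is a constant $C_\kappa$ with $\tnorm{\Pi_{U_{i,Q}||V_{i,Q}} f} \le \kappa\norm{f} + C_\kappa\wnorm{f}$, which upgrades immediately to $\tnorm{\Pi_{U_{i,Q}||V_{i,Q}} A} \le \kappa\norm{A} + C_\kappa\tnorm{A}$ for all $A \in \LL(X)$. Since $\norm{\Pi_{V_{i-1,Q}||U_{i-1,Q}} - \Pi_{V_{i-1,P}||U_{i-1,P}}} \le 2\Theta_0$ uniformly in $P$, choosing $\kappa$ so small that $2\kappa\Theta_0 < \nu/4$ and then $\epsilon$ small enough for Theorem \ref{thm:stability_cocycle} to drive $\tnorm{\Pi_{V_{i-1,Q}||U_{i-1,Q}} - \Pi_{V_{i-1,P}||U_{i-1,P}}}$ below $\nu/(4C_\kappa)$ closes the argument.

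The main obstacle is the ordering of quantifiers in this Saks-space estimate: $\kappa$ must be fixed first (in terms of $\nu$ and $\Theta_0$) to absorb the uncontrolled norm factor, and only then can $\epsilon$ be chosen small enough to subdue $C_\kappa$. The structural fact $U_{i-1,P}(\omega) \subseteq U_{i,P}(\omega)$ underlying the factorization is the only ingredient not directly packaged by Section \ref{sec:saks_space_stability}; everything else is triangle-inequality bookkeeping combined with the Saks-space equicontinuity of the unperturbed projections encoded in Lemma \ref{lemma:fast_space_ss}.
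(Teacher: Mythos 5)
Your proposal follows the paper's proof almost exactly: the factorization $\Pi_{G_{i,P}||H_{i,P}} = \Pi_{U_{i,P}||V_{i,P}}\Pi_{V_{i-1,P}||U_{i-1,P}}$, the cross-term decomposition, the use of Lemma \ref{lemma:fast_space_ss} to control $\tnorm{\Pi_{U_{i,Q}||V_{i,Q}}\,\cdot\,}$ with the quantifier ordering $\kappa$-then-$\epsilon$, and the appeal to Theorem \ref{thm:stability_cocycle} at level $k_Q$ for \eqref{eq:perturbed_oseledets_blocks_bound_000}; this is the same argument as in the paper.

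One imprecision worth flagging in your aside: you assert that the factorization is \emph{equivalent} to $U_{i-1,P}(\omega)\subseteq U_{i,P}(\omega)$ and that this follows from $U_{i-1,P}(\omega)\cap V_{i,P}(\omega)=\{0\}$. Neither is quite right. The factorization requires \emph{both} $U_{i-1,P}\subseteq U_{i,P}$ and $V_{i,P}\subseteq V_{i-1,P}$ (the first does not imply the second, since a subspace does not determine its complement), and $U_{i-1,P}\cap V_{i,P}=\{0\}$ only gives that $\Pi_{U_{i,P}||V_{i,P}}$ is injective on $U_{i-1,P}$, not that $U_{i-1,P}$ sits inside $U_{i,P}$. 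The correct derivation of $U_{i-1,P}\subseteq U_{i,P}$ pulls $v\in U_{i-1,P}(\omega)$ back along the cocycle: writing $v=P^n_{\sigma^{-n}(\omega)}v_n$ with $v_n\in U_{i-1,P}(\sigma^{-n}(\omega))$ and using equivariance of the $U_{i,P},V_{i,P}$ projections, one finds $\Pi_{V_{i,P}(\omega)||U_{i,P}(\omega)}v$ is bounded by a quantity that decays like $e^{-2n(\eta+\beta_0)}$, forcing it to vanish; the inclusion $V_{i,P}\subseteq V_{i-1,P}$ then follows by a forward-growth argument of the flavour you sketched. Since the paper also invokes \eqref{eq:perturbed_oseledets_blocks_bound_1} without proof, this is an observation about completeness rather than a defect in your argument relative to the paper's.
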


The second main result of this section confirms that the perturbed cocycle $\mathcal{P}$ has an Oseledets splitting, and that this Oseledets splitting refines the splitting in  \eqref{eq:splitting_perturbed_blocks}.

\begin{proposition}\label{prop:perturbed_oseledets_existence}
  With $\epsilon_0$ as in Proposition \ref{prop:perturbed_oseledets_blocks_bound}, if $P \in \mathcal{LY}(C_1, C_2, r, R) \cap \mathcal{O}_{\epsilon_0}(Q)$ then $\mathcal{P}$ has an Oseledets splitting of dimension $d$ and if for each $i \in \{1, \dots, k_Q\}$ we set
  \begin{equation*}
    S(i) = \left\{ j : \sum_{1 \le \ell < i} d_{\ell,Q}< \sum_{1 \le \ell \le j} d_{\ell,P} \le \sum_{1 \le \ell \le i} d_{\ell,Q}\right\}
  \end{equation*}
  then for a.e. $\omega$ we have
  \begin{equation}\label{eq:perturbed_oseledets_existence_0}
    G_{i,P}(\omega) = \bigoplus_{j \in S(i)} E_{j,P}(\omega),
  \end{equation}
  and $F_P(\omega) = V_{k_Q,P}(\omega)$.
\end{proposition}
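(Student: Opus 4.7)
The plan is to reduce the existence of an Oseledets splitting for $\mathcal{P}$ to a finite collection of applications of the classical finite-dimensional multiplicative ergodic theorem, one on each of the equivariant blocks $G_{i,P}$ produced by intersecting the hyperbolic splittings given by Theorem \ref{thm:stability_cocycle}.

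First I would establish that, for each $i \in \{1,\dots,k_Q\}$, the block $G_{i,P}$ is a measurable, $d_{i,Q}$-dimensional, $P$-equivariant subbundle on which $P_\omega$ acts bijectively onto $G_{i,P}(\sigma\omega)$. Equivariance is immediate: both $U_{i,P}$ and $V_{i-1,P}$ are equivariant under $P$ (with the convention $V_{0,P}(\omega) = X$), and since $P_\omega$ maps $U_{i,P}(\omega)$ bijectively onto $U_{i,P}(\sigma\omega)$ by Definition \ref{def:hyperbolic_splitting}, a dimension count forces $P_\omega G_{i,P}(\omega) = G_{i,P}(\sigma\omega)$. Measurability of $G_{i,P}$ and of the projections in \eqref{eq:splitting_perturbed_blocks} reduces to measurability of $U_{i,P}$, $V_{i,P}$, which follows from strong measurability of $P$ via the fixed-point constructions in Sections \ref{sec:fast_space} and \ref{sec:slow_space}: the graph coordinates $U^P$ and $V^P$ are strong limits of iterates of $(P^{n_0})^*$ and $P^{n_0}_*$ applied to the zero section, and each iterate is strongly measurable in $\omega$.

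Next I would use \eqref{eq:fast_oseledets} at index $i$ and \eqref{eq:slow_oseledets} at index $i-1$ to show that every non-zero $v \in G_{i,P}(\omega)$ satisfies
\begin{equation*}
  \lambda_{i,Q} - \eta - \beta_0 \,\le\, \liminf_{n\to\infty} \tfrac{1}{n}\log\norm{P_\omega^n v} \,\le\, \limsup_{n\to\infty} \tfrac{1}{n}\log\norm{P_\omega^n v} \,\le\, \lambda_{i,Q} + \eta + \beta_0.
\end{equation*}
By the choice of $\beta_0$ the intervals $[\lambda_{i,Q} - \eta - \beta_0, \lambda_{i,Q} + \eta + \beta_0]$ are pairwise disjoint across $i$, and all lie strictly above $\mu_Q + \eta + \beta_0$, the analogous upper bound for growth on $V_{k_Q,P}$ coming from \eqref{eq:slow_oseledets} at $i = k_Q$. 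Because $P$ restricted to $G_{i,P}$ is a finite-dimensional invertible measurable cocycle whose one-step norm and inverse norm are uniformly bounded (using $P \in \mathcal{LY}(C_1,C_2,r,R)$ together with the one-step form of \eqref{eq:fast_oseledets}), the log-integrability hypotheses are trivial and the classical Oseledets theorem applies, producing a $P$-equivariant measurable splitting $G_{i,P}(\omega) = \bigoplus_{j \in S(i)} E_{j,P}(\omega)$ whose Lyapunov exponents are confined to the $i$th window; \eqref{eq:perturbed_oseledets_existence_0} then holds by the very definition of $S(i)$.

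Finally I would assemble these data: starting from \eqref{eq:splitting_perturbed_blocks}, refine each $G_{i,P}(\omega)$ into its Oseledets subspaces and declare $V_{k_Q,P}(\omega)$ the slow part. This tuple satisfies Definition \ref{def:oseledets_splitting} with $\mu_P \le \mu_Q + \eta + \beta_0$, which is strictly below every Lyapunov exponent identified above, so the exponents and multiplicities sum correctly to $d$. Uniqueness of the Oseledets decomposition in terms of asymptotic growth rates then identifies the slow space: any $v \in V_{k_Q,P}(\omega)$ grows at rate at most $\mu_Q + \eta + \beta_0 < \lambda_{j,P}$ for every $j$ and hence lies in $F_P(\omega)$, while the complementarity in \eqref{eq:splitting_perturbed_blocks} together with the dimension count on the fast part leaves no room for $F_P$ to be strictly larger, yielding $F_P(\omega) = V_{k_Q,P}(\omega)$. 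The only genuine technical obstacle is the strong measurability of the perturbed hyperbolic subspaces from Theorem \ref{thm:stability_cocycle}, which is routine from the contraction-mapping construction but must be verified explicitly.
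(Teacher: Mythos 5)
Your strategy—decompose $X$ into the equivariant blocks $G_{i,P}$ coming from the intersected hyperbolic splittings, confine the growth rates on each block using \eqref{eq:fast_oseledets} and \eqref{eq:slow_oseledets}, and apply the classical Oseledets theorem block-by-block—is essentially the paper's strategy. However, there is a genuine gap at the single most technical step: you write ``$P$ restricted to $G_{i,P}$ is a finite-dimensional invertible measurable cocycle whose\ldots{} the classical Oseledets theorem applies,'' but the classical theorem of Oseledets is stated for a cocycle of matrices over a \emph{fixed} $\C^{d_{i,Q}}$, not for a cocycle acting between the fibers of a measurable $d_{i,Q}$-dimensional subbundle $\{G_{i,P}(\omega)\}_{\omega}$ sitting inside a Banach space. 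To bridge that gap you need a \emph{measurable trivialization}: a strongly measurable family $A_{i,\omega} \in \LL(X,\C^{d_{i,Q}})$, bijective on $G_{i,P}(\omega)$, with measurable inverses and uniform two-sided bounds, so that $P_{i,\omega} := A_{i,\sigma(\omega)} P_\omega (\restr{A_{i,\omega}}{G_{i,P}(\omega)})^{-1}$ is a genuine measurable matrix cocycle with $\log^+\norm{P_{i,\omega}^{\pm1}} \in L^1$. This is not the same thing as strong measurability of the projections onto $G_{i,P}(\omega)$, which you flag as ``the only genuine technical obstacle''; it is a strictly harder construction, and it is what Lemma \ref{lemma:measurable_change_of_basis} in the paper supplies (via a measurable Auerbach-like basis selection, with the explicit bounds \eqref{eq:measurable_change_of_basis_0} and \eqref{eq:measurable_change_of_basis_1} feeding directly into the log-integrability estimates \eqref{eq:perturbed_oseledets_existence_1}--\eqref{eq:perturbed_oseledets_existence_3}).

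Two smaller inaccuracies: your claimed two-sided window $[\lambda_{i,Q} - \eta - \beta_0,\, \lambda_{i,Q} + \eta + \beta_0]$ fails for $i=1$, since $G_{1,P}(\omega) = U_{1,P}(\omega)$ is not contained in any slow space $V_{0,P}$ and the only available upper bound is $\log R$ from the Lasota-Yorke class—this is why the paper treats $i=1$ separately in \eqref{eq:perturbed_oseledets_existence_3}, and it is also why the intervals $I_i$ in Theorem \ref{thm:stability_lyapunov} carry the $\max\{\lambda_{i,Q},\log(\delta_{1i}R)\}$ endpoint. The argument still goes through because the windows remain disjoint and ordered, but the statement as written is false. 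Finally, the assertion that \eqref{eq:perturbed_oseledets_existence_0} ``holds by the very definition of $S(i)$'' hides the re-indexing bookkeeping in \eqref{eq:perturbed_oseledets_existence_8}--\eqref{eq:perturbed_oseledets_existence_10}; it is not difficult but it is not tautological either, since one must verify that the cumulative multiplicities line up across blocks.
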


The idea behind the proof of Proposition \ref{prop:perturbed_oseledets_existence} is rather simple: since each family $\{G_{i,P}(\omega)\}_{\omega \in \Omega}$ consists of $d_{i,Q}$-dimensional subspaces and is invariant under the action of $P$ we are essentially in the setting of the classical multiplicative ergodic theorem of Oseledets \cite{oseledets1968multiplicative}.
Unfortunately, actualising this idea requires the strong measurability of several constructions, the proofs of which are rather tedious. As such, many of the purely technical proofs have been deferred to Appendix \ref{app:stability_lyapunov_proofs}.

\begin{lemma}\label{lemma:measurable_hyperbolic_perturbed_spaces}
  For every $i \in \{1, \dots, k_Q\}$ the map $\omega \mapsto \Pi_{U_{i,P}(\omega) || V_{i,P}(\omega)}$ is strongly measurable.
  \begin{proof}
    From the construction of $\{U_{i,P}(\omega)\}_{\omega \in \Omega}$ in Proposition \ref{prop:fast_contraction_mapping} and by Proposition \ref{prop:graph_chart} there is $n_0 \in \Z^+$ such that almost uniformly we have
    \begin{equation*}
      \Pi_{U_{i,P}(\omega) || V_{i,Q}(\omega)} = \lim_{m\to \infty} \left(\Id +  (P_{\sigma^{-mn_0}(\omega)}^{mn_0})^*(0)\right) \Pi_{U_{i,Q}(\omega) || V_{i,Q}(\omega)},
    \end{equation*}
    where the graph transform $(P_{\sigma^{-mn_0}(\omega)}^{mn_0})^*$ maps $\LL(U_{i,Q}(\sigma^{-mn_0}(\omega)), V_{i,Q}(\sigma^{-mn_0}(\omega)))$ to $\LL(U_{i,Q}(\omega), V_{i,Q}(\omega))$.
    By \cite[Lemma A.5]{GTQuas1} the map $\omega \mapsto P_{\sigma^{-mn_0}(\omega)}^{mn_0}$ is strongly measurable for each $m$.
    Hence, as both $\omega \mapsto \Pi_{U_{i,Q}(\omega) || V_{i,Q}(\omega)}$ and $\omega \mapsto \Pi_{U_{i,Q}(\sigma^{-mn_0}(\omega)) || V_{i,Q}(\sigma^{-mn_0}(\omega))}$ are strongly measurable, by Proposition \ref{prop:measurable_graph_tranform} the map $\omega \mapsto \left(\Id +  (P_{\sigma^{-mn_0}(\omega)}^{mn_0})^*(0)\right) \Pi_{U_{i,Q}(\omega) || V_{i,Q}(\omega)}$ is strongly measurable for every $m \in \Z^+$.
    By Proposition \ref{prop:fast_space_props} we have $\esssup_{\omega \in \Omega} \norm{\Pi_{U_{i,P}(\omega) || V_{i,Q}(\omega)}} < \infty$, and so Lemma \ref{lemma:limit_of_strong_measurable} implies that $\omega \mapsto \Pi_{U_{i,P}(\omega) || V_{i,Q}(\omega)}$ is strongly measurable.

    From the construction of $\{V_{i,P}(\omega)\}_{\omega \in \Omega}$ in Proposition \ref{prop:slow_contraction_mapping} and by Proposition \ref{prop:graph_chart} there exists $n_1 \in \Z^+$ such that almost uniformly we have
    \begin{equation*}
      \Pi_{U_{i,P}(\omega) || V_{i,P}(\omega)} = \lim_{m \to \infty}\left( \Pi_{U_{i,P}(\omega) || V_{i,Q}(\omega)} - (P_{\omega}^{m n_1})_*(0)\Pi_{V_{i,Q}(\omega) || U_{i,P}(\omega)} \right),
    \end{equation*}
    where the graph transform $(P_{\omega}^{mn_1})_*$ maps $\LL(V_{i,Q}(\sigma^{mn_1}(\omega)), U_{i,P}(\sigma^{mn_1}(\omega)))$ to $\LL(V_{i,Q}(\omega), U_{i,P}(\omega))$.
    As both $\omega \mapsto \Pi_{U_{i,P}(\omega) || V_{i,Q}(\omega)}$ and $\omega \mapsto \Pi_{U_{i,P}(\sigma^{mn_1}(\omega)) || V_{i,Q}(\sigma^{mn_1}(\omega))}$ are strongly measurable, by Proposition \ref{prop:measurable_graph_tranform} the map $\omega \mapsto \Pi_{U_{i,P}(\omega) || V_{i,Q}(\omega)} - (P_{\omega}^{mn_1})_*(0)\Pi_{V_{i,Q}(\omega) || U_{i,P}(\omega)}$ is strongly measurable for every $m \in \Z^+$.
    By \eqref{eq:oseledets_angle} we have $\esssup_{\omega \in \Omega} \norm{\Pi_{U_{i,P}(\omega) || V_{i,P}(\omega)}} < \infty$, and so $\omega \mapsto \Pi_{U_{i,P}(\omega) || V_{i,P}(\omega)}$ is strongly measurable by Lemma \ref{lemma:limit_of_strong_measurable}.
  \end{proof}
\end{lemma}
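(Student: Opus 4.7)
The plan is to exploit the iterative constructions of the perturbed fast and slow spaces from Sections \ref{sec:fast_space} and \ref{sec:slow_space}. By Proposition \ref{prop:fast_contraction_mapping}, there is $n_0 \in \Z^+$ such that $U_{i,P}(\omega)$ is the graph of the unique fixed point $U^P_\omega$ of the forward graph transform $(P^{n_0})^*$ acting on the cone field $\mathcal{C}_{a_0}$ (built using $Q$'s splitting), and by the contraction estimate \eqref{eq:fast_contraction_mapping_0} we have $U^P_\omega = \lim_{m\to\infty} (P^{mn_0}_{\sigma^{-mn_0}(\omega)})^*(0)$ with convergence uniform in $\omega$ (at rate $c_0^m a_0$). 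Thus by Proposition \ref{prop:graph_chart},
\[
  \Pi_{U_{i,P}(\omega) || V_{i,Q}(\omega)} = \lim_{m\to\infty} \bigl(\Id + (P^{mn_0}_{\sigma^{-mn_0}(\omega)})^*(0)\bigr) \Pi_{U_{i,Q}(\omega) || V_{i,Q}(\omega)}.
\]
So the task reduces to showing each finite-$m$ approximant is strongly measurable in $\omega$ and then passing to the limit.

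For the first point, $\omega \mapsto P^{mn_0}_{\sigma^{-mn_0}(\omega)}$ is strongly measurable by the standard fact that iterates of strongly measurable cocycles over an invertible base are strongly measurable (e.g.\ \cite[Lemma A.5]{GTQuas1}), and $\omega \mapsto \Pi_{U_{i,Q}(\omega) || V_{i,Q}(\omega)}$ is strongly measurable by Remark \ref{remark:measureable_fast_spaces} together with Lemma \ref{lemma:strong_cont_proj_grassmannian}. I would then invoke a measurability-of-graph-transform result (to be established in the appendix as Proposition \ref{prop:measurable_graph_tranform}) to conclude that $\omega \mapsto (P^{mn_0}_{\sigma^{-mn_0}(\omega)})^*(0)$, defined via the inverse and product of strongly measurable families of operators between the measurable subbundles $\{U_{i,Q}(\omega)\}$ and $\{V_{i,Q}(\omega)\}$, is strongly measurable. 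Composition preserves strong measurability, hence each approximant is strongly measurable. For the limit, the $a_0$-cone invariance gives uniform boundedness in $\omega$ and $m$, so I would apply the standard lemma that a pointwise limit of a uniformly norm-bounded sequence of strongly measurable maps into $\LL(X)$ is strongly measurable (e.g.\ the almost-uniform convergence version, Lemma \ref{lemma:limit_of_strong_measurable}). This yields strong measurability of $\omega \mapsto \Pi_{U_{i,P}(\omega) || V_{i,Q}(\omega)}$.

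The final step is to upgrade the direction of the kernel from $V_{i,Q}$ to $V_{i,P}$, for which I would repeat the whole scheme using the backward graph transform of Section \ref{sec:slow_space}. By Proposition \ref{prop:slow_contraction_mapping}, $V_{i,P}(\omega)$ is realised as the graph, over $V_{i,Q}(\omega)$ in the direction of $U_{i,P}(\omega)$, of the unique fixed point of $(P^{n_1})_*$ on $\LL(V_{i,Q}, U_{i,P})$, obtained as $\lim_{m\to\infty}(P^{mn_1}_\omega)_*(0)$ at a geometric rate uniform in $\omega$. The main subtlety here — and really the only delicate point in the whole argument — is that the backward graph transform now couples the measurable family $\omega \mapsto P_\omega^{mn_1}$ with the measurable family $\omega \mapsto \Pi_{U_{i,P}(\omega) || V_{i,Q}(\omega)}$ produced in the previous step, so one must verify that the appendix's measurable graph-transform statement applies in this coupled, $\omega$-varying-subspace setting. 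Granting that, the same limit argument as above produces strong measurability of $\omega \mapsto \Pi_{U_{i,P}(\omega) || V_{i,P}(\omega)}$, completing the proof.
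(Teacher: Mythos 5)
Your proposal is correct and follows essentially the same route as the paper: realise $\Pi_{U_{i,P}(\omega)\|V_{i,Q}(\omega)}$ and then $\Pi_{U_{i,P}(\omega)\|V_{i,P}(\omega)}$ as almost-uniform limits of graph-transform approximants, establish strong measurability of each approximant via Proposition~\ref{prop:measurable_graph_tranform} and \cite[Lemma A.5]{GTQuas1}, and pass to the limit with Lemma~\ref{lemma:limit_of_strong_measurable} using the uniform operator-norm bounds. One small slip: the strong measurability of $\omega\mapsto\Pi_{U_{i,Q}(\omega)\|V_{i,Q}(\omega)}$ does not come from Remark~\ref{remark:measureable_fast_spaces} and Lemma~\ref{lemma:strong_cont_proj_grassmannian} (those go from projections to subspaces, the opposite direction) but directly from the definition of an Oseledets splitting (Definition~\ref{def:oseledets_splitting}), which requires the projections to be strongly measurable; the ``subtlety'' you flag in the backward-transform step is already accommodated by the formulation of Proposition~\ref{prop:measurable_graph_tranform}, which accepts $\omega$-varying strongly measurable projection inputs.
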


\begin{lemma}\label{lemma:perturbed_oseledets_blocks}
  For each $i \in \{1, \dots, k_Q\}$ the map $\omega \mapsto \Pi_{G_{i,P}(\omega) || H_{i,P}(\omega)}$ is strongly measurable.
  \begin{proof}
    The cases where $i =1$ is covered by Lemma \ref{lemma:measurable_hyperbolic_perturbed_spaces}.
    For $1 < i \le k_Q$ we have
    \begin{equation}\label{eq:perturbed_oseledets_blocks_1}
      \Pi_{G_{i,P}(\omega) || H_{i,P}(\omega)} = \Pi_{U_{i,P}(\omega) || V_{i, P}(\omega) } \Pi_{V_{i-1,P}(\omega) || U_{i-1, P}(\omega) },
    \end{equation}
    and so $\omega \mapsto \Pi_{G_{i,P}(\omega) || H_{i,P}(\omega)}$ is strongly measurable by Lemma \ref{lemma:measurable_hyperbolic_perturbed_spaces} and \cite[Lemma A.5]{GTQuas1}.
  \end{proof}
 \end{lemma}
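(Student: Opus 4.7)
The plan is to reduce strong measurability of $\omega \mapsto \Pi_{G_{i,P}(\omega) || H_{i,P}(\omega)}$ to the strong measurability already established in Lemma \ref{lemma:measurable_hyperbolic_perturbed_spaces}. The case $i = 1$ is immediate: by construction $G_{1,P}(\omega) = U_{1,P}(\omega)$ and $H_{1,P}(\omega) = V_{1,P}(\omega)$, so the projection in question coincides with $\Pi_{U_{1,P}(\omega) || V_{1,P}(\omega)}$, which is strongly measurable by that lemma.

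For $1 < i \le k_Q$, my approach would be to find an algebraic factorization of $\Pi_{G_{i,P}(\omega) || H_{i,P}(\omega)}$ as a composition of projections whose strong measurability has already been established. The natural candidate, given that $G_{i,P}(\omega) = U_{i,P}(\omega) \cap V_{i-1,P}(\omega)$ and $H_{i,P}(\omega) = V_{i,P}(\omega) \oplus U_{i-1,P}(\omega)$ (and that $U_{i-1,P}(\omega) \subseteq U_{i,P}(\omega)$, $V_{i,P}(\omega) \subseteq V_{i-1,P}(\omega)$), is
\[
\Pi_{G_{i,P}(\omega) || H_{i,P}(\omega)} = \Pi_{U_{i,P}(\omega) || V_{i, P}(\omega)} \, \Pi_{V_{i-1,P}(\omega) || U_{i-1, P}(\omega)}.
\]
To verify this identity, I would check that the right-hand side is idempotent with the correct range and kernel: vectors in $U_{i-1,P}(\omega)$ are killed by the inner projection, while vectors in $V_{i,P}(\omega) \subseteq V_{i-1,P}(\omega)$ are fixed by the inner projection and then annihilated by the outer one, so $H_{i,P}(\omega)$ lies in the kernel; dually, the image is easily seen to lie in $U_{i,P}(\omega) \cap V_{i-1,P}(\omega) = G_{i,P}(\omega)$, and acts as the identity there. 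A dimension/complementation count then forces equality.

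Once this factorization is in hand, each factor is strongly measurable by Lemma \ref{lemma:measurable_hyperbolic_perturbed_spaces}, and the combined Proposition \ref{prop:perturbed_oseledets_blocks_bound} bound \eqref{eq:perturbed_oseledets_blocks_bound_0} shows the product stays in a bounded set of $\LL(X)$. Composition of strongly measurable operator-valued maps then yields strong measurability of the product, as recorded in \cite[Lemma A.5]{GTQuas1}, completing the proof. I do not anticipate any serious obstacle here: the whole argument is essentially linear-algebraic bookkeeping once Lemma \ref{lemma:measurable_hyperbolic_perturbed_spaces} is available, and the only point that requires mild care is confirming the factorization identity, which follows from the inclusion relations among the $U_{j,P}$ and $V_{j,P}$ spaces.
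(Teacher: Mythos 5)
Your proof is correct and follows essentially the same route as the paper: the $i=1$ case via Lemma \ref{lemma:measurable_hyperbolic_perturbed_spaces}, the factorization $\Pi_{G_{i,P}(\omega) || H_{i,P}(\omega)} = \Pi_{U_{i,P}(\omega) || V_{i, P}(\omega)} \Pi_{V_{i-1,P}(\omega) || U_{i-1, P}(\omega)}$ for $1 < i \le k_Q$, and \cite[Lemma A.5]{GTQuas1} for strong measurability of compositions. Your extra verification of the factorization identity and the boundedness remark are harmless but not needed, as the paper simply asserts the identity (it is \eqref{eq:perturbed_oseledets_blocks_bound_1}) and the cited lemma does not require a uniform bound.
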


A key tool in the proof of Proposition \ref{prop:perturbed_oseledets_existence} is the following result on the existence of measurable change of basis maps, which allows to reduce our setting to the classical one of an invertible cocycle on a finite dimensional vector space.
We note that a similar construction is carried out in \cite[Chapter 7]{lian2010lyapunov}.
We defer the proof to Appendix \ref{app:stability_lyapunov_proofs}.

\begin{lemma}\label{lemma:measurable_change_of_basis}
  If $(\Omega, \mathcal{F}, \mathbb{P})$ is a Lebesgue space, $X$ is a separable Banach space, $d \in \Z^+$ and $\omega \mapsto \Pi_\omega$ is a strongly measurable map such that each $\Pi_\omega$ is rank-$d$ projection and $\esssup_{\omega \in \Omega} \norm{\Pi_{\omega}} < \infty$ then for every $\epsilon > 0$ there exists a strongly measurable map $A : \Omega \to \LL(X, \C^d)$ such that $\restr{A_\omega}{\Pi_{\omega}(X)} : \Pi_\omega(X) \to \C^d$ is a bijection, $\ker(A_\omega) = \ker(\Pi_\omega)$ for a.e. $\omega$, and the map $\omega \mapsto \left(\restr{A_\omega}{\Pi_{\omega}(X)}\right)^{-1}$ is strongly measurable.
  Moreover,
  \begin{equation}\label{eq:measurable_change_of_basis_0}
    \esssup_{\omega \in \Omega} \norm{\restr{A_\omega}{\Pi_{\omega}(X)}} \le \left(\frac{2}{1-\epsilon}\right)^{d-1},
  \end{equation}
  and
  \begin{equation}\label{eq:measurable_change_of_basis_1}
    \esssup_{\omega \in \Omega} \norm{\left(\restr{A_\omega}{\Pi_{\omega}(X)}\right)^{-1}} \le \sqrt{d}.
  \end{equation}
\end{lemma}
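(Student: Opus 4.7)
The plan is to measurably construct an approximately orthonormal basis $v_1(\omega), \ldots, v_d(\omega)$ of $\Pi_\omega(X)$ by an iterative selection, and then to define $A_\omega$ as the composition of $\Pi_\omega$ with the coordinate map in this basis, extended by $0$ on $\ker \Pi_\omega$. Fix at the outset a countable dense sequence $\{x_n\}_{n \in \Z^+} \subseteq X$ using the separability of $X$; boundedness of $\Pi_\omega$ ensures $\{\Pi_\omega x_n\}$ is dense in $\Pi_\omega(X)$ for a.e.\ $\omega$.

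For $k \in \{1, \ldots, d\}$ I would inductively define $v_k(\omega)$ as follows. Set $V_{k-1}(\omega) = \vspan\{v_1(\omega), \ldots, v_{k-1}(\omega)\}$ and let $n_k(\omega)$ be the least $n$ for which $\norm{\Pi_\omega x_n} > 0$ and
\begin{equation*}
  \mathrm{dist}(\Pi_\omega x_n, V_{k-1}(\omega)) \ge (1-\epsilon)\norm{\Pi_\omega x_n},
\end{equation*}
then put $v_k(\omega) = \Pi_\omega x_{n_k(\omega)}/\norm{\Pi_\omega x_{n_k(\omega)}}$. Since $k-1 < d$, the subspace $V_{k-1}(\omega)$ is proper in $\Pi_\omega(X)$, so Riesz's lemma provides $y \in \Pi_\omega(X)$ of norm $1$ with $\mathrm{dist}(y, V_{k-1}(\omega)) \ge 1 - \epsilon/2$; density of $\{\Pi_\omega x_n\}$ in $\Pi_\omega(X)$ together with continuity of the ratio $\mathrm{dist}(\cdot, V_{k-1}(\omega))/\norm{\cdot}$ off $V_{k-1}(\omega)$ then guarantees that $n_k(\omega)$ is a.s.\ finite. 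Measurability of $n_k(\omega)$, and hence of $v_k$, follows from expressing the distance as an infimum over $\lambda_1, \ldots, \lambda_{k-1} \in \mathbb{Q} + i\mathbb{Q}$ of the measurable map $\omega \mapsto \norm{\Pi_\omega x_n - \sum_j \lambda_j v_j(\omega)}$.

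With this basis in place, a short induction using the distance condition $(1-\epsilon)\abs{c_k} \le \norm{\sum_{i \le k} c_i v_i(\omega)}$ and the triangle inequality yields
\begin{equation*}
  \max_i \abs{c_i} \le \left(\frac{2}{1-\epsilon}\right)^{d-1} \norm{\sum_i c_i v_i(\omega)}
\end{equation*}
for all scalars $c_1, \ldots, c_d$. I would then define $A_\omega : X \to \C^d$ by $A_\omega x = (c_1, \ldots, c_d)$, where $\Pi_\omega x = \sum_i c_i v_i(\omega)$. By construction $\ker A_\omega = \ker \Pi_\omega$; the bound \eqref{eq:measurable_change_of_basis_0} is immediate from the conditioning estimate; and the inverse $(A_\omega|_{\Pi_\omega(X)})^{-1}(c) = \sum_i c_i v_i(\omega)$ is manifestly strongly measurable with norm at most $\sqrt{d}$ by the Cauchy--Schwarz inequality, giving \eqref{eq:measurable_change_of_basis_1}.

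The principal obstacle is to verify strong measurability of $A_\omega$ itself, as opposed to just its inverse. The approach is to produce a measurable family of biorthogonal functionals $\phi_1(\omega), \ldots, \phi_d(\omega) \in X^*$, uniquely characterised by $\phi_i(\omega)(v_j(\omega)) = \delta_{ij}$ and $\phi_i(\omega)|_{\ker \Pi_\omega} = 0$, so that $A_\omega x = (\phi_1(\omega)(x), \ldots, \phi_d(\omega)(x))$. Using a countable weak-$*$ dense subset of $B_{X^*}$ (available from separability of $X$), on each piece of the countable measurable partition $\{(n_1(\omega), \ldots, n_d(\omega)) = \mathbf{m}\}$ I would measurably select a $d$-tuple from this subset whose restrictions to $\Pi_\omega(X)$ are linearly independent, and then obtain $\phi_i(\omega)$ via Cramer's rule applied to the resulting invertible $d \times d$ matrix with measurable entries. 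The bulk of the technical work lies in carefully tracking measurability through this selection, which is routine but tedious and motivates the deferral of the full argument to the appendix.
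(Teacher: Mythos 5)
Your overall strategy matches the paper's: build a measurable ``nice'' basis $v_1(\omega),\dots,v_d(\omega)$ of $\Pi_\omega(X)$, then obtain $A_\omega$ from a measurable family of coordinate functionals biorthogonal to it. The difference lies in how you fill in the two measurability sub-steps. For the basis, the paper simply cites \cite[Corollary 39]{lian2010lyapunov}, whereas you construct it directly via an iterated Riesz-lemma selection from the dense sequence $\{\Pi_\omega x_n\}$; this is a reasonable, self-contained substitute. For the coordinate functionals, the paper extends the dual basis $\nu_i(\omega) \in (\Pi_\omega(X))^*$ to all of $X$ using a measurable Hahn--Banach theorem \cite[Proposition 40]{lian2010lyapunov} and then defines $A_\omega = \phi_\omega \Pi_\omega$; you instead measurably select functionals $\psi_1,\dots,\psi_d$ from a countable weak-$*$ dense subset of $B_{X^*}$ and invert the matrix $(\psi_k(v_j(\omega)))$ via Cramer's rule. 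Both routes are viable; yours avoids the measurable Hahn--Banach citation at the cost of a selection and partition argument. One point to tidy up: the $\phi_i(\omega)$ you obtain from Cramer's rule are linear combinations of the $\psi_k$, which satisfy $\phi_i(\omega)(v_j(\omega)) = \delta_{ij}$ but have no reason to vanish on $\ker\Pi_\omega$, contrary to your stated characterisation. The fix is simply to pre-compose with the projection, defining $A_\omega x = (\phi_1(\omega)(\Pi_\omega x),\dots,\phi_d(\omega)(\Pi_\omega x))$, which is exactly the device the paper uses ($A_\omega = \phi_\omega\Pi_\omega$); this forces $\ker A_\omega = \ker\Pi_\omega$ and changes nothing about the estimates.
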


\begin{remark}
  The bound \eqref{eq:measurable_change_of_basis_0}, while sufficient for our purposes, is likely an artefact of the proof - one typically expects a bound comparable to $\sqrt{d}$.
\end{remark}

\begin{proof}[{The proof of Proposition \ref{prop:perturbed_oseledets_existence}}]
  For each $i \in \{1, \dots, k_Q\}$ let $A_{i,\omega}$ denote the map produced by applying Lemma \ref{lemma:measurable_change_of_basis} to $\omega \mapsto \Pi_{G_{i,P}(\omega) || H_{i,P}(\omega)}$ with $\epsilon$ very small, and set
  \begin{equation*}
    P_{i,\omega} = A_{i, \sigma(\omega)} P_\omega \left(\restr{A_{i,\omega}}{G_{i,P}(\omega)} \right)^{-1}.
  \end{equation*}
  Then $\mathcal{P}_i = (\Omega, \mathcal{F}, \mathbb{P}, \sigma, \C^{d_{i,Q}},\omega \mapsto P_{i,\omega})$ is a separable strongly measurable random linear system with an ergodic invertible base.
  Moreover, by \eqref{eq:fast_oseledets} and the estimates in Lemma \ref{lemma:measurable_change_of_basis} for every $n \in \Z^+$ and $i \in \{1, \dots, k_Q\}$ we have
  \begin{equation}\begin{split}\label{eq:perturbed_oseledets_existence_1}
    \norm{\left(P_{i,\omega}^{n}\right)^{-1}} &\le \norm{A_{i, \omega}} \norm{\left(\restr{P_\omega^n}{G_{i,P}(\omega)}\right)^{-1}} \norm{\left(\restr{A_{i,\sigma^{n}(\omega)}}{G_{i,P}(\sigma^{n}(\omega))}\right)^{-1}} \\
    &\le C_0 \sqrt{d} \left(\frac{2}{1-\epsilon}\right)^{d-1} e^{-n(\lambda_{i,Q} - \eta -\beta_0)}.
  \end{split}\end{equation}
  On the other hand, by \eqref{eq:slow_oseledets} and the estimates in Lemma \ref{lemma:measurable_change_of_basis} for every $n \in \Z^+$ and $1 < i \le k_Q$ we have
  \begin{equation}\label{eq:perturbed_oseledets_existence_2}
    \norm{P_{i, \omega}^n} \le \norm{A_{i, \sigma^n(\omega)}} \norm{\restr{P_\omega^n}{G_{i,P}(\omega)}} \norm{\left(\restr{A_{i,\omega}}{G_{i,P}(\omega)}\right)^{-1}} \le C_0 \sqrt{d} \left(\frac{2}{1-\epsilon}\right)^{d-1} e^{n(\lambda_{i,Q} + \eta + \beta_0)},
  \end{equation}
  while for $i = 1$ we have
  \begin{equation}\label{eq:perturbed_oseledets_existence_3}
    \norm{P_{i, \omega}^n} \le C_0 C_3 \sqrt{d} \left(\frac{2}{1-\epsilon}\right)^{d-1} R^{n}.
  \end{equation}
  Thus $\log^+ \norm{P_{i, \omega}^{\pm 1}} \in L^1(\Omega, \mathcal{F}, \mathbb{P})$, and so by Oseledets' Multiplicative Ergodic Theorem \cite{oseledets1968multiplicative}, each $\mathcal{P}_i$ has an Oseledets splitting of dimension $d_{i,Q}$ given by
  \begin{equation}\label{eq:perturbed_oseledets_existence_4}
    \C^d = \bigoplus_{j=1}^{k_{P_i} } E_{j, P_i}(\omega).
  \end{equation}
  By pulling back these Oseledets spaces to $X$ we obtain for each $i \in \{1, \dots, k_Q\}$ and a.e. $\omega$ the splitting
  \begin{equation*}
     G_{i,P}(\omega) = \bigoplus_{j=1}^{k_{P_i} } \left(\restr{A_{i,\omega}}{G_{i,P}(\omega)} \right)^{-1} E_{j, P_i}(\omega),
  \end{equation*}
  and so in view of \eqref{eq:splitting_perturbed_blocks} we have
  \begin{equation}\label{eq:perturbed_oseledets_existence_5}
    X = \left(\bigoplus_{i=1}^{k_Q} \left(\bigoplus_{j=1}^{k_{P_i} } \left(\restr{A_{i,\omega}}{G_{i,P}(\omega)} \right)^{-1} E_{j, P_i}(\omega) \right) \right) \oplus V_{k_Q, P}(\omega).
  \end{equation}
  Let $k_{P} = \sum_{i=1}^{k_Q} k_{P_i}$. For $1 \le \ell \le k_P$ set $h(\ell) = \max \{ \sum_{i=1}^t k_{P_{i}} : \sum_{i=1}^t k_{P_{i}} \le \ell \}$, $g(\ell) = \ell - h(\ell)$ and
  \begin{equation*}
     E_{\ell, P}(\omega) = \left(\restr{A_{h(\ell),\omega}}{G_{h(\ell),P}(\omega)} \right)^{-1} E_{g(\ell), P_{h(\ell)}}(\omega).
  \end{equation*}
  If we set $F_{P}(\omega) = V_{k_Q, P}(\omega)$ then we may rewrite \eqref{eq:perturbed_oseledets_existence_5} as
  \begin{equation}\label{eq:perturbed_oseledets_existence_6}
      X = \left(\bigoplus_{\ell = 1}^{k_P} E_{\ell, P}(\omega) \right) \oplus F_P(\omega).
  \end{equation}
  We claim that \eqref{eq:perturbed_oseledets_existence_6} is an Oseledets splitting for $\mathcal{P}$ of dimension $d$.

  \paragraph{The strong measurability of the Oseledets projections.}
  The projection onto $F_P(\omega)$ according to \eqref{eq:perturbed_oseledets_existence_6} is strongly measurable by Lemma \ref{lemma:measurable_hyperbolic_perturbed_spaces}.
  The projection onto each $E_{\ell, P}(\omega)$ according to the decomposition \eqref{eq:perturbed_oseledets_existence_5} is given by
  \begin{equation*}
    \left(\restr{A_{h(\ell),\omega}}{G_{h(\ell),P}(\omega)}\right)^{-1} \Pi_{g(\ell),h(\ell),\omega} A_{h(\ell),\omega}\Pi_{G_{h(\ell),P}(\omega) || H_{h(\ell),P}(\omega)},
  \end{equation*}
  where $\Pi_{g(\ell),h(\ell),\omega}$ denotes the projection onto $E_{g(\ell), P_{h(\ell)}}(\omega)$ according to the splitting in \eqref{eq:perturbed_oseledets_existence_4}.
  Thus the projection onto $E_{\ell, P}(\omega)$ according to the decomposition \eqref{eq:perturbed_oseledets_existence_6}, being the composition of strongly measurable maps, is strongly measurable by \cite[Lemma A.5]{GTQuas1}.

  \paragraph{The properties of the fast Oseledets spaces.}
  It is easily checked that for any $\ell \in \{1, \dots, k_P\}$ and a.e. $\omega$ we have
  \begin{equation*}\begin{split}
    P_\omega^n (E_{\ell, P}(\omega)) &= P_\omega^n \left(\left(\restr{A_{h(\ell),\omega}}{G_{h(\ell),P}(\omega)} \right)^{-1}E_{g(\ell),P_{h(\ell)}}(\omega)\right) \\
    &= \left(\restr{A_{h(\ell),\sigma^{n}(\omega)}}{G_{h(\ell),P}(\sigma^{n}(\omega))} \right)^{-1} E_{g(\ell),P_{h(\ell)}}(\sigma^{n}(\omega)) = E_{\ell, P}(\sigma^{n}(\omega)).
  \end{split}\end{equation*}
  In addition, due to the bounds \eqref{eq:measurable_change_of_basis_0} and \eqref{eq:measurable_change_of_basis_1} we have for a.e. $\omega $ and every non-zero $v \in E_{\ell, P}(\omega)$ that $A_{h(\ell),\omega}v \in E_{g(\ell), P_{h(\ell)}}(\omega)$ and so
  \begin{equation*}
     \lim_{n \to \infty} \frac{1}{n} \log \norm{P_{\omega}^n v} = \lim_{n \to \infty} \frac{1}{n} \log \norm{\left(\restr{A_{h(\ell), \sigma^{n}(\omega)}}{G_{h(\ell), P}(\sigma^{n}(\omega)} \right)^{-1}\left(\restr{P_{h(\ell),\omega}^n}{E_{g(\ell), P_{h(\ell)}}(\omega)} \right)A_{h(\ell),\omega}v} = \lambda_{g(\ell), P_{h(\ell)}}.
  \end{equation*}

  \paragraph{The ordering of the Lyapunov exponents.}
  For each $\ell \in \{1, \dots, k_P\}$ we set $\lambda_{\ell, P} = \lambda_{g(\ell), P_{h(\ell)}}$ so that $\lambda_{\ell, P}$ is the Lyapunov exponent associated to the spaces $\{E_{\ell, P}(\omega)\}_{\omega \in \Omega}$.
  Clearly $\lambda_{\ell_1, P} < \lambda_{\ell_2,P}$ whenever $\ell_1 > \ell_2$ and $h(\ell_1) = h(\ell_2)$, since then $g(\ell_1) > g(\ell_2)$ and so  $\lambda_{\ell_1, P} = \lambda_{g(\ell_1), P_{h(\ell_1)}} < \lambda_{g(\ell_2), P_{h(\ell_2)}} = \lambda_{\ell_2, P}$.
  On the other hand, if $\ell_1 > \ell_2$ and $h(\ell_1) \ne h(\ell_2)$ then since $\eta + \beta_0 < 2^{-1}\min_{1 \le i \le k_Q} \{\lambda_{i,Q} - \lambda_{i+1,Q}\}$ we may use \eqref{eq:perturbed_oseledets_existence_1} and \eqref{eq:perturbed_oseledets_existence_2} to conclude that
  \begin{equation*}
    \lambda_{\ell_1, P} \le \lambda_{h(\ell_1),Q} + \eta + \beta_0 < \lambda_{h(\ell_2),Q} - \eta - \beta_0 \le  \lambda_{\ell_2 , P}.
  \end{equation*}
  Thus $\lambda_{1, P} > \lambda_{2, P} > \dots > \lambda_{k,P}$.

  \paragraph{The properties of the slow Oseledets spaces.}
  That $P_\omega F_P(\omega) \subseteq F(\sigma(\omega))$ a.e. follows from our application of Theorem \ref{thm:stability_cocycle} in the construction of $V_{k_Q, P}(\omega)$.
  By \eqref{eq:perturbed_oseledets_existence_2} we have a.e. that
  \begin{equation}\label{eq:perturbed_oseledets_existence_7}
    \lim_{n \to \infty} \frac{1}{n} \log \norm{\restr{P_{\omega}^n}{V_{k_Q, P}(\omega)}} := \mu_{P} \le \lambda_{k_Q+1, Q} + \eta + \beta_0.
  \end{equation}
  By \eqref{eq:perturbed_oseledets_existence_1} we get $\lambda_{k_P,P} > \lambda_{k_Q,Q} - \eta - \beta_0$.
  Since $\eta + \beta_0 < 2^{-1}\min_{1 \le i \le k_Q} \{\lambda_{i,Q} - \lambda_{i+1,Q}\}$ it follows that $\mu_{P} < \lambda_{k_P,P}$.

  \paragraph{The identity \eqref{eq:perturbed_oseledets_existence_0}.}
  If we set $s(i) = \sum_{t=1}^{i-1} k_{P_t}$ then
  \begin{equation}\label{eq:perturbed_oseledets_existence_8}
      G_{i,P}(\omega) = \bigoplus_{j=1}^{k_{P_i} } E_{j + s(i), P}(\omega).
  \end{equation}
  Then for $j \in \{1, \dots, k_{P_i}\}$ we have
  \begin{equation}\begin{split}\label{eq:perturbed_oseledets_existence_9}
    \sum_{1 \le \ell \le j + s(i)} d_{\ell,P} &= \left(\sum_{1 \le \ell \le s(i)} d_{\ell,P}\right) + \left(\sum_{1 \le \ell \le j} d_{s(i) + \ell,P} \right)\\
    &= \left(\sum_{1 \le t < i} \sum_{1 \le m \le k_{P_t}} d_{s(t) + m,P}\right) + \left(\sum_{1 \le \ell \le j} d_{s(i) + \ell,P} \right).
  \end{split}\end{equation}
  Since $d_{s(t) + m,P} = d_{m, P_{t}}$ we get
  \begin{equation}\label{eq:perturbed_oseledets_existence_10}
      \sum_{1 \le t < i} \sum_{1 \le m \le k_{P_t}} d_{s(t) + m,P} = \sum_{1 \le t < i} d_{t,Q},\quad \text{ and } \quad  0 < \sum_{1 \le \ell \le j} d_{s(i) + \ell,P} < d_{i,Q}.
  \end{equation}
  Thus by combining \eqref{eq:perturbed_oseledets_existence_9} and \eqref{eq:perturbed_oseledets_existence_10} we see that $j + s(i) \in S(i)$.
  Running our argument in reverse, we observe that if $\ell \in S(i)$ then $h(\ell) = s(i)$ and so $\ell = g(\ell) + s(i)$ with $g(\ell) \in \{1, \dots, k_{P_i}\}$.
  Thus we obtain \eqref{eq:perturbed_oseledets_existence_0} by re-indexing the direct sum \eqref{eq:perturbed_oseledets_existence_8}.
\end{proof}

\begin{proof}[{The first part of the proof of Theorem \ref{thm:stability_lyapunov}}]
  As per Proposition \ref{prop:perturbed_oseledets_existence}, if $P \in \mathcal{LY}(C_1, C_2, r, R) \cap \mathcal{O}_{\epsilon_0}(Q)$ then $\mathcal{P}$ has an Oseledets splitting of dimension $d$.
  Set $c_0 = \beta_0 + \eta$.
  From the proof of Proposition \ref{prop:perturbed_oseledets_existence}, and in particular the estimates \eqref{eq:perturbed_oseledets_existence_1}, \eqref{eq:perturbed_oseledets_existence_2} and \eqref{eq:perturbed_oseledets_existence_3}, we have for every $i \in \{1, \dots, k_Q\}$ that
  \begin{equation}\label{eq:stability_lyapunov}
    \{\lambda_{j, P} : j \in S(i) \} \subseteq I_i = (\lambda_{i,Q} - c_0 , \max\{ \lambda_{i,Q}, \log(\delta_{i1} R)\} + c_0).
  \end{equation}
  Moreover, by \eqref{eq:perturbed_oseledets_existence_7} and the ensuing discussion, we have $I_i \subseteq (\mu_P, \infty)$ for each $i$.
  Thus, $\partial I_{i_1} \cap \partial I_{i_2} = \emptyset$ whenever $i_1 \ne i_2$. As for every $j \in \{1, \dots, k_P\}$ we have $j \in S(i)$ for some $i \in \{1, \dots, k_Q\}$, it follows that $\partial I_i \cap \{\lambda_{j, P} : 1\le j \le k_P\} = \emptyset$ for every $i$.
  Hence each $I_i$ separates the Lyapunov spectrum of $\mathcal{P}$.
  In view of Proposition \ref{prop:perturbed_oseledets_existence} we therefore have $\Pi_{I_i,P}(\omega) = \Pi_{G_{i,P}(\omega) || H_{i,P}(\omega)}$, and so we obtain \eqref{eq:stability_lyapunov_0000} upon recalling that $\dim(G_{i,P}(\omega)) = d_{i,Q}$ for a.e. $\omega \in \Omega$.
  We get \eqref{eq:stability_lyapunov_00000} and \eqref{eq:stability_lyapunov_00} from \eqref{eq:perturbed_oseledets_blocks_bound_0} and \eqref{eq:perturbed_oseledets_blocks_bound_00}, respectively, in Proposition \ref{prop:perturbed_oseledets_blocks_bound}.
  Finally, as $V_{k_Q,Q}(\omega) = F_{Q}(\omega)$ and $V_{k_P,P} = F_P(\omega)$ we obtain \eqref{eq:stability_lyapunov_000} from \eqref{eq:perturbed_oseledets_blocks_bound_000} in Proposition \ref{prop:perturbed_oseledets_blocks_bound}.
\end{proof}

\subsection{Convergence of the Lyapunov exponents}\label{sec:Lyapunov_converge}

In this section we focus on the proving the estimate \eqref{eq:stability_lyapunov_0}. A key tool in our proof will be a generalisation of the determinant to operators on Banach spaces, which we will use to access the Lyapunov exponents of $\mathcal{P}$. When $E \in \mathcal{G}(X)$ is finite dimensional we denote by $m_E$ the Haar measure on $E$, normalised so that $m_E(B_E)$ has the measure of the $\dim(E)$-dimensional Euclidean unit ball.
For each $d \in \Z^+$ we define a map $\det : \LL(X) \times \mathcal{G}_d(X) \to \R$ by
\begin{equation}\label{eq:det}
  \det(A | E) = \frac{m_{AE}(A(B_E))}{m_E(B_E)}.
\end{equation}
We refer the reader to \cite[Section 2.2]{blumenthal2016volume} for an overview of the basic properties of the determinant.

\begin{lemma}\label{lemma:measurability_determinant}
  Recall $\epsilon_0$ from Proposition \ref{prop:perturbed_oseledets_blocks_bound}.
  For every $P \in \mathcal{LY}(C_1, C_2, r, R) \cap \mathcal{O}_{\epsilon_0}(Q)$, $n \in \Z^+$ and $\ell \in \{1, \dots, k_P\}$ the maps $\omega \mapsto \log\det(P^n_\omega | E_{\ell,P}(\omega))$, $\omega \mapsto \log \norm{\restr{P^n_\omega}{E_{\ell,P}(\omega)}}$ and $\omega \mapsto \log \norm{\left(\restr{P^n_\omega}{E_{\ell,P}(\omega)}\right)^{-1}}^{-1}$ are $(\mathcal{F}, \mathcal{B}_{\R})$-measurable and in $L^1(\Omega, \mathcal{F}, \mathbb{P})$.
  \begin{proof}
    Fix $n$ and $\ell$. Let $\psi : \Omega \to \LL(X) \times \mathcal{G}_{d_{\ell,P}}(X)$ be defined by $\psi(\omega) = (P^n_\omega, E_{\ell,P}(\omega))$. The map $\omega \mapsto P^n_\omega$ is strongly measurable by \cite[Lemma A.5]{GTQuas1}.
    On the other hand, the projection onto $E_{\ell,P}(\omega)$ is strongly measurable since it is an Oseledets space, and so $\omega \mapsto E_{\ell,P}(\omega)$ is $(\mathcal{F}, \mathcal{B}_{\mathcal{G}(X)})$-measurable by Lemma \ref{lemma:strong_cont_proj_grassmannian}.
    Thus $\psi$ is $(\mathcal{F}, \mathcal{S} \times \mathcal{B}_{\mathcal{G}(X)})$-measurable.
    That $\omega \mapsto \log \det(\psi(\omega))$ is $(\mathcal{F}, \mathcal{B}_{\R})$-measurable follows from
    Proposition \ref{prop:measurable_determinant}, while the $(\mathcal{F}, \mathcal{B}_{\R})$-measurability of $\omega \mapsto \log \norm{\restr{P^n_\omega}{E_{\ell,P}(\omega)}}$ is a consequence of \cite[Lemma B.16]{GTQuas1}.
    To see that $\omega \mapsto \log \norm{\left(\restr{P^n_\omega}{E_{\ell,P}(\omega)}\right)^{-1}}$ is measurable we note that
    \begin{equation}\label{eq:measurability_determinant_1}
        \norm{\left(\restr{P^n_\omega}{E_{\ell,P}(\omega)}\right)^{-1}} = \norm{\restr{\left(\restr{\Pi_{E_{\ell,P}(\sigma^{n}(\omega))} P^n_\omega}{E_{\ell,P}(\omega)}\right)^{-1} \Pi_{E_{\ell,P}(\sigma^{n}(\omega))}} {E_{\ell,P}(\sigma^{n}(\omega))}},
    \end{equation}
    where $\Pi_{E_{\ell,P}(\sigma^{n}(\omega))}$ denotes the projection onto $E_{\ell,P}(\sigma^{n}(\omega))$ according to the Oseledets decomposition for $\mathcal{P}$.
    The map $\omega \mapsto \left(\restr{\Pi_{E_{\ell,P}(\sigma^{n}(\omega))} P^n_\omega}{E_{\ell,P}(\omega)}\right)^{-1} \Pi_{E_{\ell,P}(\sigma^{n}(\omega))}$ is $(\mathcal{F}, \mathcal{B}_{\R})$-measurable by Proposition \ref{prop:measurable_inverse}. Hence the right hand side of \eqref{eq:measurability_determinant_1} is $(\mathcal{F}, \mathcal{B}_{\R})$-measurable by \cite[Lemma B.16]{GTQuas1}, which of course implies that the left hand side of \eqref{eq:measurability_determinant_1} is $(\mathcal{F}, \mathcal{B}_{\R})$-measurable.

    Since
    \begin{equation*}
        \norm{\left(\restr{P^n_\omega}{E_{\ell,P}(\omega)}\right)^{-1}}^{-1} B_{E_{\ell,P}(\sigma^{n}(\omega))} \subseteq P_\omega^n B_{E_{\ell,P}(\omega)} \subseteq \norm{\restr{P^n_\omega}{E_{\ell,P}(\omega)}}B_{E_{\ell,P}(\sigma^{n}(\omega))},
    \end{equation*}
    we have
    \begin{equation}\label{eq:measurability_determinant_2}
        \log\norm{\left(\restr{P^n_\omega}{E_{\ell,P}(\omega)}\right)^{-1}}^{-1} \le \frac{1}{d_{\ell,P}} \log \det(P^n_\omega | E_{\ell,P}(\omega)) \le \log \norm{\restr{P^n_\omega}{E_{\ell,P}(\omega)}}.
    \end{equation}
    By \eqref{eq:fast_oseledets} and Proposition \ref{prop:perturbed_oseledets_existence} we have
    \begin{equation}\label{eq:measurability_determinant_3}
        C_0^{-1} e^{n(\lambda_{k_Q,Q} -\eta - \beta_0)} \le \norm{\left(\restr{P^n_\omega}{E_{\ell,P}(\omega)}\right)^{-1}}^{-1}.
    \end{equation}
    On the other hand, since $P \in \mathcal{LY}(C_1, C_2, r, R)$ we have
    \begin{equation}\label{eq:measurability_determinant_4}
        \norm{\restr{P^n_\omega}{E_{\ell,P}(\omega)}} \le C_3 R^{n}.
    \end{equation}
    Since $(\Omega, \mathcal{F}, \mathbb{P})$ is a probability space, upon combining \eqref{eq:measurability_determinant_2}, \eqref{eq:measurability_determinant_3} and \eqref{eq:measurability_determinant_4} we get that $\omega \mapsto \log\det(P^n_\omega | E_{\ell,P}(\omega))$, $\omega \mapsto \log \norm{\restr{P^n_\omega}{E_{\ell,P}(\omega)}}$ and $\omega \mapsto \log \norm{\left(\restr{P^n_\omega}{E_{\ell,P}(\omega)}\right)^{-1}}^{-1}$ are all contained in $L^1(\Omega, \mathcal{F}, \mathbb{P})$.
  \end{proof}
\end{lemma}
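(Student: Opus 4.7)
The plan is to treat the three quantities in parallel by first establishing that the pair $\omega \mapsto (P_\omega^n, E_{\ell, P}(\omega))$ is measurable into $\LL(X) \times \mathcal{G}_{d_{\ell,P}}(X)$, and then pushing forward through appropriate Borel-measurable functionals. Strong measurability of $\omega \mapsto P_\omega^n$ reduces to the strong measurability of $\omega \mapsto P_\omega$ (a standard composition-of-strongly-measurable argument, as recorded in GTQuas1). For $\omega \mapsto E_{\ell, P}(\omega)$, I would use that its Oseledets projection is strongly measurable by construction in Proposition \ref{prop:perturbed_oseledets_existence}, and then apply Lemma \ref{lemma:strong_cont_proj_grassmannian} (from the appendix) to transfer this to $(\mathcal{F}, \mathcal{B}_{\mathcal{G}(X)})$-measurability of the subspace itself.

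With the pair in hand, measurability of $\omega \mapsto \log\det(P_\omega^n \mid E_{\ell,P}(\omega))$ would follow from a lemma asserting Borel measurability of $\det : \LL(X) \times \mathcal{G}_d(X) \to \R_{\ge 0}$ (Proposition \ref{prop:measurable_determinant}). For $\omega \mapsto \log \norm{\restr{P_\omega^n}{E_{\ell, P}(\omega)}}$, I would invoke a ready-made measurability lemma from GTQuas1 for restriction norms of strongly measurable operators along measurably varying subspaces. The reciprocal expansion rate is slightly more delicate: I would rewrite $\norm{(\restr{P_\omega^n}{E_{\ell, P}(\omega)})^{-1}}^{-1}$ as the restriction norm to $E_{\ell, P}(\sigma^n(\omega))$ of the operator $(\restr{\Pi_{E_{\ell, P}(\sigma^n(\omega))} P_\omega^n}{E_{\ell, P}(\omega)})^{-1} \Pi_{E_{\ell, P}(\sigma^n(\omega))}$, whose measurability requires a measurable-inverse statement for operators whose domain and codomain vary in a strongly measurable manner.

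For integrability, I would sandwich each quantity between deterministic, $n$-dependent but $\omega$-independent bounds. The upper bound $\norm{\restr{P_\omega^n}{E_{\ell, P}(\omega)}} \le \norm{P_\omega^n} \le C_3 R^n$ comes directly from $P \in \mathcal{LY}(C_1, C_2, r, R)$. For the lower bound, Proposition \ref{prop:perturbed_oseledets_existence} shows that $E_{\ell, P}(\omega) \subseteq G_{i, P}(\omega) \subseteq U_{i, P}(\omega)$ for some $i \in \{1, \dots, k_Q\}$, so the hyperbolic fast-space estimate \eqref{eq:fast_oseledets} yields
\begin{equation*}
  \norm{\left(\restr{P_\omega^n}{E_{\ell, P}(\omega)}\right)^{-1}}^{-1} \ge C_0^{-1} e^{n(\lambda_{k_Q, Q} - \eta - \beta_0)}.
\end{equation*}
For the determinant, the inclusions $\norm{(\restr{P_\omega^n}{E_{\ell, P}(\omega)})^{-1}}^{-1} B_{E_{\ell, P}(\sigma^n(\omega))} \subseteq P_\omega^n B_{E_{\ell, P}(\omega)} \subseteq \norm{\restr{P_\omega^n}{E_{\ell, P}(\omega)}} B_{E_{\ell, P}(\sigma^n(\omega))}$ pinch $\tfrac{1}{d_{\ell, P}} \log\det(P_\omega^n \mid E_{\ell, P}(\omega))$ between the other two log-quantities. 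Since all bounds are $\omega$-uniform and $\mathbb{P}$ is a probability measure, $L^1$-integrability is immediate.

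The main obstacle I anticipate is precisely the measurability of $\omega \mapsto \norm{(\restr{P_\omega^n}{E_{\ell, P}(\omega)})^{-1}}^{-1}$: operator inversion is only continuous on invertible operators, and here the operator being inverted acts between subspaces that themselves vary with $\omega$. Handling this cleanly likely requires an auxiliary result on measurable inverses of operators between measurably varying closed subspaces (an analogue, in the strong-measurability category, of the classical Kuratowski measurable-inverse theorem), which I would expect to carry out using the change-of-basis trick from Lemma \ref{lemma:measurable_change_of_basis} to reduce to the finite-dimensional setting where inversion is a rational operation in the matrix entries.
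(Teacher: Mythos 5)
Your proposal follows the paper's proof essentially step for step: the same measurability of the pair $\omega \mapsto (P^n_\omega, E_{\ell,P}(\omega))$ via strong measurability of $P^n_\omega$, Lemma \ref{lemma:strong_cont_proj_grassmannian} and Proposition \ref{prop:measurable_determinant}, the same identity \eqref{eq:measurability_determinant_1} reducing the conorm to a restriction norm of a measurably-inverted operator, and the same sandwich \eqref{eq:measurability_determinant_2}--\eqref{eq:measurability_determinant_4} giving uniform bounds and hence $L^1$-integrability. The measurable-inverse statement you anticipate needing is exactly Proposition \ref{prop:measurable_inverse} of Appendix \ref{app:stability_lyapunov_proofs}, which the paper establishes through strong-operator-topology continuity of the inversion map on bounded sets (Lemma \ref{lemma:inverse_sot_cont}) rather than the finite-dimensional change-of-basis reduction you sketch (a route that would additionally require a uniform bound on the projections onto the individual $E_{\ell,P}(\omega)$, which is not available here).
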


\begin{proposition}\label{prop:exponent_formula}
  Recall $\epsilon_0$ from Proposition \ref{prop:perturbed_oseledets_blocks_bound}.
  For every $P \in \mathcal{LY}(C_1, C_2, r, R) \cap \mathcal{O}_{\epsilon_0}(Q)$, $n \in \Z^+$ and $\ell \in \{1, \dots, k_{P}\}$ we have
  \begin{align}
    \lambda_{\ell,P} &= \frac{1}{n d_{\ell,P} } \intf_{\Omega} \log \det( P^n_\omega | E_{\ell,P}(\omega) ) d\mathbb{P}, \label{eq:exponent_formula_det}\\
    &= \lim_{m \to \infty} \frac{1}{m } \intf_{\Omega} \log \norm{\restr{P^{m}_\omega}{E_{\ell,P}(\omega)}} d\mathbb{P}, \label{eq:exponent_formula_norm}\\
    &= \lim_{m \to \infty} \frac{1}{m} \intf_{\Omega} \log \norm{\left(\restr{P^{m}_\omega}{E_{\ell,P}(\omega)}\right)^{-1}}^{-1} d\mathbb{P}. \label{eq:exponent_formula_conorm}
  \end{align}
  \begin{proof}
    Recalling \eqref{eq:measurability_determinant_2} from the proof of Lemma \ref{lemma:measurability_determinant}, we have for every $j \in \Z^+$ that
    \begin{equation*}
        \frac{1}{nj}\intf \log\norm{\left(\restr{P^{nj}_\omega}{E_{\ell,P}(\omega)}\right)^{-1}}^{-1} d\mathbb{P} \le \frac{1}{nj d_{\ell,P}} \intf \log \det(P^{nj}_\omega | E_{\ell,P}(\omega)) d\mathbb{P} \le \frac{1}{nj} \intf \log \norm{\restr{P^{nj}_\omega}{E_{\ell,P}(\omega)}} d\mathbb{P}.
    \end{equation*}
    Hence it suffices to prove that
    \begin{equation}\label{eq:exponent_norm_inequalities}
        \limsup_{m \to \infty} \frac{1}{m } \intf_{\Omega} \log \norm{\restr{P^{m}_\omega}{E_{\ell,P}(\omega)}} d\mathbb{P} \le \lambda_{\ell,P} \le \liminf_{m \to \infty} \frac{1}{m} \intf_{\Omega} \log \norm{\left(\restr{P^{m}_\omega}{E_{\ell,P}(\omega)}\right)^{-1}}^{-1} d\mathbb{P},
    \end{equation}
    and that
    \begin{equation}\label{eq:exponent_formula_det_1}
        \frac{1}{nj d_{\ell,P}} \intf \log \det(P^{nj}_\omega | E_{\ell,P}(\omega)) d\mathbb{P} = \frac{1}{n d_{\ell,P} } \intf_{\Omega} \log \det( P^n_\omega | E_{\ell,P}(\omega) ) d\mathbb{P}.
    \end{equation}

    \paragraph{The identity \eqref{eq:exponent_formula_det_1}.}
    Since the determinant is multiplicative \cite[Proposition 2.13]{blumenthal2016volume} we have
    \begin{equation}\label{eq:exponent_formula_1}
        \frac{1}{nj d_{\ell,P}} \intf \log \det(P^{nj}_\omega | E_{\ell,P}(\omega)) d\mathbb{P}  = \frac{1}{nj d_{\ell,P}} \sum_{i=0}^{j-1} \intf \log \det(P^{n}_{\sigma^{ni}(\omega)} | E_{\ell,P}(\sigma^{ni}(\omega))) d\mathbb{P}.
    \end{equation}
    Since $\mathbb{P}$ is $\sigma$-invariant, for $i \in \{0, \dots, j-1\}$ we have
    \begin{equation}\label{eq:exponent_formula_2}
        \intf \log \det(P^{n}_{\sigma^{ni}(\omega)} | E_{\ell,P}(\sigma^{ni}(\omega))) d\mathbb{P} = \intf \log \det(P^{n}_\omega | E_{\ell,P}(\omega)) d\mathbb{P}.
    \end{equation}
    Combining \eqref{eq:exponent_formula_1} and \eqref{eq:exponent_formula_2} yields \eqref{eq:exponent_formula_det_1}.

    \paragraph{The first inequality in \eqref{eq:exponent_norm_inequalities}.}
    By Lemma \ref{lemma:measurability_determinant} we have $\left\{\omega \mapsto \log\norm{\restr{P^{m}_\omega}{E_{\ell,P}(\omega)}} \right\}_{m \in \Z^+} \subseteq L^1(\Omega, \mathcal{F}, \mathbb{P})$. Since $\left\{\omega \mapsto \log \norm{\restr{P^{m}_\omega}{E_{\ell,P}(\omega)}} \right\}_{m \in \Z^+}$ is subadditive with respect to $\sigma$ and as $\sigma$ is $\mathbb{P}$-ergodic, by Kingman's subadditive ergodic theorem we have for a.e. $\omega$ that
    \begin{equation}\label{eq:exponent_formula_3}
        \lim_{m \to \infty} \frac{1}{m } \log \norm{\restr{P^{m}_\omega}{E_{\ell,P}(\omega)}} = \lim_{m \to \infty} \frac{1}{m } \intf_{\Omega} \log \norm{\restr{P^{m}_\omega}{E_{\ell,P}(\omega)}} d\mathbb{P}.
    \end{equation}
    Fix a normalised Auerbach basis $\{v_i\}_{i=1}^{d_{\ell,P}}$ for $E_{\ell,P}(\omega)$, and for each $m$ let $v_{m} \in E_{\ell,P}(\omega)$ satisfy $\norm{w_m} = 1$ and $\norm{\restr{P^{m}_\omega}{E_{\ell,P}(\omega)}} = \norm{P^{m}_\omega w_m}$.
    If we write $w_m = \sum_{i=1}^{d_{\ell,P}} a_{i,m} v_{i}$ then
    \begin{equation*}
        \norm{\restr{P^{m}_\omega}{E_{\ell,P}(\omega)}} \le \left(\max_{i \in \{1, \dots, d_{\ell,P}\}} \norm{P^{m}_\omega v_i}\right) \sum_{i=1}^{d_{\ell,P}} \abs{a_{i,m}}.
    \end{equation*}
    Since $\{v_i\}_{i=1}^{d_{\ell,P}}$ is Auerbach, by \cite[Corollary A.7]{quas2019explicit} we have $\sum_{i=1}^{d_{\ell,P}} \abs{a_{i,m}} \le d_{\ell,P}$, and so
    \begin{equation*}
       \lim_{m \to \infty} \frac{1}{m } \log \norm{\restr{P^{m}_\omega}{E_{\ell,P}(\omega)}} \le \max_{i \in \{1, \dots, d_{\ell,P}\}}\left( \limsup_{m \to \infty} \frac{1}{m} \log \left( \norm{P^{m}_\omega v_i}\right)\right) = \lambda_{\ell,P},
    \end{equation*}
    which, in view of \eqref{eq:exponent_formula_3}, yields the first inequality in \eqref{eq:exponent_norm_inequalities}.

    \paragraph{The second inequality in \eqref{eq:exponent_norm_inequalities}.}
    The proof is very similar to the one in the previous paragraph.
    By Lemma \ref{lemma:measurability_determinant} we have $\left\{\omega \mapsto \log \norm{\left(\restr{P^{m}_{\omega}}{E_{\ell,P}(\omega)}\right)^{-1}} \right\}_{m \in \Z^+} \subseteq L^1(\Omega, \mathcal{F}, \mathbb{P})$. Since $\left\{\omega \mapsto \log \norm{\left(\restr{P^{m}_{\omega}}{E_{\ell,P}(\omega)}\right)^{-1}} \right\}_{m \in \Z^+}$ is subadditive with respect to $\sigma$, and as $\sigma$ is invertible and $\mathbb{P}$-ergodic, by Kingman's subadditive ergodic theorem we have for a.e. $\omega$ that
    \begin{equation}\label{eq:exponent_formula_4}
        \lim_{m \to \infty} \frac{1}{m } \log \norm{\left(\restr{P^{m}_{\sigma^{-m}(\omega)}}{E_{\ell,P}(\sigma^{-m}(\omega))}\right)^{-1}} = \lim_{m \to \infty} \frac{1}{m } \intf \log \norm{\left(\restr{P^{m}_{\omega}}{E_{\ell,P}(\omega)}\right)^{-1}} d\mathbb{P}.
    \end{equation}
    Fix a normalised Auerbach basis $\{v_i\}_{i=1}^{d_{\ell,P}}$ for $E_{\ell,P}(\omega)$, and for each $m$ let $w_{m} \in E_{\ell,P}(\omega)$ satisfy $\norm{w_m} = 1$ and
    \begin{equation*}
        \norm{\left(\restr{P^{m}_{\sigma^{-m}(\omega)}}{E_{\ell,P}(\sigma^{-m}(\omega))}\right)^{-1}} = \norm{\left(\restr{P^{m}_{\sigma^{-m}(\omega)}}{E_{\ell,P}(\sigma^{-m}(\omega))}\right)^{-1} w_{m}}.
    \end{equation*}
    If we write $w_m = \sum_{i=1}^{d_{\ell,P}} a_{i,m} v_{i}$ then
    \begin{equation*}
        \norm{\left(\restr{P^{m}_{\sigma^{-m}(\omega)}}{E_{\ell,P}(\sigma^{-m}(\omega))}\right)^{-1}} \le \left(\max_{i \in \{1, \dots, d_{\ell,P}\}} \norm{\left(\restr{P^{m}_{\sigma^{-m}(\omega)}}{E_{\ell,P}(\sigma^{-m}(\omega))}\right)^{-1} v_{i}}\right) \sum_{i=1}^{d_{\ell,P}} \abs{a_{i,m}}.
    \end{equation*}
    Since $\{v_i\}_{i=1}^{d_{\ell,P}}$ is Auerbach, by \cite[Corollary A.7]{quas2019explicit} we have $\sum_{i=1}^{d_{\ell,P}} \abs{a_{i,m}} \le d_{\ell,P}$, and so
    \begin{equation*}\begin{split}
       \lim_{m \to \infty} \frac{1}{m} \log \norm{\left(\restr{P^{m}_{\sigma^{-m}(\omega)}}{E_{\ell,P}(\sigma^{-m}(\omega))}\right)^{-1}} &\le \max_{i \in \{1, \dots, d_{\ell,P}\}}\left(\limsup_{m \to \infty} \frac{1}{m } \log \left( \norm{\left(\restr{P^{m}_{\sigma^{-m}(\omega)}}{E_{\ell,P}(\sigma^{-m}(\omega))}\right)^{-1} v_{i}}\right)\right) \\
       &= -\lambda_{\ell,P},
    \end{split}\end{equation*}
    which, in view of \eqref{eq:exponent_formula_4}, yields the second inequality in \eqref{eq:exponent_norm_inequalities}.
  \end{proof}
\end{proposition}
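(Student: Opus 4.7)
The plan is to establish the three formulas by first noting that Lemma \ref{lemma:measurability_determinant} already provides the key sandwich inequality
\begin{equation*}
\log\norm{\left(\restr{P^n_\omega}{E_{\ell,P}(\omega)}\right)^{-1}}^{-1} \le \tfrac{1}{d_{\ell,P}} \log \det(P^n_\omega | E_{\ell,P}(\omega)) \le \log \norm{\restr{P^n_\omega}{E_{\ell,P}(\omega)}},
\end{equation*}
together with integrability of each of the three quantities. It therefore suffices to prove three things: (i) the determinant integral is $n$-independent (so the displayed identity \eqref{eq:exponent_formula_det} holds for every $n$); (ii) $\limsup_m \tfrac{1}{m}\int \log \norm{\restr{P^m_\omega}{E_{\ell,P}(\omega)}}\,d\mathbb{P} \le \lambda_{\ell,P}$; and (iii) $\liminf_m \tfrac{1}{m}\int \log \norm{(\restr{P^m_\omega}{E_{\ell,P}(\omega)})^{-1}}^{-1}\,d\mathbb{P} \ge \lambda_{\ell,P}$. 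Combining these with the sandwich inequality (applied to the iterate $P^{nj}$ and averaged in $j$) forces all three expressions to equal $\lambda_{\ell,P}$.

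For (i), I would use the multiplicativity of the determinant along an orbit, namely $\det(P^{nj}_\omega | E_{\ell,P}(\omega)) = \prod_{i=0}^{j-1} \det(P^n_{\sigma^{ni}(\omega)} | E_{\ell,P}(\sigma^{ni}(\omega)))$ which follows from equivariance of $E_{\ell,P}$, and then take logs and integrate, exploiting $\sigma$-invariance of $\mathbb{P}$ to see that each summand contributes the same value. For (ii), the family $\{\omega \mapsto \log \norm{\restr{P^m_\omega}{E_{\ell,P}(\omega)}}\}_{m}$ is subadditive along $\sigma$ (again by equivariance), is in $L^1$ by Lemma \ref{lemma:measurability_determinant}, and so by Kingman's subadditive ergodic theorem the pointwise limit exists a.e.\ and equals the limit of the average. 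To control this pointwise limit, fix an Auerbach basis $\{v_i\}_{i=1}^{d_{\ell,P}}$ of $E_{\ell,P}(\omega)$ and for each $m$ write the norm-maximising unit vector as $w_m = \sum_i a_{i,m} v_i$; then $\sum_i |a_{i,m}| \le d_{\ell,P}$ by \cite[Corollary A.7]{quas2019explicit}, so $\tfrac{1}{m}\log\norm{\restr{P^m_\omega}{E_{\ell,P}(\omega)}} \le \max_i \tfrac{1}{m}\log\norm{P^m_\omega v_i} + o(1)$, whose $\limsup$ is $\lambda_{\ell,P}$ by \eqref{eq:fast_growth}.

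For (iii), the argument is parallel but uses invertibility of $\sigma$: apply Kingman's theorem to the subadditive sequence $\omega \mapsto \log \norm{(\restr{P^m_\omega}{E_{\ell,P}(\omega)})^{-1}}$ pulled back to the pre-orbit $\sigma^{-m}(\omega)$, again integrable by Lemma \ref{lemma:measurability_determinant} thanks to the hyperbolic lower bound \eqref{eq:fast_oseledets}. The Auerbach basis argument on $E_{\ell,P}(\omega)$ reduces the $\limsup$ of $\tfrac{1}{m}\log \norm{(\restr{P^m_{\sigma^{-m}(\omega)}}{E_{\ell,P}(\sigma^{-m}(\omega))})^{-1}}$ to $\max_i \limsup_m \tfrac{1}{m}\log\norm{(\restr{P^m_{\sigma^{-m}(\omega)}}{E_{\ell,P}(\sigma^{-m}(\omega))})^{-1} v_i}$, which is $-\lambda_{\ell,P}$ by \eqref{eq:fast_growth} applied along the pre-orbit.

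The main obstacle I anticipate is not analytical but organisational: making sure the Auerbach/basis reduction is carried out on the correct fiber (forward-time for (ii) vs.\ backward-time for (iii)), so that the Kingman limit and the single-vector Lyapunov limit \eqref{eq:fast_growth} align on the same $\omega$. Once one is careful that the a.e.\ set on which the Kingman limit exists intersects the a.e.\ set on which all vectors in $E_{\ell,P}(\omega)$ grow at rate $\lambda_{\ell,P}$, the three identities follow cleanly by chaining inequalities and collapsing the sandwich.
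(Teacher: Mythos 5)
Your proposal is correct and follows essentially the same route as the paper: sandwich inequality from the determinant bound, multiplicativity plus $\sigma$-invariance for the $n$-independence of the determinant integral, Kingman's theorem for both limits, and an Auerbach-basis reduction to single-vector Lyapunov exponents, using invertibility of $\sigma$ to run the conorm argument along the pre-orbit. The organisational point you flag about aligning forward/backward fibers is precisely how the paper handles it, so no gaps.
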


Throughout the proof of Theorem \ref{thm:stability_lyapunov} we will use the following corollary of Lemma \ref{lemma:fast_space_eccentricity}, which is obtained by applying Lemma \ref{lemma:fast_space_eccentricity} to $P \in \mathcal{LY}(C_1, C_2, r, R) \cap \mathcal{O}_{\epsilon_0}(Q)$ with the hyperbolic splitting consisting of fast spaces $\{U_{k_Q, P}(\omega)\}_{\omega \in \Omega}$ and slow spaces $\{V_{k_Q, P}(\omega)\}_{\omega \in \Omega}$.

\begin{lemma}\label{lemma:oseledets_eccentricity}
  Recall $\epsilon_0$ from Proposition \ref{prop:perturbed_oseledets_blocks_bound}.
  There exists $K > 0$ so that for every $P \in \mathcal{LY}(C_1, C_2, r, R) \cap \mathcal{O}_{\epsilon_0}(Q)$, a.e. $\omega$ and every $v \in \bigoplus_{i=1}^{k_P} E_{i, P}(\omega)$ we have $\norm{v} \le K \wnorm{v}$.
\end{lemma}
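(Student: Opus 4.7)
The plan is to reduce the claim to a direct application of Lemma \ref{lemma:fast_space_eccentricity}, applied to the perturbed cocycle $P$ with the hyperbolic splitting whose fast and slow spaces are $\{U_{k_Q, P}(\omega)\}$ and $\{V_{k_Q, P}(\omega)\}$, respectively. The first step is to identify the subspace $\bigoplus_{i=1}^{k_P} E_{i, P}(\omega)$ with $U_{k_Q, P}(\omega)$ for a.e.\ $\omega$. This follows by reading off the splitting \eqref{eq:splitting_perturbed_blocks}: we have
\begin{equation*}
  X = \left( \bigoplus_{i=1}^{k_Q} G_{i,P}(\omega) \right) \oplus V_{k_Q, P}(\omega),
\end{equation*}
and by Proposition \ref{prop:perturbed_oseledets_existence} each block decomposes as $G_{i,P}(\omega) = \bigoplus_{j \in S(i)} E_{j,P}(\omega)$, where the sets $S(i)$, $1 \le i \le k_Q$, partition $\{1, \dots, k_P\}$. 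Since moreover $X = U_{k_Q, P}(\omega) \oplus V_{k_Q, P}(\omega)$, we conclude $\bigoplus_{i=1}^{k_P} E_{i, P}(\omega) = U_{k_Q, P}(\omega)$.

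The second step is to verify that the hypotheses of Lemma \ref{lemma:fast_space_eccentricity} are satisfied by $P$ relative to the splitting $\{U_{k_Q, P}(\omega)\} \oplus \{V_{k_Q, P}(\omega)\}$, with constants that can be chosen uniformly in $P \in \mathcal{LY}(C_1, C_2, r, R) \cap \mathcal{O}_{\epsilon_0}(Q)$. Indeed, $P \in \mathcal{LY}(C_1, C_2, r, R)$ supplies the Lasota-Yorke inequality with the same universal constants for every such $P$, and the lower expansion bound \eqref{eq:fast_oseledets} gives the requisite estimate
\begin{equation*}
  \norm{P^n_\omega v} \ge C_0^{-1} e^{n(\lambda_{k_Q,Q} - \eta - \beta_0)} \norm{v}, \qquad v \in U_{k_Q, P}(\omega),
\end{equation*}
with $C_0$ and $e^{\lambda_{k_Q,Q} - \eta - \beta_0}$ independent of $P$. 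Furthermore, by the choice of $\beta_0, \eta$ we have $\lambda_{k_Q, Q} - \eta - \beta_0 > \mu_Q > \log r$, so the fast expansion strictly dominates $r$, which is exactly what the proof of Lemma \ref{lemma:fast_space_eccentricity} needs in order to pick an $N$ with $C_0^{-1} e^{N(\lambda_{k_Q,Q} - \eta - \beta_0)} - C_1 r^N > 0$.

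Applying Lemma \ref{lemma:fast_space_eccentricity} then yields a constant $K$, depending only on $C_0, C_1, C_2, R, r, \lambda_{k_Q,Q}, \eta, \beta_0$ (hence independent of $P$), such that for a.e.\ $\omega$ and every $v \in X$,
\begin{equation*}
  \norm{\Pi_{U_{k_Q, P}(\omega) || V_{k_Q, P}(\omega)} v} \le K \wnorm{\Pi_{U_{k_Q, P}(\omega) || V_{k_Q, P}(\omega)} v}.
\end{equation*}
Restricting to $v \in \bigoplus_{i=1}^{k_P} E_{i,P}(\omega) = U_{k_Q, P}(\omega)$ makes this projection the identity, giving the desired inequality $\norm{v} \le K \wnorm{v}$. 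There is no real obstacle here; the only thing to be a little careful about is confirming the uniformity of the various constants in $P$, which, as discussed, is automatic from the Lasota-Yorke class and from the single application of Theorem \ref{thm:stability_cocycle} to $Q$ that produced $C_0$.
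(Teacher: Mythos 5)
Your proof is correct and takes the same route as the paper, which in fact gives exactly this one-line argument in the sentence preceding the lemma: apply Lemma \ref{lemma:fast_space_eccentricity} to $P$ with the hyperbolic splitting $\{U_{k_Q,P}(\omega)\} \oplus \{V_{k_Q,P}(\omega)\}$. Your additional verifications — that $\bigoplus_{i=1}^{k_P} E_{i,P}(\omega) = U_{k_Q,P}(\omega)$ via Proposition \ref{prop:perturbed_oseledets_existence} and \eqref{eq:splitting_perturbed_blocks}, and that the constants $C_0$, $e^{\lambda_{k_Q,Q}-\eta-\beta_0}$, $C_1$, $C_2$, $r$, $R$ feeding into Lemma \ref{lemma:fast_space_eccentricity}'s constant $K$ are uniform over $P \in \mathcal{LY}(C_1,C_2,r,R) \cap \mathcal{O}_{\epsilon_0}(Q)$, with $\lambda_{k_Q,Q}-\eta-\beta_0 > \mu_Q > \log r$ guaranteeing the required strict separation — are exactly the details the paper leaves implicit.
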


We may now finish the proof of Theorem \ref{thm:stability_lyapunov}.
For the sake of brevity, throughout the proof we use $\Pi_{E_{i,Q}(\omega)}$ to denote the projection onto $E_{i,Q}(\omega)$ according to the Oseledets splitting of $Q$, and $\Pi_{G_{i,P}(\omega)}$ to denote the projection onto $G_{i,P}(\omega)$ according to the splitting in \eqref{eq:splitting_perturbed_blocks}.

\begin{proof}[{The second part of the proof of Theorem \ref{thm:stability_lyapunov}}]
    It remains to prove the bound \eqref{eq:stability_lyapunov_0}.
    Let $\ell \in \{1, \dots, d\}$ and note that it suffices to produce for each $\nu > 0$ a $\epsilon_{\nu,\ell}$ such that if $P \in \mathcal{LY}(C_1, C_2, r, R) \cap \mathcal{O}_{\epsilon_{\nu,\ell}}(Q)$ then $\abs{\gamma_{\ell, P} - \gamma_{\ell, Q}} \le \nu$.
    By Proposition \ref{prop:perturbed_oseledets_existence} we have $\gamma_{\ell, Q} = \lambda_{i, Q}$ for some $i \in \{1, \dots, k_Q\}$ and $\gamma_{\ell, P} = \lambda_{j, P}$ for some $j \in S(i)$.
    Recalling \eqref{eq:measurability_determinant_2} from the proof of Lemma \ref{lemma:measurability_determinant} we have for each $n \in \Z^+$ and a.e. $\omega$ that
    \begin{equation*}
        \frac{1}{n}\log \norm{\left(\restr{P^n_\omega}{E_{j,P}(\omega)}\right)^{-1}}^{-1} \le \frac{1}{n d_{j, P}} \log\det( P^n_\omega | E_{j,P}(\omega) ) \le \frac{1}{n} \log \norm{\restr{P^n_\omega}{E_{j,P}(\omega)}}.
    \end{equation*}
    Since $\wnorm{\cdot} \le \norm{\cdot}$ and by Lemma \ref{lemma:oseledets_eccentricity}, we get
    \begin{equation}\label{eq:stability_lyapunov_2}
        \frac{1}{n} \log\left(\inf_{v \in E_{j,P}(\omega) \cap B_{\norm{\cdot}}} \wnorm{P^n_\omega v}\right) \le \frac{1}{n d_{j, P}} \det( P^n_\omega | E_{j,P}(\omega) ) \le \frac{\log K}{n} + \frac{1}{n} \log\left(\sup_{v \in E_{j,P} \cap B_{\norm{\cdot}}} \wnorm{P^n_\omega v} \right).
    \end{equation}
    The rest of the proof will run as follows: we will first pursue some technical bounds, which we will then use to obtain lower and upper bounds in terms of $\lambda_{i,Q}$ for the left-most and right-most terms, respectively, in \eqref{eq:stability_lyapunov_2}.

    \paragraph{Some technical bounds.}
    For every $v \in E_{j,P}(\omega) \cap B_{\norm{\cdot}}$ we have
    \begin{equation}\begin{split}\label{eq:stability_lyapunov_3}
        \abs{\frac{\wnorm{Q^n_\omega \Pi_{E_{i,Q}(\omega)}v}}{\wnorm{P^n_\omega v}} - 1} &= \wnorm{P^n_\omega v}^{-1} \abs{\wnorm{Q^n_\omega \Pi_{E_{i,Q}(\omega)}v} - \wnorm{P^n_\omega v}}
        \\
        &\le \wnorm{P^n_\omega v}^{-1} \left( \tnorm{\restr{Q^n_\omega( \Id - \Pi_{E_{i,Q}(\omega)})}{E_{j,P}(\omega)}} + \tnorm{Q^n_\omega - P_\omega^n}\right).
    \end{split}\end{equation}
    By \eqref{eq:fast_oseledets}, Lemma \ref{lemma:oseledets_eccentricity}, and as $\eta + \beta_0 < 2^{-1}\min_{1 \le i \le k_Q} \{\lambda_{i,Q} - \lambda_{i+1,Q}\}$, we have
    \begin{equation}\label{eq:stability_lyapunov_4}
        \wnorm{P^n_\omega v}^{-1} \le K \norm{P^n_\omega v}^{-1} \le K C_0 e^{-n\lambda_{k_Q+1,Q}}.
    \end{equation}
    By Lemma \ref{lemma:equicont_power} we have $Q^n \in \End_S(\mathbb{X}, \sigma)$, and so for every $n \in \Z^+$ and $\kappa > 0$ there exists $C_{\kappa,n}$ such that
    \begin{equation}\begin{split}\label{eq:stability_lyapunov_5}
        \tnorm{\restr{Q^n_\omega( \Id - \Pi_{E_{i,Q}(\omega)})}{E_{j,P}(\omega)}} &= \tnorm{Q^n_\omega( \Pi_{G_{i,P}(\omega)} - \Pi_{E_{i,Q}(\omega)})} \\
        &\le \kappa \norm{\Pi_{G_{i,P}(\omega)} - \Pi_{E_{i,Q}(\omega)}} + C_{\kappa, n} \tnorm{\Pi_{G_{i,P}(\omega)} - \Pi_{E_{i,Q}(\omega)}},
    \end{split}\end{equation}
    where we also used the fact that $\restr{\Pi_{G_{i,P}(\omega)}}{E_{j,P}(\omega)} = \restr{\Id}{E_{j,P}(\omega)}$.
    From the proof of Proposition  \ref{prop:perturbed_oseledets_blocks_bound} we have $\norm{\Pi_{G_{i,P}(\omega)} - \Pi_{E_{i,Q}(\omega)}} \le 2\Theta_0^2$.
    Thus, by applying \eqref{eq:stability_lyapunov_4}, \eqref{eq:stability_lyapunov_5} to \eqref{eq:stability_lyapunov_3} we obtain
    \begin{equation*}
        \abs{\frac{\wnorm{Q^n_\omega \Pi_{E_{i,Q}(\omega)}v}}{\wnorm{P^n_\omega v}} - 1} \le K C_0 e^{-n\lambda_{k_Q+1,Q}}\left(2\kappa \Theta_0^2 + C_{\kappa, n} \tnorm{\Pi_{G_{i,P}(\omega)} - \Pi_{E_{i,Q}(\omega)}} + \tnorm{Q^n_\omega - P_\omega^n}\right).
    \end{equation*}
    Fix $\gamma > 0$ and take $\kappa = \gamma K^{-1}e^{n\lambda_{k_Q+1,Q}}/(4\Theta_0^2 C_0)$.
    By Propositions \ref{lemma:tnorm_vanish} and \ref{prop:perturbed_oseledets_blocks_bound}, for every $n \in \Z^+$ there exists $\epsilon_{\gamma, n} > 0$ so that if $P \in \mathcal{LY}(C_1, C_2, r, R) \cap \mathcal{O}_{\epsilon_{\gamma, n}}(Q)$ then
    \begin{equation*}
        K C_0 e^{-n\lambda_{k_Q+1,Q}}\left(C_{\kappa, n} \tnorm{\Pi_{G_{i,P}(\omega)} - \Pi_{E_{i,Q}(\omega)}} + \tnorm{Q^n_\omega - P_\omega^n}\right) \le \frac{\gamma}{2},
    \end{equation*}
    and so
     \begin{equation*}
        \abs{\frac{\wnorm{Q^n_\omega \Pi_{E_{i,Q}(\omega)}v}}{\wnorm{P^n_\omega v}} - 1} \le \gamma.
    \end{equation*}
    Hence
    \begin{equation}\label{eq:stability_lyapunov_6}
        \abs{\log\left( \wnorm{P^n_\omega v} \right) - \log\left( \wnorm{Q^n_\omega \Pi_{E_{i,Q}(\omega)}v} \right)} \le \max\left\{ \log(1 + \gamma), -\log(1 - \gamma) \right\} := e(\gamma).
    \end{equation}
    We finish this part of the proof by deriving a lower bound for $\norm{\Pi_{E_{i,Q}(\omega)} v}$ when $v \in E_{j,P}(\omega) \cap B_{\norm{\cdot}}$. By Lemma \ref{lemma:oseledets_eccentricity} and as $E_{j,P}(\omega) \subseteq G_{i,P}(\omega)$ we have
    \begin{equation*}\begin{split}
        \norm{\Pi_{E_{i,Q}(\omega)} v} \ge \wnorm{\Pi_{E_{i,Q}(\omega)} v}
        &\ge \wnorm{\Pi_{G_{i,P}(\omega)}v} - \tnorm{\Pi_{G_{i,P}(\omega)} - \Pi_{E_{i,Q}(\omega)}} \\
        &\ge K^{-1} - \tnorm{\Pi_{G_{i,P}(\omega)} - \Pi_{E_{i,Q}(\omega)}}.
    \end{split}\end{equation*}
    Thus by Proposition \ref{prop:perturbed_oseledets_blocks_bound} there exists some $\epsilon'$ such that if $P \in \mathcal{LY}(C_1, C_2, r, R) \cap \mathcal{O}_{\epsilon'}(Q)$ then $\norm{\Pi_{E_{i,Q}(\omega)} v} \ge 1/(2K)$. We assume that $\epsilon_{\gamma,n} \le \epsilon'$ without loss of generality.
    \paragraph{An upper bound for the right hand side of \eqref{eq:stability_lyapunov_2}.}
    From \eqref{eq:stability_lyapunov_6}, Proposition \ref{prop:perturbed_oseledets_blocks_bound}, and as $ \norm{\Pi_{E_{i,Q}(\omega)} v} \ne 0$ for $v \in E_{j,P} \cap B_{\norm{\cdot}}$, we get
    \begin{equation*}\begin{split}
        \log\left(\sup_{v \in E_{j,P} \cap B_{\norm{\cdot}}} \wnorm{P^n_\omega v} \right) &\le \sup_{v \in E_{j,P} \cap B_{\norm{\cdot}}}\left(\log\left( \frac{\norm{Q^n_\omega \Pi_{E_{i,Q}(\omega)} v}}{\norm{\Pi_{E_{i,Q}(\omega)} v}} \right) + \log\left(\norm{\Pi_{E_{i,Q}(\omega)} v}\right)\right)+ e(\gamma) \\
        &\le \log\left( \norm{\restr{Q^n_\omega}{E_{i,Q}}} \right) + \log(\Theta_0^2)+ e(\gamma).
    \end{split}\end{equation*}
    From \eqref{eq:stability_lyapunov_2} we deduce that if $P \in \mathcal{LY}(C_1, C_2, r, R) \cap \mathcal{O}_{\epsilon_{\gamma, n}}(Q)$ then
    \begin{equation*}
        \frac{1}{n d_{j, P}} \intf \det( P^n_\omega | E_{j,P}(\omega) ) d\mathbb{P} \le \frac{1}{n} \intf \log\left( \norm{\restr{Q^n_\omega}{E_{i,Q}}} \right) d\mathbb{P} + \frac{1}{n}\left(\log(K\Theta_0^2) + e(\gamma)\right).
    \end{equation*}
    Applying Proposition \ref{prop:exponent_formula} we see that for every $\nu$ we may take $n$ to be very large and $\gamma$ sufficiently small to produce $\epsilon_{\nu_1}$ (depending on $\gamma$ and $n$) so that if $P \in \mathcal{LY}(C_1, C_2, r, R) \cap \mathcal{O}_{\nu_1}(Q)$ then
     \begin{equation*}
        \lambda_{j,P} = \frac{1}{n d_{j, P}} \intf \det( P^n_\omega | E_{j,P}(\omega) ) d\mathbb{P} \le  \lambda_{i,Q} + \nu.
    \end{equation*}
    Hence
    \begin{equation}\label{eq:stability_lyapunov_7}
        \gamma_{\ell,P } - \gamma_{\ell, Q} = \lambda_{j,P} - \lambda_{i,Q} \le \nu.
    \end{equation}

    \paragraph{A lower bound for the left hand side of \eqref{eq:stability_lyapunov_2}.}
    From \eqref{eq:stability_lyapunov_6}, Lemma \ref{lemma:oseledets_eccentricity}, and as $\norm{\Pi_{E_{i,Q}(\omega)} v} \ge 1/(2K)$ we get
    \begin{equation*}\begin{split}
        \log\left(\inf_{v \in E_{j,P}(\omega) \cap B_{\norm{\cdot}}} \wnorm{P^n_\omega v}\right) &\ge \inf_{v \in E_{j,P}(\omega) \cap B_{\norm{\cdot}}} \left( \log\left(\frac{\wnorm{Q^n_\omega \Pi_{E_{i,Q}(\omega)} v}}{\norm{\Pi_{E_{i,Q}(\omega)} v}}\right) + \log\left(\norm{\Pi_{E_{i,Q}(\omega)} v}\right)\right) - e(\gamma)\\
        &\ge \log\left(\norm{\left(\restr{Q^n_\omega}{E_{i,Q}(\omega)}\right)^{-1}}^{-1}\right) - \log(K) -\log(2K) - e(\gamma).
    \end{split}\end{equation*}
    Thus by \eqref{eq:stability_lyapunov_2} for $P \in \mathcal{LY}(C_1, C_2, r, R) \cap \mathcal{O}_{\epsilon_{\gamma, n}}(Q)$ we have
    \begin{equation*}
        \frac{1}{n d_{j, P}} \intf \det( P^n_\omega | E_{j,P}(\omega) ) d\mathbb{P} \ge \frac{1}{n} \intf \log\left(\norm{\left(\restr{Q^n_\omega}{E_{i,Q}(\omega)}\right)^{-1}}^{-1}\right) d\mathbb{P} - \frac{\log(2K^2) + e(\gamma)}{n}.
    \end{equation*}
     Applying Proposition \ref{prop:exponent_formula} as in the previous step, we see that for every $\nu > 0$ we may take $n$ to be very large and $\gamma$ sufficiently small to produce $\epsilon_{\nu_2}$ (depending on $\gamma$ and $n$) so that if $P \in \mathcal{LY}(C_1, C_2, r, R) \cap \mathcal{O}_{\epsilon_{\nu_2}}(Q)$ then $\lambda_{j,P} \ge \lambda_{i,P} - \nu$.
     Hence
     \begin{equation}\label{eq:stability_lyapunov_8}
        \gamma_{\ell,Q } - \gamma_{\ell, P} = \lambda_{i,Q} - \lambda_{\ell,P}  \le \nu.
    \end{equation}
    Setting $\nu = \min\{\nu_1, \nu_2\}$, and then combining \eqref{eq:stability_lyapunov_7} and \eqref{eq:stability_lyapunov_8}  yields $\abs{\gamma_{\ell,P } - \gamma_{\ell, Q}} \le \nu $ for $P \in \mathcal{LY}(C_1, C_2, r, R) \cap \mathcal{O}_{\epsilon_{\nu}}(Q)$.
    As discussed at the beginning of the proof, this suffices to prove \eqref{eq:stability_lyapunov_0}, which completes the proof of Theorem \ref{thm:stability_lyapunov}.
\end{proof}

\section{Application to random dynamical systems}
\label{sec:app_random_dynamics}

In this section we demonstrate the application of Theorems \ref{thm:stability_cocycle} and \ref{thm:stability_lyapunov} to cocycles of Perron-Frobenius operators associated to random dynamical systems consisting of $\mathcal{C}^k$ expanding maps on $S^1$, with $k \ge 2$.
We will consider two types of perturbations to such maps: fiber-wise `deterministic' perturbations to the random dynamics\footnote{In this rather unfortunate oxymoron, a fiber-wise `deterministic' perturbation simply means that the random maps are fiber-wise perturbed to nearby maps.}, and perturbations that arise via numerical approximations of the Perron-Frobenius cocycle.

Fix a Lebesgue probability space $(\Omega, \mathcal{F}, \mathbb{P})$, and an invertible, $\mathbb{P}$-ergodic map $\sigma : \Omega \to \Omega$. We will considering random dynamical systems taking values in the following sets.

\begin{definition}
  For $k \ge 2$, $\alpha \in (0,1)$ and $K > 0$ we set
  \begin{equation*}
    \LY_k(\alpha, K) = \left\{ T \in \mathcal{C}^k(S^1, S^1) : \inf \abs{T'} \ge \alpha^{-1} \text{ and } d_{\mathcal{C}^k}(T,0) \le K \right\}.
  \end{equation*}
  We say that $\mathcal{T} : \Omega \to \LY_k(\alpha, K)$ is measurable if it is measurable with respect to $\mathcal{F}$ and the Borel $\sigma$-algebra on $\mathcal{C}^{k}(S^1, S^1)$.
\end{definition}

Suppose $k \ge 2$, $\alpha \in (0,1)$, and $K > 0$. If $T \in \LY_k(\alpha,K)$ then we denote by $L_T : L^1(S^1) \to L^1(S^1)$ the associated Perron-Frobenius operator, which is defined by duality via
\begin{equation}\label{eq:def_pf}
  \intf L_Tf \cdot g d\Leb = \intf f \cdot g \circ T d\Leb \quad \forall f \in L^1(S^1), g \in L^\infty(S^1).
\end{equation}
In a slight abuse of notation, whenever $\mathcal{T} : \Omega \to \LY_k(\alpha, K)$ is measurable, we denote by $L_{\mathcal{T}} : \Omega \to \LL(L^1)$ the map defined by $L_{\mathcal{T}}(\omega) = L_{\mathcal{T}(\omega)}$.
The regularity of maps in $\LY_k(\alpha, K)$ suggests that we should consider how their Perron-Frobenius operators act on objects with some smoothness, rather than on $L^1(S^1)$.
For $k \in \N$ the Sobolev space $W^{k,1}(S^1)$ is defined by
\begin{equation*}
 W^{k,1}(S^1) = \{ f \in L^p(S^1) : f^{(\ell)} \text{ exists in the weak sense and } \norm{f^{(\ell)}}_{L^1} < \infty \text{ for each } 0 \le \ell \le k \}.
\end{equation*}
Each $W^{k,1}(S^1)$ becomes a Banach space when equipped with the norm
\begin{equation*}
 \norm{f}_{W^{k,1}} = \norm{f}_{L^1} + \norm{f^{(k)}}_{L^1}.
\end{equation*}
For each $k \ge 1$ the embedding of $W^{k,1}(S^1)$ into $W^{k-1,1}(S^1)$ is compact by the Rellich--Kondrachov Theorem.
Moreover, $\norm{f^{(k)}}_{L^1} = \Var(f^{(k-1)})$ and so by following the arguments in Examples \ref{example:bv} and \ref{example:sobolev} we conclude that $\norm{\cdot}_{W^{k,1}}$ is upper semicontinuous with respect to $\norm{\cdot}_{W^{k-1,1}}$.
Thus $(W^{k,1}(S^1), \norm{\cdot}_{W^{k,1}}, \norm{\cdot}_{W^{k-1,1}})$ is a pre-compact Saks space.
We remind the reader that each $W^{k,1}(S^1)$ is separable as a Banach space.

\begin{proposition}\label{prop:c2_cocycles_measurable}
  If $\mathcal{T} : \Omega \to \LY_k(\alpha, K)$ is measurable for  $k \ge 2$, $\alpha \in (0,1)$ and $K > 0$, then $(\Omega, \mathcal{F}, \mathbb{P}, \sigma, W^{k-1,1}(S^1), L_{\mathcal{T}})$ is a separable strongly measurable random linear system with ergodic invertible base.
  Moreover $(\Omega, \mathcal{F}, \mathbb{P}, \sigma, W^{k-1,1}(S^1),L_{\mathcal{T}})$ has an Oseledets splitting of dimension $d \ge 1$ with $\lambda_{1,\mathcal{T}} = 0$.
\end{proposition}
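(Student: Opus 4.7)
The plan is to verify strong measurability of $L_{\mathcal{T}}$, establish a uniform Lasota-Yorke inequality on the Saks space $(W^{k-1,1}(S^1), \norm{\cdot}_{W^{k-1,1}}, \norm{\cdot}_{W^{k-2,1}})$, and then invoke a semi-invertible MET (for instance from \cite{GTQuas1}) to produce the Oseledets splitting, with the identification $\lambda_{1,\mathcal{T}} = 0$ obtained via the presence of an equivariant density.

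For strong measurability, the space $W^{k-1,1}(S^1)$ is separable by standard results. I would observe that for each fixed $f \in W^{k-1,1}(S^1)$, the map $T \mapsto L_T f$ from $(\LY_k(\alpha, K), d_{\mathcal{C}^k})$ to $W^{k-1,1}(S^1)$ is continuous: using the explicit formula $L_T f(x) = \sum_{Ty=x} f(y)/|T'(y)|$ together with $\inf|T'|\ge \alpha^{-1}$ and the $\mathcal{C}^k$-control coming from $K$, one checks via Fa\`a di Bruno that the inverse branches, their $k$-th derivatives, and hence $L_T f$ together with its $(k-1)$-th weak derivative depend continuously on $T$ in the $\mathcal{C}^k$ topology. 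Composing with the measurable map $\mathcal{T}$ yields $(\mathcal{F},\mathcal{B}_{W^{k-1,1}})$-measurability of $\omega \mapsto L_{\mathcal{T}(\omega)} f$.

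For the Lasota-Yorke inequality, I would apply Fa\`a di Bruno to the transfer operator formula and bound
\begin{equation*}
  \norm{(L_T f)^{(k-1)}}_{L^1} \le \alpha^{k-1} \norm{f^{(k-1)}}_{L^1} + C(\alpha, K, k) \norm{f}_{W^{k-2,1}},
\end{equation*}
the contraction factor $\alpha^{k-1}$ arising because each of the $(k-1)$ differentiations applied to an inverse branch contributes a factor of $1/T' \le \alpha$, while all other terms involve derivatives of $f$ of order at most $k-2$ and bounded combinations of derivatives of $T$ up to order $k$. Combining with the $L^1$-contraction $\norm{L_T f}_{L^1} \le \norm{f}_{L^1}$ and iterating gives constants $C_1, C_2$, independent of $T \in \LY_k(\alpha, K)$, such that
\begin{equation*}
  \norm{L_T^n f}_{W^{k-1,1}} \le C_1 \alpha^{(k-1)n} \norm{f}_{W^{k-1,1}} + C_2 \norm{f}_{W^{k-2,1}}.
\end{equation*}
Thus $L_{\mathcal{T}} \in \mathcal{LY}(C_1, C_2, \alpha^{k-1}, 1)$ and, thanks to the compact embedding $W^{k-1,1}(S^1) \hookrightarrow W^{k-2,1}(S^1)$ (Rellich-Kondrachov), Theorem \ref{thm:ITMSS} gives fiber-wise quasi-compactness.

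Given the uniform LY inequality and quasi-compactness, a semi-invertible multiplicative ergodic theorem (e.g.\ the one in \cite{GTQuas1}) applies and yields an Oseledets splitting of the cocycle on $W^{k-1,1}(S^1)$ with $\mu_{\mathcal{T}} \le \log \alpha^{k-1} < 0$. It remains to show $d \ge 1$ and $\lambda_{1,\mathcal{T}} = 0$. Since $L_T$ preserves the integral, $L_{\mathcal{T}(\omega)}^n \mathbf{1}$ is a probability density for every $\omega$ and $n$, so $\norm{L_{\mathcal{T}(\omega)}^n \mathbf{1}}_{W^{k-1,1}} \ge \norm{L_{\mathcal{T}(\omega)}^n \mathbf{1}}_{L^1} = 1$ for every $n$; combined with the Lasota-Yorke bound $\norm{L_{\mathcal{T}(\omega)}^n \mathbf{1}}_{W^{k-1,1}} \le C_1 + C_2$, this forces $\lim_{n\to\infty} \frac{1}{n}\log \norm{L_{\mathcal{T}(\omega)}^n \mathbf{1}}_{W^{k-1,1}} = 0$. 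Hence $\mathbf{1}$ realises an exceptional Lyapunov exponent equal to $0$, which must be the top exponent since the Oseledets projection of $\mathbf{1}$ onto any space with strictly positive exponent would contradict the uniform upper bound, while strictly negative exponents are ruled out by the $L^1$ lower bound. The main technical obstacle is the Fa\`a di Bruno bookkeeping required to obtain the LY inequality uniformly on $\LY_k(\alpha, K)$ at the $(k-1)$-th derivative level, though this is standard and routine for each fixed $k$.
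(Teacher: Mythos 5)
Your overall plan is sound and reaches the right conclusion, but one step is glossed over in a way that matters.

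For strong measurability, the paper instead invokes \cite[Lemma 2.4]{viviane2000positive} to get $\mathcal{C}^{k-1}$-continuity of $T \mapsto L_T f$ for smooth $f$ and then closes by a density argument; your direct Fa\`a di Bruno approach is a reasonable alternative and reaches the same conclusion.

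The issue is the iterated Lasota--Yorke inequality you claim,
\begin{equation*}
  \norm{L_T^n f}_{W^{k-1,1}} \le C_1 \alpha^{(k-1)n} \norm{f}_{W^{k-1,1}} + C_2 \norm{f}_{W^{k-2,1}},
\end{equation*}
which you justify by ``combining with the $L^1$-contraction and iterating.'' For $k > 2$ this does not close as stated. When you iterate the one-step LY at level $k-1$, the weak term accumulates factors of $\norm{L_T^j f}_{W^{k-2,1}}$, not $\norm{L_T^j f}_{L^1}$, and the $L^1$-contraction says nothing about $W^{k-2,1}$. Iterating naively gives a weak term of order $R^n \norm{f}_{W^{k-2,1}}$ with $R = \sup_{T} \norm{L_T}_{\LL(W^{k-2,1})}$, which can exceed $1$ for $k > 2$; this is precisely the form the paper obtains in Proposition \ref{prop:ly_ss}. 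Your stronger bound with a \emph{constant} weak term is in fact true, but establishing it requires chaining LY inequalities at every intermediate level $W^{1,1}, W^{2,1}, \dots, W^{k-1,1}$ — proving by induction that $\sup_{n,T}\norm{L_T^n}_{\LL(W^{j,1})} < \infty$ for each $j$ — and you do not spell that chain out. Since this uniform boundedness is exactly what delivers $\lambda_{1,\mathcal{T}} \le 0$, the gap sits directly under your conclusion.

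The paper sidesteps the chained-LY bookkeeping. It observes that on $W^{1,1}$ the Markov property makes the weak-norm factor $R = 1$, so the restricted cocycle $\mathcal{P}'$ on $W^{1,1}$ is uniformly bounded and hence $\lambda^*_{\mathcal{P}'} = 0$; it then transfers this to $\mathcal{P}$ on $W^{k-1,1}$ via the dense-restriction result \cite[Theorem 37]{gonzalez2018stability}. Both routes work: yours is more self-contained once the chain is written out, while the paper's uses a known transfer theorem to avoid the inductive estimates. If you keep your route, you should make the multi-level LY argument explicit rather than attributing the constant weak term to the $L^1$-contraction alone.
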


We will now make precise our first type of perturbation. For measurable maps $\mathcal{S}, \mathcal{T} : \Omega \to \LY_k(\alpha, K)$ we set
\begin{equation*}
  d_{k-1}(\mathcal{S}, \mathcal{T}) = \esssup_{\omega \in \Omega} d_{\mathcal{C}^{k-1}}(\mathcal{S}(\omega), \mathcal{T}(\omega)).
\end{equation*}
For $\epsilon > 0$ and measurable $\mathcal{T} : \Omega \to \LY_k(\alpha, K)$ we set
\begin{equation*}
  \mathcal{O}_{\epsilon, k, \alpha, K}(\mathcal{T}) = \left\{ \mathcal{S}: \Omega \to \LY_k(\alpha, K) \, \bigg\vert \, \mathcal{S} \text{ is measurable with } d_{k-1}(\mathcal{T}, \mathcal{S}) \le \epsilon \right\}.
\end{equation*}
The next result concerns the stability of Oseledets splitting and Lyapunov exponents of cocycles of Perron-Frobenius operator associated to maps in $\LY_k(\alpha, K)$ under perturbations which are small in the $d_{k-1}$ metric.
We adopt the notation of Section \ref{sec:stability_lyapunov}, aside from frequently replacing $L_\mathcal{T}$ (resp. $L_\mathcal{S}$) by $\mathcal{T}$ (resp. $\mathcal{S}$) in various subscripts.

\begin{theorem}\label{thm:random_deterministic_perturbation}
  Fix $k \ge 2$, $\alpha \in (0,1)$ and $K > 0$, and suppose that $\mathcal{T} : \Omega \to \LY_k(\alpha, K)$ is measurable, and that $(\Omega, \mathcal{F}, \mathbb{P}, \sigma, W^{k-1,1}(S^1), L_{\mathcal{T}})$ admits a hyperbolic Oseledets splitting of dimension $d$ with $(k-1)\log \alpha < \mu_{L_{\mathcal{T}}}$.
  There exists $\epsilon > 0$ such that if $\mathcal{S} \in \mathcal{O}_{\epsilon, k, \alpha, K}(\mathcal{T})$ then $(\Omega, \mathcal{F}, \mathbb{P}, \sigma, W^{k-1,1}(S^1), L_{\mathcal{S}})$ has an Oseledets splitting of dimension $d$.
  In addition, there exists $c_0,R_0 > 0$ such that each $I_{i} = (\lambda_{i,\mathcal{T}} - c_0, \max\{\lambda_{i,\mathcal{T}}, \log(\delta_{1i} R_0)\} + c_0)$, $i \in \{1, \dots, k_{\mathcal{T}}\}$, separates the Lyapunov spectrum of $(\Omega, \mathcal{F}, \mathbb{P}, \sigma, W^{k-1,1}(S^1), L_{\mathcal{S}})$, and the corresponding projections satisfy
  \begin{equation*}
    \forall i \in \{1, \dots, k_{\mathcal{T}}\}, \text{ a.e. } \omega \in \Omega \quad \rank(\Pi_{I_i, \mathcal{S}}(\omega)) = d_{i,\mathcal{T}},
  \end{equation*}
  and
  \begin{equation*}
    \sup \left\{ \esssup_{\omega \in \Omega} \norm{\Pi_{I_i, \mathcal{S}}(\omega) }_{\LL(W^{k-1,1})}\, \bigg\vert \, \mathcal{S} \in \mathcal{O}_{\epsilon, k, \alpha, K}(\mathcal{T}), 1\le i \le k_{\mathcal{T}} \right\} < \infty.
  \end{equation*}
  Moreover, for every $\beta > 0$ there exists $\epsilon_\beta > 0$ so that if $\mathcal{S} \in \mathcal{O}_{\epsilon_{\beta}, k, \alpha, K}(\mathcal{T})$ then
  \begin{equation*}
      \sup_{1 \le i \le d} \abs{\gamma_{i, \mathcal{T}} - \gamma_{i, \mathcal{S}}} \le \beta,
  \end{equation*}
  \begin{equation*}
    \sup_{1 \le i \le k_\mathcal{T}} \esssup_{\omega \in \Omega} \norm{\Pi_{I_i, \mathcal{T}}(\omega) - \Pi_{I_i, \mathcal{S}}(\omega)}_{\LL(W^{k-1,1},W^{k-2,1})} \le \beta,
  \end{equation*}
  and
  \begin{equation*}
    \esssup_{\omega \in \Omega} d_H(F_{\mathcal{T}}(\omega), F_{\mathcal{S}}(\omega)) \le \beta.
  \end{equation*}
\end{theorem}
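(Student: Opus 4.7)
The plan is to deduce Theorem \ref{thm:random_deterministic_perturbation} directly from Theorem \ref{thm:stability_lyapunov}, with $(X, \norm{\cdot}, \wnorm{\cdot}) = (W^{k-1,1}(S^1), \norm{\cdot}_{W^{k-1,1}}, \norm{\cdot}_{W^{k-2,1}})$, unperturbed system $\mathcal{Q} = (\Omega, \mathcal{F}, \mathbb{P}, \sigma, W^{k-1,1}, L_\mathcal{T})$ and perturbations $\mathcal{P} = (\Omega, \mathcal{F}, \mathbb{P}, \sigma, W^{k-1,1}, L_\mathcal{S})$ for $\mathcal{S} \in \mathcal{O}_{\epsilon, k, \alpha, K}(\mathcal{T})$. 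Strong measurability and the existence of an ergodic invertible base are furnished by Proposition \ref{prop:c2_cocycles_measurable}, while the hyperbolicity of the Oseledets splitting of $\mathcal{Q}$ is an assumption of the theorem. Thus it remains to produce constants $C_1, C_2, R > 0$ and some $r \in [0, e^{\mu_{L_\mathcal{T}}})$ such that (i) $L_\mathcal{S} \in \mathcal{LY}(C_1, C_2, r, R)$ for every measurable $\mathcal{S} : \Omega \to \LY_k(\alpha, K)$, (ii) $L_\mathcal{T} \in \End_S(\mathbb{X}, \sigma)$, and (iii) $\esssup_\omega \tnorm{L_{\mathcal{T}(\omega)} - L_{\mathcal{S}(\omega)}} \to 0$ as $d_{k-1}(\mathcal{T}, \mathcal{S}) \to 0$.

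For (i), the plan is to establish a uniform Lasota--Yorke inequality on the class $\LY_k(\alpha, K)$. Expanding $(L_T f)^{(k-1)}$ via the change-of-variables formula and repeated application of the chain rule, one isolates a leading term whose $L^1$-norm is bounded by $\alpha^{k-1}\norm{f^{(k-1)}}_{L^1}$, with all remaining terms controlled in $W^{k-2,1}$ by a constant depending only on $\alpha$ and $K$. Iterating yields $\norm{L_T^n f}_{W^{k-1,1}} \le C_1 \alpha^{(k-1)n}\norm{f}_{W^{k-1,1}} + C_2 \norm{f}_{W^{k-2,1}}$ with constants depending only on $(\alpha,K,k)$; setting $r := \alpha^{k-1}$ and $R := e^K$ (say) then gives (i), and the hypothesis $(k-1)\log\alpha < \mu_{L_\mathcal{T}}$ is exactly $r < e^{\mu_{L_\mathcal{T}}}$. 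For (ii), one observes that each $L_T$ is bounded on $W^{k-2,1}(S^1)$ uniformly in $T \in \LY_k(\alpha,K)$ (by the analogous Lasota--Yorke argument one level down, with $W^{k-3,1}$ replaced by $L^1$ for $k=2$, and using $\norm{L_T}_{L^1} = 1$). Hence $\wnorm{L_\omega f} \le M\wnorm{f}$ for some $M$, which by Proposition \ref{prop:saks_equicont} suffices for $L_\mathcal{T} \in \End_S(\mathbb{X}, \sigma)$.

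For (iii), I would prove the fiberwise Keller--Liverani type estimate $\norm{L_T f - L_S f}_{W^{k-2,1}} \le C(\alpha, K) \, d_{\mathcal{C}^{k-1}}(T,S) \, \norm{f}_{W^{k-1,1}}$ for $T, S \in \LY_k(\alpha,K)$, which passes to an $\esssup$ to yield $\esssup_\omega \tnorm{L_{\mathcal{T}(\omega)} - L_{\mathcal{S}(\omega)}} \le C \, d_{k-1}(\mathcal{T}, \mathcal{S})$. The fiberwise bound follows from writing $L_T f - L_S f$ via the transfer formula, differentiating $k-2$ times, and bounding each resulting term using products of derivatives of $f$ (of order $\le k-1$) against differences of derivatives of $T^{-1}$ and $S^{-1}$ (of order $\le k-1$); the crucial point is that only $\mathcal{C}^{k-1}$-differences of the inverse branches appear, while the $\mathcal{C}^k$-regularity is used only through the uniform bound $d_{\mathcal{C}^k}(T,0) \le K$ to control the implicit constants.

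With (i)--(iii) in hand, Theorem \ref{thm:stability_lyapunov} applies with $Q = L_\mathcal{T}$ and delivers all the listed conclusions: given $\beta > 0$, choose $\nu = \beta$ in Theorem \ref{thm:stability_lyapunov} to obtain $\epsilon_\nu$, then set $\epsilon_\beta := \epsilon_\nu/(2C)$ so that $\mathcal{S} \in \mathcal{O}_{\epsilon_\beta, k, \alpha, K}(\mathcal{T})$ guarantees $L_\mathcal{S} \in \mathcal{O}_{\epsilon_\nu}(L_\mathcal{T})$. The stated bounds on Lyapunov exponents, Oseledets projections (in $\LL(W^{k-1,1}, W^{k-2,1})$, which coincides with the $\tnorm{\cdot}$-topology), and slow Oseledets space (in the Grassmannian of $W^{k-1,1}$) then follow from \eqref{eq:stability_lyapunov_0}--\eqref{eq:stability_lyapunov_000}, with $R_0 = R$. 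The main obstacle is the quantitative $\mathcal{C}^{k-1}$-stability estimate (iii) at the Sobolev level $W^{k-2,1}$: one loses exactly one derivative both in the source space (from $W^{k-1,1}$ to $W^{k-2,1}$) and in the dynamical regularity (from $\mathcal{C}^k$ to $\mathcal{C}^{k-1}$), and making this balance work is precisely what is needed to obtain conclusions under the weaker $\mathcal{C}^{k-1}$ assumption on the perturbation (as advertised in the introduction).
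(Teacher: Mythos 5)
Your proposal is correct and takes essentially the same route as the paper: reduce to Theorem \ref{thm:stability_lyapunov} by verifying the uniform Lasota--Yorke inequality on $\LY_k(\alpha,K)$ (Propositions \ref{prop:pf_summary}, \ref{prop:ly_ss}), Saks-space equicontinuity of $L_\mathcal{T}$ via uniform $W^{k-2,1}$-boundedness, strong measurability (Proposition \ref{prop:c2_cocycles_measurable}), and the fiberwise $\mathcal{C}^{k-1}$ triple-norm estimate (Proposition \ref{prop:tnorm_close_deterministic}). One small imprecision: for $k>2$ the iterated Lasota--Yorke bound necessarily carries an $R^n$ factor on the $\norm{f}_{W^{k-2,1}}$ term (since $L_T$ need not be a contraction on $W^{k-2,1}$), so the inequality you wrote should read $\norm{L_\omega^n f}_{W^{k-1,1}} \le \alpha^{(k-1)n}\norm{f}_{W^{k-1,1}} + C_2 R^n \norm{f}_{W^{k-2,1}}$; this does not affect the argument, as the $\mathcal{LY}$ class permits exactly such growth.
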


Our second application concerns the numerical approximation of the Oseledets splitting and Lyapunov exponents associated to a Perron-Frobenius operator cocycle.
For each $n \in \Z^+$ the $n$th Fej{\'e}r kernel $J_{n} : S^1 \to \C$ is defined by
\begin{equation*}
  J_{n}(t) = \sum_{k=-n}^n \left(1 - \frac{\abs{k}}{n+1} \right)e^{2 \pi i k t}.
\end{equation*}
Convolution with the $n$th Fej{\'e}r kernel corresponds to taking the Ces{\`a}ro average of the first $n+1$ partial Fourier series, so that for each $f \in L^1(S^1)$ one has
\begin{equation}\label{eq:fejer_fourier_series}
  (J_n * f)(x) = \frac{1}{n+1}\sum_{\ell=0}^n \left(\sum_{j=-\ell}^\ell \hat{f}(j) e^{2\pi i j x} \right) = \sum_{\ell=-n}^n \left(1 - \frac{\abs{k}}{n+1}\right) \hat{f}(\ell) e^{2\pi i \ell x},
\end{equation}
where $\hat{f}(\ell) = \intf f(x) e^{-2\pi i \ell x} d \Leb$. The following proposition, which is well-known, summarises the relevant properties of the Fej{\'e}r kernel in our setting.

\begin{proposition}\label{prop:fejer_props}
  For $n \in \Z^+$ let $\mathcal{J}_n : L^1(S^1) \to L^1(S^1)$ denote the operator defined by
  \begin{equation*}
    \mathcal{J}_n(f) = J_n \ast f.
  \end{equation*}
  For every $n,k \in \Z^+$ the operator $\mathcal{J}_n$ is Markov\footnote{That is, the positive cone in $W^{k,1}(S^1)$ is invariant under $\mathcal{J}_n$, and $\mathcal{J}_n$ preserves integrals.} and restricts to a contraction in $\LL(W^{k,1}(S^1))$.
  In addition, if $k \ge 1$ then
  \begin{equation}\label{eq:fejer_props_1}
    \lim_{n \to \infty} \norm{\mathcal{J}_n - \Id}_{\LL(W^{k,1}, W^{k-1,1})} = 0.
  \end{equation}
\end{proposition}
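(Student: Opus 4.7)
The plan is to handle the three assertions separately, exploiting the non-negativity and approximate identity properties of the Fej\'er kernel together with the fact that convolution commutes with weak differentiation.

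For the Markov property I would recall the classical closed form $J_n(t) = \frac{1}{n+1}\left(\frac{\sin((n+1)\pi t)}{\sin(\pi t)}\right)^2 \ge 0$, which immediately gives positivity of $\mathcal{J}_n$. The integral-preserving property then follows from Fubini's theorem and $\int_{S^1} J_n\,d\Leb = 1$, the latter being the zeroth Fourier coefficient of $J_n$ as read off from \eqref{eq:fejer_fourier_series}.

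For the contraction estimate on $W^{k,1}(S^1)$ I would invoke Young's inequality together with $\norm{J_n}_{L^1} = 1$ (which uses $J_n \ge 0$) to obtain $\norm{J_n \ast g}_{L^1} \le \norm{g}_{L^1}$ for every $g \in L^1(S^1)$. Since convolution on $S^1$ commutes with weak differentiation, $(J_n \ast f)^{(k)} = J_n \ast f^{(k)}$ for every $f \in W^{k,1}(S^1)$, and combining the previous estimate applied to $g \in \{f, f^{(k)}\}$ yields the desired bound $\norm{J_n \ast f}_{W^{k,1}} \le \norm{f}_{W^{k,1}}$.

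The main obstacle is the norm convergence \eqref{eq:fejer_props_1}, which I would reduce to the quantitative tightness estimate $c_n := \int_{-1/2}^{1/2} \abs{t} J_n(t)\,dt \to 0$. The latter I would prove by splitting the integral at $\abs{t} = n^{-1/2}$ and applying the standard tail bound $J_n(t) \le C(n \abs{t}^2)^{-1}$, which follows from the closed form and the inequality $\abs{\sin(\pi t)} \gtrsim \abs{t}$ on $[-1/2, 1/2]$. Given $c_n \to 0$, for any $g \in W^{1,1}(S^1)$ the identity $(J_n \ast g - g)(x) = \int J_n(y)(g(x-y) - g(x))\,dy$, Fubini's theorem, and the elementary estimate $\norm{g(\cdot - y) - g}_{L^1} \le \abs{y}\, \norm{g'}_{L^1}$ (valid on $S^1$ with its natural distance) combine to give $\norm{J_n \ast g - g}_{L^1} \le c_n \norm{g'}_{L^1}$. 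Since $(J_n \ast f - f)^{(k-1)} = J_n \ast f^{(k-1)} - f^{(k-1)}$, applying this with $g \in \{f, f^{(k-1)}\} \subseteq W^{1,1}(S^1)$ yields $\norm{J_n \ast f - f}_{W^{k-1,1}} \le 2 c_n \norm{f}_{W^{k,1}}$, from which \eqref{eq:fejer_props_1} is immediate.
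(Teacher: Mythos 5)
Your proposal is correct and takes essentially the same approach as the paper: both deduce the contraction from the Markov property of $\mathcal{J}_n$ and the commutation of convolution with weak differentiation, and both reduce \eqref{eq:fejer_props_1} to the tightness estimate $\int J_n(y)\abs{y}\,d\Leb(y) \to 0$ via the absolute-continuity bound $\norm{g(\cdot-y)-g}_{L^1} \le \abs{y}\norm{g'}_{L^1}$. The only substantive difference is that you prove $\int J_n(y)\abs{y}\,d\Leb(y) \to 0$ by hand (splitting at $\abs{t}=n^{-1/2}$ and using the pointwise tail bound $J_n(t)\lesssim (n t^2)^{-1}$), whereas the paper simply cites the standard approximate-identity property of $\{J_n\}$; your version is more self-contained at the cost of a small extra computation. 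One bookkeeping note: in the final bound $\norm{J_n \ast f - f}_{W^{k-1,1}} \le 2c_n\norm{f}_{W^{k,1}}$, the term coming from $g=f$ gives $c_n\norm{f'}_{L^1}$, which is controlled by $\norm{f}_{W^{k,1}} = \norm{f}_{L^1}+\norm{f^{(k)}}_{L^1}$ only up to the embedding constant $D_{k,1}$ of $W^{k,1}(S^1) \into W^{1,1}(S^1)$; so the factor $2$ should really be $1 + D_{k,1}$. This does not affect the conclusion, and the paper handles it the same way by invoking the continuous embeddings.
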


When $\mathcal{T} : \Omega \to \LY_k(\alpha, K)$ is measurable and  $n \in \Z^+$ we define $L_{\mathcal{T},n} : \Omega \to \LL(W^{k-1,1}(S^1))$ by $L_{\mathcal{T},n}(\omega) = \mathcal{J}_n  L_{\mathcal{T}}(\omega)$.
Note that each $L_{\mathcal{T},n}(\omega)$ has finite rank and preserves $\vspan\{ e^{2\pi i \ell x} : -n \le \ell \le n\}$. Hence, by a constant change of basis we may view $(\Omega, \mathcal{F}, \mathbb{P}, \sigma, W^{k-1,1}(S^1), L_{\mathcal{T},n})$ as a matrix cocycle on $\C^{2n+1}$.
One could then use this matrix representation to approximate the Oseledets splitting and Lyapunov exponents of the original cocycle by computing the singular value decomposition of very large iterates of the matrix cocycle, as in \cite{froyland2010coherent}.
While a completely rigorous proof of convergence for such an algorithm is outside of the scope of this paper, we believe that the following theorem is a substantial step in the direction of such a result.

\begin{theorem}\label{thm:random_fejer_approx}
  Fix $k \ge 2$, $\alpha \in (0,1)$ and $K > 0$.
  If $\mathcal{T} : \Omega \to \LY(\alpha, K)$ is measurable and $(\Omega, \mathcal{F}, \mathbb{P}, \sigma, W^{k-1,1}(S^1), L_{\mathcal{T}})$ admits a hyperbolic Oseledets splitting of dimension $d$ with $(k-1)\log \alpha < \mu_{L_{\mathcal{T}}}$, then there exists $N$ such that if $n > N$ then $(\Omega, \mathcal{F}, \mathbb{P}, \sigma, W^{k-1,1}(S^1), L_{\mathcal{T},n})$ admits an Oseledets splitting of dimension $d$.
  In addition, there exists $c_0, R_0 > 0$ such that each $I_{i} = (\lambda_{i,\mathcal{T}} - c_0, \max\{\lambda_{i,\mathcal{T}}, \log(\delta_{1i} R_0)\} + c_0)$, $i \in \{1, \dots, k_{\mathcal{T}}\}$, separates the Lyapunov spectrum of $(\Omega, \mathcal{F}, \mathbb{P}, \sigma, W^{1,1}(S^1), L_{\mathcal{T},n})$, and the corresponding projections satisfy
  \begin{equation*}
    \forall i \in \{1, \dots, k_{\mathcal{T}}\}, \text{ a.e. } \omega \in \Omega \quad \rank(\Pi_{I_i, L_{\mathcal{T},n}}(\omega)) = d_{i,\mathcal{T}},
  \end{equation*}
  and
  \begin{equation*}
    \sup \left\{ \esssup_{\omega \in \Omega} \norm{\Pi_{I_i, L_{\mathcal{T},n}}(\omega) }_{\LL(W^{k-1,1})}\, \bigg\vert \, n > N, 1\le i \le k_{\mathcal{T}} \right\} < \infty.
  \end{equation*}
  In addition, for every $\beta > 0$, there exists $N_\beta > N$ such that if $n > N_\beta$ then
  \begin{equation*}
      \sup_{1 \le i \le d} \abs{\gamma_{i, L_{\mathcal{T}}} - \gamma_{i, L_{\mathcal{T},n}}} \le \beta,
  \end{equation*}
  \begin{equation*}
    \sup_{1 \le i \le k_\mathcal{T}} \esssup_{\omega \in \Omega} \norm{\Pi_{I_i, L_{\mathcal{T}}}(\omega) - \Pi_{I_i, L_{\mathcal{T},n}}(\omega)}_{\LL(W^{k-1,1},W^{k-2,1})} \le \beta,
  \end{equation*}
  and
  \begin{equation*}
    \esssup_{\omega \in \Omega} d_H(F_{L_{\mathcal{T}}}(\omega), F_{L_{\mathcal{T},n}}(\omega)) \le \beta.
  \end{equation*}
\end{theorem}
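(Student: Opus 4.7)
The plan is to apply Theorem \ref{thm:stability_lyapunov} with the pre-compact Saks space $(W^{k-1,1}(S^1), \norm{\cdot}_{W^{k-1,1}}, \norm{\cdot}_{W^{k-2,1}})$ (see Example \ref{example:sobolev}), taking $Q = L_{\mathcal{T}}$ and $P = L_{\mathcal{T}, n}$ for $n$ sufficiently large. The hyperbolic Oseledets splitting for $L_\mathcal{T}$ is provided as a hypothesis, and strong measurability of $L_{\mathcal{T}, n} = \mathcal{J}_n \circ L_{\mathcal{T}}$ follows from that of $L_\mathcal{T}$ (Proposition \ref{prop:c2_cocycles_measurable}) together with the fact that $\mathcal{J}_n$ is a single fixed bounded operator. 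What remains is to verify (a) a uniform Lasota-Yorke inequality placing both $L_\mathcal{T}$ and every $L_{\mathcal{T},n}$ in a common class $\mathcal{LY}(C_1, C_2, r, R)$ with $r < e^{\mu_{L_\mathcal{T}}}$, (b) that $L_\mathcal{T} \in \End_S(\mathbb{X}, \sigma)$, and (c) the triple-norm convergence $\esssup_{\omega \in \Omega} \tnorm{L_{\mathcal{T}, n}(\omega) - L_\mathcal{T}(\omega)} \to 0$.

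For (a), starting from $(L_T f)(y) = \sum_i f(\phi_i(y)) |\phi_i'(y)|$, where the $\phi_i$ are the local inverse branches of $T$, I would differentiate $k-1$ times and apply the chain rule to isolate a principal term $f^{(k-1)}(\phi_i(y)) (\phi_i'(y))^{k-1} |\phi_i'(y)|$; changing variables $x = \phi_i(y)$ absorbs the final factor as a Jacobian and leaves $k-1$ factors of $|T'|^{-1} \le \alpha$, yielding the single-step inequality
\begin{equation*}
  \norm{L_T f}_{W^{k-1,1}} \le \alpha^{k-1}\norm{f}_{W^{k-1,1}} + C_0 \norm{f}_{W^{k-2,1}}
\end{equation*}
uniformly over $T \in \LY_k(\alpha, K)$. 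An analogous argument (or downward induction from the Markov identity $\norm{L_T f}_{L^1} \le \norm{f}_{L^1}$) produces a uniform bound $\norm{L_T f}_{W^{k-2,1}} \le R_0 \norm{f}_{W^{k-2,1}}$ with $R_0 \ge 1$ depending only on $\alpha$ and $K$. Iterating these two inequalities in tandem places $L_\mathcal{T}$ in $\mathcal{LY}(C_1, C_2, \alpha^{k-1}, R_0)$ for suitable $C_1, C_2$. Since each $\mathcal{J}_n$ is a $W^{j,1}$-contraction for every $j \ge 0$ (Proposition \ref{prop:fejer_props}), both single-step inequalities persist verbatim with $\mathcal{J}_n L_T$ in place of $L_T$ and with identical constants, and the same induction therefore places every $L_{\mathcal{T}, n}$ in the same class $\mathcal{LY}(C_1, C_2, \alpha^{k-1}, R_0)$, with constants independent of $n$. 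The hypothesis $(k-1) \log \alpha < \mu_{L_{\mathcal{T}}}$ is precisely $\alpha^{k-1} < e^{\mu_{L_\mathcal{T}}}$.

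Condition (b) is immediate from the bound $\sup_\omega \norm{L_{\mathcal{T}(\omega)}}_{\LL(W^{k-2,1})} \le R_0$ established above, since it yields $\wnorm{L_{\mathcal{T}(\omega)} f} \le R_0 \wnorm{f}$, so the equicontinuity condition \eqref{eq:equicont_seq_1} holds with $C_\eta = R_0$ for every $\eta > 0$. For (c) one bounds
\begin{equation*}
  \esssup_{\omega \in \Omega} \tnorm{L_{\mathcal{T}, n}(\omega) - L_\mathcal{T}(\omega)} \le \norm{\mathcal{J}_n - \Id}_{\LL(W^{k-1,1}, W^{k-2,1})} \cdot \sup_\omega \norm{L_{\mathcal{T}(\omega)}}_{\LL(W^{k-1,1})},
\end{equation*}
which vanishes as $n \to \infty$ by \eqref{eq:fejer_props_1} and the uniform $W^{k-1,1}$-boundedness in (a). Theorem \ref{thm:stability_lyapunov} then delivers, for all $n$ sufficiently large, an Oseledets splitting of dimension $d$ for $L_{\mathcal{T}, n}$ together with each conclusion of Theorem \ref{thm:random_fejer_approx}, upon noting that the triple norm on $\LL(W^{k-1,1})$ coincides with the $\LL(W^{k-1,1}, W^{k-2,1})$-operator norm appearing in the statement. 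The main obstacle is the uniformity of the Lasota-Yorke constants across the Fej\'er approximations: once the single-step bounds for $\mathcal{J}_n L_T$ are shown to exactly match those for $L_T$, which relies essentially on $\mathcal{J}_n$ being nonexpansive in both the strong and weak norms simultaneously, the remainder of the argument is a direct transcription of Theorem \ref{thm:stability_lyapunov}.
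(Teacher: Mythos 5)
Your proposal is correct and follows essentially the same route as the paper: verify the hypotheses of Theorem \ref{thm:stability_lyapunov} with $Q = L_{\mathcal{T}}$ and $P = L_{\mathcal{T},n}$, using the fact that $\mathcal{J}_n$ is simultaneously a contraction on $W^{k-1,1}$ and $W^{k-2,1}$ so that the Lasota-Yorke constants of Proposition \ref{prop:ly_ss} carry over unchanged, strong measurability of $L_{\mathcal{T},n}$ by composition, and triple-norm convergence from $\norm{\mathcal{J}_n - \Id}_{\LL(W^{k-1,1},W^{k-2,1})} \to 0$ in Proposition \ref{prop:fejer_props}. The only cosmetic difference is that you re-derive the single-step inequality and use the boundedness criterion for $\End_S$ rather than citing Propositions \ref{prop:pf_summary} and \ref{prop:ly_ss} directly, which changes nothing of substance.
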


Before proving the results described thus far we will describe some concrete setting in which they may be applied. We note that the chief difficulty in applying Theorems \ref{thm:random_deterministic_perturbation} \ref{thm:random_fejer_approx} is not proving the existence of an Oseledets splitting (recall Proposition \ref{prop:c2_cocycles_measurable}), but verifying that the splitting is hyperbolic.

\begin{example}\label{example:fixed_map}
  Fix $k \ge 2$, $\alpha \in (0,1) $ and $K > 0$.
  For each $T \in \LY_{k}(\alpha,K)$ we may consider the constant random dynamical system given by $\omega \mapsto T$. In this case the associated Perron-Frobenius operator $L_T$ is quasi-compact\footnote{We remind the reader of the proof. One bounds the essential spectral radius by using Theorem \ref{thm:ITMSS} and Proposition \ref{prop:ly_ss}.
  Since $L_T$ preserves integrals we have $\rho(L_T) \ge 1$.
  If $\rho(L_T) > 1$ then $L_T$ has an eigenvalue of modulus greater than 1 on $W^{k-1,1}(S^1)$, which must also be an eigenvalue for $L_T$ on $L^1(S^1)$; but $L_T$ is a contraction on $L^1(S^1)$ and so no such eigenvalue can exist.} on $W^{k-1,1}(S^1)$ with $\rho_{\ess}(L_T) \le \alpha^{k-1} < \rho(L_T) = 1$.
  It follows that $(\Omega, \mathcal{F}, \mathbb{P}, \sigma, W^{k-1,1}(S^1), \omega \mapsto L_{T})$ admits a hyperbolic Oseledets splitting with $\mu > \log \alpha^{k-1}$: for any such $\mu \in (\log \alpha^{k-1}, 0)$ the fast Oseledets spaces are just direct sums of the eigenspaces of $L_T$ associated to eigenvalues of modulus greater than $e^{\mu}$ (of which there are finitely many), and the Lyapunov exponents are $\{ \log \abs{\lambda} : \lambda \in \sigma(L_T), \abs{\lambda} > \log \mu \}$.
  We refer the reader to \cite{keller2004eigenfunctions} and \cite{slipantschuk2013analytic} for examples of expanding maps on $S^1$ with non-trivial eigenvalues with modulus in $(\alpha^{k-1},1)$, and note that the different choice of Banach space in either paper is inconsequential due to \cite[Section A.2]{baladi2018dynamical}.
  We may therefore apply Theorem \ref{thm:random_deterministic_perturbation} to $(\Omega, \mathcal{F}, \mathbb{P}, \sigma, W^{k-1,1}(S^1), \omega \mapsto L_{T})$ with perturbation $(\Omega, \mathcal{F}, \mathbb{P}, \sigma, W^{k-1,1}(S^1), L_{\mathcal{S}})$ whenever $\mathcal{S} : \Omega \mapsto \LY_k(\alpha,K)$ is measurable and such that $\esssup_{\omega \in \Omega} d_{\mathcal{C}^{k-1}}(\mathcal{S}(\omega), T)$ is sufficiently small.
\end{example}

\begin{example}
  If, in the setting of Example \ref{example:fixed_map}, there exists $\mu \in (\log \alpha^{k-1}, 0)$ such that for every $r > \log \mu$ the set $\{\lambda \in \sigma(L_T) : \abs{\lambda} = r\}$ contains at most a single element, then every Lyapunov exponent of $(\Omega, \mathcal{F}, \mathbb{P}, \sigma, W^{k-1,1}(S^1), \omega \mapsto L_{T})$ has multiplicity one.
  Thus, by Remark \ref{remark:not_hyperbolic}, it follows that if
  $\mathcal{S} : \Omega \mapsto \LY_k(\alpha,K)$ is measurable
  and $\esssup_{\omega \in \Omega} d_{\mathcal{C}^{k-1}}(\mathcal{S}(\omega), T)$ is sufficiently small then the Oseledets splitting for $(\Omega, \mathcal{F}, \mathbb{P}, \sigma, W^{k-1,1}(S^1), L_{\mathcal{S}})$ that is produced by Theorem \ref{thm:random_deterministic_perturbation} is hyperbolic.
  Thus both Theorem \ref{thm:random_deterministic_perturbation} and Theorem \ref{thm:random_fejer_approx} may be applied to $(\Omega, \mathcal{F}, \mathbb{P}, \sigma, W^{k-1,1}(S^1), L_{\mathcal{S}})$.
\end{example}

\subsection{Proofs for Section \ref{sec:app_random_dynamics}}

The next proposition, which is well-known, summarises the basic properties of Perron-Frobenius operators associated to maps in $\LY_k(\alpha, K)$.
\begin{proposition}\label{prop:pf_summary}
  There exists $C_{k-1,\alpha,K} > 0$ such that for every $T \in \LY_k(\alpha, K)$ and $f \in W^{k-1,1}(S^1)$ we have
  \begin{equation}\label{eq:pf_summary_0}
    \norm{L_T f }_{W^{k-1,1}} \le \alpha^{k-1} \norm{ f }_{W^{k-1,1}} + C_{k-1,\alpha,K} \norm{f}_{W^{k-2,1}}.
  \end{equation}
  Hence $\{ L_T : T \in \LY_{k}(\alpha, K) \}$ is an equicontinuous subset of $\LL_S(W^{k-1,1})$, where $W^{k-1,1}(S^1)$ has the Saks space structure $(W^{k-1,1}(S^1), \norm{\cdot}_{W^{k-1,1}}, \norm{\cdot}_{W^{k-2,1}})$.
\end{proposition}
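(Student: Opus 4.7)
The proposition has two parts: the Lasota-Yorke estimate \eqref{eq:pf_summary_0} and the equicontinuity claim. Granting the former, the latter is nearly immediate: \eqref{eq:pf_summary_0} gives uniform boundedness of $\{L_T : T \in \LY_k(\alpha, K)\}$ in $\LL(W^{k-1,1})$, and the same commutation argument applied at level $k-2$ (or, when $k = 2$, just the $L^1$-contractivity of the Perron-Frobenius operator) yields a uniform bound $\norm{L_T f}_{W^{k-2,1}} \le D \norm{f}_{W^{k-2,1}}$. Taking $C_\eta = D$ for every $\eta > 0$ verifies the max-condition in Proposition \ref{prop:saks_equicont}, giving the desired equicontinuity in $\LL_S(W^{k-1,1})$.

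For \eqref{eq:pf_summary_0} itself, the plan is to derive and iterate a commutation formula between $L_T$ and $D = d/dx$. Using the duality \eqref{eq:def_pf} and integration by parts on $S^1$ (no boundary terms, since the circle is closed), for smooth $g$ one computes
\begin{equation*}
    \int (L_T f)' \, g \, d\Leb = -\int f \,(g' \circ T) \, d\Leb = -\int f \frac{(g \circ T)'}{T'} \, d\Leb = \int \left(\frac{f}{T'}\right)'(g \circ T) \, d\Leb,
\end{equation*}
which via \eqref{eq:def_pf} gives $(L_T f)' = L_T(f'/T' - f T''/(T')^2)$. A straightforward induction on $k$ then yields an expression of the form
\begin{equation*}
    (L_T f)^{(k-1)} = L_T\!\left(\frac{f^{(k-1)}}{(T')^{k-1}} + \sum_{j=0}^{k-2} b_j(T)\, f^{(j)}\right),
\end{equation*}
where each $b_j(T)$ is a universal polynomial expression in $T', T'', \dots, T^{(k)}$ and $1/T'$.

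The hypotheses $|T'| \ge \alpha^{-1}$ and $d_{\mathcal{C}^k}(T, 0) \le K$ then give $\norm{(T')^{-(k-1)}}_\infty \le \alpha^{k-1}$ and $\sup_{T \in \LY_k(\alpha, K)} \norm{b_j(T)}_\infty \le C'$ for some $C' = C'(k, \alpha, K)$. Using that $L_T$ is a contraction on $L^1$ (being the dual of the $L^\infty$-contractive Koopman operator), we obtain
\begin{equation*}
    \norm{(L_T f)^{(k-1)}}_{L^1} \le \alpha^{k-1} \norm{f^{(k-1)}}_{L^1} + C' \sum_{j=0}^{k-2} \norm{f^{(j)}}_{L^1}.
\end{equation*}
Standard Sobolev interpolation on $S^1$ bounds each $\norm{f^{(j)}}_{L^1}$ for $0 \le j \le k-2$ by a constant multiple of $\norm{f}_{W^{k-2,1}} = \norm{f}_{L^1} + \norm{f^{(k-2)}}_{L^1}$; combined with the contractivity $\norm{L_T f}_{L^1} \le \norm{f}_{L^1}$ this yields \eqref{eq:pf_summary_0}.

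The main obstacle is really one of book-keeping: verifying that the iterated commutator has exactly the structure claimed---in particular that the leading coefficient is $1/(T')^{k-1}$, which is what produces the sharp contraction rate $\alpha^{k-1}$---while all other terms involve only derivatives of $f$ of order at most $k-2$, carrying coefficients that are bounded uniformly over $\LY_k(\alpha, K)$. Since only the leading coefficient matters quantitatively, every lower-order term can be absorbed into the single constant $C_{k-1, \alpha, K}$, and the induction is entirely mechanical.
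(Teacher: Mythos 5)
Your proposal is correct and takes essentially the same route as the paper: both establish the commutation formula $(L_T f)^{(k-1)} = L_T\bigl((T')^{-(k-1)} f^{(k-1)} + \text{lower order}\bigr)$ by induction (the paper via Lemma~\ref{lemma:higher_deriv}, differentiating the explicit sum-over-preimages representation rather than arguing by duality, but the resulting identity is identical), then use $L^1$-Markovness, the $\alpha^{k-1}$ bound on the leading coefficient, and uniform sup-norm bounds on the remaining coefficients over $\LY_k(\alpha,K)$ to obtain \eqref{eq:pf_summary_0}. The deduction of equicontinuity in $\LL_S(W^{k-1,1})$---via a uniform bound in $\LL(W^{k-2,1})$ together with Proposition~\ref{prop:saks_equicont}---also matches the paper's argument.
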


Before proving Proposition \ref{prop:pf_summary} we need the following lemma.
\begin{lemma}\label{lemma:higher_deriv}
  For every $k \in \N$ there exists multinomials $G_{k,\ell} : \R^{k} \to \R$, $\ell \in \{0,\dots,k\}$, such that for every $T \in \LY_k(\alpha, K)$ and $f \in W^{k,1}$ we have
  \begin{equation}\label{eq:higher_deriv_0}
    (L_Tf)^{(k)} = L_T\left( (T')^{-2k}\sum_{\ell=0}^k G_{k,\ell}(T', \dots, T^{(k+1)}) \cdot f^{(\ell)} \right).
  \end{equation}
  Moreover $G_{k,k}(x_1, \dots, x_{k+1}) = x_1^{k}$.
  \begin{proof}
    By differentiating the identity
    \begin{equation*}
      (L_Tf)(x) = \sum_{T(y) = x} \frac{f(y)}{\abs{T'(y)}}
    \end{equation*}
    one finds that $(L_Tf)' = L_T( (1/T') f' + (1/T')' f)$ whenever $f \in W^{1,1}(S^1)$, from which \eqref{eq:higher_deriv_0} follows by a straightforward induction on $k$.
    The claim that $G_{k,k}(x_1, \dots, x_k) = x_1^{k}$ for all $k$ follows upon noting that the coefficient of $f^{(k)}$ in \begin{equation*}
       (T')^{-2k}\sum_{\ell=0}^k G_{k,\ell}(T', \dots, T^{(k+1)}) \cdot f^{(\ell)}
    \end{equation*}
    is always $(T')^{-k}$.
  \end{proof}
\end{lemma}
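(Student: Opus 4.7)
The plan is to prove the identity by induction on $k$, with the base case handled by a direct local computation and the inductive step following from a bookkeeping exercise with the product and chain rules. For the base case $k=1$ I would work with the inverse branches $\phi_i$ of $T$, which exist locally on $S^1$ since $|T'| \ge \alpha^{-1} > 1$, and write $L_T f(x) = \sum_i f(\phi_i(x)) |\phi_i'(x)|$. Differentiating termwise, the derivatives of $\phi_i$ at the point $T(y)$ can be rewritten in terms of $T$ at $y$ using $\phi_i'(T(y)) = 1/T'(y)$ and $\phi_i''(T(y)) = -T''(y)/T'(y)^3$ (each obtained by differentiating $\phi_i(T(y)) = y$). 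The sign coming from $|\phi_i'|$ matches the sign in $L_T(\cdot)$, so one obtains, uniformly in the sign of $T'$,
\begin{equation*}
  (L_T f)' = L_T\!\left( \frac{T' f' - T'' f}{(T')^2}\right),
\end{equation*}
which is \eqref{eq:higher_deriv_0} at $k = 1$ with $G_{1,1}(x_1,x_2) = x_1$ and $G_{1,0}(x_1,x_2) = -x_2$.

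For the inductive step I would assume \eqref{eq:higher_deriv_0} at level $k$, differentiate both sides once, and apply the base-case identity to the outer Perron-Frobenius operator. Setting $g = (T')^{-2k}\sum_{\ell = 0}^k G_{k,\ell}(T',\dots,T^{(k+1)}) f^{(\ell)}$, the product rule gives
\begin{equation*}
  \frac{T' g' - T'' g}{(T')^2} = (T')^{-2(k+1)} \!\left[\, -(2k+1)\, T'' \sum_{\ell=0}^k G_{k,\ell} f^{(\ell)} + T' \sum_{\ell=0}^k \bigl( (G_{k,\ell})^{\boldsymbol{\cdot}} f^{(\ell)} + G_{k,\ell} f^{(\ell+1)} \bigr) \right],
\end{equation*}
where $(G_{k,\ell})^{\boldsymbol{\cdot}}$ denotes the $y$-derivative of the composition $y \mapsto G_{k,\ell}(T'(y),\dots,T^{(k+1)}(y))$. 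By the chain rule this is again a multinomial in $T',\dots,T^{(k+2)}$, so collecting the coefficients of each $f^{(\ell)}$ for $\ell \in \{0,\dots,k+1\}$ defines multinomials $G_{k+1,\ell}$ in the required number of variables, completing the inductive step.

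The only genuinely non-mechanical point will be the claim $G_{k+1,k+1}(x_1,\dots,x_{k+2}) = x_1^{k+1}$. Inspecting the displayed formula above, the highest-order term $f^{(k+1)}$ can arise only from the term $G_{k,k} f^{(k+1)}$ inside the sum over $\ell$ (since $(G_{k,\ell})^{\boldsymbol{\cdot}}$ involves only $T$ and not $f$). Its coefficient is $T' \cdot G_{k,k}(T',\dots,T^{(k+1)})$, which by the inductive hypothesis equals $T' \cdot (T')^k = (T')^{k+1}$, yielding $G_{k+1,k+1}(x_1,\dots,x_{k+2}) = x_1^{k+1}$ as required. I expect all the remaining effort to lie in organising the re-indexing transparently; no analytic subtleties should arise beyond the base case.
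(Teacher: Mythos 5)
Your proof is correct and follows essentially the same route as the paper's: compute the $k=1$ identity $(L_Tf)' = L_T\bigl((T'f' - T''f)/(T')^2\bigr)$ directly via the inverse branches (equivalent to the paper's formula $(L_Tf)' = L_T((1/T')f' + (1/T')'f)$), then induct by applying the base case to the integrand, collecting terms, and reading off the coefficient of $f^{(k+1)}$ to get $G_{k+1,k+1}(x_1,\dots,x_{k+2}) = x_1^{k+1}$. You simply fill in the details of the "straightforward induction" that the paper leaves implicit, including the sign-tracking for negatively oriented $T$.
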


\begin{proof}[{The proof of Proposition \ref{prop:pf_summary}}]
  For brevity we write $G_{k-1,\ell,T}$ in place of $G_{k-1,\ell}(T', \dots, T^{(k)})$.
  By Lemma \ref{lemma:higher_deriv} and as $L_T$ is Markov we have
  \begin{equation*}\begin{split}
    \norm{(L_Tf)^{(k-1)}}_{L^1} &= \intf \abs{ L_T\left((T')^{-2(k-1)}\sum_{\ell=0}^{k-1} G_{k-1,\ell,T} \cdot f^{(\ell)} \right) } d\Leb \\
    &\le \intf \frac{\abs{f^{(k-1)}}}{\abs{T'}^{k-1}} d\Leb + \sum_{\ell=0}^{k-2} \intf \abs{T'}^{-2(k-1)} \abs{G_{k-1,\ell,T}} \abs{f^{(\ell)}} d\Leb \\
    &\le \alpha^{k-1} \norm{f}_{W^{k-1,1}} + \left(\sum_{0= \ell}^{k-1} \alpha^{2(k-1)} D_{k-2,\ell} \norm{G_{k-1,\ell,T}}_{L^\infty} \right) \norm{f}_{W^{k-2,1}},
  \end{split}\end{equation*}
  where $D_{k-2,\ell}$ denotes the norm of the embedding of $W^{k-2,1}(S^1)$ into $W^{\ell,1}(S^1)$.
  Let
  \begin{equation*}
    C_{k-1,\alpha,K} = 1 + \sup_{T \in \LY_k(\alpha, K)} \left(\sum_{0= \ell}^{k-1} \alpha^{2(k-1)} D_{k-2,\ell} \norm{G_{k-1,\ell,T}}_{L^\infty} \right),
  \end{equation*}
  and note that $C_{k-1,\alpha,K} < \infty$.
  Since $L_T$ is Markov on $L^1(S^1)$ we therefore have
  \begin{equation*}
    \norm{L_T f}_{W^{k-1,1}} = \norm{L_T f}_{L^1} + \norm{(L_Tf)^{(k-1)}}_{L^1} \le \alpha^{k-1} \norm{f}_{W^{k-1,1}} + C_{k-1,\alpha,K}\norm{f}_{W^{k-2,1}},
  \end{equation*}
  which yields \eqref{eq:pf_summary_0}.
  Since the embedding of $W^{k-1,1}(S^1)$ into $W^{k-2,1}(S^1)$ is bounded, \eqref{eq:pf_summary_0} also implies that $\{ L_T : T \in \LY_k(\alpha, K) \}$ is a bounded subset of $\LL(W^{k-1,1}(S^1))$.
  For $k > 2$ the same argument shows that $\{ L_T : T \in \LY_k(\alpha, K) \}$ is a bounded subset of $\LL(W^{k-2,1}(S^1))$, while if $k = 2$ then $\{ L_T : T \in \LY_k(\alpha, K) \}$ is bounded in $\LL(W^{k-2,1}(S^1))$ since each $L_T$ is Markov on $W^{k-2,1}(S^1) = L^1(S^1)$.
  That $\{ L_T : T \in \LY_k(\alpha, K) \}$ is an equicontinuous subset of $\LL_S(W^{k-1,1}(S^1))$ then follows from Proposition \ref{prop:saks_equicont}.
\end{proof}

\begin{proposition}\label{prop:ly_ss}
  Let $\mathbb{W}^{k-1} = \bigsqcup_{\omega \in \Omega} \{\omega\} \times W^{k-1,1}$.
  There exists $R_{k-1,\alpha, K} \ge 1$ and $A_{k-1,\alpha, K} >0$ such that if $\mathcal{T} : \Omega \to \LY_k(\alpha, K)$ is measurable then $L_{\mathcal{T}} \in \mathcal{LY}(1,A_{k-1,\alpha, K},\alpha^{k-1},R_{k-1,\alpha, K}) \cap \End_{S}(\mathbb{W}^{k-1}, \sigma)$.
  \begin{proof}
    That $L_{\mathcal{T}} \in \End_{S}(\mathbb{W}^{k-1}, \sigma)$ follows trivially from Proposition \ref{prop:pf_summary}.
    Let
    \begin{equation*}
      R_{k-1,\alpha,K} = \max\{1, \sup\{ \norm{L_T}_{\LL(W^{k-2,1})} : T \in \LY_k(\alpha, K) \}\},
    \end{equation*}
    and note that $R_{k-1,\alpha,K}$ is finite by Proposition \ref{prop:pf_summary}.
    By Proposition \ref{prop:pf_summary}, for every $f \in W^{k-1,1}(S^1)$, $\omega \in \Omega$ and $n \in \Z^+$ we have
    \begin{equation*}\begin{split}
      \norm{(L_{\mathcal{T}(\sigma^{n}(\omega))} \circ \dots \circ L_{\mathcal{T}(\omega)})f}_{W^{k-1,1}} \le &\alpha^{k-1} \norm{(L_{\mathcal{T}(\sigma^{n-1}(\omega))} \circ \dots \circ L_{\mathcal{T}(\omega)})f}_{W^{k-1,1}} \\
      &+ C_{k-1,\alpha,K} R_{k-1,\alpha,K}^{n-1} \norm{f}_{W^{k-2,1}}.
    \end{split}\end{equation*}
    Iterating the above inequality yields
    \begin{equation*}\begin{split}
      \norm{(L_{\mathcal{T}(\sigma^{n}(\omega))} \circ \dots \circ L_{\mathcal{T}(\omega)})f}_{W^{k-1,1}} &\le \alpha^{n(k-1)} \norm{f}_{W^{k-1,1}} + \frac{C_{k-1,\alpha,K}}{R_{k-1,\alpha, K}} R_{k-1,\alpha, K}^{n} \left( \sum_{j=0}^{n-1} \left( \frac{\alpha^{k-1}}{R_{k-1,\alpha, K}}\right)^{j} \right)\norm{f}_{W^{k-2,1}}\\
      &\le \alpha^{n(k-1)} \norm{f}_{W^{k-1,1}} + \frac{C_{k-1,\alpha,K}}{R_{k-1,\alpha, K} - \alpha^{k-1}} R_{k-1,\alpha, K}^{n}\norm{f}_{W^{k-2,1}}.
    \end{split}\end{equation*}
    We obtain the claim upon setting $A_{k-1,\alpha, K} =C_{k-1,\alpha,K}(R_{k-1,\alpha, K} - \alpha^{k-1})^{-1}$.
  \end{proof}
\end{proposition}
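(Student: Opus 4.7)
The plan is to extract both conclusions from Proposition \ref{prop:pf_summary}. The $\End_S(\mathbb{W}^{k-1}, \sigma)$ membership should be essentially free: since Proposition \ref{prop:pf_summary} asserts that $\{L_T : T \in \LY_k(\alpha,K)\}$ is an equicontinuous subset of $\LL_S(W^{k-1,1})$, Remark \ref{remark:ss_equi} immediately gives that $L_\mathcal{T} \in \End_S(\mathbb{W}^{k-1}, \sigma)$ for any measurable $\mathcal{T} : \Omega \to \LY_k(\alpha, K)$, with the defining equicontinuity condition \eqref{eq:equicont_seq_1} holding fiberwise via $\omega \mapsto \mathcal{T}_\omega$.

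The substantive step is the $n$-step Lasota--Yorke inequality \eqref{eq:ly}. I would first set
\begin{equation*}
R_{k-1,\alpha,K} = \max\bigl\{1,\ \sup\{\norm{L_T}_{\LL(W^{k-2,1})} : T \in \LY_k(\alpha,K)\}\bigr\},
\end{equation*}
which should be finite: when $k = 2$ each $L_T$ is Markov on $L^1 = W^{k-2,1}$, so the supremum equals $1$; when $k \ge 3$, the inclusion $\LY_k(\alpha,K) \subseteq \LY_{k-1}(\alpha,K)$ allows Proposition \ref{prop:pf_summary} to be applied at order $k-1$, yielding uniform boundedness on $W^{k-2,1}$. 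With $R_{k-1,\alpha,K}$ in hand, I would iteratively apply the single-step bound \eqref{eq:pf_summary_0} to $L_\omega^{n-1} f$ and control the weak-norm term by $\norm{L_\omega^{n-1} f}_{W^{k-2,1}} \le R_{k-1,\alpha,K}^{n-1}\norm{f}_{W^{k-2,1}}$; a straightforward induction on $n$ should then give
\begin{equation*}
\norm{L_\omega^n f}_{W^{k-1,1}} \le \alpha^{n(k-1)}\norm{f}_{W^{k-1,1}} + C_{k-1,\alpha,K}\sum_{j=0}^{n-1}\alpha^{j(k-1)} R_{k-1,\alpha,K}^{n-1-j}\norm{f}_{W^{k-2,1}}.
\end{equation*}

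Because $R_{k-1,\alpha,K} \ge 1 > \alpha^{k-1}$, the geometric sum should be dominated by $R_{k-1,\alpha,K}^n/(R_{k-1,\alpha,K} - \alpha^{k-1})$, closing the argument with $A_{k-1,\alpha,K} = C_{k-1,\alpha,K}/(R_{k-1,\alpha,K} - \alpha^{k-1})$. I do not anticipate any real obstacle: the whole proof is a direct iteration of the deterministic bound already delivered by Proposition \ref{prop:pf_summary}, and the only minor subtlety is the finiteness of $R_{k-1,\alpha,K}$, handled above by a case split on $k$.
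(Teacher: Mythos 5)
Your proposal matches the paper's proof essentially step for step: citing Proposition \ref{prop:pf_summary} for both the $\End_S$ membership and the single-step bound, defining $R_{k-1,\alpha,K}$ identically, iterating, summing the geometric series, and arriving at $A_{k-1,\alpha,K} = C_{k-1,\alpha,K}/(R_{k-1,\alpha,K}-\alpha^{k-1})$. The only cosmetic difference is that you re-derive the finiteness of $R_{k-1,\alpha,K}$ via the inclusion $\LY_k(\alpha,K)\subseteq\LY_{k-1}(\alpha,K)$, whereas the paper simply cites Proposition \ref{prop:pf_summary}, whose proof already records the needed boundedness on $W^{k-2,1}$.
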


\begin{proof}[{The proof of Proposition \ref{prop:c2_cocycles_measurable}}]
    To show that $(\Omega, \mathcal{F}, \mathbb{P}, \sigma, W^{k-1,1}(S^1), L_{\mathcal{T}})$ is a separable strongly measurable random linear system with ergodic invertible base it suffices to the map $\Delta : \LY_k(\alpha, K) \to \LL(W^{k-1,1})$ defined by $\Delta(T) = L_T$ is measurable with respect to the Borel $\sigma$-algebras on $\mathcal{C}^2(S^1,S^1)$ and $\LL(W^{k-1,1})$, where the later space is equipped with the strong operator topology.
    We will do this by showing that $\Delta$ is continuous: for every $f \in W^{k-1,1}$ we will show that $\norm{(L_T -L_S)f}_{W^{k-1,1}} \to 0$ as $d_{\mathcal{C}^k}(T,S) \to 0$.
    By \cite[(C1) of Lemma 2.4]{viviane2000positive}, for every $g \in \mathcal{C}^{k-1}(S^1)$ we have $\norm{(L_T -L_S)f}_{\mathcal{C}^{k-1}} \to 0$ as $d_{\mathcal{C}^k}(T,S) \to 0$.
    Fix $f \in W^{k-1,1}$ and for each $\epsilon > 0$ let $f_\epsilon \in \mathcal{C}^{k-1}(S^1)$ satisfy $\norm{f - f_\epsilon}_{W^{k-1,1}} \le \epsilon$.
    Then
    \begin{equation*}\begin{split}
      \norm{(L_T -L_S)f}_{W^{k-1,1}} &\le \norm{(L_T -L_S)f_\epsilon}_{W^{k-1,1}} + \norm{(L_T -L_S)(f - f_\epsilon)}_{W^{k-1,1}} \\
      &\le \norm{(L_T -L_S)f_\epsilon}_{C^{k-1}} + 2 \epsilon\sup_{T \in \LY_k(\alpha,K)} \norm{L_T}_{\LL(W^{k-1,1})} \\
      &\to 2 \epsilon\sup_{T \in \LY_k(\alpha,K)} \norm{L_T}_{\LL(W^{k-1,1})},
    \end{split}\end{equation*}
    as $d_{\mathcal{C}^2}(T,S) \to 0$.
    The set $\{ L_T : T \in \LY_k(\alpha,K)\}$ is bounded in $\LL(W^{k-1,1})$ by Proposition \ref{prop:pf_summary}, and so we obtain the required claim by sending $\epsilon \to 0$.

    We will sketch the proof that $\mathcal{P} = (\Omega, \mathcal{F}, \mathbb{P}, \sigma, W^{k-1,1}(S^1),L_{\mathcal{T}})$ has an Oseledets splitting.
    We aim to verify the hypotheses of \cite[Theorem 2.10]{GTQuas1} i.e. that the index of compactness $\kappa_{\mathcal{P}}^*$ of $\mathcal{P}$ is less than the maximal Lyapunov exponent $\lambda_\mathcal{P}^*$ of $\mathcal{P}$ (see \cite[Definition 2.3]{GTQuas1}).
    Our proof of this roughly follows the argument laid out in \cite[Lemma 3.16]{GTQuas1}.
    Recall from Proposition \ref{prop:ly_ss} that $L_{\mathcal{T}} \in \mathcal{LY}(1,A_{k-1,\alpha, K},\alpha^{k-1},R_{k-1,\alpha, K})$.
    By \cite[Lemma C.5]{GTQuas1}, it follows that $\kappa_{\mathcal{P}}^* \le \log \alpha^{k-1}<0$.
    On the other hand, since each $L_\omega$ is Markov, we have
    \begin{equation*}
      \norm{(L_{\mathcal{T}(\sigma^{n}(\omega))} \circ \dots \circ L_{\mathcal{T}(\omega)})1}_{W^{k-1,1}} \ge \norm{(L_{\mathcal{T}(\sigma^{n}(\omega))} \circ \dots \circ L_{\mathcal{T}(\omega)})1}_{L^1} = 1,
    \end{equation*}
    and so $\lambda_\mathcal{P} \ge 0$.
    Thus $\mathcal{P}$ has an Oseledets splitting with $\lambda_{1,\mathcal{T}} = \lambda_{\mathcal{P}}^* \ge 0$ by \cite[Theorem 2.10]{GTQuas1}.
    We will now show that $\lambda_{1,\mathcal{T}} = 0$. Let $\mathcal{P}' = (\Omega, \mathcal{F}, \mathbb{P}, \sigma, W^{1,1}(S^1),L_{\mathcal{T}})$, and note that the arguments of the previous paragraph imply that $\mathcal{P}'$ has an Oseledets splitting with $\kappa_{\mathcal{P}'}^* \le \log \alpha < \lambda_{\mathcal{P}'}^*$ and that $\lambda_{\mathcal{P}'}^* \ge 0$.
    However, the Lasota-Yorke inequality obtained for $\mathcal{P}'$ from Proposition \ref{prop:ly_ss} has $R_{k-1,\alpha,K} = 1$, and so
    \begin{equation*}
      \sup_{n \in \Z^+} \sup_{\omega \in \Omega} \norm{(L_{\mathcal{T}(\sigma^{n}(\omega))} \circ \dots \circ L_{\mathcal{T}(\omega)})}_{\LL(W^{1,1})} < \infty,
    \end{equation*}
    which implies that $\lambda_{\mathcal{P}'}^* \le 0$. Thus $\lambda_{\mathcal{P}'}^* = 0$.
    In the language of \cite[Appendix A]{gonzalez2018stability}, $\mathcal{P}$ is a dense restriction of $\mathcal{P}'$. As $\lambda_{\mathcal{P}'}^* \ge \max\{\kappa_{\mathcal{P}'}^*, \kappa_{\mathcal{P}}^*\}$ by \cite[Theorem 37]{gonzalez2018stability} we have $\lambda_{\mathcal{P}'}^* = \lambda_{\mathcal{P}}^* = 0$.
\end{proof}

\begin{proposition}\label{prop:tnorm_close_deterministic}
  There exists $Q_{k,\alpha,K} > 0$ such that for every $S, T \in \LY_k(\alpha,K)$ we have
  \begin{equation*}
    \norm{L_T - L_S}_{\LL(W^{k-1,1},W^{k-2,1})} \le Q_{k,\alpha,K} d_{\mathcal{C}^{k-1}}(S,T).
  \end{equation*}
  \begin{proof}
    The case where $k = 2$ is known: upon recalling from Example \ref{example:sobolev} that $\norm{f}_{\BV} = \norm{f}_{W^{1,1}}$ for $f \in W^{1,1}$, the result is given by \cite[Example 3.1]{liverani2004invariant}.
    We therefore focus on the case where $k > 2$, although we use the $k = 2$ case during our argument.
    Let $f \in W^{k-2,1}(S^1)$ and $g \in L^\infty(S^1)$. For brevity, if $R \in \LY_k(\alpha,K)$ then we will write $G_{k-2,\ell,R}$ in place of $G_{k-2,\ell}(R', \dots, R^{(k-1)})$.
    By Lemma \ref{lemma:higher_deriv} we have
    \begin{equation}\begin{split}\label{eq:tnorm_close_deterministic_1}
       \intf (L_Tf - L_Sf)^{(k-2)} g d \Leb &= \intf \left(L_T\left( \sum_{\ell=0}^{k-2} \frac{G_{k-2,\ell,T} \cdot f^{(\ell)}}{(T')^{2(k-2)}} \right) - L_S\left(\sum_{\ell=0}^{k-2} \frac{G_{k-2,\ell,S} \cdot f^{(\ell)}}{(S')^{2(k-2)}} \right) \right) \cdot g d\Leb\\
       &= \intf (L_T - L_S)\left( \sum_{\ell=0}^{k-2} \frac{G_{k-2,\ell,T} \cdot f^{(\ell)}}{(T')^{2(k-2)}} \right) \cdot g d\Leb \\
       &+ \sum_{\ell=0}^{k-2}\intf \left(\frac{G_{k-2,\ell,T} }{(T')^{2(k-2)}} - \frac{G_{k-2,\ell,S}}{(S')^{2(k-2)}}\right) \cdot f^{(\ell)} \cdot g \circ S d\Leb.
    \end{split}\end{equation}
    To bound the first term we apply the inequality for $k = 2$, which is valid since $d_{\mathcal{C}^1}(S,T) \le d_{\mathcal{C}^{k-1}}(S,T)$, yielding
    \begin{equation*}
      \intf (L_T - L_S)\left( \sum_{\ell=0}^{k-2}
      \frac{G_{k-2,\ell,T} \cdot f^{(\ell)} }{(T')^{2(k-2)}}
      \right) g d\Leb \le Q_{2,\alpha,K} \norm{g}_{L^\infty} \left(\sum_{\ell=0}^{k-2} \norm{\frac{G_{k-2,\ell,T} \cdot f^{(\ell)}}{(T')^{2(k-2)}}}_{W^{1,1}}\right) d_{\mathcal{C}^{k-1}}(S,T).
    \end{equation*}
    If we let $D_{k-1, \ell}$ denote the norm of the embedding of $W^{k-1,1}(S^1)$ into $W^{\ell,1}(S^1)$ and set
    \begin{equation*}
       Z_{k, \ell, \alpha,K} = \sup_{T \in \LY_k(\alpha,K)} \left(\left(D_{k-1,\ell}  + \alpha 2(k-2) K + D_{k-1,\ell+1}\right) \norm{G_{k-2,\ell,T}}_{L^\infty} + D_{k-1,\ell}\norm{G_{k-2,\ell,T}'}_{L^\infty}\right),
    \end{equation*}
    then by the product rule, the definition of $\norm{\cdot}_{W^{1,1}}$ and as $\alpha < 1$ we have
    \begin{equation*}
      \sum_{\ell=0}^{k-2} \norm{\frac{G_{k-2,\ell,T} \cdot f^{(\ell)}}{(T')^{2(k-2)}}}_{W^{1,1}}\le \sum_{\ell=0}^{k-2} Z_{k, \ell, \alpha,K} \norm{f}_{W^{k-1,1}}.
    \end{equation*}
    Thus
    \begin{equation}\label{eq:tnorm_close_deterministic_2}
      \intf (L_T - L_S)\left( \sum_{\ell=0}^{k-2}
      \frac{G_{k-2,\ell,T} \cdot f^{(\ell)} }{(T')^{2(k-2)}}
      \right) g d\Leb \le Q_{2,\alpha,K} \left(\sum_{\ell=0}^{k-2} Z_{k, \ell, \alpha,K}\right)\norm{g}_{L^\infty}  \norm{f}_{W^{k-1,1}} d_{\mathcal{C}^{k-1}}(S,T).
    \end{equation}
    On the other hand,
    \begin{equation*}
      \intf \left(\frac{G_{k-2,\ell,T} }{(T')^{2(k-2)}} - \frac{G_{k-2,\ell,S}}{(S')^{2(k-2)}}\right) \cdot f^{(\ell)} \cdot g \circ S d\Leb \le D_{k-1,\ell}\norm{g}_{L^\infty} \norm{f}_{W^{k-1,1}}  \norm{\left(\frac{G_{k-2,\ell,T} }{(T')^{2(k-2)}} - \frac{G_{k-2,\ell,S}}{(S')^{2(k-2)}}\right)}_{L^\infty}.
    \end{equation*}
    Since each of the multinomials $G_{k-2,\ell}$ is Lipschitz on $[-K,K]^{k-1}$ and $G_{k-2,\ell,T}$ (resp. $G_{k-2,\ell,S}$) only contains derivatives of $T$ (resp. $S$) of order less than $k-1$, for each $\ell \in \{0, \dots, k-2\}$ there exists $V_{k,\ell}$ such that for every $S,T \in \LY_k(\alpha, K)$ we have
    \begin{equation*}
      \norm{\left(\frac{G_{k-2,\ell,T} }{(T')^{2(k-2)}} - \frac{G_{k-2,\ell,S}}{(S')^{2(k-2)}}\right)}_{L^\infty} \le V_{k,\ell}d_{\mathcal{C}^{k-1}}(S,T).
    \end{equation*}
    It follows that
    \begin{equation}\label{eq:tnorm_close_deterministic_3}
      \sum_{\ell=0}^{k-2}\intf \left(\frac{G_{k-2,\ell,T} }{(T')^{2(k-2)}} - \frac{G_{k-2,\ell,S}}{(S')^{2(k-2)}}\right) \cdot f^{(\ell)} \cdot g \circ S d\Leb \le D_{k-1,\ell}\left(\sum_{\ell=0}^{k-2} V_{k,\ell}\right) \norm{g}_{L^\infty} \norm{f}_{W^{k-1,1}} d_{\mathcal{C}^{k-1}}(S,T).
    \end{equation}
    Applying \eqref{eq:tnorm_close_deterministic_2} and \eqref{eq:tnorm_close_deterministic_3} to \eqref{eq:tnorm_close_deterministic_1}, and then taking the supremum over $g \in L^\infty(S^1)$ with $\norm{g}_{L^\infty} = 1$ yields
    \begin{equation*}
      \norm{(L_Tf - L_Sf)^{(k-2)}}_{L^1} \le \left(\sum_{\ell=0}^{k-2} D_{k-1,\ell} V_{k,\ell} + Q_{2,\alpha,K} Z_{k, \ell, \alpha,K}\right)\norm{f}_{W^{k-1,1}}d_{\mathcal{C}^{k-1}}(S,T).
    \end{equation*}
    Thus, by using the case where $k = 2$ again, we obtain
    \begin{equation*}
      \norm{L_T - L_S}_{\LL(W^{k-1,1},W^{k-2,1})} \le Q_{2,\alpha,K}D_{k-1,1}d_{\mathcal{C}^{k-1}}(S,T) + \left(\sum_{\ell=0}^{k-2} D_{k-1,\ell} V_{k,\ell} + Q_{2,\alpha,K} Z_{k, \ell, \alpha,K}\right)d_{\mathcal{C}^{k-1}}(S,T),
    \end{equation*}
    as required.
  \end{proof}
\end{proposition}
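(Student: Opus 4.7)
The plan is to handle the base case $k=2$ first and then bootstrap to larger $k$ using Lemma \ref{lemma:higher_deriv}. For $k=2$, the inequality $\norm{L_T - L_S}_{\LL(W^{1,1}, L^1)} \le Q_{2,\alpha,K} d_{\mathcal{C}^1}(S,T)$ is a classical estimate of Keller--Liverani type; I would cite \cite[Example 3.1]{liverani2004invariant} after noting (via Example \ref{example:sobolev}) that $\norm{\cdot}_{W^{1,1}}$ and $\norm{\cdot}_{\BV}$ coincide.

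For $k > 2$, the aim is to test $(L_T f - L_S f)^{(k-2)}$ against $g \in L^\infty(S^1)$ and use the identity of Lemma \ref{lemma:higher_deriv}:
\begin{equation*}
(L_R f)^{(k-2)} = L_R h_{R,f}, \qquad h_{R,f} := (R')^{-2(k-2)} \sum_{\ell=0}^{k-2} G_{k-2,\ell}(R', \dots, R^{(k-1)}) \cdot f^{(\ell)}.
\end{equation*}
The decomposition
\begin{equation*}
\int (L_T f - L_S f)^{(k-2)} g\, d\Leb = \int (L_T - L_S) h_{T,f} \cdot g\, d\Leb + \int L_S(h_{T,f} - h_{S,f}) \cdot g\, d\Leb
\end{equation*}
then splits the problem into two manageable pieces. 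The first integral is bounded by the $k=2$ case applied to $h_{T,f} \in W^{1,1}(S^1)$: since $h_{T,f}$ is a linear combination of products of $\mathcal{C}^1$ functions (uniformly bounded on $\LY_k(\alpha,K)$) with $f^{(\ell)}$ for $\ell \le k-2$, the product rule gives $\norm{h_{T,f}}_{W^{1,1}} \le C\norm{f}_{W^{k-1,1}}$ with $C$ depending only on $k, \alpha, K$. The second integral is $\int (h_{T,f} - h_{S,f}) \cdot g \circ S\, d\Leb$ by definition of $L_S$, and is bounded in absolute value by $\norm{g}_{L^\infty} \norm{h_{T,f} - h_{S,f}}_{L^1}$.

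The final step is to estimate $\norm{h_{T,f} - h_{S,f}}_{L^1}$. Since the scalar functions $(R')^{-2(k-2)} G_{k-2,\ell}(R', \dots, R^{(k-1)})$ depend polynomially on the derivatives $R', \ldots, R^{(k-1)}$ (each uniformly bounded on $\LY_k(\alpha, K)$) and rationally on $R'$ (which is bounded below by $\alpha^{-1}$), the map $R \mapsto (R')^{-2(k-2)} G_{k-2,\ell,R}$ is Lipschitz from $(\LY_k(\alpha,K), d_{\mathcal{C}^{k-1}})$ into $L^\infty(S^1)$ with a Lipschitz constant depending only on $k,\alpha,K$. This yields $\norm{h_{T,f} - h_{S,f}}_{L^1} \le C' \norm{f}_{W^{k-2,1}} \cdot d_{\mathcal{C}^{k-1}}(S,T)$, and $\norm{f}_{W^{k-2,1}} \le D \norm{f}_{W^{k-1,1}}$ by the Sobolev embedding. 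Taking the supremum over $g \in L^\infty(S^1)$ with $\norm{g}_{L^\infty} = 1$ recovers the $L^1$ norm of $(L_T f - L_S f)^{(k-2)}$, which together with the $k=2$ case controlling $\norm{L_T f - L_S f}_{L^1}$ gives the required $W^{k-2,1}$ bound.

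The main technical obstacle is the bookkeeping in Step 3: verifying that every coefficient one encounters is uniformly bounded and uniformly Lipschitz on $\LY_k(\alpha, K)$ with respect to $d_{\mathcal{C}^{k-1}}$. This is where the constraint $\alpha < 1$ and the uniform bound $K$ on the $\mathcal{C}^k$-size matter: they ensure $R'$ stays away from zero and $G_{k-2,\ell,R}$ lives in a fixed compact subset of its domain, on which any polynomial/rational function is automatically Lipschitz. No use of the higher ($k$-th order) derivative of $R$ is needed in the difference estimate since $G_{k-2,\ell}$ only involves derivatives up to order $k-1$, which is precisely why the $d_{\mathcal{C}^{k-1}}$ distance suffices on the right-hand side.
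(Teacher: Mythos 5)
Your proposal is correct and takes essentially the same route as the paper: it uses Lemma \ref{lemma:higher_deriv} to rewrite $(L_Rf)^{(k-2)}$ as $L_R$ applied to a coefficient-weighted sum of lower derivatives, splits the difference as $(L_T-L_S)h_{T,f} + L_S(h_{T,f}-h_{S,f})$, handles the first term with the $k=2$ estimate and the second via the uniform Lipschitz dependence of the coefficients on the derivatives up to order $k-1$, and closes by testing against $g \in L^\infty$ and invoking the $k=2$ case once more for the $L^1$ part of the $W^{k-2,1}$ norm. This is exactly the paper's argument, with only cosmetic differences in how the embedding constants are organised.
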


\begin{proof}[{The proof of Theorem \ref{thm:random_deterministic_perturbation}}]
  By assumption $(\Omega, \mathcal{F}, \mathbb{P}, \sigma, W^{k-1,1}(S^1), L_{\mathcal{T}})$ is a separable strongly measurable random linear system with ergodic invertible base and a hyperbolic Oseledets splitting of dimension $d$.
  We have $L_{\mathcal{T}} \in \End_{S}(\mathbb{W}^{k-1}, \sigma) \cap \mathcal{LY}(1,A_{k-1,\alpha, K},\alpha^{k-1},R_{k-1,\alpha, K})$ by Proposition \ref{prop:ly_ss}, and so all the requirements of Theorem \ref{thm:stability_lyapunov} are verified for $(\Omega, \mathcal{F}, \mathbb{P}, \sigma, W^{k-1,1}(S^1), L_{\mathcal{T}})$.
  For measurable $\mathcal{S} : \Omega \to \LY_k(\alpha, K)$ we get that $(\Omega, \mathcal{F}, \mathbb{P}, \sigma, W^{k-1,1}(S^1), L_{\mathcal{S}})$ is also a separable strongly measurable random linear system by Proposition \ref{prop:c2_cocycles_measurable} and by Proposition \ref{prop:ly_ss} we have $L_{\mathcal{S}} \in \mathcal{LY}(1,A_{k-1,\alpha, K},\alpha^{k-1},R_{k-1,\alpha, K})$.
  For any $\epsilon > 0$ we may ensure that
  \begin{equation*}
    \esssup_{\omega \in \Omega} \norm{L_{\mathcal{T}(\omega)} - L_{\mathcal{S}(\omega)}}_{\LL(W^{k-1,1},W^{k-2,1})} \le \epsilon,
  \end{equation*}
  by making $d_{k-1}(\mathcal{T}, \mathcal{S})$ small and then using Proposition \ref{prop:tnorm_close_deterministic}.
  Thus, we obtain the conclusion of Theorem \ref{thm:stability_lyapunov} for the perturbation $(\Omega, \mathcal{F}, \mathbb{P}, \sigma, W^{k-1,1}(S^1), L_{\mathcal{S}})$, as required.
\end{proof}

\begin{proof}[{Proof of Proposition \ref{prop:fejer_props}}]
  It is well known that the Fej{\'e}r kernels approximate the identity \cite[Section 2.2 and 2.5]{katznelson2002introduction}.
  Thus $\mathcal{J}_n$ is Markov, as claimed. Since convolution and differentiation commute, for each $f \in W^{k,1}(S^1)$ we have
  \begin{equation*}
    \norm{\mathcal{J}_n f}_{W^{k,1}} = \norm{ \mathcal{J}_n f}_{L^1} + \norm{ \mathcal{J}_n (f^{(k)})}_{L^1} \le \norm{f}_{L^1} + \norm{f^{(k)}}_{L^1} = \norm{f}_{W^{k,1}},
  \end{equation*}
  and so $\mathcal{J}_n$ restricts to a contraction in $\LL(W^{k,1}(S^1))$.
  For $x,y \in S^1$ define $g_{x,y} : S^1 \to \R$ by
  \begin{equation*}
    g_{x,y} =
    \begin{cases}
      \chi_{x-y, x} & y < x, \\
      \chi_{x, x-y} & x < y.
    \end{cases}
  \end{equation*}
  Using Fubini-Tonelli and the fact that every $f \in W^{1,1}$ is absolutely continuous, we have
  \begin{equation*}\begin{split}
    \norm{\mathcal{J}_n f - f}_{L^1} &= \intf \abs{(J_n \ast f)(x) - f(x)} d\Leb(x)\\
    &= \intf \intf J_n(y) \abs{f(x-y) - f(x)} d\Leb(y) d\Leb(x) \\
    &\le \intf \intf \intf J_n(y) g_{x,y}(z) \abs{f'(z)} d\Leb(z) d\Leb(y) d\Leb(x) \\
    &= \intf J_n(y)\left( \intf  \abs{f'(z)} \left(\intf g_{x,y}(z) d\Leb(x)\right) d\Leb(z)\right) d\Leb(y) \\
    &= \left(\intf J_n(y) \abs{y} d\Leb(y) \right)\norm{f}_{W^{1,1}}.
  \end{split}\end{equation*}
  Since $\{J_n\}_{n \in \Z^+}$ approximates the identity we have $\intf J_n(y) \abs{y} d\Leb(y) \to 0$ as $n \to \infty$, which yields \eqref{eq:fejer_props_1} for $k = 1$.
  The claim for $k > 1$ follows from the case where $k = 1$, the fact that differentiation commutes with $\mathcal{J}_n$, and the fact that $W^{k,1}(S^1)$ continuously embeds into $W^{k-1,1}(S^1)$ and $W^{1,1}(S^1)$.
\end{proof}

\begin{proof}[{The proof of Theorem \ref{thm:random_fejer_approx}}]
  The proof is very similar to that of Theorem \ref{thm:random_deterministic_perturbation}.
  By assumption $(\Omega, \mathcal{F}, \mathbb{P}, \sigma, W^{k-1,1}(S^1), L_{\mathcal{T}})$ is a separable strongly measurable random linear system with ergodic invertible base and a hyperbolic Oseledets splitting of dimension $d \in \Z^+$.
  We have $L_{\mathcal{T}}  \in \End_{S}(\mathbb{W}^{k-1}, \sigma) \cap \mathcal{LY}(1,A_{k-1,\alpha, K},\alpha^{k-1},R_{k-1,\alpha, K})$ by Proposition \ref{prop:ly_ss}, and so all the requirements of Theorem \ref{thm:stability_lyapunov} are verified for $(\Omega, \mathcal{F}, \mathbb{P}, \sigma, W^{k-1,1}(S^1), L_{\mathcal{T}})$.
  Since the composition of strongly measurable maps is strongly measurable (\cite[Lemma A.5]{GTQuas1}), and the constant map $\omega \mapsto \mathcal{J}_n$ is strongly measurable, for each $n \in \Z^+$ we have that $(\Omega, \mathcal{F}, \mathbb{P}, \sigma, W^{k-1,1}(S^1), L_{\mathcal{T},n})$ is a separable strongly measurable random linear system.
  Since $\mathcal{J}_n$ is a contraction on $W^{k-1,1}$, from the Lasota-Yorke inequality \eqref{eq:pf_summary_0} we have for every $f\in W^{k-1,1}(S^1)$, $n \in \Z^+$ and $\omega \in \Omega$ that
  \begin{equation*}
    \norm{\mathcal{J}_n L_{\mathcal{T}(\omega)} f }_{W^{k-1,1}} \le \alpha^{k-1} \norm{ f }_{W^{k-1,1}} + C_{k-1,\alpha,K} \norm{f}_{W^{k-2,1}}.
  \end{equation*}
  By using the fact that $\mathcal{J}_n$ is a contraction on $W^{k-2,1}(S^1)$ and repeating the argument made in Proposition \ref{prop:ly_ss}, we deduce that $L_{\mathcal{T},n} \in \mathcal{LY}(1,A_{k-1,\alpha, K},\alpha^{k-1},R_{k-1,\alpha, K})$ for every $n \in \Z^+$.
  Thus, after using \eqref{eq:fejer_props_1} from Proposition \ref{prop:fejer_props} we obtain the conclusion of Theorem \ref{thm:stability_lyapunov} for the perturbation $(\Omega, \mathcal{F}, \mathbb{P}, \sigma, W^{1,1}(S^1), L_{\mathcal{T},n})$, as required.
\end{proof}

\section*{Acknowledgements}

H.C. is supported by an Australian Government Research Training Program Scholarship and the School of Mathematics and Statistics, UNSW.
He would like to thank Gary Froyland and Jason Atnip for their encouragement, helpful feedback and many stimulating discussions while writing this paper, and to Cecilia Gonz{\'a}lez-Tokman for helpfully answering a number of technical questions.
In addition, he would like to thank Fadi Antown for the many discussions regarding this paper that were had over lunch.
He would like to thank Philippe Thieullen for introducing him to the graph-representation of the Grassmannian and the associated graph transform, which form the technical backbone of this paper, and to Campus France and the Institut de Math{\'e}matiques de Bordeaux for providing funding for and hosting him during, respectively, his visit to Philippe in 2018.

\appendix

\section{Proofs for Section \ref{sec:prelims}}\label{app:preliminaries_proofs}

\subsection{Proofs for Section \ref{sec:grassmannian}}

\begin{proof}[{The proof of Lemma \ref{lemma:inv_proj}}]
  The map $\restr{\Pi_{E_1 || F}}{E_2}$ must be injective as otherwise $F \cap E_2 = \ker \Pi_{E_1 || F} \cap E_2 \ne \{0\}$, which contradicts $E_2 \oplus F = X$. To see that the map is surjective, we note that for any $e \in E_1$ one has $e = \Pi_{E_2 || F}e + \Pi_{F || E_2}e$ and so $\Pi_{E_1 || F}(\Pi_{E_2 || F}e) = e$.
\end{proof}

\begin{lemma}\label{lemma:proj_graph}
  If $E,  E' \in \mathcal{N}(F)$ then
  \begin{equation*}
    \Pi_{E' || F} = (\Id + \Phi_{E \oplus F}(E'))\Pi_{E || F}.
  \end{equation*}
  \begin{proof}
    Since $\ker(\Pi_{E || F}) = F$ we have $\ker((\Id + \Phi_{E \oplus F}(E'))\Pi_{E || F})) \subseteq F$ and so it suffices to prove that the restriction of $(\Id + \Phi_{E \oplus F}(E'))\Pi_{E || F}$ to $E'$ is the identity, but
    by definition we have
    \begin{equation*}
      \restr{(\Id + \Phi_{E \oplus F}(E'))\Pi_{E || F}}{E'} = \left(\restr{\Pi_{E || F}}{E'}\right)^{-1}\left(\restr{\Pi_{E || F}}{E'}\right) = \restr{\Id}{E'}.
    \end{equation*}
  \end{proof}
\end{lemma}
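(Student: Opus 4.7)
The plan is to verify the operator identity by checking that both sides agree on each summand of the direct-sum decomposition $X = E' \oplus F$. Both $\Pi_{E' || F}$ and the proposed right-hand side $(\Id + \Phi_{E \oplus F}(E'))\Pi_{E || F}$ are bounded linear endomorphisms of $X$ (the former by the closed graph theorem, the latter as a composition of bounded operators), so agreement on $E'$ and on $F$ will suffice to conclude.

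First I would handle the restriction to $F$: since $\ker \Pi_{E || F} = F$, the right-hand side vanishes identically on $F$, and by the defining property of the projection $\Pi_{E' || F}$ so does the left-hand side.

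Second, for the restriction to $E'$, I would invoke Lemma \ref{lemma:inv_proj}, which guarantees that $\restr{\Pi_{E || F}}{E'} : E' \to E$ is a linear bijection. By the very definition of the graph representation, $\Id + \Phi_{E \oplus F}(E') = \left(\restr{\Pi_{E || F}}{E'}\right)^{-1}$, where the inverse is viewed as a map $E \to E' \subseteq X$ (and the identity on its right is the inclusion $E \hookrightarrow X$). Composing with $\restr{\Pi_{E || F}}{E'}$ on the right then yields $\Id_{E'}$, which agrees with $\restr{\Pi_{E' || F}}{E'}$.

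There is no real obstacle beyond careful bookkeeping: one must keep track of the fact that $\Phi_{E \oplus F}(E') \in \LL(E, F)$ so that $\Id + \Phi_{E \oplus F}(E')$ makes sense as a map $E \to X$ whose image lies in $E'$, and that the composition $(\Id + \Phi_{E \oplus F}(E'))\Pi_{E || F}$ therefore factors through $E'$. Once this is set up, the two-line verification above gives the claim.
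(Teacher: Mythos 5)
Your proof is correct and takes essentially the same approach as the paper: both arguments reduce to checking that $(\Id + \Phi_{E \oplus F}(E'))\Pi_{E || F}$ vanishes on $F$ (trivial, since $\ker\Pi_{E||F} = F$) and restricts to the identity on $E'$ (via $\Id + \Phi_{E \oplus F}(E') = (\restr{\Pi_{E||F}}{E'})^{-1}$), then invoke $X = E' \oplus F$. Your version is, if anything, slightly more explicit about the two-summand decomposition and the codomain bookkeeping.
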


\begin{lemma}\label{lemma:graph_rep_inv}
  If $E \oplus F = X$ then $\Phi_{E \oplus F}^{-1} :  \mathcal{L}(E,F) \to \mathcal{N}(F)$ exists and, for each $L \in \LL(E,F)$, satisfies
  \begin{equation*}
    \Phi_{E \oplus F}^{-1}(L) = (\Id + L)(E).
  \end{equation*}
  \begin{proof}
    If $\Phi_{E \oplus F}(E_1) = \Phi_{E \oplus F}(E_2)$ for some $E_1,E_2 \in \mathcal{N}(F)$, then, when restricted to $E$,
    \begin{equation*}
      \left(\restr{\Pi_{E || F}}{E_1}\right)^{-1} = \left(\restr{\Pi_{E || F}}{E_2}\right)^{-1},
    \end{equation*}
    which implies that $E_1 = E_2$. Thus $\Phi_{E \oplus F}$ is injective.
    We now show that $\Phi_{E \oplus F}$ is surjective. Suppose that $L \in \LL(E,F)$. Then, as $L$ is bounded, $E' = (\Id + L)(E) \in \mathcal{G}(X)$. If $E' \cap F \ne \{0\}$ then there exists $e \in E \setminus \{0\}$ such that $e + L(e) \in F$, but this implies implies that $e \in F$, which is impossible. Thus $E' \cap F = \{0\}$.
    On the other hand, if $v \in X$ then, by writing $v = e + f$ according to the splitting $E \oplus F$, we observe that $v = (\Id + L)(e) + f - L(e)$, and so $E' + F = X$. Thus $E'$ and $F$ are topologically complementary subspaces by the Closed Graph Theorem.
    For every $e \in E$ we have $e + L(e) \in E'$ and $\Pi_{E || F}(e + L(e)) = e$, which implies that
    \begin{equation*}
      \Phi_{E \oplus F}(E') = \left(\restr{\Pi_{E || F}}{(\Id + L)(E)}\right)^{-1} - \Id = \Id + L - \Id = L,
    \end{equation*}
    and so $\Phi_{E \oplus F}$ is surjective.
    Moreover, since $E' = (\Id + L)E$, we see that $\Phi_{E \oplus F}^{-1}(L) = (\Id + L)(E)$.
  \end{proof}
\end{lemma}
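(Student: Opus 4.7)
The plan is to establish the claimed identity by (i) defining a candidate inverse $\Psi : \mathcal{L}(E,F) \to \mathcal{N}(F)$ by $\Psi(L) = (\Id + L)(E)$ and verifying that $\Psi$ lands in $\mathcal{N}(F)$; and (ii) checking that $\Psi$ is a two-sided inverse of $\Phi_{E \oplus F}$. The bijectivity of $\Phi_{E \oplus F}$ then follows automatically, and the explicit formula is read off from the definition of $\Psi$.

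For step (i), fix $L \in \LL(E,F)$ and set $E' = (\Id + L)(E)$. I need to check that $E'$ is a closed subspace of $X$ and that $E' \oplus F = X$. Closedness is essentially the fact that a graph of a bounded operator is closed: since $E \oplus F = X$ topologically, the map $(e,f) \mapsto e+f$ is a topological isomorphism $E \times F \to X$, so $E'$ corresponds under this isomorphism to the graph $\{(e, L(e)) : e \in E\} \subseteq E \times F$, which is closed because $L$ is bounded. For the splitting, the triviality of $E' \cap F$ follows because if $e + L(e) \in F$ then projecting along $E \oplus F$ onto $E$ forces $e = 0$; and for $E' + F = X$, given $v \in X$ write $v = e + f$ with $e \in E$, $f \in F$ and observe $v = (e + L(e)) + (f - L(e))$.

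For step (ii), the relation $\Phi_{E \oplus F} \circ \Psi = \Id$ is immediate from the definition: on $E' = (\Id + L)(E)$, the projection $\Pi_{E || F}$ acts as $e + L(e) \mapsto e$, so $(\restr{\Pi_{E || F}}{E'})^{-1}(e) = e + L(e)$ and subtracting $\Id$ yields exactly $L$. The relation $\Psi \circ \Phi_{E \oplus F} = \Id$ is the one where the earlier Lemma \ref{lemma:proj_graph} does most of the work: since $\Pi_{E' || F} = (\Id + \Phi_{E \oplus F}(E'))\Pi_{E || F}$, taking ranges gives
\begin{equation*}
  E' = \Pi_{E' || F}(X) = (\Id + \Phi_{E \oplus F}(E'))\Pi_{E || F}(X) = (\Id + \Phi_{E \oplus F}(E'))(E) = \Psi(\Phi_{E \oplus F}(E')),
\end{equation*}
as required.

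The only nontrivial step is verifying closedness of $E'$ in $X$; everything else is a direct computation. I would expect the proof to be about half a page at most, with the closedness argument being the conceptual ingredient and the rest being bookkeeping with the graph representation.
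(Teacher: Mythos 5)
Your proof is correct and follows essentially the same route as the paper's, just repackaged: the paper proves injectivity and surjectivity of $\Phi_{E \oplus F}$ separately, whereas you construct $\Psi(L) = (\Id + L)(E)$ as a two-sided inverse. The surjectivity argument in the paper and your verification of $\Phi_{E \oplus F} \circ \Psi = \Id$ are identical computations; the paper's injectivity argument (reading off that the restricted inverses, hence their ranges, coincide) is a direct substitute for your $\Psi \circ \Phi_{E \oplus F} = \Id$ step via Lemma \ref{lemma:proj_graph}, with the former being slightly more self-contained. One place where you add useful detail is closedness of $(\Id + L)(E)$: the paper asserts it with "as $L$ is bounded" but does not spell out that $(\Id + L)(E)$ is the image of the graph of $L$ under the topological isomorphism $E \times F \cong X$, which is exactly the right justification. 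Both approaches are equally valid and of the same length.
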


\begin{proof}[{The proof of Lemma \ref{lemma:graph_rep_continuity}}]
  For every $\epsilon > 0$ there exists $u_1 \in \Phi_{E \oplus F}^{-1}(L_1)$ with $\norm{u_1} = 1$ such that
  \begin{equation*}
    \gap(\Phi_{E \oplus F}^{-1}(L_1), \Phi_{E \oplus F}^{-1}(L_2)) \le \epsilon + \inf_{u_2 \in \Phi_{E \oplus F}^{-1}(L_2)} \norm{u_1 - u_2}.
  \end{equation*}
  Since $L_1 = \left(\restr{\Pi_{E || F}}{\Phi_{E \oplus F}^{-1}(L_1)}\right)^{-1} - \Id$, by taking $u = \Pi_{E || F} u_1$ and $u_2 = (\Id + L_2)u$ we get
  \begin{equation*}\begin{split}
    \gap(\Phi_{E \oplus F}^{-1}(L_1), \Phi_{E \oplus F}^{-1}(L_2)) \le \epsilon + \norm{(\Id+L_1)u - (\Id + L_2)u} \le  \epsilon + \norm{L_1 - L_2} \norm{\Pi_{E || F}}.
  \end{split}\end{equation*}
  Since $\epsilon$ was arbitrary the same inequality holds for $\epsilon = 0$. The same bound clearly holds $\gap(\Phi_{E \oplus F}^{-1}(L_2),\Phi_{E \oplus F}^{-1}(L_1))$, and so we obtain the required inequality from \eqref{eq:gap_equiv}.
\end{proof}

\begin{proof}[{The proof of Lemma \ref{lemma:graph_rep_inv_continuity}}]
  By Lemma \ref{lemma:proj_graph} we have $\Pi_{E_i || F} = (\Id + \Phi_{E \oplus F}(E_i))\Pi_{E || F}$ for $i = 1, 2$, and so $\Pi_{E_i || F} - \Pi_{E || F} = \Phi_{E \oplus F}(E_i)$ on $E$. Thus
  \begin{equation*}
    \norm{\Phi_{E \oplus F}(E_1) - \Phi_{E \oplus F}(E_2)} = \norm{\Pi_{E_1 || F} - \Pi_{E_2 || F}} = \norm{\Pi_{F || E_1}\Pi_{E_2 || F}} \le \norm{\restr{\Pi_{F || E_1}}{E_2}}\norm{\Pi_{E_2 || F}}.
  \end{equation*}
  For $u_i \in E_i$ with $\norm{u_i} = 1$, $i \in \{ 1,2\}$, we have
  \begin{equation*}
    \norm{\Pi_{F || E_1}}\norm{u_2 - u_1} \ge \norm{\Pi_{F || E_1}(u_2 - u_1)} = \norm{\Pi_{F || E_1}u_2},
  \end{equation*}
  Taking the infimum over $u_1$ and then the supremum over $u_2$ yields
  \begin{equation*}\begin{split}
    \sup_{\substack{u_2 \in E_2 \\ \norm{u_2} = 1}} \inf_{\substack{u_1 \in E_2 \\ \norm{u_1} = 1}} \norm{u_1 - u_2} &\ge \left(\norm{\Pi_{F || E_1}}\right)^{-1} \norm{\restr{\Pi_{F || E_1}}{E_2}} \\
    &\ge \left(\norm{\Pi_{F || E_1}}\norm{\Pi_{E_2 || F}}\right)^{-1}\norm{\Phi_{E \oplus F}(E_1) - \Phi_{E \oplus F}(E_2)}.
  \end{split}\end{equation*}
  We obtain the required inequality upon noting that the roles of $E_1$ and $E_2$ may be swapped in the above argument, and then recalling the definition of $d_H$.
\end{proof}

\subsection{Proofs for Section \ref{sec:ss_primer}}

\begin{proof}[{The proof of Lemma \ref{lemma:saks_boundedness}}]
  Since $A$ is continuous it maps $\gamma[\norm{\cdot}_1, \wnorm{\cdot}_1]$-bounded sets to $\gamma[\norm{\cdot}_2, \wnorm{\cdot}_2]$-bounded sets.
  As $B_{\norm{\cdot}_1}$ is $\gamma[\norm{\cdot}_1, \wnorm{\cdot}_1]$-bounded it follows that $A(B_{\norm{\cdot}_1})$ is $\gamma[\norm{\cdot}_2, \wnorm{\cdot}_2]$-bounded.
  Proposition \ref{prop:ss_bounded} says that the $\gamma[\norm{\cdot}_2, \wnorm{\cdot}_2]$-bounded sets are exactly the $\norm{\cdot}_2$-bounded sets, and so $A(B_{\norm{\cdot}_1})$ is $\norm{\cdot}_2$-bounded. Thus $A \in \LL(X_1, X_2)$.
\end{proof}

\begin{proof}[{The proof of Proposition \ref{prop:linear_ops_saks}}]
  Without loss of generality we may assume that both $X_1$ and $X_2$ are normal Saks spaces. By Lemma \ref{lemma:saks_boundedness} we have $\LL_{S}(X_1, X_2) \subseteq \LL(X_1,X_2)$.
  Since $\tnorm{\cdot} \le \norm{\cdot}$ it follows that $\tnorm{\cdot}$ is finite on $\LL_{S}(X_1, X_2)$, and that $B_{\norm{\cdot}}$ is $\tnorm{\cdot}$-bounded.
  It remains to verify one of the conditions from Lemma \ref{lemma:equiv}: we will show that $B_{\norm{\cdot}}$ is $\tnorm{\cdot}$-closed.
  Suppose that $\{A_n\}_{n \in \Z^+} \subseteq B_{\norm{\cdot}} \cap \LL_S(X_1,X_2)$ is a $\tnorm{\cdot}$-convergent sequence with limit $A \in \LL_S(X_1,X_2)$.
  For every $\epsilon > 0$ there exists $f_\epsilon \in X_1$ with $\norm{f_\epsilon}_1  =1 $ such that $\norm{A} \le \norm{A f_\epsilon }_2 +\epsilon$.
  Since $\lim_{n \to \infty} \tnorm{A_n - A} = 0$ we have $\lim_{n \to \infty} \wnorm{(A_n - A)f_\epsilon}_2 = 0$. Moreover, $\norm{A_n f_\epsilon}_2 \le 1$ for every $n \in \Z^+$.
  Since $(X_2, \norm{\cdot}_2, \wnorm{\cdot}_2)$ satisfies each of the conditions in Lemma \ref{lemma:equiv} we have $\norm{A f_\epsilon}_2 \le 1$, which implies that $\norm{A} \le 1 + \epsilon$ for every $\epsilon > 0$.
  Hence $\norm{A} \le 1$ and so $A \in
  B_{\norm{\cdot}}$ i.e. $B_{\norm{\cdot}}$ is $\tnorm{\cdot}$-closed in $\LL_{S}(X_1, X_2)$.
  It follows that $(\LL_S(X_1,X_2), \norm{\cdot}, \tnorm{\cdot})$ is a Saks space, as claimed.
\end{proof}

\begin{proof}[{The proof of Proposition \ref{prop:saks_equicont}}]
  We prove the reverse implication first i.e. that for every $\gamma[\norm{\cdot}_2, \wnorm{\cdot}_2]$-open neighbourhood $U$ of $0$ there exists a $\gamma[\norm{\cdot}_1, \wnorm{\cdot}_1]$-open neighbourhood $V$ of $0$ such that $A_\alpha(V) \subseteq U$ for every $\alpha \in \mathcal{A}$.
  If $U$ is such a neighbourhood then, after recalling the form of the neighbourhood basis for $\gamma[\norm{\cdot}_2, \wnorm{\cdot}_2]$ from \eqref{eq:ss_neighbourhood_basis}, we observe that $U$ contains a set of the form
  \begin{equation*}
    \bigcup_{n =1}^\infty \left(\sum_{k=1}^n 2^{\ell_k} B_{\wnorm{\cdot}_2}^\mathrm{o} \cap 2^{k-1} B_{\norm{\cdot}_2}\right)
  \end{equation*}
  for some sequence $\{\ell_i\}_{i=1}^\infty \subseteq \R$, where $B_{\wnorm{\cdot}_1}^\mathrm{o}$ denotes the open unit $\wnorm{\cdot}_1$-ball.
  By assumption we have $\sup_{\alpha} \norm{A_\alpha} < \infty$ and so there exists $N \ge 0$ such that $A_\alpha(B_{\norm{\cdot}_1}) \subseteq 2^{N} B_{\norm{\cdot}_2}$ for every $\alpha \in \mathcal{A}$.
  For the moment fix $k > N$, and let us suppose that $f \in 2^{\nu_k}B_{\wnorm{\cdot}_1}^\mathrm{o} \cap 2^{k - N - 1}B_{\norm{\cdot}_1}$ for some $\nu_k \in \R$.
  Then $A_\alpha f \in 2^{k-N + N -1}B_{\norm{\cdot}_2} = 2^{k - 1} B_{\norm{\cdot}_2}$ for every $\alpha \in \mathcal{A}$.
  In addition, by \eqref{eq:saks_equicont_0} we have for every $\eta > 0$ and $\alpha \in \mathcal{A}$ that
  \begin{equation*}
      \wnorm{A_\alpha f}_{2} \le \max\{\eta 2^{k - N -1}, C_{\eta} 2^{\nu_k} \},
  \end{equation*}
  so that if we set $\eta_k = 2^{\ell_k + N -k}$ and take $2^{\nu_k} = C_{2^{\ell_k + N -k }}^{-1} 2^{\ell_k - 1}$ then $\wnorm{A_\alpha f}_{2} < 2^{\ell_k}$ for every $\alpha \in \mathcal{A}$.
  Thus for every $\alpha \in \mathcal{A}$ and $k \ge 1$ we have
  \begin{equation*}
    A_\alpha(2^{\nu_k}B_{\wnorm{\cdot}_1}^\mathrm{o} \cap 2^{k - N - 1}B_{\norm{\cdot}_1}) \subseteq 2^{\ell_k} B_{\wnorm{\cdot}_1}^\mathrm{o} \cap 2^{k - 1} B_{\norm{\cdot}_2}.
  \end{equation*}
  Set
  \begin{equation*}
    V = \bigcup_{n =1}^\infty \left(\sum_{k=N + 1}^{n+N} 2^{\nu_{k}} B_{\wnorm{\cdot}_2}^\mathrm{o} \cap 2^{k-N - 1} B_{\norm{\cdot}_2}\right),
  \end{equation*}
  and note that $V$ is open per \eqref{eq:ss_neighbourhood_basis}.
  Moreover, for every $\alpha \in \mathcal{A}$ we have
  \begin{equation*}\begin{split}
    A_\alpha (V) &= \bigcup_{n =1}^\infty \left(\sum_{k=N + 1}^{n+N} A_\alpha(2^{\nu_{k}} B_{\wnorm{\cdot}_2}^\mathrm{o} \cap 2^{k-N - 1} B_{\norm{\cdot}_2})\right) \\
    &\subseteq \bigcup_{n =1}^\infty \left(\sum_{k=N + 1}^{n+N} 2^{\ell_k} B_{\wnorm{\cdot}_1}^\mathrm{o} \cap 2^{k - 1} B_{\norm{\cdot}_2}\right) \subseteq U.
  \end{split}\end{equation*}
  Thus $\{A_\alpha\}_{\alpha \in \mathcal{A}}$ is equicontinuous in $\LL_S(X_1, X_2)$.

  We will now prove the opposite implication.
  Let $U$ be any $\gamma[\norm{\cdot}_2, \wnorm{\cdot}_2]$-neighbourhood of $0$.
  Since $\{A_\alpha\}_{\alpha \in \mathcal{A}}$ is equicontinuous in $\LL_S(X_1, X_2)$, by \cite[4.1]{schaefer1986topological} the set $V = \cap_{\alpha \in \mathcal{A}} A_\alpha^{-1}(U)$ is $\gamma[\norm{\cdot}_1, \wnorm{\cdot}_1]$-open.
  Since $V$ is $\gamma[\norm{\cdot}_1, \wnorm{\cdot}_1]$-open and $B_{\norm{\cdot}_1}$ is $\gamma[\norm{\cdot}_1, \wnorm{\cdot}_1]$-bounded, there exists $\lambda > 0$ such that $B_{\norm{\cdot}_1} \subseteq \lambda V$.
  Hence $A_\alpha(B_{\norm{\cdot}_1}) \subseteq \lambda U$ for every $\alpha \in \mathcal{A}$ and so $\bigcup_{\alpha \in \mathcal{A}} A_\alpha(B_{\norm{\cdot}_1})$ is $\gamma[\norm{\cdot}_2, \wnorm{\cdot}_2]$-bounded.
  By Proposition \ref{prop:ss_bounded} it follows that $\bigcup_{\alpha \in \mathcal{A}} A_\alpha(B_{\norm{\cdot}_1})$ is $\norm{\cdot}_2$-bounded i.e. there exists $M > 0$ such that $A_\alpha(B_{\norm{\cdot}_1}) \subseteq M B_{\norm{\cdot}_2}$ for every $\alpha \in \mathcal{A}$.
  Hence $\{A_\alpha\}_{\alpha \in \mathcal{A}}$ is bounded in $\LL(X_1,X_2)$, which implies that $\{A_\alpha\}_{\alpha \in \mathcal{A}}$ is equicontinuous as a subset of $\LL(X_1,X_2)$.

  It remains to prove \eqref{eq:saks_equicont_0}.
  Fix $\eta > 0$. Since $\{A_\alpha\}_{\alpha \in \mathcal{A}}$ is equicontinuous in $\LL_S(X_1, X_2)$ and $\wnorm{\cdot}_2$ is $\gamma[\norm{\cdot}_2, \wnorm{\cdot}_2]$-continuous there exists a $\gamma[\norm{\cdot}_1, \wnorm{\cdot}_1]$-neighbourhood of 0, say $U_\eta$, such that if $f \in U_\eta$ and $\alpha \in \mathcal{A}$ then $\wnorm{A_\alpha(f)}_2 < \eta$.
  By construction $U_\eta$ contains a set of the form $2^{\ell_\eta} B_{\wnorm{\cdot}_1}^\mathrm{o} \cap B_{\norm{\cdot}_1}$ for some $\ell_\eta \in \R$. For any non-zero $f \in X_1$ we have
  \begin{equation*}
    \frac{f}{\max\{2^{-\ell_\eta +1} \wnorm{f}_1, \norm{f}_1\}} \in 2^{\ell_\eta} B_{\wnorm{\cdot}_1}^\mathrm{o} \cap B_{\norm{\cdot}_1},
  \end{equation*}
  and so for every $\alpha \in \mathcal{A}$ we have
  \begin{equation*}
    \wnorm{A_\alpha \left(\frac{f}{\max\{2^{-\ell_\eta+1} \wnorm{f}_1, \norm{f}_1\}}\right)}_2 < \eta.
  \end{equation*}
  In particular, if we set $C_\eta = \eta 2^{-\ell_\eta+1}$ then for every $f \in X_1$ and $\alpha \in \mathcal{A}$ we have
  \begin{equation*}
    \wnorm{A_\alpha(f)}_2 < \max\{\eta \norm{f}_1, C_\eta \wnorm{f}_1\},
  \end{equation*}
  as required.
\end{proof}

\begin{proof}[{The proof of Proposition \ref{prop:operator_saks_space_complete}}]
  Suppose $\{A_n \}_{n \in \Z^+} \subseteq \LL_S(X_1,X_2)$ is $\norm{\cdot}$-bounded and $\tnorm{\cdot}$-Cauchy.
  Then for every $f \in X_1$ the sequence $\{A_n f\}_{n \in \Z^+}$ is $\norm{\cdot}_{2}$-bounded and $\wnorm{\cdot}_2$-Cauchy, and so there exists $g \in X_2$ such that $A_n f \to g$ in $\gamma[\norm{\cdot}_2, \wnorm{\cdot}_2]$.
  Define $A: X_1 \to X_2$ by $A f = \lim_{n \to \infty} A_n f$, and  note that $\tnorm{A - A_n} \to 0$ due to the fact that $\{A_n \}_{n \in \Z^+} \subseteq \LL_S(X_1,X_2)$ is $\tnorm{\cdot}$-Cauchy.
  We will use Proposition \ref{prop:saks_equicont} to prove that $A \in \LL_S(X_1, X_2)$.
  Since $\norm{\cdot}$ is lower semicontinuous for $\tnorm{\cdot}$ we have
  \begin{equation*}
    \sup_{\substack{f \in X_1 \\ \norm{f} = 1}} \norm{A f} \le \sup_{\substack{f \in X_1 \\ \norm{f} = 1}} \liminf_{n \to \infty}\norm{A_n f} \le \sup_{n \in \Z^+} \norm{A_n},
  \end{equation*}
  and so $A \in \LL(X_1, X_2)$.
  Fix $\eta > 0$. For every $f \in X_1$ and $n \in \Z^+$ we have
  \begin{equation*}
    \wnorm{A f}_2 \le \wnorm{A_n f}_2 + \tnorm{A_n - A} \norm{f}_1.
  \end{equation*}
  By Proposition \ref{prop:saks_equicont} and as $\{A_n\}_{n \in \Z^+} \subseteq \LL_{S}(X_1, X_2)$, for each $n \in \Z^+$ and $\kappa > 0$ there exists $D_{\kappa,n}$ such that for every $f \in X_1$ we have $\wnorm{A_n f}_2 \le \kappa \norm{f}_1 + D_{\kappa, n} \wnorm{f}_1$.
  Thus
  \begin{equation*}
    \wnorm{A f}_2 \le (\kappa + \tnorm{A_n - A}) \norm{f}_1 + D_{\kappa, n} \wnorm{f}_1.
  \end{equation*}
  Suppose that $n$ is large enough so that $\tnorm{A_n - A} \le \eta /4 $. Set $\kappa = \eta / 4$ and $C_\eta = 2D_{\kappa, n}$. Then
  \begin{equation*}
    \wnorm{A f}_2 \le \frac{\eta}{2} \norm{f}_1 + D_{\kappa, n} \wnorm{f}_1 \le 2\max\left\{\frac{\eta}{2} \norm{f}_1, D_{\kappa,n} \wnorm{f}_1\right\} = \max\{ \eta \norm{f}_1, C_{\eta} \wnorm{f}_1\}.
  \end{equation*}
  Hence $A \in \LL_S(X_1, X_2)$ by Proposition \ref{prop:saks_equicont}.
\end{proof}

\begin{proof}[{The proof of Theorem \ref{thm:compact_saks_unique}}]
  We will prove that the set of $\gamma[\norm{\cdot}, \tau]$-convergent nets coincides with the set of $\gamma[\norm{\cdot}, \tau']$-convergent nets.
  Suppose that $\{f_\alpha\}_{\alpha \in \mathcal{A}}$ is a $\gamma[\norm{\cdot}, \tau]$-convergent net and, for a contradiction, that $\{f_\alpha\}_{\alpha \in \mathcal{A}}$ is not $\gamma[\norm{\cdot}, \tau']$-convergent.
  By continuity, $\{f_\alpha\}_{\alpha \in \mathcal{A}}$ must be convergent in $D$.
  Since $(X, \norm{\cdot}, \tau')$ is a compact Saks space and $\sup_{\alpha \in \mathcal{A}} \norm{f_\alpha} < \infty$ there exists a $\tau'$-convergent sub-net $\{f_\alpha\}_{\alpha \in \mathcal{A}'}$, which accumulates away from the $\tau$-limit of $\{f_\alpha\}_{\alpha \in \mathcal{A}}$.
  But $\{f_\alpha\}_{\alpha \in \mathcal{A}'}$ can only accumulate in $D$ at the accumulation point of $\{f_\alpha\}_{\alpha \in \mathcal{A}}$, and so the two accumulation points must be the same.
\end{proof}

\begin{proof}[{The proof of Theorem \ref{thm:saks_space_norm_formula}}]
  By Proposition \ref{prop:saks_space_finite} there exists a separable Banach space $F$ such that $(X, \norm{\cdot}, \wnorm{\cdot}) = (F^*, \norm{\cdot}, \sigma(F^*, F))$. Specifically, $\sigma(F^*, F)$ and $\wnorm{\cdot}$ are equivalent on $B_{\norm{\cdot}}$.
  Since both $\norm{\cdot}_{F^*}$ and $\norm{\cdot}$ are stronger than $\sigma(F^*, F)$, which is a Hausdorff topology on $F^*$, it follows from the closed graph theorem that $\norm{\cdot}_{F^*}$ and $\norm{\cdot}$ are equivalent (see \cite[Proposition 1]{terry_tao_lemma}).
  Since $F$ is separable, there exists a $\norm{\cdot}_{F}$-bounded family $\Phi = \{\varphi_n\}_{n \in \Z^+} \subseteq F^* \cap F$ that is $\norm{\cdot}_F$-dense in $\partial B_{\norm{\cdot}_F}$ and so that $\norm{\cdot}_{F^*} = \norm{\cdot}_{\Phi}$.
  Thus $\norm{\cdot}_{\Phi}$ and $\norm{\cdot}$ are equivalent, so it only remains to prove that $\wnorm{\cdot}$ is equivalent to $\wnorm{\cdot}_{\Phi}$ on $B_{\norm{\cdot}}$.
  Suppose that $\{f_k\}_{k \in \Z^+} \subseteq B_{\norm{\cdot}}$ is a $\wnorm{\cdot}$-convergent sequence with limit $f \in B_{\norm{\cdot}}$. Fix $\epsilon > 0$. Since $\{f_k\}_{k \in \Z^+} \subseteq B_{\norm{\cdot}}$ there exists $N$ such that
  \begin{equation*}
    \sum_{n = N + 1}^\infty 2^{-n} \abs{\varphi_n(f-f_k)} \le \epsilon.
  \end{equation*}
  Hence
  \begin{equation*}
    \wnorm{f - f_k}_\Phi = \sum_{n =1 }^\infty 2^{-n}  \abs{\varphi_n(f-f_k)} \le \sum_{n =1 }^N 2^{-n} \abs{\varphi_n(f-f_k)} + \epsilon.
  \end{equation*}
  Viewing $f-f_k$ as an element of $F^*$, each $\varphi_n$ as an element of $F$, and using the equivalence of $\sigma(F^*,F)$ and $\wnorm{\cdot}$ on $B_{\norm{\cdot}}$, we observe that $\limsup_{k \to \infty}\wnorm{f - f_k}_\Phi \le \epsilon$. Thus, as $\epsilon$ is arbitrary, we have $\lim_{k \to \infty} \wnorm{f_k - f}_\Phi = 0$.
  On the other hand, if $\{f_k\}_{k \in \Z^+} \subseteq B_{\norm{\cdot}}$ is instead $\wnorm{\cdot}_\Phi$-convergent then we must have $\varphi_n(f-f_k) \to 0$ for every $n \in \Z^+$. Since $\Phi$ is $\norm{\cdot}_F$-dense in $\partial B_{\norm{\cdot}_F}$, we have $(f-f_k)(x) \to 0$ for every $x \in F$ i.e. $f \to f_k$ in $\sigma(F^*, F)$.
  Since $\sigma(F^*,F)$ and $\wnorm{\cdot}$ are equivalent on $B_{\norm{\cdot}}$, and $\sigma(F^*,F)$ is metrisable on $B_{\norm{\cdot}}$, it follows that $f \to f_k$ in $\wnorm{\cdot}$.
  Hence $\wnorm{\cdot}$ and $\wnorm{\cdot}_\Phi$ are equivalent on $B_{\norm{\cdot}}$, as required.
  Thus, $\gamma[\norm{\cdot}, \wnorm{\cdot}]$ and  $\gamma[\norm{\cdot}_\Phi, \wnorm{\cdot}_\Phi]$ are equivalent by Proposition \ref{prop:saks_space_weak_top}.
\end{proof}

\section{Proofs for Section \ref{sec:stability_lyapunov}}\label{app:stability_lyapunov_proofs}

In this section we prove some technical results from Section \ref{sec:stability_lyapunov} on the existence, continuity and measurability of certain maps.

Throughout this appendix $(X, \norm{\cdot})$ will denote a separable Banach space.
Let $\Delta_d = \{ A \in \LL(X) : A^2 = A \text{ and } \rank(A) = d\}$ denote the space of bounded $d$-dimensional projections on $X$, and set
\begin{equation*}
  \Lambda_d = \left\{ (A, \Pi_1, \Pi_2) \in  \LL(X) \times \Delta_d \times \Delta_d \,\mid\, \restr{\Pi_{2} A}{\Pi_1(X)} : \Pi_1(X) \to \Pi_2(X) \text{ is invertible } \right\}.
\end{equation*}
Let $\Gamma^* : \Lambda_d \to \Delta_d$ be defined by
\begin{equation*}
  \Gamma^*(A, \Pi_1, \Pi_2) = (\Id + A^*(0))\Pi_2,
\end{equation*}
where $A^*$ is understood as the forward graph transform from $\LL(\Pi_1(X), \ker(\Pi_1))$ to $\LL(\Pi_2(X), \ker(\Pi_2))$,
and let $\Gamma_* : \Lambda_d \to \Delta_d$ be defined by
\begin{equation*}
    \Gamma_*(A, \Pi_1, \Pi_2) = \Pi_1 - A_*(0)(\Id - \Pi_1),
\end{equation*}
where $A_*$ is understood as the backward graph transform from $\LL(\ker(\Pi_2), \Pi_2(X))$ to $\LL(\ker(\Pi_1), \Pi_1(X))$.

The first result we will focus on proving is the following.
\begin{proposition}\label{prop:measurable_graph_tranform}
  Suppose that $(\Omega, \mathcal{F}, \mathbb{P})$ is a Lebesgue space, $X$ is a separable Banach space, and $Y : \Omega \mapsto \LL(X)^3$ is a $(\mathcal{F}, \mathcal{S}^3)$-measurable map with $Y(\Omega) \subseteq \Lambda_d$.
  Then $\Gamma^* \circ Y$ and $\Gamma_* \circ Y$ are $(\mathcal{F},\mathcal{S})$-measurable.
\end{proposition}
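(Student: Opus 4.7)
The plan is to express $\Gamma^*$ and $\Gamma_*$ as compositions of strongly measurable algebraic operations with a single nontrivial inversion step, and then to handle that inversion step via a measurable change of basis to $\C^d$. With $E_i = \Pi_i(X)$ and $F_i = \ker(\Pi_i)$, a direct unpacking of the definitions (using $\Pi_{E_i || F_i} = \Pi_i$ and $\Pi_{F_i || E_i} = \Id - \Pi_i$) gives
\begin{equation*}
  \Gamma^*(A, \Pi_1, \Pi_2) = \Pi_2 + (\Id - \Pi_2) A \, J(A, \Pi_1, \Pi_2), \quad \Gamma_*(A, \Pi_1, \Pi_2) = \Pi_1 + J(A, \Pi_1, \Pi_2) \, A (\Id - \Pi_1),
\end{equation*}
where $J(A, \Pi_1, \Pi_2) = (\restr{\Pi_2 A}{\Pi_1(X)})^{-1}\Pi_2 \in \LL(X)$. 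Since strong measurability is preserved under addition, scalar multiplication, and composition by \cite[Lemma A.5]{GTQuas1}, it suffices to prove that $\omega \mapsto J(Y(\omega))$ is strongly measurable.

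Because $X$ is separable, each of the real-valued maps $\omega \mapsto \norm{\Pi_{1,\omega}}, \norm{\Pi_{2,\omega}}$ is $\mathcal{F}$-measurable (as a countable supremum of strongly measurable maps indexed by a dense subset of $B_X$). The next step is therefore to partition $\Omega$ into the measurable sets $\Omega_n = \{ \omega : \max\{\norm{\Pi_{1,\omega}}, \norm{\Pi_{2,\omega}}\} \le n \}$ and prove strong measurability of $J \circ Y$ separately on each $\Omega_n$, which suffices because strong measurability is local over countable measurable partitions.

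On a fixed $\Omega_n$, Lemma \ref{lemma:measurable_change_of_basis} applied to each $\omega \mapsto \Pi_{i,\omega}$ produces strongly measurable maps $B_{i,\omega} : X \to \C^d$ such that $\ker(B_{i,\omega}) = \ker(\Pi_{i,\omega})$, the restriction $\restr{B_{i,\omega}}{\Pi_{i,\omega}(X)}$ is a bijection, and $\omega \mapsto (\restr{B_{i,\omega}}{\Pi_{i,\omega}(X)})^{-1}$ is strongly measurable. Setting $M_\omega = B_{2,\omega} A_\omega (\restr{B_{1,\omega}}{\Pi_{1,\omega}(X)})^{-1} \in \LL(\C^d)$, the map $\omega \mapsto M_\omega$ is measurable as a $\C^{d \times d}$-valued function (composition of strongly measurable maps, then restriction to a finite-dimensional target), and each $M_\omega$ is invertible precisely by the assumption $Y(\omega) \in \Lambda_d$. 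Continuity of matrix inversion on invertible matrices, or equivalently Cramer's rule, then yields measurability of $\omega \mapsto M_\omega^{-1}$. Using that $\ker(B_{2,\omega}) = \ker(\Pi_{2,\omega})$ implies $B_{2,\omega} = B_{2,\omega} \Pi_{2,\omega}$, a routine verification shows
\begin{equation*}
   J(Y(\omega)) = (\restr{B_{1,\omega}}{\Pi_{1,\omega}(X)})^{-1} M_\omega^{-1} B_{2,\omega},
\end{equation*}
which exhibits $J \circ Y$ on $\Omega_n$ as a composition of strongly measurable maps.

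The main obstacle is producing the measurable trivialisations $B_{i,\omega}$ without a uniform bound on $\norm{\Pi_{i,\omega}}$; this is precisely why one localises to the pieces $\Omega_n$. Once the trivialisations exist on each piece, the hard part of the inversion takes place in the finite-dimensional space $\C^{d \times d}$, and the rest is formal.
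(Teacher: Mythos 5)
Your proof is correct, and it takes a genuinely different route from the paper's. The paper proceeds by proving that $\Gamma^*$, $\Gamma_*$, and the auxiliary inversion map $\Xi$ are continuous in the strong operator topology when restricted to $(sB_{\LL(X)})^3$ (Lemmas \ref{lemma:open_sot}, \ref{lemma:inverse_sot_cont}, \ref{lemma:forward_transform_cont}), that $\Lambda_d \cap (sB_{\LL(X)})^3$ is relatively open, and then deduces measurability of the composition by decomposing preimages of open sets into countable unions of rectangles using separability. You instead isolate the single nontrivial inversion $J(A,\Pi_1,\Pi_2) = (\restr{\Pi_2 A}{\Pi_1(X)})^{-1}\Pi_2$ (which is exactly the paper's $\Xi$, so your argument also yields Proposition \ref{prop:measurable_inverse} for free), localise to the sets $\Omega_n$ where the projections are uniformly bounded, and on each piece invoke Lemma \ref{lemma:measurable_change_of_basis} to conjugate the inversion into $GL_d(\C)$, where measurability of inversion is elementary. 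Your algebraic identities for $\Gamma^*$, $\Gamma_*$ in terms of $J$, and the factorisation $J \circ Y = (\restr{B_{1,\omega}}{\Pi_{1,\omega}(X)})^{-1} M_\omega^{-1} B_{2,\omega}$, all check out. Two trade-offs worth noting. First, your argument relies on Lemma \ref{lemma:measurable_change_of_basis}, which uses measurable selection and therefore the Lebesgue space hypothesis; the paper's route uses only that $(\Omega, \mathcal{F})$ is a measurable space and $X$ is separable, and would apply unchanged without the probability structure, even though both are formally available here. Second, the paper's intermediate continuity lemmas are themselves reused elsewhere (for instance in the cone contraction arguments), so the extra work is not wasted; your approach is tighter if one only wants the measurability statement and already has the measurable trivialisation lemma in hand.
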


\begin{lemma}\label{lemma:strong_cont_proj_grassmannian}
  The map $\Psi : \Delta_d \to \mathcal{G}_d(X)$ defined by $\Psi(\Pi) = \Pi(X)$ is continuous with respect to the strong operator topology on $\Delta_d$.
  \begin{proof}
    Fix a normalised Auerbach basis $\{v_i\}_{i = 1}^d$ for $\Psi(\Pi)$ i.e. a basis such that
    \begin{equation*}
      \forall i \in \{1 ,\dots, d\} \quad \dist(v_i, \vspan\{ v_j : j \ne i\}) = 1.
    \end{equation*}
    For $\eta > 0$ set
    \begin{equation*}
      S_\eta := \Delta_d \cap \left( \bigcap_{i=1}^d \left\{ \Pi' \in \LL(X) : \norm{(\Pi - \Pi')v_i} < \eta \right\} \right),
    \end{equation*}
    and note that each $S_\eta$ is open in the strong operator topology.
    Set $\epsilon = 2^{-d-2}$ and let $NB_d^\epsilon(X)$ denote the set of $\epsilon$-nice bases for $d$-dimensional subspaces of $X$ (see \cite[Definition 2]{GTQuas1} for the relevant definition). We note that $\{v_i\}_{i = 1}^d \in NB_d^\epsilon(X)$.
    By \cite[Lemma B.8]{GTQuas1} there exists $\eta > 0$ so that if $\{w_i\}_{i = 1}^d$ satisfies $\sup_{i} \norm{v_i-w_i} < \eta'$ then $\{w_i\}_{i = 1}^d \in NB_d^\epsilon(X)$ too.
    Hence if $\Pi' \in S_\eta$ then $\{\Pi' v_i\}_{i=1}^d$ is a $\epsilon$-nice basis for $\Pi'(X)$.
    Moreover, the map $\Pi' \mapsto \{\Pi'v_i\}_{i=1}^d$ is continuous from $S_\eta$ to $NB_d^\epsilon(X)$.
    Thus by \cite[Corollary B.6]{GTQuas1} the map $\Pi' \mapsto \vspan\{\Pi' v_i \}_{i=1}^d = \Pi(X)$ is continuous from $S_\eta$ to $\mathcal{G}_d(X)$.
  \end{proof}
\end{lemma}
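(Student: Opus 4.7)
The plan is to establish sequential continuity at each $\Pi \in \Delta_d$. Because SOT-convergent sequences are $\norm{\cdot}_{\LL(X)}$-bounded by the uniform boundedness principle, and because the SOT is metrisable on $\norm{\cdot}_{\LL(X)}$-bounded subsets of $\LL(X)$ when $X$ is separable, checking sequential continuity will suffice for continuity.

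First I would fix $\Pi \in \Delta_d$ and a normalised Auerbach basis $\{v_i\}_{i=1}^d$ of $\Pi(X)$, so in particular $\Pi v_i = v_i$. If $\Pi_n \to \Pi$ in the SOT, then $\Pi_n v_i \to v_i$ in norm for every $i$. I would then invoke the quantitative stability of $\epsilon$-nice bases \cite[Lemma B.8]{GTQuas1}: with $\epsilon = 2^{-d-2}$, the basis $\{v_i\}_{i=1}^d$ is $\epsilon$-nice, and any $d$-tuple sufficiently $\norm{\cdot}$-close to it inherits the $\epsilon$-nice property. Hence for all $n$ large enough $\{\Pi_n v_i\}_{i=1}^d$ is $\epsilon$-nice, and in particular linearly independent.

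Second, each $\Pi_n v_i$ lies in $\Pi_n(X)$, and $\dim \Pi_n(X) = d$, so linear independence of the $d$ vectors $\{\Pi_n v_i\}$ inside $\Pi_n(X)$ forces $\Pi_n(X) = \vspan\{\Pi_n v_i : 1 \le i \le d\}$. Continuity of the span map on $\epsilon$-nice $d$-tuples, as in \cite[Corollary B.6]{GTQuas1}, then yields $\Pi_n(X) \to \Pi(X)$ in the gap metric on $\mathcal{G}_d(X)$. Combining the two steps shows $\Psi$ is sequentially continuous at $\Pi$, which by the metrisability observation above promotes to continuity.

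The main obstacle is that the SOT gives no a priori control on the angle between $\Pi_n(X)$ and $\ker \Pi$, so a direct argument via graph representations (as in Proposition \ref{prop:graph_chart}) is not directly available: we cannot a priori write $\Pi_n(X)$ as the graph of some $L_n \in \LL(\Pi(X), \ker \Pi)$. The Auerbach / nice-basis machinery of \cite{GTQuas1} is precisely what bypasses the missing transversality, converting norm convergence of the distinguished basis $\{\Pi_n v_i\}$ into Grassmannian convergence of its span.
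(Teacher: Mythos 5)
Your core argument is the same as the paper's: fix an Auerbach basis $\{v_i\}_{i=1}^d$ of $\Pi(X)$, use \cite[Lemma B.8]{GTQuas1} to guarantee that $\{\Pi' v_i\}_{i=1}^d$ is an $\epsilon$-nice (hence linearly independent) $d$-tuple once $\|\Pi' v_i - v_i\|$ are all small, deduce $\Pi'(X) = \vspan\{\Pi' v_i\}$ by dimension count, and then invoke \cite[Corollary B.6]{GTQuas1}. The one place where you diverge is the preliminary reduction, and it contains a gap: the claim that ``sequential continuity suffices for continuity'' does not follow from the two facts you cite. The uniform boundedness principle tells you that an SOT-convergent \emph{sequence} is norm-bounded, and metrisability on bounded sets lets you test sequence convergence with a metric; but together these only establish sequential continuity, not continuity, and the SOT on $\LL(X)$ (and on the unbounded set $\Delta_d$) is not first countable for infinite-dimensional $X$, so sequential continuity does not automatically upgrade to continuity. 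Said differently, the basic SOT-open neighbourhoods of a projection $\Pi$ (sets cut out by finitely many vector conditions) are unbounded, so you cannot localise to a bounded set where the metrisability would apply.

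The fix is short and does not change your main reasoning: instead of taking a sequence $\Pi_n \to \Pi$, observe that the conditions $\|\Pi' v_i - v_i\| < \eta$ for $i = 1, \dots, d$ directly cut out a basic SOT-open neighbourhood $S_\eta$ of $\Pi$ in $\Delta_d$. Running your argument for an arbitrary $\Pi' \in S_\eta$ (rather than for a tail of a sequence) shows $\Psi$ restricted to $S_\eta$ is the composition $\Pi' \mapsto \{\Pi' v_i\}_{i=1}^d \mapsto \vspan\{\Pi' v_i\}_{i=1}^d$, which is continuous by \cite[Corollary B.6]{GTQuas1}. Since $S_\eta$ is SOT-open and contains $\Pi$, this gives continuity of $\Psi$ at $\Pi$. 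This is exactly how the paper phrases the argument. Your closing remark about the failure of the graph-chart approach is accurate and is part of why the nice-basis route is used.
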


A fact we will use frequently throughout the following lemmas is that, as $X$ is separable, the strong operator topology is metrisable on $sB_{\LL(X)}$ for every $s > 0$. In particular, if $\{x_i\}_{i \in \Z^+}$ is a dense subset of $X$ then the topology induced by the metric
\begin{equation*}
  d(S,T) := \sum_{i=1}^\infty 2^{-i} \frac{\norm{(S-T)x_i}}{1 + \norm{(S-T)x_i}}
\end{equation*}
is equivalent to the strong operator topology on $\LL(X)$-bounded sets. Another nice fact about the restriction of the strong operator topology to $\LL(X)$-bounded sets, which we will use frequently, is that the composition map $(S,T) \mapsto S \circ T$ is continuous.

\begin{lemma}\label{lemma:open_sot}
  For every $s > 0$ the set $\Lambda_d \cap (s B_{\LL(X)})^3$ is open in the restriction of the strong operator topology to $(s B_{\LL(X)})^3$.
  \begin{proof}
    It suffices to prove that $(A, \Pi_1, \Pi_2) \mapsto \norm{\left(\restr{\Pi_2 A}{\Pi_1(X)}\right)^{-1}}$ is locally finite and upper semicontinuous on $\Lambda_d \cap (s B_{\LL(X)})^3$ i.e. that if $\{(A_\epsilon, \Pi_{1,\epsilon}, \Pi_{2,\epsilon}) \}_{\epsilon > 0} \subseteq ( sB_{\LL(X)})^3$ converges to $(A_0, \Pi_{1,0}, \Pi_{2,0}) \in \Lambda_d \cap ( sB_{\LL(X)})^3$ then
    \begin{equation}\label{eq:open_sot_0}
      \limsup_{\epsilon \to 0} \norm{\left(\restr{\Pi_{2,\epsilon} A_\epsilon}{\Pi_{1,\epsilon}(X)}\right)^{-1}} \le \norm{\left(\restr{\Pi_{2,0} A_0}{\Pi_{1,0}(X)}\right)^{-1}}.
    \end{equation}
    By definition, for each $v \in \Pi_{1,\epsilon}(X)$ with $\norm{v} = 1$ there exists $w \in \Pi_{1,\epsilon}(X)$ with $\norm{w} = 1$ such that $\norm{v-w} \le d_H( \Pi_{1,\epsilon}(X), \Pi_{1,0}(X))$. Hence, for every such $v$ and $w$,
    \begin{equation}\label{eq:open_sot_1}
      \norm{\Pi_{2,\epsilon} A_{\epsilon} v} \ge \norm{\Pi_{2,\epsilon} A_{\epsilon} w} - \norm{\Pi_{2,\epsilon} A_{\epsilon} (v-w)}
      \ge \norm{\Pi_{2,\epsilon} A_{\epsilon} w} - s^2 d_H( \Pi_{1,\epsilon}(X), \Pi_{1,0}(X)).
    \end{equation}
    Focusing on the first term yields
    \begin{equation}\begin{split}\label{eq:open_sot_2}
      \norm{\Pi_{2,\epsilon} A_{\epsilon} w} &\ge \norm{\Pi_{2,0} A_0 w } - \norm{\left(\Pi_{2,0} A_0 - \Pi_{2,\epsilon} A_{\epsilon}\right) w }\\
      &\ge \norm{\left(\restr{\Pi_{2,0} A_0 }{\Pi_{1,0}(X)}\right)^{-1}}^{-1}\left(1 - d_H( \Pi_{1,\epsilon}(X), \Pi_{1,0}(X))\right) - \norm{\left(\Pi_{2,0} A_0 - \Pi_{2,\epsilon} A_{\epsilon}\right) \Pi_{1,0} w }.
    \end{split}\end{equation}
    Fix a normalised Auerbach basis $\{w_i\}_{i=1}^d$ for $\Pi_{1,0}(X)$ and write $\Pi_{1,0}w = \sum_{i=1}^d a_i w_i$. Since $\{w_i\}_{i=1}^d$ is Auerbach, by \cite[Corollary A.7]{quas2019explicit} we have
    \begin{equation*}
      \left( \sum_{i=1}^d \abs{a_i}^2 \right)^{1/2} \le \sqrt{d} \norm{\Pi_{1,0}w} \le s \sqrt{d}.
    \end{equation*}
    Hence
    \begin{equation}\begin{split}\label{eq:open_sot_3}
      \norm{\left(\Pi_{2,0} A_0 - \Pi_{2,\epsilon} A_{\epsilon}\right) \Pi_{1,0} w } &\le \max_{i \in \{1, \dots, d\}} \norm{\left(\Pi_{2,0} A_0 - \Pi_{2,\epsilon} A_{\epsilon}\right) \Pi_{1,0} w_i} \left(\sum_{i=1}^d \abs{a_i}\right)\\
      &\le s d\max_{i \in \{1, \dots, d\}} \norm{\left(\Pi_{2,0} A_0 - \Pi_{2,\epsilon} A_{\epsilon}\right) \Pi_{1,0} w_i}.
    \end{split}\end{equation}
    Combining \eqref{eq:open_sot_1}, \eqref{eq:open_sot_2} and \eqref{eq:open_sot_3} yields a lower bound for $\norm{\Pi_{2,\epsilon} A_{\epsilon} v}$ that is uniform in $v \in \Pi_{1,\epsilon}(X)$ with $\norm{v}=1$.
    Moreover, using the facts that the right hand side of \eqref{eq:open_sot_3} vanishes as $\epsilon \to 0$, and that, due to Lemma \ref{lemma:strong_cont_proj_grassmannian}, $\lim_{\epsilon \to 0} \Pi_{1,\epsilon}(X) = \Pi_{1,0}(X)$ in $\mathcal{G}_d(X)$, we see that this lower bound converges to $\norm{\left(\restr{\Pi_{2,0} A_0}{\Pi_{1,0}(X)}\right)^{-1}}^{-1}$ as $\epsilon \to 0$, which completes the proof.
    \end{proof}
\end{lemma}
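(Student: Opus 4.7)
The plan is to show that $\Lambda_d \cap (sB_{\LL(X)})^3$ is sequentially open in the restriction of the strong operator topology to $(sB_{\LL(X)})^3$; since $X$ is separable, the strong operator topology is metrizable on $sB_{\LL(X)}$ and hence on the product, so sequential openness suffices. Given $(A_0, \Pi_{1,0}, \Pi_{2,0}) \in \Lambda_d \cap (sB_{\LL(X)})^3$ and a sequence $(A_n, \Pi_{1,n}, \Pi_{2,n}) \to (A_0, \Pi_{1,0}, \Pi_{2,0})$ in SOT within $(sB_{\LL(X)})^3$, I need to show $T_n := \restr{\Pi_{2,n} A_n}{\Pi_{1,n}(X)} : \Pi_{1,n}(X) \to \Pi_{2,n}(X)$ is invertible for all sufficiently large $n$. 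Since $\Pi_{1,n}(X)$ and $\Pi_{2,n}(X)$ are both $d$-dimensional, it is enough to produce a uniform lower bound $\norm{T_n v} \ge c \norm{v}$ with $c > 0$ independent of $n$ (for large $n$): then $T_n$ is injective, and by dimension count bijective.

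The argument proceeds in three reductions. First, composition is jointly SOT-continuous on $\LL(X)$-bounded sets, so $\Pi_{2,n} A_n \to \Pi_{2,0} A_0$ in SOT, with norms uniformly bounded by $s^2$. Second, by Lemma \ref{lemma:strong_cont_proj_grassmannian}, $\Pi_{1,n}(X) \to \Pi_{1,0}(X)$ in $\mathcal{G}_d(X)$, which translates (via \eqref{eq:gap_equiv}) into $d_H$-convergence and in particular gap-convergence. Third, I need to upgrade the SOT-convergence $\Pi_{2,n}A_n \to \Pi_{2,0}A_0$ to uniform convergence on the unit sphere of the fixed finite-dimensional space $\Pi_{1,0}(X)$. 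To achieve this, fix a normalised Auerbach basis $\{w_i\}_{i=1}^d$ for $\Pi_{1,0}(X)$: for any unit $w = \sum_i a_i w_i \in \Pi_{1,0}(X)$, \cite[Corollary A.7]{quas2019explicit} yields $\sum_i |a_i| \le d$, whence
\begin{equation*}
\sup_{\substack{w \in \Pi_{1,0}(X) \\ \norm{w} = 1}} \norm{(\Pi_{2,n}A_n - \Pi_{2,0}A_0)w} \le d \max_{1 \le i \le d} \norm{(\Pi_{2,n}A_n - \Pi_{2,0}A_0)w_i} \longrightarrow 0.
\end{equation*}

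To conclude, let $c_0 := \norm{(\restr{\Pi_{2,0}A_0}{\Pi_{1,0}(X)})^{-1}}^{-1} > 0$. For any unit $v \in \Pi_{1,n}(X)$, choose (up to a vanishing error) a unit $w \in \Pi_{1,0}(X)$ with $\norm{v - w} \le d_H(\Pi_{1,n}(X), \Pi_{1,0}(X)) + 1/n$. Then
\begin{equation*}
\norm{\Pi_{2,n}A_n v} \ge \norm{\Pi_{2,0}A_0 w} - \norm{(\Pi_{2,n}A_n - \Pi_{2,0}A_0)w} - s^2\norm{v - w} \ge c_0 - o(1),
\end{equation*}
uniformly in $v$, where the first term is bounded below by $c_0$ by definition and the latter two terms vanish as $n \to \infty$ by the second and third reductions. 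Thus $\norm{T_n v} \ge c_0/2$ for all large $n$ and all unit $v \in \Pi_{1,n}(X)$, giving the required invertibility. As a by-product, $\limsup_n \norm{T_n^{-1}} \le 2/c_0$, and refining the $o(1)$ estimate yields \eqref{eq:open_sot_0}, which is the local finiteness and upper semicontinuity foreshadowed at the start of the author's proof.

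The main obstacle is the moving-domain issue: both the operator $\Pi_{2,n}A_n$ and its domain $\Pi_{1,n}(X)$ vary simultaneously, so pointwise SOT-convergence on $X$ does not a priori give uniform control on the shifting unit spheres $\{v \in \Pi_{1,n}(X) : \norm{v} = 1\}$. The Auerbach-basis trick handles the operator convergence on a fixed reference sphere, and the Grassmannian convergence transfers bounds from that reference sphere to nearby spheres, closing the gap.
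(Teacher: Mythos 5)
Your proposal is correct and takes essentially the same approach as the paper: both reduce to a uniform lower bound for $\norm{\Pi_{2,\epsilon}A_\epsilon v}$ over unit $v \in \Pi_{1,\epsilon}(X)$ by comparing $v$ to a nearby unit vector in the fixed subspace $\Pi_{1,0}(X)$ via the Grassmannian distance, and both use the Auerbach-basis bound to upgrade SOT-convergence of $\Pi_{2,\epsilon}A_\epsilon$ to uniform convergence on the unit sphere of $\Pi_{1,0}(X)$. Your reformulation in terms of a uniform lower bound $\norm{T_n v}\ge c_0/2$ leading to injectivity (hence invertibility by dimension count) is the same content as the paper's upper-semicontinuity of $\norm{T^{-1}}$, just stated from the other side.
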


\begin{lemma}\label{lemma:inverse_sot_cont}
  Let $\Xi : \Lambda_d \to \LL(X)$ be defined by
  \begin{equation*}
    \Xi(A,\Pi_1, \Pi_2) = \left(\restr{\Pi_{2} A}{\Pi_{1}(X)} \right)^{-1}\Pi_{2}.
  \end{equation*}
  For each $s > 0$ the restriction of $\Xi$ to $\Lambda_d \cap (s B_{\LL(X)})^3$ is continuous in the strong operator topology.
  \begin{proof}
    Fix $(A_0, \Pi_{1,0}, \Pi_{2,0}) \in  \Lambda_d \cap ( sB_{\LL(X)})^3$. By \eqref{eq:open_sot_0} there exists a neighbourhood $U \subseteq \Lambda_d \cap (sB_{\LL(X)})^3$ of $(A_0, \Pi_{1,0}, \Pi_{2,0})$ that is open in the strong operator topology and such that
    \begin{equation}
    \sup\left\{ \norm{\Xi(A,\Pi_1, \Pi_2)} : (A,\Pi_1, \Pi_2) \in U \right\} < \infty.
    \end{equation}
    Thus $\Xi(U)$ is bounded in $\LL(X)$, and $\LL(X)$ is therefore metrisable on $\Xi(U)$.
    Hence, to prove that $\Xi$ is continuous at $(A_0, \Pi_{1,0}, \Pi_{2,0})$ it suffices to show that if $\{(A_\epsilon, \Pi_{1,\epsilon}, \Pi_{2,\epsilon}) \}_{\epsilon > 0} \subseteq \Lambda_d \cap ( sB_{\LL(X)})^3$ converges to $(A_0, \Pi_{1,0}, \Pi_{2,0}) \in  \Lambda_d \cap ( sB_{\LL(X)})^3$ then $\Xi(A_\epsilon,\Pi_{1,\epsilon}, \Pi_{2,\epsilon}) \to \Xi(A_0,\Pi_{1,0}, \Pi_{2,0})$.
    Fix $v \in X$ with $\norm{v} = 1$. We have
    \begin{equation}\begin{split}\label{eq:inverse_sot_cont_1}
      \norm{\Xi(A_0,\Pi_{1,0}, \Pi_{2,0})v - \Xi(A_\epsilon,\Pi_{1,\epsilon}, \Pi_{2,\epsilon})v}
      &\le \norm{ \Pi_{1,\epsilon}\left(\restr{\Pi_{2,0} A_0}{\Pi_{1,0}(X)} \right)^{-1}\Pi_{2,0}v -  \left(\restr{\Pi_{2,\epsilon} A_\epsilon}{\Pi_{1,\epsilon}(X)} \right)^{-1}\Pi_{2,\epsilon}v} \\
      &+ \norm{ (\Id - \Pi_{1,\epsilon})\Pi_{1,0}\left(\restr{\Pi_{2,0} A_0}{\Pi_{1,0}(X)} \right)^{-1}\Pi_{2,0}v}.
    \end{split}\end{equation}
    We of course have
    \begin{equation}\label{eq:inverse_sot_cont_2}
      \lim_{\epsilon \to 0} \norm{ (\Id - \Pi_{1,\epsilon})\Pi_{1,0}\left(\restr{\Pi_{2,0} A_0}{\Pi_{1,0}(X)} \right)^{-1}\Pi_{2,0} v} = \norm{ (\Id - \Pi_{1,0})\Pi_{1,0}\left(\restr{\Pi_{2,0} A_0}{\Pi_{1,0}(X)} \right)^{-1}\Pi_{2,0} v} = 0.
    \end{equation}
    On the other hand we have
    \begin{equation*}\begin{split}
      \Pi_{1,\epsilon}&\left(\restr{\Pi_{2,0} A_0}{\Pi_{1,0}(X)} \right)^{-1}\Pi_{2,0}v - \left(\restr{\Pi_{2,\epsilon} A_\epsilon}{\Pi_{1,\epsilon}(X)} \right)^{-1}\Pi_{2,\epsilon}v\\
      &= \Xi(A_\epsilon,\Pi_{1,\epsilon}, \Pi_{2,\epsilon})\left(\left(\Pi_{2,\epsilon}A_\epsilon \Pi_{1,\epsilon} - \Pi_{2,0}A_0\Pi_{1,0} \right)\Xi(A_0,\Pi_{1,0}, \Pi_{2,0}) - \Pi_{2,\epsilon}(\Id - \Pi_{2,0})\right)v.
    \end{split}\end{equation*}
    Applying \eqref{eq:open_sot_0} from the proof of Lemma \ref{lemma:open_sot}, we have
    \begin{equation}\begin{split}\label{eq:inverse_sot_cont_3}
      &\limsup_{\epsilon \to 0} \bigg\lVert\Pi_{1,\epsilon}\left(\restr{\Pi_{2,0} A_0}{\Pi_{1,0}(X)} \right)^{-1}\Pi_{2,0}v - \left(\restr{\Pi_{2,\epsilon} A_\epsilon}{\Pi_{1,\epsilon}(X)} \right)^{-1}\Pi_{2,\epsilon}v\bigg\rVert \\
      &\le
      \norm{\Xi(A_0,\Pi_{1,0}, \Pi_{2,0})} \limsup_{\epsilon \to 0} \norm{\left(\left(\Pi_{2,\epsilon}A_\epsilon \Pi_{1,\epsilon} - \Pi_{2,0}A_0\Pi_{1,0}  \right)\Xi(A_0,\Pi_{1,0}, \Pi_{2,0}) - \Pi_{2,\epsilon}(\Id - \Pi_{2,0})\right)v} =0.
    \end{split}\end{equation}
    We obtain the required claim by applying \eqref{eq:inverse_sot_cont_2} and \eqref{eq:inverse_sot_cont_3} to \eqref{eq:inverse_sot_cont_1}.
  \end{proof}
\end{lemma}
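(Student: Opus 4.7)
The plan is to leverage the immediately preceding Lemma~\ref{lemma:open_sot} in two conceptual steps: first, to convert the abstract SOT-continuity question to a question about sequences of vectors in $X$; and second, to push the difference $\Xi_\epsilon v - \Xi_0 v$ through the invertibility structure defining $\Xi$. Concretely, Lemma~\ref{lemma:open_sot} yields local upper semicontinuity, hence local boundedness, of the norm $\norm{\left(\restr{\Pi_2 A}{\Pi_1(X)}\right)^{-1}}$ on $\Lambda_d \cap (sB_{\LL(X)})^3$, and this immediately gives local norm-boundedness of $\Xi$ itself, since $\norm{\Xi(A,\Pi_1,\Pi_2)} \le \norm{(\restr{\Pi_2 A}{\Pi_1(X)})^{-1}} \cdot \norm{\Pi_2}$. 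This lets us restrict attention to the image under $\Xi$ of a SOT-open neighborhood $U$ of a fixed point $(A_0,\Pi_{1,0},\Pi_{2,0})$, which sits inside a norm-bounded subset of $\LL(X)$. Because $X$ is separable, the SOT is metrizable on norm-bounded subsets of $\LL(X)$, so continuity of $\Xi$ at $(A_0,\Pi_{1,0},\Pi_{2,0})$ reduces to sequential SOT-continuity.

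For the sequential part, fix a sequence $(A_\epsilon,\Pi_{1,\epsilon},\Pi_{2,\epsilon}) \to (A_0,\Pi_{1,0},\Pi_{2,0})$ in the SOT inside $U$, and fix $v \in X$; abbreviate $\Xi_\epsilon := \Xi(A_\epsilon,\Pi_{1,\epsilon},\Pi_{2,\epsilon})$. The main subtlety is that $\Xi_\epsilon v \in \Pi_{1,\epsilon}(X)$ while $\Xi_0 v \in \Pi_{1,0}(X)$, so these vectors live in different moving subspaces and cannot be compared directly. The remedy is the decomposition
\[
\Xi_0 v - \Xi_\epsilon v \;=\; (\Id - \Pi_{1,\epsilon})\Pi_{1,0}\Xi_0 v \;+\; \bigl(\Pi_{1,\epsilon}\Xi_0 v - \Xi_\epsilon v\bigr).
\]
The first summand vanishes in norm because $\Pi_{1,\epsilon} \to \Pi_{1,0}$ in SOT applied to the fixed vector $\Pi_{1,0}\Xi_0 v$. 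For the second summand, note that $\Pi_{1,\epsilon}\Xi_0 v - \Xi_\epsilon v$ lies in $\Pi_{1,\epsilon}(X)$, so we may apply $\restr{\Pi_{2,\epsilon}A_\epsilon}{\Pi_{1,\epsilon}(X)}$ and its uniformly bounded inverse (bounded by Lemma~\ref{lemma:open_sot}). Using the defining identity $\Pi_{2,\epsilon}A_\epsilon \Xi_\epsilon = \Pi_{2,\epsilon}$ together with $\Pi_{2,0}A_0\Xi_0 v = \Pi_{2,0}v$ and $\Pi_{1,0}\Xi_0 v = \Xi_0 v$, we obtain
\[
\Pi_{2,\epsilon}A_\epsilon\bigl(\Pi_{1,\epsilon}\Xi_0 v - \Xi_\epsilon v\bigr) \;=\; \bigl(\Pi_{2,\epsilon}A_\epsilon\Pi_{1,\epsilon} - \Pi_{2,0}A_0\Pi_{1,0}\bigr)\Xi_0 v \;-\; (\Pi_{2,\epsilon}-\Pi_{2,0})v,
\]
each term of which tends to $0$ in norm by SOT-convergence of the ingredients evaluated on fixed vectors (uniform norm bounds on $A_\epsilon$, $\Pi_{i,\epsilon}$ being provided by the inclusion $U \subseteq (sB_{\LL(X)})^3$). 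Multiplying by the uniformly bounded inverse yields $\Pi_{1,\epsilon}\Xi_0 v - \Xi_\epsilon v \to 0$, and combining with the first summand gives $\Xi_\epsilon v \to \Xi_0 v$ for every $v$, as required.

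The main obstacle is the algebraic rearrangement in the second step: the vectors $\Xi_\epsilon v$ and $\Xi_0 v$ live in different moving subspaces, so one must carefully project and factor through the invertible restriction $\restr{\Pi_{2,\epsilon}A_\epsilon}{\Pi_{1,\epsilon}(X)}$ in order to reduce everything to SOT-convergence statements applied to fixed vectors. This is precisely where Lemma~\ref{lemma:open_sot} earns its keep: without a uniform bound on the inverse norm in a SOT-neighborhood, the norm estimate we produce on the range of $\Pi_{2,\epsilon}A_\epsilon$ could not be transferred back to a norm estimate on the domain, and the argument would collapse.
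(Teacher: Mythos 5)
Your proof is correct and follows essentially the same route as the paper's: reduce to sequential continuity via the local norm bound from Lemma~\ref{lemma:open_sot} and separability, split $\Xi_0 v - \Xi_\epsilon v$ into $(\Id - \Pi_{1,\epsilon})\Pi_{1,0}\Xi_0 v$ plus $\Pi_{1,\epsilon}\Xi_0 v - \Xi_\epsilon v$, and control the latter by pushing through $\Pi_{2,\epsilon} A_\epsilon$ and invoking the uniform bound on the restricted inverse. The only cosmetic difference is that you write the residual term as $(\Pi_{2,\epsilon}-\Pi_{2,0})v$ and apply the inverse directly, whereas the paper writes it as $\Pi_{2,\epsilon}(\Id - \Pi_{2,0})v$ and packages everything as $\Xi_\epsilon(\cdot)v$; these agree because $\Xi_\epsilon \Pi_{2,\epsilon} = \Xi_\epsilon$.
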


\begin{lemma}\label{lemma:forward_transform_cont}
  For every $s > 0$ the maps $\restr{\Gamma^*}{(sB_{\LL(X)})^3}$ and $\restr{\Gamma_*}{(sB_{\LL(X)})^3}$ are continuous with respect to the strong operator topology.
  \begin{proof}
    We will just prove that $\restr{\Gamma^*}{(sB_{\LL(X)})^3}$ is continuous, since essentially the same proof applies to $\restr{\Gamma_*}{(sB_{\LL(X)})^3}$.
    Fix $(A_0,\Pi_{1,0}, \Pi_{2,0}) \in (sB_{\LL(X)})^3$. An argument similar to that at the beginning of Lemma \ref{lemma:inverse_sot_cont} shows that there is a neighbourhood $U \subseteq \Lambda_d \cap (sB_{\LL(X)})^3$ of $(A_0, \Pi_{1,0}, \Pi_{2,0})$ that is open in the strong operator topology and such that $\Gamma^*(U)$ is bounded. Therefore, to show that $\Gamma^*$ is continuous at $(A_0,\Pi_{1,0}, \Pi_{2,0})$ it suffices to prove that if $\{(A_\epsilon, \Pi_{1,\epsilon}, \Pi_{2,\epsilon}) \}_{\epsilon > 0} \subseteq \Lambda_d \cap ( sB_{\LL(X)})^3$ converges to $(A_0, \Pi_{1,0}, \Pi_{2,0})$ then $\Gamma^*(A_\epsilon,\Pi_{1,\epsilon}, \Pi_{2,\epsilon}) \to \Gamma^*(A_0,\Pi_{1,0}, \Pi_{2,0})$.
    By the definition of the forward graph transform we have
    \begin{equation*}\begin{split}
      \Gamma^*(A_\epsilon, \Pi_{1,\epsilon}, \Pi_{2,\epsilon}) - \Gamma^*(A_0, \Pi_{1,0}, \Pi_{2,0}) &= \Pi_{2,\epsilon} + (\Id -\Pi_{2,\epsilon}) A_\epsilon\left(\restr{\Pi_{2,\epsilon} A_{\epsilon}}{\Pi_{1,\epsilon}(X)} \right)^{-1}\Pi_{2,\epsilon} \\
      &-\Pi_{2,0} - (\Id -\Pi_{2,0}) A_0\left(\restr{\Pi_{2,0} A_{0}}{\Pi_{1,0}(X)} \right)^{-1}\Pi_{2,0}.
    \end{split}\end{equation*}
    Applying Lemmas \ref{lemma:open_sot} and \ref{lemma:inverse_sot_cont}, and \eqref{eq:open_sot_0} yields
    \begin{equation*}
      (\Id -\Pi_{2,\epsilon}) A_\epsilon\left(\restr{\Pi_{2,\epsilon} A_{\epsilon}}{\Pi_{1,\epsilon}(X)} \right)^{-1}\Pi_{2,\epsilon} \to (\Id -\Pi_{2,0}) A_0\left(\restr{\Pi_{2,0} A_{0}}{\Pi_{1,0}(X)} \right)^{-1}\Pi_{2,0}
    \end{equation*}
    in the strong operator topology, which completes the proof.
  \end{proof}
\end{lemma}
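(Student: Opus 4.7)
The plan is to reduce the continuity statement to Lemma \ref{lemma:inverse_sot_cont} by writing $\Gamma^*$ and $\Gamma_*$ as explicit compositions of SOT-continuous pieces, and then invoking local boundedness together with the fact that operator composition is jointly SOT-continuous on norm-bounded subsets of $\LL(X)$. First I would fix a point $(A_0, \Pi_{1,0}, \Pi_{2,0})$ in the domain. By Lemma \ref{lemma:open_sot} the domain $\Lambda_d$ is SOT-open inside $(sB_{\LL(X)})^3$, and the upper semicontinuity estimate \eqref{eq:open_sot_0} lets me pick an SOT-open neighborhood $U$ of this point on which $\norm{\Xi(\cdot)}$ is uniformly bounded; consequently both $\Gamma^*$ and $\Gamma_*$ will take values in a fixed norm-bounded subset of $\LL(X)$ on $U$. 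Because $X$ is separable, the SOT is metrizable on every norm-bounded subset of $\LL(X)$, so it suffices to check sequential SOT continuity at $(A_0,\Pi_{1,0},\Pi_{2,0})$.

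Next I would unpack both graph transforms at the zero operator in order to express them in terms of $\Xi$. For the forward transform, a direct computation from the definition yields $A^*(0) = (\Id - \Pi_2) A \, \Xi(A, \Pi_1, \Pi_2)$, so that
\begin{equation*}
\Gamma^*(A, \Pi_1, \Pi_2) \;=\; \Pi_2 + (\Id - \Pi_2) A \, \Xi(A, \Pi_1, \Pi_2).
\end{equation*}
For the backward transform, substituting $V = 0$ into the formula for the backward graph transform gives $A_*(0) = -\Xi(A, \Pi_1, \Pi_2) A$ when viewed as an element of $\LL(\ker\Pi_1, \Pi_1(X))$, and so precomposing with $\Id - \Pi_1$ produces
\begin{equation*}
\Gamma_*(A, \Pi_1, \Pi_2) \;=\; \Pi_1 + \Xi(A, \Pi_1, \Pi_2) A \,(\Id - \Pi_1).
\end{equation*}
Each factor appearing in these two closed-form expressions depends SOT-continuously on $(A, \Pi_1, \Pi_2)$ on our neighborhood $U$: the dependence on $A$, $\Pi_1$, $\Pi_2$, $\Id - \Pi_1$, and $\Id - \Pi_2$ is trivial, and Lemma \ref{lemma:inverse_sot_cont} supplies the SOT-continuity of $\Xi$ on bounded sets.

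The remaining ingredient is the standard fact that composition $(S, T) \mapsto S T$ is jointly SOT-continuous on pairs of uniformly norm-bounded operators on $X$: for fixed $x \in X$ the estimate $\norm{(S_\epsilon T_\epsilon - S_0 T_0)x} \le \norm{S_\epsilon}\norm{(T_\epsilon - T_0)x} + \norm{(S_\epsilon - S_0)(T_0 x)}$ forces both terms to vanish as $\epsilon \to 0$ once the $\norm{S_\epsilon}$ are uniformly bounded, which is exactly the situation on $U$. Applying this repeatedly to the closed-form expressions above yields SOT-convergence $\Gamma^*(A_\epsilon, \Pi_{1,\epsilon}, \Pi_{2,\epsilon}) \to \Gamma^*(A_0, \Pi_{1,0}, \Pi_{2,0})$ and the analogous statement for $\Gamma_*$.

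The main obstacle is really only notational: one has to set up the right closed-form expressions for $\Gamma^*$ and $\Gamma_*$ in terms of $\Xi$, taking care with the natural interpretation of $A_*(0)$ as an element of $\LL(\ker\Pi_1, \Pi_1(X))$ so that the identity $A_*(0)(\Id - \Pi_1) = -\Xi(A, \Pi_1, \Pi_2) A (\Id - \Pi_1)$ makes sense as maps $X \to \Pi_1(X)$. Once these formulas are in hand, the continuity reduces mechanically to Lemmas \ref{lemma:open_sot} and \ref{lemma:inverse_sot_cont} plus the standard joint SOT-continuity of composition on bounded sets.
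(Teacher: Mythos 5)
Your proof is correct and takes essentially the same route as the paper: reduce to sequential SOT-continuity on a bounded neighbourhood via \eqref{eq:open_sot_0}, rewrite the transforms in terms of $\Xi$, and invoke Lemma \ref{lemma:inverse_sot_cont} together with joint SOT-continuity of composition on norm-bounded sets. You are slightly more explicit than the paper in that you derive the closed-form expression for $\Gamma_*$ as well, where the paper only writes out the $\Gamma^*$ case and asserts the other is ``essentially the same.''
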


\begin{proof}[{The proof of Proposition \ref{prop:measurable_graph_tranform}}]
  We will just prove that $\Gamma^* \circ Y$ is $(\mathcal{F}, \mathcal{S})$-measurable, since the same proof works for $\Gamma_* \circ Y$.
  Let $U \subseteq \LL(X)$ be open in the strong operator topology. Then
  \begin{equation}\label{eq:measurable_graph_tranform_1}
    (\Gamma^* \circ Y)^{-1}(U) = Y^{-1}\left(\bigcup_{n \in \Z^+} (n B_{\LL(X)} )^3 \cap (\Gamma^*)^{-1}(U) \right)  = Y^{-1}\left(\bigcup_{n \in \Z^+} \left(\restr{\Gamma^*}{(n B_{\LL(X)} )^3}\right)^{-1}(U) \right).
  \end{equation}
  By Lemma \ref{lemma:forward_transform_cont}, the map $\restr{\Gamma^*}{(n B_{\LL(X)} )^3}$ is continuous in the strong operator topology for every $n \in \Z^+$, and so $\left(\restr{\Gamma^*}{(n B_{\LL(X)} )^3}\right)^{-1}(U) = U_n \cap \Lambda_d \cap (n B_{\LL(X)})^3$ for some $U_n \in B_{\LL(X)}^3$ that is open in the strong operator topology.
  Since $Y(\Omega) \subseteq \Lambda_d$ we have
  \begin{equation}\label{eq:measurable_graph_tranform_2}
    Y^{-1}(U_n \cap \Lambda_d \cap (n B_{\LL(X)} )^3) = Y^{-1}(U_n \cap (n B_{\LL(X)} )^3).
  \end{equation}
  Since $(n B_{\LL(X)} )^3$ is a separable metric space, for each $n \in \Z^+$ there exists countably many rectangles $\{R_{i,n} \times P_{i,n} \times Q_{i,n}\}_{i \in \Z^+} \subseteq (B_{\LL(X)} )^3$ such that $R_{i,n} ,P_{i,n}$, and $Q_{i,n}$ are open in the strong operator topology on $B_{\LL(X)}$ and so that
  \begin{equation}\label{eq:measurable_graph_tranform_3}
    U_n \cap (n B_{\LL(X)} )^3 = \bigcup_{i\in \Z^+} (R_{i,n} \cap n B_{\LL(X)}) \times (P_{i,n} n B_{\LL(X)}) \times (Q_{i,n} \cap n B_{\LL(X)}).
  \end{equation}
  By \cite[Lemma A.2]{GTQuas1} we have $n B_{\LL(X)} \in \mathcal{S}$, and so $U_n \cap (n B_{\LL(X)} )^3 \in \mathcal{S}^3$, being the countable union of sets in $\mathcal{S}^3$ by \eqref{eq:measurable_graph_tranform_3}.
  Since $Y$ is $(\mathcal{F}, \mathcal{S}^3)$-measurable, by \eqref{eq:measurable_graph_tranform_1}, \eqref{eq:measurable_graph_tranform_2} and \eqref{eq:measurable_graph_tranform_3} we may conclude that $(\Gamma^* \circ Y)^{-1}(U) \in \mathcal{F}$ i.e $\Gamma^* \circ Y$ is $(\mathcal{F}, \mathcal{S})$-measurable.
\end{proof}

\begin{proposition}\label{prop:measurable_inverse}
  Suppose that $(\Omega, \mathcal{F}, \mathbb{P})$ is a Lebesgue space, $X$ is a separable Banach space, and that $Y : \Omega \mapsto \LL(X)^3$ is $(\mathcal{F}, \mathcal{S}^3)$-measurable with $Y(\Omega) \subseteq \Lambda_d$. Then $\Xi \circ Y$ is $(\mathcal{F}, \mathcal{S})$-measurable.
  \begin{proof}
  The proof is identical to that of Proposition \ref{prop:measurable_graph_tranform}, but with Lemma \ref{lemma:inverse_sot_cont} used in place of Lemma \ref{lemma:forward_transform_cont}.
  \end{proof}
\end{proposition}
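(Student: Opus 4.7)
The plan is to adapt, essentially verbatim, the argument given for Proposition \ref{prop:measurable_graph_tranform}, with Lemma \ref{lemma:forward_transform_cont} replaced throughout by Lemma \ref{lemma:inverse_sot_cont}. First I would fix an arbitrary strongly-open set $U \subseteq \LL(X)$ and decompose
\begin{equation*}
(\Xi \circ Y)^{-1}(U) \;=\; Y^{-1}\!\left(\bigcup_{n \in \Z^+} \Lambda_d \cap (nB_{\LL(X)})^3 \cap \Xi^{-1}(U)\right) \;=\; Y^{-1}\!\left(\bigcup_{n \in \Z^+} \left(\restr{\Xi}{\Lambda_d \cap (nB_{\LL(X)})^3}\right)^{-1}(U)\right).
\end{equation*}

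By Lemma \ref{lemma:inverse_sot_cont}, each restriction $\restr{\Xi}{\Lambda_d \cap (nB_{\LL(X)})^3}$ is SOT-continuous, so its preimage of $U$ takes the form $V_n \cap \Lambda_d \cap (nB_{\LL(X)})^3$ for some $V_n \subseteq (B_{\LL(X)})^3$ that is SOT-open. The hypothesis $Y(\Omega) \subseteq \Lambda_d$ now lets me drop the $\Lambda_d$ factor after pulling back by $Y$, reducing matters to showing that $V_n \cap (nB_{\LL(X)})^3 \in \mathcal{S}^3$.

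To verify the latter, I would invoke separability of $X$, which makes the SOT metrisable on each operator ball $nB_{\LL(X)}$. Consequently $V_n \cap (nB_{\LL(X)})^3$ is a countable union of open product rectangles $R_{i,n} \times P_{i,n} \times Q_{i,n}$ with each factor SOT-open in $nB_{\LL(X)}$. By \cite[Lemma A.2]{GTQuas1} one has $nB_{\LL(X)} \in \mathcal{S}$, so each such rectangle lies in $\mathcal{S}^3$, and hence so does $V_n \cap (nB_{\LL(X)})^3$. Measurability of $Y$ then yields $(\Xi \circ Y)^{-1}(U) \in \mathcal{F}$, as required.

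The only mildly delicate point is the handling of $\Lambda_d$, which is not a priori a Borel-measurable set in a canonical way; however, since $Y$ takes values inside $\Lambda_d$, intersecting with $\Lambda_d$ before pulling back through $Y$ is harmless, exactly as in the proof of Proposition \ref{prop:measurable_graph_tranform}. Beyond that no new obstacles arise: Lemma \ref{lemma:inverse_sot_cont} supplies precisely the continuity statement needed to replicate the measurability argument word for word.
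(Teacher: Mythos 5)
Your argument is exactly the paper's intended proof: the paper simply remarks that the argument of Proposition \ref{prop:measurable_graph_tranform} goes through verbatim with Lemma \ref{lemma:inverse_sot_cont} replacing Lemma \ref{lemma:forward_transform_cont}, and that is precisely what you have spelled out, including the decomposition over operator balls and the metrisability/rectangle argument for showing $V_n \cap (nB_{\LL(X)})^3 \in \mathcal{S}^3$. Correct, and no material deviation from the paper.
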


\begin{lemma}\label{lemma:limit_of_strong_measurable}
    Suppose that $(\Omega, \mathcal{F}, \mathbb{P})$ is a Lebesgue space, $X$ is a separable Banach space, that $\{f_n \}_{n \in \Z^+}$ is a sequence of strongly $(\mathcal{F}, \mathcal{S})$-measurable functions, and that $f : \Omega \to \LL(X)$ $f_n \to f$ almost uniformly and  $\esssup_{\omega \in \Omega} \norm{f} < \infty$.
    Then $f$ is $(\mathcal{F}, \mathcal{S})$-measurable.
    \begin{proof}
        Let $r > \esssup_{\omega \in \Omega} \norm{f}$.
        By changing each $f_n$ on a set of measure $0$ we may assume that $\limsup_{n \to \infty} \sup_{\omega \in \Omega} \norm{f_n(\omega)} \le r$ and that there exists $g : \Omega \to r B_{\LL(X)}$ with $f = g$ a.e. and such that $f_n \to g$ uniformly.
        Since $f_n \to g$ uniformly there exists $N > 0$ such that $f_n(\Omega) \subseteq rB_{\LL(X)}$ for every $n > N$.
        By \cite[Lemma A.2]{GTQuas1} we have $rB_{\LL(X)} \in \mathcal{S}$ and so $f_n$ is $(\mathcal{F}, \mathcal{S}_{r})$-measurable for $n > N$, where $\mathcal{S}_{r}$ denotes the Borel $\sigma$-algebra associated to the restriction of the strong operator topology to $rB_{\LL(X)}$.
        Since $X$ is separable, the strong operator topology on $rB_{\LL(X)}$ is metrisable.
        Thus $g$ is $(\mathcal{F}, \mathcal{S}_{r})$-measurable, being the pointwise limit of measurable functions with values in a metric space.
        For $U \subseteq \LL(X)$ that is open in the strong operator topology we have $U \cap rB_{\LL(X)} \in \mathcal{S}_{r}$, and so $g^{-1}(U) = g^{-1}(U \cap rB_{\LL(X)}) \in \mathcal{F}$.
        Thus $g$ is $(\mathcal{F}, \mathcal{S})$-measurable. Since $f = g$ a.e., we have that $f$ is $(\mathcal{F}, \mathcal{S})$-measurable too.
    \end{proof}
\end{lemma}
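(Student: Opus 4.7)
The plan is to reduce to a metric-space-valued pointwise limit problem by restricting attention to a $\norm{\cdot}_{\LL(X)}$-bounded ball, on which the strong operator topology is metrizable (because $X$ is separable). Concretely, first I would choose $r > \esssup_{\omega} \norm{f(\omega)}$. By almost uniform convergence there is a full-measure set $\Omega_0$ on which $f_n \to f$ uniformly, and by discarding finitely many terms and modifying on a null set (using completeness of the Lebesgue space, which preserves strong $(\mathcal{F},\mathcal{S})$-measurability) I may assume that $f_n(\Omega), f(\Omega) \subseteq r B_{\LL(X)}$ for all $n$ large enough.

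Next I would invoke \cite[Lemma A.2]{GTQuas1} to ensure that $r B_{\LL(X)} \in \mathcal{S}$. Let $\mathcal{S}_r$ denote the Borel $\sigma$-algebra on $r B_{\LL(X)}$ for the relative strong operator topology. Since $X$ is separable, picking a countable dense set $\{x_i\}$ produces a compatible metric (e.g.\ $d(S,T) = \sum_i 2^{-i} \tfrac{\norm{(S-T)x_i}}{1+\norm{(S-T)x_i}}$) that generates the strong operator topology on $r B_{\LL(X)}$; in particular, $\mathcal{S}_r$ is a standard Borel $\sigma$-algebra on a separable metric space. Each $f_n$ (for $n$ large) is then $(\mathcal{F}, \mathcal{S}_r)$-measurable, so the pointwise (uniform) limit $f$ is $(\mathcal{F}, \mathcal{S}_r)$-measurable by the standard fact that pointwise limits of measurable maps into a metric space are measurable.

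Finally, to upgrade $(\mathcal{F}, \mathcal{S}_r)$-measurability of $f$ to $(\mathcal{F}, \mathcal{S})$-measurability: for any $U \subseteq \LL(X)$ open in the strong operator topology, the set $U \cap r B_{\LL(X)}$ lies in $\mathcal{S}_r$, and because $f(\Omega) \subseteq r B_{\LL(X)}$ we have $f^{-1}(U) = f^{-1}(U \cap r B_{\LL(X)}) \in \mathcal{F}$.

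The main subtlety -- and the only place where the hypotheses really interact -- is the very first reduction step: one must ensure that the a.e.\ modifications used to realise uniform convergence and the inclusion $f_n(\Omega) \subseteq r B_{\LL(X)}$ do not destroy $(\mathcal{F}, \mathcal{S})$-measurability. This is exactly where completeness of the Lebesgue space $(\Omega, \mathcal{F}, \mathbb{P})$ is used: redefining a strongly measurable function on a $\mathbb{P}$-null set still yields a strongly measurable function. Once this is in hand the rest is bookkeeping, and the non-metrizability of the strong operator topology on the whole of $\LL(X)$ is bypassed entirely by the restriction to the bounded ball.
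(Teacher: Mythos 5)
Your proof is correct and follows essentially the same route as the paper: restrict to a bounded ball $rB_{\LL(X)}$ where the strong operator topology is metrizable, invoke \cite[Lemma A.2]{GTQuas1} to see this ball lies in $\mathcal{S}$, take the pointwise limit in the metric-space setting, and transport measurability back to $\LL(X)$ by intersecting open sets with $rB_{\LL(X)}$. The only cosmetic difference is that the paper introduces an auxiliary function $g$ equal to $f$ a.e.\ rather than modifying $f$ directly, but the underlying appeal to completeness of the Lebesgue space is the same.
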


\begin{proof}[{The proof of Lemma \ref{lemma:measurable_change_of_basis}}]
  By Lemma \ref{lemma:strong_cont_proj_grassmannian} the map $\omega \mapsto \Pi_\omega(X)$ is measurable.
  By \cite[Corollary 39]{lian2010lyapunov} for every $\epsilon > 0$ there exists measurable maps $e_i : \Omega \to X$, $1 \le i \le d$ such that $\vspan\{e_1(\omega), \dots, e_d(\omega)\} = \Pi_{\omega}(X)$ and for each $1 \le i \le d-1$ we have $\norm{e_i(\omega)} = 1$ and
  \begin{equation*}
    \dist(e_i(\omega), \vspan\{ e_{j}(\omega) : j > i\}) \ge 1 - \epsilon.
  \end{equation*}
  Let $\{\nu_i(\omega)\}_{i=1}^d$ denote the dual basis to $\{e_i(\omega)\}_{i=1}^d$ in $\Pi_{\omega}(X)$. For every $\omega$ we have $\nu_i(\omega) e_j(\omega) = \delta_{ij}$ and so each $\omega \mapsto \nu_i(\omega)e_j(\omega)$ is measurable.
  For $(a_1, \dots, a_d) \in \Q^d + i\Q^d$ set $\psi(a_1, \dots, a_d)(\omega) = \sum_{i=1}^d a_i e_i(\omega)$. We of course have
  \begin{equation*}
    \norm{\nu_i(\omega)} = \sup_{\substack{(a_1, \dots, a_d) \in \Q^d + i\Q^d \\ (a_1, \dots, a_d) \ne (0, \dots, 0)}} \frac{\abs{\nu_i(\omega)(\psi(a_1, \dots, a_d)(\omega))}}{\abs{\psi(a_1, \dots a_d)(\omega)}},
  \end{equation*}
  and that each of the maps
  \begin{equation*}
    \omega \mapsto \frac{\abs{\nu_i(\omega)(\psi(a_1, \dots, a_d)(\omega))}}{\abs{\psi(a_1, \dots, a_d)(\omega)}}
  \end{equation*}
  is measurable.
  Hence $\omega \mapsto \norm{\nu_i(\omega)}$ is measurable, being the supremum of countably many measurable maps.
  By \cite[Proposition 40]{lian2010lyapunov}, each $\nu_i$ may be extended to a strongly measurable map $\nu_i : \Omega \to \LL(X, \C)$ without increasing $\norm{\nu_i(\omega)}$.
  Define $\phi_\omega : X \to \C^d$ by
  \begin{equation*}
    \phi_\omega v = (\nu_1(\omega)(v), \dots, \nu_d(\omega)(v)),
  \end{equation*}
  and set $A_\omega = \phi_\omega \Pi_{\omega}$.
  We clearly have $\ker(A_\omega) = \ker(\Pi_\omega)$, and that $\restr{A_{\omega}}{\Pi_\omega(X)}$ is a bijection. The map $\omega \mapsto \phi_\omega$ is strongly measurable as each of component maps $\omega \mapsto \nu_i(\omega)$ is strongly measurable, and so $\omega \mapsto A_\omega$ is strongly measurable, due to being the composition of strongly measurable maps \cite[Lemma A.5]{GTQuas1}.
  Moreover, we have
  \begin{equation*}
    \left(\restr{A_\omega}{\Pi_\omega(X)}\right)^{-1}(a_1, \dots, a_d) = \sum_{i=1}^d a_i e_i(\omega),
  \end{equation*}
  which implies that $\left(\restr{A_\omega}{\Pi_\omega(X)}\right)^{-1}$ is strongly measurable.

  We may now prove the estimates \eqref{eq:measurable_change_of_basis_0} and \eqref{eq:measurable_change_of_basis_1}.
  For \eqref{eq:measurable_change_of_basis_1} we simply note that if $v \in \Pi_{\omega}$ then
  \begin{equation*}
    \norm{v} \le \sum_{i=1}^d \abs{\nu_i(\omega) v} \le \sqrt{d} \left(\sum_{i=1}^d \abs{\nu_i(\omega) v}^{2}\right)^{1/2} = \sqrt{d} \norm{A_\omega v}.
  \end{equation*}
  Obtaining \eqref{eq:measurable_change_of_basis_0} is more involved.
  For every $v \in \Pi_{\omega}(X)$ one has
  \begin{equation}\label{eq:measurable_change_of_basis_2}
    \norm{v} \ge \max\left\{ \norm{\Pi_{\vspan\{e_i(\omega)\} || \vspan\{e_j(\omega) : j \ne i \}}v} \norm{\Pi_{\vspan\{e_i(\omega)\} || \vspan\{e_j(\omega) : j \ne i \}}}^{-1} : 1 \le i \le d\right\}.
  \end{equation}
  For each $i \in \{1, \dots, d\}$ set $\Pi_{i,\omega} = \Pi_{\vspan\{e_i(\omega)\} || \vspan\{e_j(\omega) : j > i \}}$ and $\Gamma_{i,\omega} = \Id - \Pi_{i,\omega}$.
  Note that
  \begin{equation*}
    \Pi_{\vspan\{e_i(\omega)\} || \vspan\{e_j(\omega) : j \ne i \}} = \Pi_{i, \omega} \left(\prod_{j=1}^{i-1}\Gamma_{i-j, \omega}\right),
  \end{equation*}
  and so $\norm{\Pi_{\vspan\{e_i(\omega)\} || \vspan\{e_j(\omega) : j \ne i \}}} \le 2^{i-1}\prod_{j=1}^i\norm{\Pi_{j, \omega}}$.
  In addition for $i \in \{1, \dots, d-1\}$ we have
  \begin{equation*}\begin{split}
    \norm{\Pi_{i,\omega}} = \sup_{v \in \vspan\{ e_j(\omega) : j \ge i\}} \frac{\norm{\Pi_{i,\omega}(v)}}{\norm{v}} &= \sup_{v' \in \vspan\{ e_j(\omega) : j > i\}} \frac{\norm{e_{i}(\omega)}}{\norm{e_{i}(\omega) - v'}} \\
    &=  \dist(e_i(\omega), \vspan\{ e_{j}(\omega) : j > i\})^{-1} \le (1- \epsilon)^{-1},
  \end{split}\end{equation*}
  while it is clear that $\norm{\Pi_{d,\omega}} = 1$.
  Thus, for every $i \in \{1, \dots, d\}$ we have $\norm{\Pi_{\vspan\{e_i(\omega)\} || \vspan\{e_j(\omega) : j \ne i \}}} \le 2^{d-1} (1- \epsilon)^{-d+1}$.
  Since $\Pi_{\vspan\{e_i(\omega)\} || \vspan\{e_j(\omega) : j \ne i \}}v = \nu_i(\omega)(v)$, from \eqref{eq:measurable_change_of_basis_2} we obtain
  \begin{equation*}
    \norm{v} \ge \left(\frac{2}{1-\epsilon}\right)^{d-1}\max\left\{ \nu_i(\omega)(v) : 1 \le i \le d\right\} \ge \left(\frac{2}{1-\epsilon}\right)^{d-1} \norm{A_\omega v}.
  \end{equation*}
\end{proof}

Our final main result for this appendix concerns the measurability of the determinant map, which is crucial for the proof of stability of Lyapunov exponents in Section \ref{sec:Lyapunov_converge}.
We refer the reader to \cite[Section 2.2]{blumenthal2016volume} for an overview of the basic properties of the determinant.

\begin{proposition}\label{prop:measurable_determinant}
  Suppose that $(\Omega, \mathcal{F}, \mathbb{P})$ is a Lebesgue space, $X$ is a separable Banach space, and that $Y : \Omega \mapsto \LL(X) \times \mathcal{G}_d(X)$ is a $(\mathcal{F}, \mathcal{S} \times \mathcal{B}_{\mathcal{G}_d(X)})$-measurable map. Then $\omega \mapsto \det(Y(\omega))$ is $(\mathcal{F}, \mathcal{B}_{\R})$-measurable.
\end{proposition}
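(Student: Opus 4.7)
The plan is to mirror the proofs of Propositions \ref{prop:measurable_graph_tranform} and \ref{prop:measurable_inverse}. First I would show that for each $s > 0$ the restriction of $\det : \LL(X) \times \mathcal{G}_d(X) \to \R$ to $(sB_{\LL(X)}) \times \mathcal{G}_d(X)$ is continuous with respect to the product of the strong operator topology and the metric $d_H$, and then deduce $(\mathcal{F}, \mathcal{B}_{\R})$-measurability of $\det \circ Y$ by writing
\begin{equation*}
  (\det \circ Y)^{-1}(U) = \bigcup_{n \in \Z^+} Y^{-1}\big(\det^{-1}(U) \cap (nB_{\LL(X)} \times \mathcal{G}_d(X))\big)
\end{equation*}
for open $U \subseteq \R$, and then exploiting the metrizability of the strong operator topology on $nB_{\LL(X)}$ (since $X$ is separable) together with the fact that $nB_{\LL(X)} \in \mathcal{S}$ by \cite[Lemma A.2]{GTQuas1} to decompose each open preimage as a countable union of measurable rectangles, exactly as in the closing paragraph of the proof of Proposition \ref{prop:measurable_graph_tranform}.

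To establish continuity, I would fix $(A_0, E_0) \in (sB_{\LL(X)}) \times \mathcal{G}_d(X)$ and a sequence $(A_n, E_n) \to (A_0, E_0)$, choose any basis $\{e_1, \dots, e_d\}$ of $E_0$ and any complement $F$ of $E_0$, and use Proposition \ref{prop:graph_chart} to produce bases $e_i^{(n)} := (\Id + \Phi_{E_0 \oplus F}(E_n)) e_i$ of $E_n$ (valid for $n$ large, since eventually $E_n \in \mathcal{N}(F)$) satisfying $e_i^{(n)} \to e_i$ in norm. Combined with $\norm{A_n} \le s$ and the strong convergence $A_n \to A_0$, this gives $A_n e_i^{(n)} \to A_0 e_i$ in norm. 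Pulling $m_E$ and $m_{AE}$ back to $\R^d$ via the charts $c \mapsto \sum_{i=1}^d c_i e_i$ and $c \mapsto \sum_{i=1}^d c_i A e_i$, a straightforward Haar measure computation yields the identity
\begin{equation*}
  \det(A | E) = \frac{\lambda\big(\{c \in \R^d : \norm{\sum_{i=1}^d c_i e_i} \le 1\}\big)}{\lambda\big(\{c \in \R^d : \norm{\sum_{i=1}^d c_i A e_i} \le 1\}\big)},
\end{equation*}
with the convention $a/\infty = 0$ covering the degenerate case where $\{Ae_i\}$ is linearly dependent (equivalently, where $A$ is not injective on $E$). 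In the non-degenerate case, the linear maps $\R^d \to X$ defined by $\{e_i^{(n)}\}$ and $\{A_n e_i^{(n)}\}$ are uniformly bounded below for $n$ large by a perturbative argument, so the sub-level sets appearing in the formula are uniformly bounded convex bodies whose Lebesgue measures converge to their limits, giving $\det(A_n | E_n) \to \det(A_0 | E_0)$.

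The main obstacle is the degenerate case, where $A_0$ is not injective on $E_0$, so $\det(A_0 | E_0) = 0$ and the limiting denominator is infinite; here I must show that $\alpha_n := \lambda(\{c \in \R^d : \norm{\sum_{i=1}^d c_i A_n e_i^{(n)}} \le 1\}) \to \infty$. Writing $T_0 : \R^d \to X$ for the map $c \mapsto \sum_{i=1}^d c_i A_0 e_i$, the kernel $\ker T_0$ is non-trivial, so $\{c : \norm{T_0 c} \le 1\}$ has infinite Lebesgue measure (being unbounded in the kernel directions). For any $R > 0$, the compact set $S_R := \{c \in \R^d : \norm{c} \le R, \norm{T_0 c} \le 1\}$ is contained in $2\{c : \norm{T_n c} \le 1\}$ for $n$ large (by uniform closeness of $T_n$ to $T_0$ on $\{\norm{c} \le R\}$), so $\alpha_n \ge 2^{-d} \lambda(S_R)$; sending $R \to \infty$ then forces $\alpha_n \to \infty$, hence $\det(A_n | E_n) \to 0 = \det(A_0 | E_0)$, completing the continuity argument.
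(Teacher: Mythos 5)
Your proof is correct and the outer structure (continuity on $\norm{\cdot}$-bounded slices, then the countable-rectangle decomposition exactly as in Proposition \ref{prop:measurable_graph_tranform}) matches the paper. Where you differ is in how you establish the analogue of Lemma \ref{lemma:measurable_determinant_bounded}, and the difference is genuine. The paper works directly with the determinant machinery of \cite[Section 2.2]{blumenthal2016volume}: for the $\det(A|E)=0$ case it uses an Auerbach basis, a kernel vector, and the sub-multiplicativity estimate \cite[Lemma 2.15]{blumenthal2016volume}; for the $\det(A|E)\ne 0$ case it first proves $A_nE_n \to AE$ in $\mathcal{G}_d(X)$ and then controls a chain of pushed-forward Haar measures, invoking \cite[Lemma 2.20]{blumenthal2016volume} at the last step. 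You instead pull both Haar measures back to $\R^d$ through fixed basis charts of the reference fiber $E_0$, which reduces the determinant to an explicit ratio of Lebesgue volumes of two sub-level sets of seminorms on $\R^d$; this formula is basis-independent, correct (the normalizing constants cancel), and handles the degenerate case by the convention $a/\infty = 0$. The payoff is an elementary, self-contained argument that avoids both Blumenthal lemmas and the auxiliary claim $A_nE_n \to AE$; the cost is that a couple of real-analysis facts are used implicitly, namely that the boundary of a convex body has Lebesgue measure zero (needed so that $\mathbb{1}_{\{\norm{T_n c}\le 1\}} \to \mathbb{1}_{\{\norm{T_0 c}\le 1\}}$ a.e.\ and dominated convergence gives convergence of volumes in the non-degenerate case) and that $\{c : \norm{T_0 c} \le 1\}$ has infinite Lebesgue measure when $\ker T_0 \ne \{0\}$ (it contains a cylinder $\ker T_0 + \epsilon B_{\R^d}$). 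These are standard and easily supplied, so the argument is sound; it is simply a different, and arguably more transparent, route to the same lemma.
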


\begin{lemma}\label{lemma:measurable_determinant_bounded}
  Suppose that $X$ is a separable Banach space. For every $d \in \Z^+$ and $s > 0$ the map $\det : \LL(X) \times \mathcal{G}_d(X) \to \R$ is continuous with respect to strong operator topology and the usual Grassmannian topology on $\mathcal{G}_d(X)$ when restricted to $sB_{\LL(X)} \times \mathcal{G}_d(X)$.
  \begin{proof}
    It suffices to prove that if $\{(A_n, E_n) \}_{n \in \Z^+} \subseteq sB_{\LL(X)} \times \mathcal{G}_d(X)$ converges to $(A, E)$ then $\det(A_n | E_n) \to \det(A | E)$.
    Let $F \in \mathcal{G}^d(X)$ be such that $E \oplus F = X$. Since $E_n \to E$ and $\mathcal{N}(F)$ is open in $\mathcal{G}_d(X)$, without loss of generality we may assume that $E_n \oplus F = X$ for every $n$.
    Moreover, by Proposition \ref{prop:graph_chart} we have $\Pi_{E_n || F} \to \Pi_{E || F}$ in the operator norm topology.
    Since $\{A_n\}_{n \in \Z^+}$ is bounded in $\LL(X)$ it follows that $A_n\Pi_{E_n || F} \to A$ in the strong operator topology.

    \paragraph{The case where $\det(A | E) = 0$.}
    If $\det(A_n | E) = 0$ eventually holds for all large $n \in \Z^+$ then we are done. Otherwise we may pass to a subsequence such that $\limsup_{n \to \infty} \det(A_n | E_n)$ is unchanged and $\det(A_n | E) \ne 0$ for every $n \in \Z^+$.
    In particular, we may assume that $\restr{A_n}{E_n}$ is injective for every $n$.
    Since $\det(A | E) = 0$ there exists $f \in \ker(\restr{A}{E}) \setminus \{0\}$.
    Let $G$ be a complementary subspace for $\vspan\{f\}$ in $E$.
    Let $f_n = \Pi_{E_n || F}f$ and $G_n = \Pi_{E_n || F} G$. As $E_n,E \in \mathcal{F}$ by Lemma \ref{lemma:inv_proj} we have that $\restr{\Pi_{E_n || F}}{E}$ is invertible. Thus $E_n = \vspan\{ f_n\} \oplus G_n$ and
    \begin{equation*}
      \Pi_{\vspan\{ f_n\} || G_n} = \Pi_{E_n || F} \Pi_{\vspan\{f\} || G} \left( \restr{\Pi_{E_n || F}}{E} \right)^{-1}.
    \end{equation*}
    Hence as $E_n \to E$ we have $\limsup_{n \to \infty} \norm{\Pi_{\vspan\{ f_n\} || G_n}} \le \norm{\Pi_{\vspan\{f\} || G}}< \infty$. Since each $A_n$ is injective, by \cite[Lemma 2.15]{blumenthal2016volume} there exists $C_d > 0$ such that
    \begin{equation*}
      \det(A_n | E_n) \le C_d \det(A_n | \vspan\{f_n\}) \det(A_n | G_n) \norm{\Pi_{\vspan\{ f_n\} || G_n}}.
    \end{equation*}
    On one hand we have $\det(A_n | \vspan\{f_n\}) \le \norm{A_n \Pi_{E_n || F}f} \to 0$,
    while on the other we have $\det(A_n | G_n) \le \norm{A_n}^{d-1}$.
    Thus
    \begin{equation*}
      \det(A_n | E_n) \le C_d \norm{A_n}^{d-1} \det(A_n | \vspan\{f_n\}) \norm{\Pi_{\vspan\{ f_n\} || G_n}} \to 0,
    \end{equation*}
    as required.

    \paragraph{The convergence of $A_n E_n$ to $A E$.}
    Henceforth we shall assume that $\det(A | E) \ne 0$, and so  $\restr{A}{E}$ has trivial kernel.
    Fix a basis $\{v_i\}_{i = 1}^d$ for $E$ such that $\{A v_i \}_{i=1}^d$ is a normalised Auerbach basis for $AE$. Then by \cite[Corollary A.7]{quas2019explicit} for every set of scalars $\{a_i \}_{i=1}^d$ we have
    \begin{equation}\label{eq:auerbach}
       \frac{1}{\sqrt{d}} \left(\sum_{i=1}^d \abs{a_i}^2\right)^{1/2} \le \norm{\sum_{i=1}^d a_i Av_i} \le \sqrt{d} \left(\sum_{i=1}^d \abs{a_i}^2\right)^{1/2}.
    \end{equation}
    For each $f \in E$ with $\norm{Af} = 1$ we write $f = \sum_{i=1}^d a_i v_i$. Then
    \begin{equation*}\begin{split}
      \dist(Af,A_n E_n) \le\norm{\sum_{i=1}^d a_i Av_i - \sum_{i=1}^d a_i A_n\Pi_{E_n || F}v_i} &\le \left(\sup_{1 \le i\le d} \norm{ (A- A_n\Pi_{E_n || F})v_i}\right) \sum_{i=1}^d \abs{a_i} \\
      &\le d \left(\sup_{1 \le i\le d} \norm{ (A - A_n\Pi_{E_n || F} )v_i}\right),
    \end{split}\end{equation*}
    where we used \eqref{eq:auerbach} and the fact that $\norm{Af} = 1$ to obtain the last inequality.
    By taking the supremum over $f \in E$ with $\norm{Af} = 1$ and letting $n \to \infty$ we observe that $\gap(A E, A_n E_n) \to 0$.
    By \cite[IV \S 2, Corollary 2.6]{kato1966perturbation} it follows that $\dim(A E) \le \dim(A_n E_n)$ for sufficiently large $n$. Since $\restr{A}{E}$ has trivial kernel we have $\dim(A E) = \dim(E) = \dim(E_n) \ge \dim(A_n E_n)$ and so $\dim(A E) = \dim(A_n E_n)$.
    By \cite[Lemma 2.6]{blumenthal2016volume} we therefore have $\gap(A_n E_n, A E) \to 0$, and so $A_n E_n \to A E$ in $\mathcal{G}_d(X)$ by \eqref{eq:gap_equiv}.

    \paragraph{The case where $\det(A | E) \ne 0$.}
    Let $F' \in \mathcal{G}^d(X)$ be such that $AE \oplus F' = X$. Since $A_n E_n \to AE$, without loss of generality we may assume that $A_n E_n \oplus F' = X$ for every $n$ and that $\Pi_{A_n E_n || F'} \to \Pi_{AE || F'}$.
    By Lemma \ref{lemma:inv_proj} the map $\restr{\Pi_{A_nE_n || F'}}{A E}$ is invertible, and so the pushforward of $m_{A E}$ under $\Pi_{A_n E_n || F'}$ is a well defined, translation invariant measure on $A_n E_n$.
    Since the Haar is unique up to scaling we get
    \begin{equation*}
      m_{A_n E_n} = \frac{m_{A_n E_n}( B_{A_n E_n})}{m_{AE}( \left(\restr{\Pi_{A_nE_n || F'}}{A E}\right)^{-1}(B_{A_n E_n}))} (m_{AE} \circ \left(\restr{\Pi_{A_nE_n || F'}}{A E}\right)^{-1}).
    \end{equation*}
    For notational convenience we set $\Gamma_n = \left(\restr{\Pi_{A_nE_n || F'}}{A E}\right)^{-1}$.
    By the definition of the determinant \eqref{eq:det}, one has
    \begin{equation}\begin{split}\label{eq:measurable_determinant_bounded_1}
      \abs{m_{E}(B_E)}&\abs{\det(A | E) - \det(A_n | E_n)} \\
      &=  \abs{m_{AE}(A(B_E)) - \frac{m_{A_n E_n}( B_{A_n E_n})}{m_{AE}( \Gamma_n(B_{A_n E_n}))} \frac{m_{AE}( \Gamma_nA_n(B_{E_n}))}{m_{AE}(\Gamma_n A_n\Pi_{E_n || F }B_E)} m_{AE}(\Gamma_n A_n\Pi_{E_n || F }B_E) }.
    \end{split}\end{equation}
    As $\norm{\restr{\Pi_{A_nE_n || F'}}{A E}}^{-1} B_{AE} \subseteq \Gamma_n(B_{A_n E_n}) \subseteq \norm{\Gamma_n} B_{AE}$ we have
    \begin{equation}\label{eq:measurable_determinant_bounded_2}
      \norm{\restr{\Pi_{A_nE_n || F'}}{A E}}^{-d} m_{AE}(B_{AE})\le m_{AE}(\Gamma_n(B_{A_n E_n}))) \le \norm{\Gamma_n}^d m_{AE}(B_{AE}).
    \end{equation}
    Since $A_nE_n \to AE$ we have $\norm{\Gamma_n - \Id} \to 0$ by Proposition \ref{prop:graph_chart} and the definition of the graph representation of $\mathcal{N}(F')$.
    Applying the facts that $m_{AE}(B_{AE}) = m_{A_n E_n}( B_{A_n E_n})$, $\norm{\restr{\Pi_{A_nE_n || F'}}{AE}}\to 1$, and $\norm{\Gamma_n}\to 1$ to \eqref{eq:measurable_determinant_bounded_2} yields
    \begin{equation}\label{eq:measurable_determinant_bounded_3}
      \lim_{n \to \infty} \frac{m_{A_n E_n}( B_{A_n E_n})}{m_{AE}( \Gamma_n(B_{A_n E_n}))} = 1.
    \end{equation}
    Similarly we have $\norm{\restr{\Pi_{E_n || F }}{E}}^{-1} \Pi_{E_n || F }B_E\subseteq B_{E_n} \subseteq \norm{\left(\restr{\Pi_{E_n || F }}{E}\right)^{-1}} \Pi_{E_n || F }B_E$ and so
    \begin{equation*}
      \norm{\restr{\Pi_{E_n || F }}{E}}^{-d} m_{AE}(\Gamma_n A_n\Pi_{E_n || F }B_E) \le m_{AE}(\Gamma_n A_nB_{E_n}) \le \norm{\left(\restr{\Pi_{E_n || F }}{E}\right)^{-1}}^{d} m_{AE}(\Gamma_n A_n\Pi_{E_n || F }B_E).
    \end{equation*}
    Arguing as before, it therefore follows that
    \begin{equation}\label{eq:measurable_determinant_bounded_4}
      \lim_{n \to \infty} \frac{m_{AE}(\Gamma_n A_n\Pi_{E_n || F }B_E)}{m_{AE}(\Gamma_n A_nB_{E_n})} = 1.
    \end{equation}
    Note that
    \begin{equation*}
      \abs{m_{AE}(A(B_E)) - m_{AE}( \Gamma_nA_n\Pi_{E_n || F }(B_{E}))} \abs{m_{E}(B_E)}^{-1} = \abs{\det(A | E) - \det(\Gamma_nA_n\Pi_{E_n || F } | E)}.
    \end{equation*}
    Since $\Gamma_nA_n\restr{\Pi_{E_n || F }}{E} \in \LL(E, AE)$ for every $n$, we have $\Gamma_nA_n\restr{\Pi_{E_n || F }}{E} \to A$ in the operator norm on $\LL(E, AE)$. Hence by \cite[Lemma 2.20]{blumenthal2016volume} we have
    $\lim_{n \to \infty} \det(\Gamma_nA_n\Pi_{E_n || F } | E) = \det(A | E)$. Combining this with \eqref{eq:measurable_determinant_bounded_1}, \eqref{eq:measurable_determinant_bounded_3} and \eqref{eq:measurable_determinant_bounded_4} completes the proof.
  \end{proof}
\end{lemma}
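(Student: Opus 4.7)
The plan is to fix a convergent sequence $(A_n, E_n) \to (A,E)$ in $sB_{\LL(X)} \times \mathcal{G}_d(X)$ (strong operator topology $\times$ Grassmannian topology) and prove $\det(A_n | E_n) \to \det(A | E)$ via a graph-representation reduction, splitting into two cases based on whether $\det(A|E)$ vanishes. First I would choose a fixed complement $F \in \mathcal{G}^d(X)$ with $E \oplus F = X$. Since $\mathcal{N}(F)$ is open in $\mathcal{G}_d(X)$, after discarding finitely many indices we may assume $E_n \oplus F = X$ for all $n$, and Proposition \ref{prop:graph_chart} together with Lemma \ref{lemma:graph_rep_inv_continuity} yields $\Pi_{E_n || F} \to \Pi_{E || F}$ in operator norm. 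Composed with the strong convergence $A_n \to A$ and the uniform bound $\norm{A_n} \le s$, this gives $A_n \Pi_{E_n || F} v \to A v$ for every $v \in X$, and actually uniformly on compact sets of $X$ — in particular on the (relatively compact) closed unit ball of the finite-dimensional space $E$.

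In the case $\det(A | E) = 0$ the restriction $\restr{A}{E}$ has nontrivial kernel, so I pick $f \in E \setminus \{0\}$ with $Af = 0$ and a complement $G$ of $\vspan\{f\}$ in $E$. Setting $f_n = \Pi_{E_n||F} f$ and $G_n = \Pi_{E_n||F} G$ gives $E_n = \vspan\{f_n\} \oplus G_n$ with $\norm{\Pi_{\vspan\{f_n\} || G_n}}$ uniformly bounded (again by the norm convergence of $\Pi_{E_n||F}$ and the formula $\Pi_{\vspan\{f_n\} || G_n} = \Pi_{E_n || F}\Pi_{\vspan\{f\}||G}(\restr{\Pi_{E_n||F}}{E})^{-1}$). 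The submultiplicativity of the determinant (Lemma 2.15 of \cite{blumenthal2016volume}) gives
\begin{equation*}
\det(A_n | E_n) \le C_d\, \det(A_n | \vspan\{f_n\})\, \det(A_n | G_n)\, \norm{\Pi_{\vspan\{f_n\} || G_n}} \le C_d\, \norm{A_n f_n}\, s^{d-1}\, \norm{\Pi_{\vspan\{f_n\} || G_n}},
\end{equation*}
and $\norm{A_n f_n} \to \norm{Af} = 0$, forcing $\det(A_n | E_n) \to 0 = \det(A|E)$.

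In the case $\det(A|E) \ne 0$, $\restr{A}{E}$ is injective, so I would first establish $A_n E_n \to AE$ in the Grassmannian. Fix a basis $\{v_i\}_{i=1}^d$ of $E$ with $\{A v_i\}$ normalized Auerbach in $AE$; for any unit $Af \in AE$ expand $f = \sum a_i v_i$, use \cite[Corollary A.7]{quas2019explicit} to bound $\sum \abs{a_i}$ by $\sqrt{d}$, and estimate
\begin{equation*}
\dist(Af, A_n E_n) \le \sum \abs{a_i}\, \norm{(A - A_n\Pi_{E_n||F}) v_i} \to 0,
\end{equation*}
giving $\gap(AE, A_n E_n) \to 0$. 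By \cite[IV §2, Corollary 2.6]{kato1966perturbation} dimensions match eventually, and \cite[Lemma 2.6]{blumenthal2016volume} together with \eqref{eq:gap_equiv} promotes this to $A_n E_n \to AE$ in $\mathcal{G}_d(X)$.

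With Grassmannian convergence in hand, I choose $F' \in \mathcal{G}^d(X)$ complementary to $AE$; then $A_n E_n \oplus F' = X$ eventually and $\Gamma_n := (\restr{\Pi_{A_n E_n || F'}}{AE})^{-1} \to \Id$ in norm. The composition $\Gamma_n A_n \Pi_{E_n || F}$ maps $E$ into the fixed space $AE$ and converges to $A : E \to AE$ in the norm of $\LL(E, AE)$ (both spaces are finite-dimensional, so strong convergence upgrades to norm convergence). Hence \cite[Lemma 2.20]{blumenthal2016volume} gives $\det(\Gamma_n A_n \Pi_{E_n||F} | E) \to \det(A | E)$. The remaining task is to relate this determinant to $\det(A_n | E_n)$; here I expand $\det(A_n | E_n) = m_{A_n E_n}(A_n B_{E_n})/m_{E_n}(B_{E_n})$ and use the uniqueness of Haar measure to rewrite the ratio as the product of three correction factors — corresponding to the change of measure $\Gamma_n : A_n E_n \to AE$, the pushforward of $B_{E_n}$ along $\Pi_{E_n||F}^{-1} : E_n \to E$, and the relation between the Euclidean normalizations on $E_n$ versus $E$ — each of which tends to $1$ by the norm convergence $\Gamma_n \to \Id$ and $\Pi_{E_n||F} \to \Pi_{E||F}$.

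The main obstacle is this last bookkeeping step: the Haar measures $m_{E_n}$ and $m_{A_n E_n}$ live on moving subspaces and are normalized via Euclidean unit balls, not intrinsically, so to pass volumes through the near-identity change-of-variables $\Gamma_n$ and $\restr{\Pi_{E_n||F}}{E}^{-1}$ one must estimate ratios $m_{AE}(\Gamma_n B_{A_n E_n})/m_{AE}(B_{AE})$ and $m_{AE}(\Gamma_n A_n \Pi_{E_n||F} B_E)/m_{AE}(\Gamma_n A_n B_{E_n})$ by sandwiching the relevant sets between dilations of each other with ratios controlled by $\norm{\Gamma_n}^{\pm d}$ and $\norm{(\restr{\Pi_{E_n||F}}{E})^{\pm 1}}^d$. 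All of these factors tend to $1$, closing the argument. Case 1 (the degenerate case) is routine once the graph-representation reduction is in place; the bulk of the technical work lies in the Haar-measure comparison of Case 2.
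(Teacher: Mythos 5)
Your proposal matches the paper's proof step by step: the same graph-representation reduction to a fixed complement $F$, the same case split on $\det(A|E)=0$ (via submultiplicativity and $\det(A_n|\vspan\{f_n\})\to 0$), the same Auerbach-basis argument to get $A_nE_n \to AE$, and the same Haar-measure bookkeeping with $\Gamma_n = (\restr{\Pi_{A_nE_n||F'}}{AE})^{-1}$ and the three correction ratios. The only slip is the claim that $\sum_i\abs{a_i}\le\sqrt{d}$ — Cauchy--Schwarz applied to the Auerbach bound $(\sum\abs{a_i}^2)^{1/2}\le\sqrt{d}$ gives $\sum\abs{a_i}\le d$, as the paper has — but this changes no conclusion.
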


\begin{proof}[{The proof of Proposition \ref{prop:measurable_determinant}}]
  The proof is similar to that of Proposition \ref{prop:measurable_graph_tranform}, but we include it for completeness.
  For every open $U \in \R$ we have
  \begin{equation}\label{eq:measurable_determinant_1}
     \{ \omega : \det(Y(\omega)) \in U \} = Y^{-1}\left( \bigcup_{n \in \Z^+} \left\{ (A, E) \in n B_{\LL(X)} \times \mathcal{G}_d(X) : \det(A | E) \in U \right\} \right).
  \end{equation}
  By Lemma \ref{lemma:measurable_determinant_bounded} for each $n \in \Z^+$ there exists a set $U_n \in nB_{\LL(X)} \times \mathcal{G}_d(X)$ that is open in the product of the relative strong operator topology and the Grassmannian topology, and such that
  \begin{equation*}
   U_n = \left\{ (A, E) \in n B_{\LL(X)} \times \mathcal{G}_d(X) : \det(A | E) \in U \right\}.
  \end{equation*}
  Since $n B_{\LL(X)}$ and $\mathcal{G}_d(X)$ are both are separable metric spaces (for the later claim see \cite[Lemma B.11]{GTQuas1}) we may write $U_n \cap (n B_{\LL(X)} \times \mathcal{G}_d(X))$ as the union of countably many rectangles $\{(R_{n,i} \cap n B_{\LL(X)}) \times Q_{n,i}\}_{i \in \Z^+}$, where $R_{n,i}$ is open in the strong operator topology on $\LL(X)$ and $Q_{n,i}$ is open in the Grassmannian topology.
  We have $n B_{\LL(X)} \in \mathcal{S}$ by \cite[Lemma A.2]{GTQuas1}, and so $R_{n,i} \in \mathcal{S}$ for every $i, n \in \Z^+$.
  It follows that each $U_n$ is a countable union of sets in $\mathcal{S} \times \mathcal{B}_{\mathcal{G}_d(X)}$, and so $U_n \in \mathcal{S} \times \mathcal{B}_{\mathcal{G}_d(X)}$ for every $n$. Thus $\bigcup_{n \in \Z^+} U_n \in \mathcal{S} \times \mathcal{B}_{\mathcal{G}_d(X)}$.
  Since $Y$ is $(\mathcal{F},\mathcal{S}\times\mathcal{B}_{\mathcal{G}_d(X)})$-measurable, it follows that the left hand side \eqref{eq:measurable_determinant_1} must be in $\mathcal{F}$.
  Thus $\omega \mapsto \det(Y(\omega))$ is $(\mathcal{F}, \mathcal{B}_{\R})$-measurable, as required.
\end{proof}

\bibliographystyle{siam}
\bibliography{bibliography}

\end{document}